\documentclass[12pt, reqno]{amsart}
\usepackage{amsmath}
\usepackage{amssymb}
\usepackage{epsfig}
\usepackage{graphicx}
\usepackage{color}
\usepackage{fullpage}
\usepackage{comment}
\definecolor{shadecolor}{gray}{0.875}
\usepackage{amscd}
\usepackage{stmaryrd}
\usepackage{amsmath}
\usepackage{ulem}
\usepackage{amsthm}
\usepackage{mathrsfs}
\usepackage{array}
\usepackage{booktabs}

\usepackage{csquotes}
\usepackage{enumitem}
\usepackage{graphics}
\usepackage{tikz-cd}

 \usepackage{array}
 \usepackage{booktabs}
 \usepackage{makecell}
 \usepackage{threeparttable}
\usepackage{longtable}

\usepackage{xcolor}
\definecolor{darkred}{RGB}{120,0,0}
\definecolor{darkgreen}{RGB}{0,90,45}

\usepackage[pagebackref]{hyperref}
\hypersetup{
	colorlinks=true,
	linkcolor=darkred,    % 定理、章节、公式等交叉引用
	citecolor=darkgreen,  % 文献引用
	urlcolor=darkred      % 网址
}

\usepackage{textcomp}

\let\oldtocsection=\tocsection

\let\oldtocsubsection=\tocsubsection

\let\oldtocsubsubsection=\tocsubsubsection

\renewcommand{\tocsection}[2]{\hspace{0em}\oldtocsection{#1}{#2}}
\renewcommand{\tocsubsection}[2]{\hspace{1em}\oldtocsubsection{#1}{#2}}
\renewcommand{\tocsubsubsection}[2]{\hspace{2em}\oldtocsubsubsection{#1}{#2}}
\makeatletter

\newcommand{\Rmnum}[1]{\expandafter\@slowromancap\romannumeral #1@}
\makeatother
\numberwithin{equation}{section}

\input xy
\xyoption{all}

\calclayout
\allowdisplaybreaks[3]

\theoremstyle{plain}
\newtheorem{prop}{Proposition}[section]
\newtheorem{theo}[prop]{Theorem}
\newtheorem{coro}[prop]{Corollary}

\newtheorem{lemm}[prop]{Lemma}

\theoremstyle{definition}
\newtheorem{defi}[prop]{Definition}
\newtheorem{cons}[prop]{Construction}

\newtheorem{ques}[prop]{Question}

\newtheorem{rema}[prop]{Remark}

\newtheorem{exam}[prop]{Example}

\newtheorem{mainthm}{Theorem}

\newtheorem{maincor}[mainthm]{Corollary}

\def\ra{\rightarrow}

\def\cF{{\mathcal F}}

\def\cN{{\mathcal N}}
\def\cO{{\mathcal O}}

\def\bC{{\mathbb C}}

\def\bP{{\mathbb P}}
\def\bQ{{\mathbb Q}}
\def\bR{{\mathbb R}}
\def\bZ{{\mathbb Z}}

\def\Bl{\mathrm{Bl}}

\def\Eff{\overline{\mathrm{Eff}}}
\def\Pic{\mathrm{Pic}}

\def\Mor{\mathrm{Mor}}
\def\Nef{\mathrm{Nef}}

\def\Pic{\mathrm{Pic}}

\def\Proj{\mathrm{Proj}}

\makeatother
\makeatletter

\author{Runxuan Gao}
\address{Graduate School of Mathematics, Nagoya University, Furocho Chikusa-ku, Nagoya, 464-8602, Japan}
\email{m20015x@math.nagoya-u.ac.jp}

\title[Examples]{The geometric exceptional set for Fano threefolds}

\begin{document}
\date{\today}
\maketitle

\begin{abstract}
	We study the geometric exceptional set in Manin's conjecture for smooth Fano threefolds.
	We prove that every adjoint rigid surface with $a$-invariant one
	on a non-product smooth Fano threefold is normal.
	This yields a description of the geometric exceptional set
	in terms of the higher-$a$ locus, anticanonical conics, and
	del Pezzo fibrations.
	Zariski density arise only from the latter two contributions.
	
	For del Pezzo fibrations, we construct a finite collection of
	parameter covers which controlling the associated breaking maps.
	We classify their low-genus components for general smooth Fano
	threefolds, yielding examples with potentially
	Zariski dense geometric exceptional sets in $22$ of the $105$
	deformation families.
	
	Extending Batyrev-Tschinkel's example,
	we construct
	Fano fourfolds with quartic del Pezzo fibrations and an explicit
	Fano threefold with two degree-six del Pezzo fibrations,
	all with Zariski dense geometric exceptional sets.
	For this threefold, the original form of Manin's conjecture
	fails on every non-empty open subset for every anticanonical height.
\end{abstract}

\tableofcontents

\section*{Introduction}

Manin's conjecture predicts the distribution of rational points of bounded
height on a Fano variety $X$ over a number field $F$ in terms of two birational
invariants, the $a$- and $b$-invariants (see Section~\ref{sec-gen}).
Rational points on certain subvarieties may accumulate and dominate the
predicted asymptotic formula. It is therefore essential to remove an
exceptional subset of $X(F)$.

The original version of Manin's conjecture \cite{FMT89} predicts that
the exceptional set can be taken to be the set of $F$-rational points
of a proper closed subset of $X$.
The first examples in which the exceptional set must be Zariski dense
were found by Batyrev and Tschinkel \cite{BT96}.
These examples are Fano varieties of dimension at least three
(see \cite{gao2024geometric} for the description of a conjectural
exceptional set for them).
It was subsequently suggested by Peyre \cite{Peyre03} that the exceptional
set should be taken to be a thin set (see Definition~\ref{def-thin}).
Building on a series of studies of the two birational invariants
\cite{HTT15,LTT,LTDuke}, Lehmann, Sengupta, and Tanimoto proposed a
conjectural description of the exceptional set and proved that it is
thin \cite{LST}.
We refer to this set as the geometric exceptional set; see
Definition~\ref{defi-ges} for the precise definition in the case of Fano
varieties.
For threefolds, Bonolis, Browning, and Huang \cite{BBH24} established
the Manin--Peyre asymptotic formula for a family of quadric surface
bundles after removing a Zariski dense thin exceptional set.

The first examples of surfaces with Zariski dense exceptional sets were
found in \cite{gao2023zariski}.
Subsequently, weak del Pezzo surfaces with Zariski dense exceptional sets
were studied systematically in \cite{gao2024quasi}.
Combined with the framework developed in \cite{LTDuke}, these results
extend to the study of the geometric exceptional set on geometrically
rational surfaces; we recall the surface results used in this paper
in Section~\ref{sec2}.

Our first result concerns adjoint rigid surfaces with
$a$-invariant one.
Previous work describes the two-dimensional general adjoint Iitaka
fibers of dominant $a$-covers as fibers of del Pezzo fibrations;
see \cite[Theorem~5.3]{beheshti2022moduli} and
\cite[Theorem~3.10]{burke2022geometric}.
We classify such surfaces without assuming that they occur as
general members of a dominant family.

\begin{mainthm}[=Theorem~\ref{ar:main}]\label{intro-ar-main}
	Let $X$ be a smooth Fano threefold over an algebraically closed field
	of characteristic zero, and assume that $X$ is not a product.
	Let $S\subset X$ be an integral surface such that
	$a(S,-K_X)=1$ and $(S,-K_X|_S)$ is adjoint rigid.
	Then $S$ is a normal del Pezzo surface with at most Du Val singularities,
	and $|S|$ is a base-point-free pencil defining a del Pezzo fibration
	\(
	X\longrightarrow\bP^1
	\)
	with $S$ as a reduced fiber.
\end{mainthm}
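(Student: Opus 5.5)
Let $\nu:\tilde S\to S$ be the normalization and $\beta:S'\to\tilde S$ a minimal resolution, and put $L=\nu^*(-K_X|_S)$, which is nef and big. The condition $a(S,-K_X)=1$ says that $K_{S'}+\beta^*L$ is pseudo-effective but not big. Adjoint rigidity says that $K_{S'}+\beta^*L$ is rigid, i.e. its Iitaka dimension is $0$ with a unique effective representative. The plan has three steps: first pin down $(S',\beta^*L)$ via surface adjunction theory as recalled in Section~\ref{sec2}, then transfer this back to $S\subset X$, and finally use the classification of Fano threefolds to exclude the non-normal cases.

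\textbf{Step 1 (the abstract pair).} I will run the $(K_{S'}+\beta^*L)$-MMP on $S'$. Since the adjoint divisor has Iitaka dimension $0$, the MMP contracts exactly its negative part and ends at a surface $S_0$ with $K_{S_0}+L_0\equiv 0$, where $L_0$ is the pushforward of $\beta^*L$. Hence $-K_{S_0}=L_0$ is nef and big, so $S_0$ is a weak del Pezzo surface. The rigid effective divisor $E=K_{S'}+\beta^*L$ is then supported on the curves contracted by this MMP. Its components are $(-1)$-curves, or chains meeting $\beta^*L$ with degree $1$, since $K\cdot C=-1$ and $(K+L)\cdot C<0$ force $L\cdot C=0$.

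\textbf{Step 2 (back to $X$).} I will compare $\nu$ with adjunction on $X$. Writing $S\in|D|$, we have $K_S=(K_X+D)|_S$, and for the normalization $K_{\tilde S}=\nu^*K_S-C$, where $C$ is the conductor. Then
\[
K_{\tilde S}+L=\nu^*(D|_S)-C .
\]
Rigidity with Iitaka dimension $0$ forces $\nu^*D|_S-C$ to be effective and rigid. The key claim is that this is only possible if $D|_S$ is numerically trivial on $S$ and $C=0$.

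For this, the first tool is the Lefschetz-type injectivity of $N^1(X)\to N^1(S)$ for an ample-ish or movable $S$, together with $h^0(X,D)\ge 1$ and the effectivity of $D$. Since $X$ is Fano, $D$ is effective and nonzero, so $D|_S$ has nonnegative degree on the moving curves of $S$. The second tool is the condition $(K_{\tilde S}+L)\cdot L=0$: from Step 1, $E\cdot\beta^*L=0$. This gives
\[
D\cdot(-K_X)\cdot S = C\cdot L\ge 0 .
\]

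If $C\neq 0$, then $S$ is non-normal along curves. In that case I will use the paper's classification of surfaces on Fano threefolds with small anticanonical degree along the non-normal locus, which are ruled by lines, to get a dominant family of lines or conics. Such a family would force $a$ to be computed by a ruling, contradicting adjoint rigidity, because a covering family of rational curves of $L$-degree $\le 2$ makes the adjoint divisor have positive Iitaka dimension or makes $a>1$. The exception is when $X$ is a product, where $\bP^1\times S_d$ gives the nonnormal or degenerate cases.

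So $C=0$ and $D|_S\equiv 0$. Then $D^2\cdot H=0$ for ample $H$, so $D$ is nef with $D^2\equiv 0$ and is not big. On a Fano threefold, $D$ nef and semiample with $D^2=0$ defines a fibration to $\bP^1$, and $S$ is a fiber. Since $S$ is integral, it is a reduced fiber. The pencil property comes from $h^0(D)=2$, by Kawamata–Viehweg and Riemann–Roch.

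\textbf{Step 3 (singularities).} The general fiber of a del Pezzo fibration is smooth, so the remaining point is the special integral fibers. Since $S$ is a Cartier divisor with $K_S=-L$ anti-ample, $S$ is Gorenstein and normal. A normal Gorenstein surface with anti-ample canonical class and rational singularities is a Du Val del Pezzo surface. The rationality of the singularities should follow from adjoint rigidity: a non-rational (elliptic) singularity would make $K_{S'}+\beta^*L$ non-effective or non-rigid, by the classification of Gorenstein del Pezzo surfaces (Hidaka–Watanabe).

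\textbf{Main obstacle.} The crux is excluding non-normal $S$, i.e. showing $C=0$. My first attempt will be the numerical identity $D\cdot(-K_X)\cdot S=C\cdot L$ combined with the fact that $-K_X|_S$-degree $\le 2$ curves sweeping out $S$ do not exist off products. Failing that, I will check the Mori–Mukai list case by case for $\rho\ge 2$, where non-normal members of del Pezzo-type linear systems might occur.
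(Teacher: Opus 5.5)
Your Step~1 and the normal half of Step~2 are fine. Step~1 is essentially the Lehmann--Tanimoto lemma: the normalization $\tilde S$ is Du Val with $K_{\tilde S}+L\sim0$, and the Du Val property follows directly from crepancy of the minimal resolution, so Hidaka--Watanabe is not needed. Once $C=0$ you have $\mathcal O_S(S)\simeq\mathcal O_S$, so $S$ is nef with $S^2\equiv0$. The sequence $0\to\mathcal O_X\to\mathcal O_X(S)\to\mathcal O_S\to0$ then gives $h^0(\mathcal O_X(S))=2$ and a base-point-free pencil.

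The genuine gap is the step you flag as the crux, excluding $C\neq0$, which is the real content of the theorem. Your identity $D\cdot(-K_X)\cdot S=C\cdot L$ is just the criterion that $S$ is normal if and only if $LS^2=0$. It is consistent with $C\neq0$, $LS^2>0$, and only restates the problem.

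The mechanism you propose for the non-normal case is false. A covering family of rational curves of $L$-degree at most $2$ does not force $a>1$ or positive adjoint Iitaka dimension. Every weak del Pezzo surface with a conic bundle, e.g.\ $\mathbb P^1\times\mathbb P^1$, is swept out by such curves and is adjoint rigid with $a=1$. More decisively, on $\mathbb P^1\times S_d$ take the surface $\mathbb P^1\times C$, with $C\subset S_d$ a nodal rational curve of $(-K_{S_d})$-degree $2$ (e.g.\ a nodal member of $|-K_{S_2}|$). It is non-normal, adjoint rigid with $a=1$, and swept out by degree-two rational curves (Proposition~\ref{prop-product-adjoint-surfaces}). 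So nothing intrinsic to $S$ or its rulings yields a contradiction; the non-product hypothesis must enter through the global geometry of $X$. The classification of surfaces swept out by lines in Section~\ref{sec-3} concerns $a>1$ and does not apply here. The Lefschetz-type injectivity $N^1(X)\to N^1(S)$ you invoke fails precisely for the fibers you want to produce.

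What is missing is the global input the paper supplies.
\begin{itemize}
\item $S$ must first be shown to be nef. If $S\cdot R<0$ for an extremal ray, the contraction is divisorial with exceptional divisor $S$, and E1--E5 divisors have $a>1$ (Lemma~\ref{ar:conductor}).
\item The bound $1\le L^2S=K_\Sigma^2\le9$ and the parity condition \eqref{ar:parity} then cut the candidates down to finitely many nef classes.
\item In Picard rank one every surface has $LS^2>0$. The paper rules this out with a differential obstruction: $\omega_\Sigma\simeq f^*\omega_X$ turns $\bigwedge^2df$ into a nonzero section of $f^*\Omega^1_X$. That is impossible on the weighted complete intersections allowed by the degree bound, by Kawamata--Viehweg vanishing and the weighted Euler and conormal sequences (Lemma~\ref{ar:cotangent}).
\item For $\rho\ge2$, the surviving non-fiber classes each need a separate obstruction:
\begin{itemize}
\item pullbacks of curves under conic bundles: a local smoothness computation along double-line fibers gives normality, then $LS^2=2\Gamma^2>0$;
\item classes $uM+vF$ on the blow-ups of $V_1,V_2$ along elliptic curves: descent to $V_j$ via Iitaka's covering theorem, using that curves of small degree on $\Sigma$ are rational and cannot dominate the elliptic center;
\item the double-cover classes in families 2.2 and 2.8: a conormal comparison forcing $D^2(B-D)=0$.
\end{itemize}
\end{itemize}
Your fallback, \emph{check the Mori--Mukai list case by case}, names this route but provides none of these ingredients. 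In particular you have no argument at all in Picard rank one, and without nefness of $S$ the enumeration cannot start.
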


A normal adjoint rigid surface with $a$-invariant one can be isolated
on a terminal Fano threefold or on a smooth Fano fourfold
(Examples~\ref{ar:isolated-terminal-surface} and
\ref{ar:isolated-fourfold-surface}).
The theorem is proved as Theorem~\ref{ar:main}.

We now return to a number field $F$.
For a smooth Fano threefold $X$ such that $X_{\overline F}$
is not a product, Proposition~\ref{prop-breaking-map-reduction}
gives a precise description of $Z(X)$.
Besides the higher-$a$ contribution, it includes conic families,
del Pezzo fibers and their adjoint rigid covers, and dominant
del Pezzo base changes.
By Remark~\ref{rema-ges-density-reduction}, Zariski density
can arise only from the conic contribution or from smooth
breaking del Pezzo fibers and dominant breaking base changes.

For a del Pezzo fibration $\pi:X\to B$, we construct a finite
collection $P_\pi$ of dominant thin maps
\[
\tau_N:Y_N\longrightarrow X
\]
by base change along covers $N\to B$ parametrizing contractions
of the smooth fibers; see Construction~\ref{cons-dP-fib}.
These covers record invariant collections of disjoint
$(-1)$-curves and conic-bundle classes.
The following theorem describes how they control the associated
breaking maps.

\begin{mainthm}[Breaking maps from del Pezzo fibrations]
	\label{intro-dp-breaking}
	Let $X$ be a smooth Fano threefold over a number field $F$,
	and let $\pi:X\to B$ be a del Pezzo fibration.
	Then the following hold.
	\begin{enumerate}
		\item For every $\tau_N\in P_\pi$,
		\[
		a(Y_N,\tau_N^*(-K_X))=1,
		\qquad
		b(F,Y_N,\tau_N^*(-K_X))
		=\rho_{F(N)}(X_{F(N)}).
		\]
		In particular, $\tau_N$ is breaking if and only if
		\[
		\rho_{F(N)}(X_{F(N)})\geq\rho_F(X).
		\]
		
		\item Let $f:Y\to X$ be a dominant $a$-cover whose general
		geometric adjoint Iitaka fiber maps onto a general fiber of
		$\pi$.
		If $f$ is breaking, then it factors rationally over $X$
		through a member of $P_\pi$.
		If $\rho_F(X)=2$, the converse also holds.
	\end{enumerate}
\end{mainthm}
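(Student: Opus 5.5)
We compute the invariants of $\tau_N$ fibrewise and then read off the breaking criterion. Recall from Construction~\ref{cons-dP-fib} that $Y_N$ is a resolution of $X\times_B N$, where $N\to B$ parametrizes an invariant collection of disjoint $(-1)$-curves or a conic-bundle class on the smooth fibres. Over $F(N)$, the general fibre of $Y_N\to N$ is a smooth del Pezzo surface $S$ with $\tau_N^*(-K_X)|_S=-K_S$.

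\textbf{Part (1), the $a$-invariant.} Since $\tau_N$ is generically finite and dominant, $a(Y_N,\tau_N^*(-K_X))\le a(X,-K_X)=1$. For the reverse inequality, $K_{Y_N}+\tau_N^*(-K_X)$ restricted to a general fibre is $K_S-K_S=0$. Hence it is not big, and $a=1$. The adjoint Iitaka fibration is then $Y_N\to N$ itself, and its fibres are del Pezzo surfaces.

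\textbf{Part (1), the $b$-invariant.} I would use the standard formula for the $b$-invariant when the adjoint Iitaka fibration has rigid fibres; this is the formula used in \cite{LST} and \cite{LTDuke}. In the present case it says that $b$ equals the rank of the Galois-invariant part of the Néron–Severi group of the generic fibre, modulo vertical classes. It therefore suffices to identify $\rho$ of the generic fibre of $Y_N\to N$ with $\rho_{F(N)}(X_{F(N)})$.

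For this identification, I would consider the restriction $\mathrm{NS}(X_{\overline F})\to\mathrm{NS}(S_{\overline{F(N)}})$. Its image is the lattice spanned by the monodromy-invariant classes, and its kernel contains the fibre class. The key point is that taking $\mathrm{Gal}(\overline{F}(N)/F(N))$-invariants commutes with this identification. I expect this to be the main technical obstacle: fibre components over non-smooth fibres, and the geometric irreducibility of $N$ over $F$, must be tracked so that the vertical contributions cancel. Once the identification holds, the breaking condition $(a,b)\ge (1,\rho_F(X))$ from Definition~\ref{defi-ges} gives the stated criterion.

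\textbf{Part (2).} Let $f:Y\to X$ be a dominant breaking $a$-cover whose general adjoint Iitaka fibre $T$ maps onto a fibre $S$ of $\pi$. Then $T\to S$ is an $a$-cover of a del Pezzo surface that is adjoint rigid with $a=1$. By the surface results recalled in Section~\ref{sec2}, either $T\to S$ is birational, or $f$ is not breaking. In the birational case, the Stein factorization of $Y\to B$ through its Iitaka base gives a cover $M\to B$.

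The breaking condition on $b$ forces the invariant part of the Picard group to grow. The extra classes, together with the Galois action over $F(M)$, determine an invariant collection of $(-1)$-curves or a conic class. This collection is one of those parametrized in Construction~\ref{cons-dP-fib}, so $M\to B$ factors through some $N$ in the finite list. Hence $f$ factors rationally through $\tau_N$.

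For the converse when $\rho_F(X)=2$, the classes coming from $F$ are only $-K_X$ and the fibre class. Any nonempty invariant collection then gives $\rho_{F(N)}\ge 2$, so the criterion of part (1) applies and $\tau_N$ is breaking; this property passes to $f$ by the factorization.
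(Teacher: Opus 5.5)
Your overall route is the paper's: a fibrewise computation of $a$, the $b$-formula of \cite[Lemma~4.15(1)]{LST} applied to $Y_N\to N$, and, for (2), birationality on fibres plus growth of the invariant Picard rank producing a new contraction. However, part (1) has a real gap. From $(K_{Y_N}+\tau_N^*(-K_X))|_S\sim0$ you correctly get that the adjoint divisor $D$ is not big, hence $a=1$ and $\kappa(D)\le1$. This does not show that $Y_N\to N$ is the adjoint Iitaka fibration, because $\kappa(D)=0$ is not excluded, and it must be excluded for two reasons. First, the $b$-formula is a statement about the Iitaka fibration. Second, in the equality case $b=\rho_F(X)$, breaking comes from Definition~\ref{defi-ges}(2)(b), which needs $\kappa>0$; if $\kappa=0$ you would need face contraction instead. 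The paper excludes $\kappa=0$ in Lemma~\ref{lemm-dp-base-change-b} via \cite[Theorem~5.5]{beheshti2022moduli}, using $\dim X=3$ and $\deg(N/B)\ge2$. One can also argue directly. The cover $N_{\overline F}\to B_{\overline F}\cong\bP^1$ ramifies at some point $n$. Since the fibres of $\pi$ are reduced, every component of $p^{-1}(n)$ is either $\tau_N$-exceptional or has multiplicity at least two in the pullback of the fibre component it maps onto. Either way it lies in the support of the effective divisor $D=K_{Y_N}-\tau_N^*K_X$, so a multiple of $D$ dominates $p^*n$ and $\kappa(D)=1$.

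Your treatment of $b$ also misplaces the difficulty. By birational invariance of $b$, you may take $Y_N$ to resolve $X\times_B N$ only over finitely many points of $N$. Its generic fibre is then literally $X_{F(N)}$, so no comparison with $\mathrm{NS}(X_{\overline F})$ and no tracking of vertical components is needed. What does need checking is that the correction term in \cite[Lemma~4.15(1)]{LST} vanishes, namely the span of horizontal divisors $E$ for which $D-cE$ stays pseudo-effective. This holds because $(D-cE)|_S\equiv-cE|_S$ is not pseudo-effective.

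In part (2), two steps of Theorem~\ref{theo-breaking} are asserted rather than proved.
\begin{enumerate}
\item The strict inequality $\rho_F(X)>\rho_{F(B)}(X_\eta)$. It holds because restriction to $X_\eta$ is surjective on divisor classes and kills the fibre class.
\item The passage from a larger invariant subspace to an element of $\mathscr E^{(r)}$ or $\mathscr R$ fixed by the Galois group of $F(T)$ but not by that of $F(B)$. This comes from the extremal rays of the cone of curves of the surface $X_{F(T)}$, which has Picard rank at least two, so each ray gives a conic bundle or contracts an invariant set of disjoint $(-1)$-curves. If all these rays were invariant over $F(B)$, the larger invariant space would lie in the smaller one.
\end{enumerate}
Two smaller points. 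The map $T\to S$ is always birational by \cite[Theorem~6.2]{LTDuke}, so no dichotomy is needed. For the converse, do not rely on breaking passing through a factorization. Instead use $b(Y)=\rho_{F(T)}(X_{F(T)})\ge\rho_{F(N)}(X_{F(N)})\ge2$ together with $\kappa=1$, as in Corollary~\ref{cor-dp-rank-two}.
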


The theorem follows from Lemma~\ref{lemm-dp-base-change-b},
Theorem~\ref{theo-breaking},
Remark~\ref{rema-dp-candidates}, and
Corollary~\ref{cor-dp-rank-two}.
The essential observation is that a breaking base change increases
the arithmetic Picard rank of the generic fiber, so an additional
contraction becomes defined over the extended function field.
For quadric surface fibrations, the construction reduces to the
double cover parametrizing rulings; see Corollary~\ref{coro:p1xp1}.

An irreducible component of $P_\pi$ with degree at least $2$
is called an parameter cover of $pi$.
We next classify the parameter covers of genus zero and one.
We say that the geometric exceptional set is potentially
Zariski dense if it becomes Zariski dense after a finite
extension of the ground field.

\begin{mainthm}[Low-genus parameter covers and potential density]
	\label{intro-dp-families}
	Among the deformation families of smooth Fano
	threefolds, there are exactly $22$ whose general member
	admits a del Pezzo fibration with a connected parameter
	cover of genus $0$ or $1$.
	In each of these families, the points
	represented by models over number fields with potentially
	Zariski dense geometric exceptional sets are Zariski
	dense in the corresponding geometric deformation space.
\end{mainthm}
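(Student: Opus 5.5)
The plan has two halves: a classification that produces the list of $22$ families, and a density argument for each family on that list.

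\textbf{Classification.} I would work through the Mori--Mukai list of $105$ deformation families and, for each one, list the del Pezzo fibrations $\pi:X\to\bP^1$ of a general member. These are the extremal contractions of type $D$ together with the compositions that give a fibration over $\bP^1$ with del Pezzo fibres; each such fibration has a degree $d=K_{X_\eta}^2$ and a generic fibre $X_\eta$ over $F(\bP^1)$. For each $\pi$ I would follow Construction~\ref{cons-dP-fib} to determine the covers $N\to\bP^1$ in $P_\pi$. These covers parametrize Galois-invariant collections of disjoint $(-1)$-curves and conic-bundle classes on the geometric fibres, so each is a component of a relative Hilbert scheme of such configurations, finite over $\bP^1$.

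The genus of each component is computed by Riemann--Hurwitz from two pieces of data:
\begin{itemize}
\item the discriminant locus of $\pi$, meaning the number of singular fibres, which comes from $\chi_{top}(X)$ minus the contribution of the smooth fibres;
\item the local monodromy around each singular fibre. For Du Val fibres with a single node this is a reflection in $W(E_{9-d})$.
\end{itemize}
Take the quadric case $d=8$ as an example. By Corollary~\ref{coro:p1xp1} the cover is the double cover branched at the singular fibres, so its genus is (number of singular fibres)$/2-1$. Low genus therefore means at most $4$ nodal fibres. For general $X$ the global monodromy group is as large as possible, so each cover is connected exactly when the corresponding orbit is a single $W$-orbit. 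That lets me read off connectivity and degree. Going through the tables should leave exactly $22$ families with a connected component of genus $0$ or $1$.

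\textbf{Density.} Fix a family on the list and a connected parameter cover $N$ of genus $\le1$. I would choose models over number fields, and enlarge the field, so that $N(F)$ is infinite. In genus $0$ this only needs one rational point. In genus $1$ it needs $N$ to be an elliptic curve of positive rank over some finite extension, which always happens after a suitable extension. Over each point of $N(F)$ the fibre has $\rho_F$ increased, so the breaking condition of Theorem~\ref{intro-dp-breaking}(1) holds on the generic fibre once $F$ is large enough that $\rho_{F(N)}(X_{F(N)})\ge\rho_F(X)$. This should be arranged by passing to a field over which $\rho_F(X)$ equals the geometric rank, while the Galois action of $F(N)/F(\bP^1)$ still lowers the fibre contribution. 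The image under $\tau_N$ of $Y_N(F)$, which is dense since $N(F)$ is dense and the fibres are rational surfaces with points, is then a Zariski dense subset of the geometric exceptional set.

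To show that such models are Zariski dense in the moduli space, I would note that the conditions used are open or hold after finite extension. Every $\overline{\bQ}$-point of the open set of general members, where the classification holds, has a model over a number field. So the set of such models contains all $\overline{\bQ}$-points of a dense open set, and is therefore dense.

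\textbf{Main obstacle.} I expect the hardest step to be the exhaustive monodromy and genus computation across all families, particularly for low-degree fibrations ($d\le4$), where the $W(E_n)$-orbits are large. The genus formula needs the ramification profile of each orbit under a reflection. I would first try to use the fact that a reflection in a root $\alpha$ fixes exactly the configurations orthogonal to $\alpha$ and swaps the others in pairs. Ramification then equals half the number of non-fixed points, and these counts are computable in $W(E_n)$.
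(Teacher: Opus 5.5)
Your outline matches the paper's: count singular fibres via $e(X)$, take the local monodromies to be reflections, compute genera of orbit covers by Riemann--Hurwitz, then base change along a genus $\le 1$ cover after splitting $X$ and enlarging $F$. The gap is in how you determine the covers. You take the monodromy of a general member to be the full Weyl group and identify the components of $P_\pi$ with $W$-orbits. That is correct only when the generic fibre has Picard rank one (types $M_d$ and $Q_-$). Otherwise the classes restricted from $\Pic(X)$ are monodromy invariant, and the relevant group is $G=\operatorname{Fix}_{W(S)}(R)$, with $R$ the image of $\Pic(X)_{\bQ}$. That the monodromy of a general member equals this group is a theorem \cite[Theorem~5.13]{burke2022geometric}; it does not follow from ``as large as possible''. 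For the blow-up types that account for most of the $22$ families, $G$ is a product of symmetric groups with Young-subgroup stabilizers, so its orbits on $\mathscr E^{(r)}$ and $\mathscr R$ are strictly finer than $W$-orbits. For example, for $Q_-(2)$ (Nos.~3-2, 3-4, 3-6, 3-10) one has $G=S_2\times S_2$. The six lines split into $\{e_1,e_2\}$ and $\{q_i-e_j\}$, and the conic class $q_1+q_2-e_1-e_2$ is invariant. The degree-two, genus-zero cover marking $e_1$ is the only reason Nos.~3-2 and 3-4 lie in the genus-zero list. It does not exist if you work with the single $W(A_2\times A_1)$-orbit of all six lines. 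So the number $22$ cannot come out of your scheme as stated.

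Two further inputs are missing.
\begin{enumerate}
\item Your $\chi_{\mathrm{top}}$ count and your reflection local monodromy both presuppose that every singular fibre of a general member is integral with a single $A_1$ point. The paper proves this case by case in Lemma~\ref{lem:bt-nodal-fibers}: rank-three quadrics are forced by smoothness, conic-bundle discriminants are smooth and simply branched, and Lefschetz pencils show the blow-up centres are simply branched over $B$.
\item Your density step needs the same fact. After splitting $X$, integrality gives $\rho_{F(B)}(X_\eta)=\rho_F(X)-1$. The new contraction over $F(N)$ (Remark~\ref{rema-dp-candidates}) then gives $\rho_{F(N)}(X_{F(N)})\ge\rho_F(X)$. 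Enlarging $F$ also raises $\rho_F(X)$, so ``$F$ large enough'' alone does not close the argument.
\end{enumerate}
Finally, your fixed-point rule fails for $r\ge2$: $s_\alpha$ fixes any set it permutes. For instance, $s_{e_1-e_2}$ fixes $\{e_1,e_2\}$ although neither class is orthogonal to $e_1-e_2$. For the low-degree types the paper avoids orbit-by-orbit counts altogether. Each stabilizer lies in a maximal parabolic $P$, a reflection fixes a proportion $|\Phi_P|/|\Phi|\le3/5$ of $G/P$, and $N\ge12$, so $g(D)\ge g(T/P)\ge1+n_P/5>1$.
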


A detailed statement is given in Theorem~\ref{thm:bt-families}.
The del Pezzo fibrations occurring in this classification have
degrees ranging from $4$ to $8$.
A complete list of non-split
del Pezzo fibrations on Fano threefolds is given in Table~\ref{tab:bt-models}, and a description of the associated
parameter covers is given in Table~\ref{tab:bt-covers}.
As an application, we give explicit examples as follows.

\begin{mainthm}[Explicit density constructions]
	\label{intro-dp-density}
	Let $F$ be a number field.
	\begin{enumerate}
		\item There exists a family of smooth Fano fourfolds $X$ over $F$
		with a quartic del Pezzo fibration
		\(
		\pi:X\longrightarrow\bP^2_F
		\)
		such that $Z(X)$ is Zariski dense.
		
		\item General line sections
		of $\pi$
		are smooth Fano threefolds of type No.~2-7 with
		potentially Zariski dense geometric exceptional sets.
		
		\item Let $X$ be the Fano threefold of type No.~3-10 given by
		the blow up of $$\{x_0x_1+y_0y_1-z^2=0\}\subset\bP^4$$ along
		$x_0=x_1=0$ and $y_0=y_1=0$, then for every
		non-empty open subset $U\subset X$ and every
		anticanonical height $H$,
		\[
		N(U(F),H,B)\gg_{U,H}B(\log B)^3.
		\]
	\end{enumerate}
\end{mainthm}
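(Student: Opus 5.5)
The three parts will be handled separately. The density claims come from Theorem~\ref{intro-dp-breaking}: one exhibits a parameter cover $N\to B$ with Zariski dense $F$-points along which the arithmetic Picard rank jumps, so that $\tau_N$ is breaking. Part (3) is then made quantitative by counting on the base change $Y_N$.

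\emph{Part (1).} I would take $X\subset\bP^2_F\times\bP^4_F$ to be a smooth complete intersection of two divisors $D_1,D_2$ of bidegree $(1,2)$. By adjunction, $-K_X=\cO(3,5)-2\cO(1,2)=\cO(1,1)$, which is ample, so $X$ is a Fano fourfold. The first projection $\pi:X\to\bP^2$ has fibers the base loci of pencils of quadrics in $\bP^4$, i.e.\ quartic del Pezzo surfaces.

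For the relevant member of $P_\pi$ I would use the conic-bundle classes. On a quartic del Pezzo surface the five pairs of conic classes correspond to the five singular quadrics of the pencil. So the parameter cover $N$ is the degree-$5$ cover of $\bP^2$ given by the discriminant quintic of the pencil; it carries a further double cover choosing one ruling of the cone, which gives the actual conic class.

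I would specialize the coefficients of $D_1,D_2$ so that:
\begin{itemize}
\item one singular member of the pencil is a cone whose vertex and rulings are defined over $F(\bP^2)$ up to a rational double cover; this makes a component of $N$ rational, so $N(F)$ is Zariski dense;
\item otherwise $\rho_F(X)=2$ is kept.
\end{itemize}
Then $\rho_{F(N)}(X_{F(N)})\ge 3>2$, so by Theorem~\ref{intro-dp-breaking}(1) the map $\tau_N$ is breaking. Over a dense set of $t\in N(F)$ the fiber of $Y_N\to N$ is a quartic del Pezzo surface carrying a conic bundle with a section over $F$, hence it is rational with dense points. Therefore $\tau_N(Y_N(F))\subset Z(X)$ is Zariski dense. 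The points to verify are smoothness of the specialized family and that $\rho_F(X)$ does not also jump.

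\emph{Part (2).} A general line $\ell\subset\bP^2$ gives $X_\ell\subset\bP^1\times\bP^4$, a complete intersection of two $(1,2)$-divisors. Projecting to $\bP^4$ identifies $X_\ell$ with the blow-up of the quadric $\{\lambda Q_1+\mu Q_2\}$-eliminant along a genus-$5$ curve cut out by quadrics, which is type No.~2-7. The parameter cover restricts to $N|_\ell$, a curve cover of $\ell$. Its genus-$0$ or genus-$1$ components from the classification in Theorem~\ref{thm:bt-families} acquire dense points after a finite extension. This is where "potentially" enters, and the same argument as in Part (1) applies.

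\emph{Part (3).} The maps $X\to\bP^1$ given by $(x_0:x_1)$ and by $(y_0:y_1)$ are two del Pezzo fibrations of degree $6$; note $\rho(X)=3$. The parameter cover I would use is the double cover $N\to\bP^1$ choosing one of the two triangles of $(-1)$-curves, equivalently one of the two contractions to $\bP^2$. This cover comes from the discriminant of the residual quadric form in $(y_0,y_1,z)$, so $N$ is a conic and in fact $N\cong\bP^1_F$. I would then check that after this base change the generic fiber becomes split toric, so that $Y_N$ is birational to a smooth split toric threefold $\widetilde Y$ with $\rho(\widetilde Y)=4$. Theorem~\ref{intro-dp-breaking}(1) gives $a=1$ and $b=\rho_{F(N)}=4$.

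Now fix an open $U\subset X$ and an anticanonical height $H$. The set $\tau^{-1}(U)$ contains an open subset of $\widetilde Y$ inside the torus. The pullback $\tau^*H$ is a height for $\tau^*(-K_X)$, and I expect this divisor to agree with $-K_{\widetilde Y}$ up to an effective divisor supported off the torus, using $a=1$ and adjoint rigidity. If so, changing adelic metrics only changes constants, and the Batyrev--Tschinkel asymptotic for anticanonical heights on split toric varieties gives
\[
\#\{y\in\widetilde Y(F)\cap\tau^{-1}(U):\tau^*H(y)\le B\}\gg B(\log B)^{3}.
\]
Since $\tau$ has bounded degree, this transfers to $N(U(F),H,B)\gg B(\log B)^3$.

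The main obstacle is this last identification: verifying that $Y_N$ is genuinely toric with $\rho=4$ and that the pulled-back height dominates a toric anticanonical height on the torus. I would try first to write $Y_N$ explicitly by parametrizing the conic $N$ and diagonalizing the quadric over $F(t)$.
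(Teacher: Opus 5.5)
Your proposal has genuine gaps in all three parts. The most concrete one is that the family you choose for Part~(1) cannot satisfy Part~(2).

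\textbf{Parts (1) and (2).} Suppose $X\subset\bP^2\times\bP^4$ is cut out by two $(1,2)$-divisors. A line section $X_\ell\subset\bP^1\times\bP^4$ is then given by $\lambda A_1+\mu A_2=\lambda B_1+\mu B_2=0$ with $A_i,B_i$ quadrics. This has three consequences:
\begin{enumerate}
\item $-K_{X_\ell}=\cO(0,1)|_{X_\ell}$;
\item the image of $X_\ell$ in $\bP^4$ is the determinantal quartic $A_1B_2-A_2B_1=0$, not a quadric;
\item $X_\ell$ contains $\ell\times\{y\}$ for every $y$ in the nonempty common zero locus of $A_1,A_2,B_1,B_2$, and $-K_{X_\ell}$ has degree $0$ on these curves.
\end{enumerate}
So for every specialization $X_\ell$ is not Fano, let alone of type No.~2-7. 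The paper instead intersects a net of quadric cones $\widehat Q_t$ with common vertex with a fixed smooth quadric $Q^3$, i.e.\ it uses bidegrees $(0,2)$ and $(1,2)$. Then $-K_X=2H+L$ and $X_\ell=\Bl_C Q^3$ with $C=Q^3\cap\widehat Q_{t_1}\cap\widehat Q_{t_2}$ of genus $5$.

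Your appeal to Theorem~\ref{thm:bt-families} in Part~(2) also fails. No.~2-7 lies in neither $\mathcal I_0$ nor $\mathcal I_1$: a general member (type $M_4$) has only parameter covers of genus at least two. The genus-one curve is special to this subfamily. It is the restriction to $\ell$ of the ruling double cover $D_{\mathcal N}\to\bP^2$, which is branched along the quartic $\Delta_{\mathcal N}$.

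In Part~(1) the arithmetic core is asserted rather than constructed. You need both that the ruling cover has dense $F$-points and that the fibers over them have $F$-points. Nothing in your construction produces ``a conic bundle with a section over $F$''. The paper gets both from a single $F$-rational base point $z_0$ of the net:
\begin{enumerate}
\item a point $P\in Q^3\cap L_{z_0}$ gives a constant section of $\pi$, so the fibers are unirational by Manin;
\item a smooth $q_t$ contains exactly one line through $z_0$ from each ruling, so the plane of lines through $z_0$ is birational to $D_{\mathcal N}$ (Proposition~\ref{prop-dp4-rational}).
\end{enumerate}
Also, $\rho_{F(N)}(X_{F(N)})=2$, not $\ge3$, because the two conic classes of one cone satisfy $c+c'=-K$. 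Breaking comes from $2\ge\rho_F(X)=2$. For a fourfold over $\bP^2$ you cannot quote the threefold Theorem~\ref{intro-dp-breaking} verbatim.

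\textbf{Part (3).} Your key step is wrong as stated, and it is unnecessary.
\begin{enumerate}
\item Any smooth model $\widetilde Y$ on which $\tau$ and $\widetilde Y\to N$ are morphisms has $\rho(\widetilde Y)\ge5$: the generic fiber has rank $4$, plus the base class. Indeed, substituting $u=z-sy$, $w=z+sy$ makes the blown-up points constant. This shows $Y_N$ is birational to $\bP^1\times S_0$, with $S_0$ the split toric sextic del Pezzo surface.
\item $(Y_N,\tau^*L)$ is not adjoint rigid: $\kappa=1$ by Lemma~\ref{lemm-dp-base-change-b}.
\item $\tau^*L=-K_{\widetilde Y}+R$ with $R$ the effective ramification divisor, and this is not a change of metric. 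Off $R$, $H_R$ is bounded below but not above. So a lower bound for the $-K_{\widetilde Y}$-count gives only an upper bound for the $\tau^*L$-count. The correct exponent $3=b-1$ would need an asymptotic for the non-anticanonical class $-K_{\widetilde Y}+R$.
\end{enumerate}
The paper avoids all of this. It picks $s\in F^\times$ such that $S_{s^2}$ meets $U$. This single fiber is a split toric del Pezzo surface with $\rho_F=4$ and $L|_S=-K_S$. Batyrev--Tschinkel applied to $S$ itself already gives $\gg B(\log B)^3$ on $U\cap T$.
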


For part {\rm(1)}, see
Theorem~\ref{theo-dp4} and
Example~\ref{exam-diagonal-dp4}.
Part {\rm(2)} is proved in
Corollary~\ref{rema-dp-threefold-sections}.
Part {\rm(3)} follows from
Proposition~\ref{prop-symmetric-3-10-exceptional} and
Corollary~\ref{coro-symmetric-3-10-manin}.

Part {\rm(2)} concerns a special subfamily of No.~2-7,
thus is absent from the lists in
Theorem~\ref{intro-dp-families}.

An explicit member of the family in part {\rm(1)} is
given in Example~\ref{exam-diagonal-dp4}.
\begin{equation}\label{eq-intro-dp4}
	X=
	\left\{
	(x,y)\in\bP^2_F\times\bP^4_F
	\;\middle|\;
	\begin{aligned}
		y_0^2+y_1^2+y_2^2+y_3^2-y_4^2&=0,\\
		x_0(y_0^2-y_3^2)+x_1(y_1^2-y_3^2)
		+x_2(y_2^2-y_3^2)&=0
	\end{aligned}
	\right\}.
\end{equation}

The first projection defines a quartic del Pezzo fibration,
and $Z(X)$ is Zariski dense;
see Example~\ref{exam-diagonal-dp4}.

The quartic construction uses the two rulings of a net of
rank-four quadrics, detected by a discriminant double cover.
For \eqref{eq-intro-dp4}, this cover is the degree-two Du Val del Pezzo surface
\[
w^2=-x_0x_1x_2(x_0+x_1+x_2)
\]
and is $F$-rational.
The example of Part {\rm(3)} admits two del Pezzo fibrations of degree $6$, each with a rational double cover parametrizing
split fibers of Picard rank 4.

We next apply the structural results to determine when the
geometric exceptional set cannot be Zariski dense.
For anticanonical conics, we use the geometry of their parameter
spaces together with the Mordell--Lang theorem.
Related arithmetic results for parameter spaces of lines and
conics were studied in
\cite[Section~4.2]{browning2017varieties} and
\cite[Sections~2.9--2.10]{corvaja2023integral}.
The del Pezzo contribution is constrained by Theorem~\ref{intro-dp-breaking}
and the numerical bound in Lemma~\ref{lemm-dp-complexity}.
Combining these results gives the following consequences for the
deformation families of smooth Fano threefolds.

\begin{maincor}[Non-density in the split case]\label{theo-1}
	Let $X$ be a smooth Fano threefold over a number field $F$ such that
	$X_{\overline F}$ is not a product and
	\[
	\rho_F(X)=\rho(X_{\overline{F}}).
	\]
	Then the geometric exceptional set of $X$ is not Zariski dense,
	except possibly in the following cases:
	\begin{enumerate}
		\item $X$ admits a del Pezzo fibration of degree $d\leq8$
		such that $\rho_F(X)\leq10-d$;
		\item $X$ has geometric Picard rank $1$, index $1$, and genus
		\[
		g\in\{2,3,4,5,6,10,12\};
		\]
		\item $X$ has geometric Picard rank $1$, index $2$, and degree
		\[
		d\in\{1,2,4,5\}.
		\]
	\end{enumerate}
	If $X_{\overline F}$ is general in its deformation family, the lists
	in the second and third cases reduce to
	$g\in\{2,10,12\}$ and $d\in\{4,5\}$, respectively.
\end{maincor}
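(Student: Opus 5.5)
We will deduce the corollary from the description of $Z(X)$ in Proposition~\ref{prop-breaking-map-reduction} together with Remark~\ref{rema-ges-density-reduction}. These show that $Z(X)$ decomposes into three parts: a higher-$a$ locus, the images of breaking maps from anticanonical conic families, and a del Pezzo contribution consisting of smooth breaking fibers and dominant breaking base changes. The higher-$a$ locus is a proper closed subset, because it is controlled by finitely many subvarieties with $a>1$ (rational curves of low anticanonical degree and the finitely many surfaces with $a(S,-K_X)>1$). So it suffices to show two things, under the split hypothesis $\rho_F(X)=\rho(X_{\overline F})$: the del Pezzo part is not dense outside case (1), and the conic part is not dense outside cases (2) and (3).

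\textbf{Del Pezzo part.} Let $\pi:X\to B$ be a del Pezzo fibration of degree $d$. By Theorem~\ref{intro-dp-breaking}(2), every dominant breaking $a$-cover whose adjoint Iitaka fibers map onto fibers of $\pi$ factors through some $\tau_N\in P_\pi$. By part~(1) of that theorem, $\tau_N$ is breaking exactly when $\rho_{F(N)}(X_{F(N)})\ge\rho_F(X)$.
\begin{itemize}
\item If $\deg(N/B)\ge 2$, then $N$ records a non-trivial Galois-invariant contraction datum of the generic fiber that is not defined over $F(B)$. Hence the rank of the generic fiber over $F(N)$ strictly exceeds that over $F(B)$.
\item The split hypothesis pins $\rho_F(X)=\rho(X_{\overline F})$. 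Combined with the relation $\rho(X)=1+\rho(X_\eta)+(\text{reducible fiber corrections})$, the inequality then forces the geometric generic fiber rank $10-d$ to be at least $\rho_F(X)$. This is the numerical bound of Lemma~\ref{lemm-dp-complexity}.
\end{itemize}
So outside case (1) no dominant breaking base change exists. Breaking fibers and adjoint rigid covers of fibers then lie over a thin, non-dense set of points of $B$ by the same rank count, giving a non-dense contribution. We also need Theorem~\ref{intro-ar-main} to ensure that every adjoint rigid $a=1$ surface is such a fiber, which uses that $X_{\overline F}$ is not a product. I expect this del Pezzo step, in particular matching the rank inequality exactly to the bound $\rho_F(X)\le 10-d$, to be the main obstacle.

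\textbf{Conic part.} Anticanonical conics form a family whose breaking maps are dominant only when the Fano scheme of conics is a surface whose evaluation map is a nontrivial cover of $X$. Going through the Mori--Mukai list:
\begin{itemize}
\item For $\rho\ge2$, or for index $\ge3$, either conics lie in fibers of extremal contractions, and are thus governed by conic bundles that are not breaking because of the split rank, or the family has no dominant breaking component.
\item For $\rho=1$, the conic surface is an abelian surface or a surface of general type (for example, the Fano surface for cubic threefolds, index 2, $d=3$) except in the listed genera and degrees.
\end{itemize}
When the surface is of general type, its rational points are not dense by Faltings. When it is an abelian variety or lies in an abelian variety, Mordell--Lang restricts the points to finitely many translates of subvarieties. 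Either way the union of the corresponding conics is not dense. The remaining rank-one cases $g\in\{2,3,4,5,6,10,12\}$ and $d\in\{1,2,4,5\}$ are exactly those where the conic surface can be rational or have dense points. This gives cases (2) and (3).

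\textbf{General members.} For $X_{\overline F}$ general we use known descriptions of the conic surfaces. For $g=3,4,5,6$ and $d=1,2$ the conic surface of a general member is of general type or irregular, so Faltings or Mordell--Lang excludes density. This leaves $g\in\{2,10,12\}$ and $d\in\{4,5\}$.
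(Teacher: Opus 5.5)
Your decomposition is the paper's. Remark~\ref{rema-ges-density-reduction} leaves only the conic contribution and the smooth breaking fibers and dominant breaking base changes of del Pezzo fibrations. The del Pezzo step you flag as the main obstacle is in fact immediate by Lemma~\ref{lemm-dp-complexity}. A breaking fiber or base change gives $\rho_F(X)\le\rho_F(S)\le 10-d$, resp.\ $\rho_F(X)\le\rho_{F(T)}(X_{F(T)})\le 10-d$. This uses neither the split hypothesis nor the relation between $\rho(X)$ and $\rho(X_\eta)$. Outside case~(1) it shows that \emph{no} smooth fiber is breaking. A merely thin set of breaking parameters would not suffice: in Example~\ref{exam-symmetric-3-10} the breaking fibers lie over the thin set of squares and are Zariski dense. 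Likewise, the conic contribution vanishes for $\rho_F(X)\ge2$ simply because a conic family has $b=1<\rho_F(X)$ (Proposition~\ref{prop-breaking-map-reduction}(1)). The split hypothesis only turns $\rho_F(X)=1$ into $\rho_{\mathrm{geom}}(X)=1$, and index $\ge3$ admits no curve of anticanonical degree $2$ by divisibility.

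The genuine gap is the non-density criterion in the rank-one conic analysis, and it has three problems.

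\emph{General type.} That rational points on a surface of general type are not Zariski dense is the Bombieri--Lang conjecture, not a theorem of Faltings.

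\emph{Abelian surfaces.} When the parameter surface is an abelian surface, Mordell--Lang gives no restriction, since abelian surfaces can have Zariski dense rational points. This is precisely why $g=10$ and $d=4$ survive; for $d=4$ the paper proves potential density (Corollary~\ref{coro-potential-density}(2)). So this branch of your argument contradicts your own list of exceptions.

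\emph{Genus $9$.} Your dichotomy misses $g=9$, where $S(X)$ is ruled over a genus-$3$ curve.

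What Faltings' theorem actually gives is Lemma~\ref{lemm-irregular-surface}: a surface with $q>2$ has non-dense rational points. Otherwise its Albanese image would be a translate of an abelian subvariety generating $\operatorname{Alb}(S)$, of dimension $q>2$. The proof then rests on the specific irregularities in Propositions~\ref{prop-S(X)}, \ref{prop-low-genus-conics} and~\ref{prop-index-two-lines}:
\begin{itemize}
\item $q=7,5,3$ for $g=7,8,9$ and $q=5$ for $d=3$, valid for every $X$;
\item $q=30,20,14,10$ for $g=3,4,5,6$ and $q=42$ for $d=1$, known only for general $X$.
\end{itemize}
For general $d=2$ the paper instead uses the Corvaja--Zannier finiteness of $F$-rational bitangents (Corollary~\ref{coro-degree-two-lines}). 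The case $g=2$ stays open because the bitangent component has constant Abel--Jacobi map (Question~\ref{ques-g2}). Without these inputs your final lists are asserted to match the statement rather than derived. Nothing in your argument separates $g=10$ from $g=7$, or explains why $g=3,\dots,6$ and $d=1,2$ are excluded only for general members.
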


The proof is given in Section~\ref{sec-proof}.
The del Pezzo case follows from Lemma~\ref{lemm-dp-complexity},
and the curve cases follow from
Theorem~\ref{theo-index-one-conics} and
Corollary~\ref{coro-potential-density}.
For index one, we give an explicit model of the
Mukai--Umemura threefold over $\mathbb Q$ whose geometric
exceptional set is Zariski dense
(Example~\ref{exam-mukai-umemura}).

In Section~\ref{sec-nonsplit},
we give the following results on non-split Fano threefolds.

\begin{maincor}[Non-split Fano threefolds of Picard rank one]
	\label{theo-1-nonsplit}
	Let $X$ be a smooth Fano threefold over a number field $F$ such that
	\[
	\rho_F(X)=1<\rho_{\mathrm{geom}}(X).
	\]
	Then the geometric exceptional set of $X$ is not Zariski dense,
	except possibly when $X_{\overline F}$ is of type
	\[
	\text{\rm No.~2-6\quad or\quad No.~2-21}.
	\]
	If, in addition, $X_{\overline F}$ is general in its deformation
	family, then the only remaining possible type is
	\[
	\text{\rm No.~2-21}.
	\]
\end{maincor}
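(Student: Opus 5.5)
We will deduce the statement from the description of $Z(X)$ in Proposition~\ref{prop-breaking-map-reduction} together with Remark~\ref{rema-ges-density-reduction}. By that remark, Zariski density of the geometric exceptional set can come only from two sources: the family of anticanonical conics, or smooth breaking del Pezzo fibers and dominant breaking base changes of a del Pezzo fibration $\pi:X\to B$. The higher-$a$ locus is a proper closed subset, and so are the finitely many isolated adjoint rigid surfaces (by Theorem~\ref{ar:main} these are fibers of pencils). The proof therefore splits into a del Pezzo part and a conic part, and in each we use that $\rho_F(X)=1$ while $\rho(X_{\overline F})\ge 2$.

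\emph{Del Pezzo part.} Since $\rho_F(X)=1$, no del Pezzo fibration $X\to B$ is defined over $F$: such a fibration would contribute an $F$-rational class beyond $-K_X$. Fibrations over $\overline F$ are permuted by the Galois group. A breaking map $f:Y\to X$ defined over $F$ whose adjoint Iitaka fibers are del Pezzo fibers must nonetheless be Galois-stable. We therefore run through the Mori--Mukai list of $X_{\overline F}$ with $\rho\ge 2$ that admit a nontrivial Galois action on $\mathrm{Pic}$ fixing only a rank-one sublattice (types 2-6, 2-12, 2-21, 2-32, 3-1, 3-13, 4-1, \dots). For each, we check whether a dominant $a$-cover with del Pezzo fibers can be breaking, using Theorem~\ref{intro-dp-breaking}(1) and the criterion $\rho_{F(N)}\ge\rho_F$. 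When the fibers of $\pi$ come from a conic-bundle or $\bP^1$-bundle structure rather than a genuine del Pezzo fibration, as for many of these types, this contribution is absent. The $b$-invariant comparison, $b=\rho_{F(N)}(X_{F(N)})$ against $\rho_F(X)=1$, is always satisfied, so the real constraint is whether such $\pi$ exists at all; Lemma~\ref{lemm-dp-complexity} bounds the degree and rank. I expect the survivors to be the types with a del Pezzo fibration, namely 2-6 (a divisor of bidegree $(2,2)$ in $\bP^2\times\bP^2$, whose two projections are conic bundles) and 2-21. I would redo this carefully from the tables.

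\emph{Conic part.} A dominant family of anticanonical conics, meaning curves with $-K_X\cdot C=2$, gives a breaking map only if the relevant parameter surface $M$ has infinitely many rational points, that is, $M$ has genus $\le 1$ or is not of general type. For types whose conic family is a surface of general type or a curve of genus $\ge 2$, Mordell--Lang (as in Theorem~\ref{theo-index-one-conics} and Corollary~\ref{coro-potential-density}) gives finitely many $F$-points outside a proper closed subset, so no density. Applying this case by case to the remaining non-split rank-one types leaves 2-6 and 2-21. For 2-21, the blow-up of a quadric along a twisted quartic, the conic family is rational and density persists generically. For 2-6, the conic family has positive genus that becomes large for general members, which yields the final sentence in the general case.

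The main obstacle is the case-by-case enumeration. We must determine, for each non-split rank-one type, the geometry of its conic parameter spaces and whether those spaces are special (genus $\le1$, abelian, or rational) for special versus general members. I would first tabulate the non-split types via their Galois-symmetric extremal contractions, and then import the known descriptions of conic Fano surfaces from the literature.
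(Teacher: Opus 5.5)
Your reduction via Remark~\ref{rema-ges-density-reduction} to a del Pezzo contribution and a conic contribution is the right skeleton, but both halves go wrong as written.

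\textbf{Del Pezzo part.} Your first sentence already settles this part. Since $\rho_F(X)=1$, no del Pezzo fibration is defined over $F$. An $F$-defined breaking map whose adjoint Iitaka fibers map onto del Pezzo fibers would have a Galois-invariant fiber class, and so would force such a fibration over $F$. This contribution is therefore empty, and there is nothing to tabulate. Your later claim that 2-6 and 2-21 survive ``as the types with a del Pezzo fibration'' is false. No.~2-6 has only two conic bundles over $\bP^2$, and No.~2-21 has two blow-downs to the quadric threefold. The types on the list that do carry del Pezzo fibrations geometrically (e.g.\ 3-1, 4-1, 3-27) are exactly the ones removed by the Galois argument. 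The two exceptions in the statement come entirely from the conic contribution.

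\textbf{Conic part.} The mechanism you propose cannot exclude most of the list.
\begin{enumerate}
\item Non-density of rational points on surfaces of general type is the Bombieri--Lang conjecture, not a theorem. What is actually available is Lemma~\ref{lemm-irregular-surface}, which requires $q>2$.
\item More seriously, for Nos.~2-32, 3-1, 3-13, 3-27 and 4-1 there are dominant conic families given by fibers of $\bP^1$-bundles or conic bundles over $\bP^2$ or $\bP^1\times\bP^1$. Their parameter spaces are rational, so no Mordell--Lang argument applies. The same holds for the two $\bP^2$-components of $F_2(X_{\overline F})$ on No.~2-6.
\end{enumerate}
The missing idea is the Galois-degree obstruction of Lemma~\ref{lemm-galois-degree}. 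With $\rho_F(X)=1$, Galois permutes the primitive nef generators $H_1,\ldots,H_r$ transitively, and $-K_{X_{\overline F}}=c\sum_i H_i$. Hence every curve defined over $F$ has anticanonical degree divisible by $cr\in\{3,4,6\}$, and there are no $F$-conics at all. On No.~2-6, the two $\bP^2$-components are exchanged by Galois and are disjoint, so they have no $F$-points. Only after this does the irregularity argument apply:
\begin{enumerate}
\item it disposes of No.~2-12, where $q=3$;
\item for general No.~2-6, it disposes of the middle component $F_{(1,1)}$, where $q=9$;
\item this leaves No.~2-21, with $F_2\cong\bP^1\times\bP^1$.
\end{enumerate}
The list of eight types itself comes from the classification recalled in Section~\ref{sec-nonsplit}.

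\textbf{Product case.} Proposition~\ref{prop-breaking-map-reduction} and Remark~\ref{rema-ges-density-reduction} assume that $X_{\overline F}$ is not a product. The case $X_{\overline F}\cong(\bP^1)^3$ must therefore be treated separately. One way is to use Proposition~\ref{prop-product-adjoint-surfaces} together with the fact that none of the fiber classes of the three projections, nor the classes of the corresponding conics, is Galois-invariant.
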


The proof is given in Section~\ref{sec-proof}, using
Corollary~\ref{coro-nonsplit-conics}.

The paper is organized as follows.
Section~\ref{sec-gen} reviews the invariants and the definition
of the geometric exceptional set, and Section~\ref{sec2}
recalls the surface results used in the paper.
Section~\ref{sec-3} describes the higher-$a$ locus and proves
its purity.
Section~\ref{ar:section} proves Theorem~\ref{intro-ar-main}.
Section~\ref{sec-bsuface} describes the geometric exceptional
set and proves Theorem~\ref{intro-dp-breaking}.
Section~\ref{sec-dp-families} classifies the low-genus
parameter covers and proves Theorem~\ref{intro-dp-families}.
Section~\ref{sec-example} gives the explicit constructions
of Theorem~\ref{intro-dp-density}.
Sections~\ref{sec-curves} and~\ref{sec-nonsplit} study
anticanonical conics, including the non-split case.
Finally, Section~\ref{sec-proof} proves
Corollaries~\ref{theo-1} and~\ref{theo-1-nonsplit}.

\noindent
{\bf Acknowledgments.}
The author thanks Sho Tanimoto and Tim Browning for useful conversations.
The author was partially supported by Grant-in-Aid for JSPS Fellows Number 24KJ1234.

\noindent\textbf{AI disclosure.}
The author began writing this paper in 2024 and developed
its main mathematical content before using AI tools.
Under the author's guidance, GPT-6 Astra carried out
some computations and screened cases in Section~\ref{sec-dp-families}.
The results were manually checked by the author.
Astra also brought several references to the author's attention,
including \cite{przyjalkowski2020bounds}, \cite{iliev2017hyper},
and \cite{jacovskis2023categorical}, of which the author had
previously been unaware. These helped the author
generalize some results from the previous manuscript in Section~\ref{sec-curves}.
Astra was also used to proofread the manuscript and provided
suggestions on language and organization.
The paper itself was written by the author
who is responsible for its correctness.

\section{Generalities}\label{sec-gen}

Throughout the paper, we work over a field $F$ of characteristic zero,
unless otherwise stated. Varieties are geometrically integral, and
all morphisms are defined over the ground field unless specified otherwise.
We write $\overline F$ for an algebraic closure of $F$.
For a projective variety $X$, let $N^1(X)$ be the real vector space of
Cartier divisors modulo numerical equivalence, and let
\[
\rho_F(X)=\dim N^1(X),
\qquad
\rho_{\mathrm{geom}}(X)=\rho_{\overline F}(X_{\overline F}).
\]
We write $\rho(X)=\rho_F(X)$ when the ground field is understood.
The pseudo-effective cone of divisors is denoted by $\Eff^1(X)$,
and its dual cone in $N_1(X)$ by $\Nef_1(X)$.

\subsection{Geometric invariants in Manin's conjecture}

\begin{defi}\label{def-a-invariant}
	Let $X$ be a smooth projective variety and let $L$ be a big
	$\bQ$-Cartier divisor on $X$. The $a$-invariant of $(X,L)$ is
	\[
	a(X,L):=\inf\{t\in\bR\mid K_X+tL\in\Eff^1(X)\}.
	\]
	For an arbitrary projective variety $X$, choose a resolution of
	singularities $\mu:\widetilde X\to X$ and define
	\[
	a(X,L):=a(\widetilde X,\mu^*L).
	\]
	When $L$ is nef but not big, we formally set $a(X,L)=+\infty$.
\end{defi}

This definition is independent of the choice of resolution by
\cite[Proposition~2.7]{HTT15}.
For a big divisor $L$, the inequality $a(X,L)>0$ holds if and only if
$X$ is geometrically uniruled \cite{BDPP12}.
If $L$ is big and nef and $X$ is geometrically uniruled, then
$a(X,L)$ is rational; see \cite[Theorem~2.16]{HTT15}.

\begin{defi}\label{def-b-invariant}
	Let $X$ be a smooth geometrically uniruled projective variety, and
	let $L$ be a big and nef $\bQ$-Cartier divisor on $X$. Set
	\[
	\cF(X,L):=
	\{\alpha\in\Nef_1(X)\mid (K_X+a(X,L)L)\cdot\alpha=0\}.
	\]
	The $b$-invariant of $(X,L)$ over $F$ is
	\[
	b(F,X,L):=\dim\cF(X,L).
	\]
	For an arbitrary geometrically uniruled projective variety $X$,
	we define $b(F,X,L):=b(F,\widetilde X,\mu^*L)$ on a resolution
	$\mu:\widetilde X\to X$.
	When $L$ is nef but not big, we formally set $b(F,X,L)=+\infty$.
\end{defi}

The definition is independent of the resolution by
\cite[Proposition~2.10]{HTT15}.
The face $\cF(X,L)$ is dual to the minimal face of $\Eff^1(X)$
containing $K_X+a(X,L)L$; see \cite[Lemma~4.14]{LST}.
We write $b(X,L)$ when $F$ is understood.

The $a$-invariant is unchanged under
field extension \cite[Corollary~4.5]{LST}, whereas the $b$-invariant
may increase under a finite extension \cite[Proposition~4.13]{LST}.
For a smooth weak Fano variety,
\begin{equation}\label{eq-fano-invariants}
	a(X,-K_X)=1,
	\qquad b(F,X,-K_X)=\rho_F(X).
\end{equation}

\subsection{Adjoint rigid pairs}\label{sec-adjoint-rigid-pairs}

In the birational study of Manin's conjecture, the adjoint Iitaka
fibration reduces the study of a polarized variety to that of
lower-dimensional adjoint rigid pairs.

\begin{defi}\label{def-adjoint-rigid}
	Let $X$ be a geometrically uniruled projective variety and let $L$
	be a big and nef $\bQ$-Cartier divisor on $X$.
	The pair $(X,L)$ is adjoint rigid if
	\[
	\kappa\bigl(\widetilde X,
	K_{\widetilde X}+a(X,L)\mu^*L\bigr)=0,
	\]
	where $\mu:\widetilde X\to X$ is a resolution of singularities.
\end{defi}

The Iitaka dimension in Definition~\ref{def-adjoint-rigid} is
independent of the resolution.
Put $D=K_{\widetilde X}+a(X,L)\mu^*L$, and choose a positive integer
$m$ such that $mD$ is Cartier.
The graded algebra
\[
R(\widetilde X,mD)
=\bigoplus_{n\geq0}
H^0\bigl(\widetilde X,\cO_{\widetilde X}(nmD)\bigr)
\]
is finitely generated by \cite[Corollary~1.1.2]{BCHM10}.
The induced rational map $X\dashrightarrow\Proj R(\widetilde X,mD)$ is called
the adjoint Iitaka fibration associated to $(X,L)$.
%After resolving this map, we take its model with connected fibers.
After resolving this map, a general geometric fiber is an adjoint rigid pair with
the same $a$-invariant as $(X,L)$ by \cite[Lemma~4.3]{LST}.

\subsection{Thin sets}

We recall the notion of a thin set used in the formulation of
Manin's conjecture; see \cite[Definition~2.1]{LST}.

\begin{defi}\label{def-thin}
	Let $X$ and $Y$ be varieties over $F$.
	A morphism $f:Y\to X$ is a thin map if it is generically finite
	onto its image and satisfies one of the following conditions:
	\begin{enumerate}[label={\rm(\arabic*)}]
		\item $f(Y)$ is not dense in $X$; in this case $f$ is of type I;
		\item $f$ is dominant and has degree at least $2$; in this case
		$f$ is of type II.
	\end{enumerate}
	A subset $Z\subset X(F)$ is thin if it is contained in a finite union
	$\bigcup_i f_i(Y_i(F))$ for thin maps $f_i:Y_i\to X$ over $F$.
\end{defi}

In particular, the inclusion of a proper subvariety and the map
from a resolution of that subvariety to $X$ are both of type I.

\subsection{Geometric exceptional sets}

We first recall two notions used in the definition of the geometric
exceptional set.

\begin{defi}\label{def-a-cover}
	Let $X$ and $Y$ be smooth projective varieties, and let $L$ be a big
	and nef $\bQ$-Cartier divisor on $X$ with $a(X,L)>0$.
	A dominant generically finite morphism $f:Y\to X$ of degree at
	least $2$ is an $a$-cover of $(X,L)$ if
	\[
	a(Y,f^*L)=a(X,L).
	\]
	It is an adjoint rigid $a$-cover if, in addition, $(Y,f^*L)$ is
	adjoint rigid in the sense of Definition~\ref{def-adjoint-rigid}.
\end{defi}

For an $a$-cover, pushforward of curve classes induces a map
\[
f_*:\cF(Y,f^*L)\longrightarrow\cF(X,L);
\]
see \cite[Lemma~4.25]{LST}.

\begin{defi}\label{def-face-contracting}
	An $a$-cover $f:Y\to X$ is face contracting if the induced map
	$f_*:\cF(Y,f^*L)\to\cF(X,L)$ is not injective.
\end{defi}

We now assume that $F$ is a number field and recall the conjectural
exceptional set proposed in \cite[Definition~5.1]{LST}, specialized
to anticanonically polarized Fano varieties.

\begin{defi}[Geometric exceptional set]\label{defi-ges}
	Let $X$ be a smooth Fano variety over $F$ and put $L=-K_X$.
	The geometric exceptional set $Z(X)$ is the union of $f(Y(F))$,
	where $Y$ is smooth, projective, and geometrically uniruled, and
	$f:Y\to X$ is a thin map satisfying one of the following conditions:
	\begin{enumerate}[label={\rm(\arabic*)}]
		\item\label{ges-a>1}
		\(
		\bigl(a(Y,f^*L),b(F,Y,f^*L)\bigr)
		>
		\bigl(a(X,L),b(F,X,L)\bigr);
		\)
		\item the two pairs are equal, and one of the following holds:
		\begin{enumerate}[label={\rm(\alph*)}]
			\item $\dim Y<\dim X$;
			\item $\dim Y=\dim X$ and $\kappa(Y,K_Y+f^*L)>0$;
			\item $\dim Y=\dim X$, $\kappa(Y,K_Y+f^*L)=0$, and $f$ is
			face contracting.
		\end{enumerate}
	\end{enumerate}
	The pairs are compared in lexicographic order.
	A thin map satisfying these conditions is called a breaking thin map.
\end{defi}

We distinguish the geometric exceptional set from the exceptional set,
since the former is a conjectural candidate for the latter.
We refer to it as the exceptional set only when Manin’s conjecture
holds with this choice of exceptional set.
The set $Z(X)$ is thin by \cite[Theorem~5.7]{LST}.
We say that it is potentially Zariski dense if there is a finite
extension $K/F$ such that the geometric exceptional set of $X_K$
is Zariski dense in $X_K$.

\section{Geometric exceptional sets on surfaces}\label{sec2}

We recall the surface results used in the study of exceptional sets
on Fano threefolds. A weak del Pezzo surface is a smooth projective
surface $S$ such that $-K_S$ is big and nef; it is a del Pezzo surface
if $-K_S$ is ample. Its degree is $d=K_S^2$.
By \eqref{eq-fano-invariants},
\[
a(S,-K_S)=1,
\qquad b(F,S,-K_S)=\rho_F(S).
\]
%For weak del Pezzo surfaces, the exceptional set is understood in
%the sense of \cite[Definition~5.1]{LST}; in particular, it includes
%the rational points on the $(-2)$-curves, where $-K_S$ has degree zero.

We first work over an algebraically closed field to describe curves
with $a$-invariant at least one.
We recall the following classification,
following \cite[Lemma~9.2 and its proof]{LT19}.

\begin{prop}\label{prop-dim2}
	Let $S$ be a weak del Pezzo surface of degree $d$ over an
	algebraically closed field of characteristic zero, and let
	$C\subset S$ be an integral curve.
	If $a(C,-K_S|_C)>1$, then $C$ is one of the following:
	\begin{enumerate}[label={\rm(\arabic*)}]
		\item a $(-2)$-curve, in which case $a(C,-K_S|_C)=+\infty$;
		\item a $(-1)$-curve;
		\item a rational curve in $|-K_S|$, when $d=1$.
	\end{enumerate}
	In the last two cases, $a(C,-K_S|_C)=2$.
	
	If $a(C,-K_S|_C)=1$, then $C$ is one of the following:
	\begin{enumerate}[label={\rm(\arabic*)},resume]
		\item a smooth fiber of a conic bundle $S\to\bP^1$;
		\item a rational curve in $|-K_S|$, when $d=2$;
		\item\label{case-dp1-exceptional}
		a rational curve in $|-K_S+E|$, when $d=1$, where $E$ is
		an effective divisor satisfying $E^2=-1$ and $(-K_S)\cdot E=1$;
		\item a rational curve in $|-2K_S|$, when $d=1$.
	\end{enumerate}
\end{prop}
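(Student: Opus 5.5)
We work with the normalization $\nu:\widetilde C\to C$ and set $L=\nu^*(-K_S|_C)$, so that $\deg L=-K_S\cdot C$. For a curve the $a$-invariant is explicit. If $\deg L=0$ it is $+\infty$ by convention. If $\deg L>0$, then $a(C,-K_S|_C)=(2g(\widetilde C)-2)/\deg L$ when $\widetilde C$ is not rational, and $a=2/\deg L$ when $\widetilde C\cong\bP^1$. In the irrational case we get $a\leq0$ if $g\geq1$, so any curve with $a\geq1$ must be rational and satisfy $-K_S\cdot C\in\{0,1,2\}$. More precisely, $a>1$ means $-K_S\cdot C\in\{0,1\}$, and $a=1$ means $-K_S\cdot C=2$. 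The proof is therefore a classification of rational curves of anticanonical degree at most $2$ on $S$.

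We combine adjunction with the Hodge index theorem. Write $p_a(C)=1+\tfrac12(C^2+K_S\cdot C)\geq0$, so that $C^2\geq -2+(-K_S\cdot C)$. Since $-K_S$ is big and nef with $K_S^2=d$, the Hodge index theorem gives $d\,C^2\leq(-K_S\cdot C)^2$.

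For degree $0$, Hodge index gives $C^2<0$; here strictness holds because $C\not\equiv0$ and $-K_S$ is big. Then $p_a\geq0$ forces $C^2=-2$ and $p_a=0$, so $C$ is a $(-2)$-curve. This is case (1).

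For degree $1$, we have $C^2\geq-1$ and $d\,C^2\leq1$. Either $C^2=-1$, which gives $p_a=0$ and a $(-1)$-curve (case (2)), or $C^2\geq0$, which forces $d=1$ and $C^2=1$. In the latter situation equality holds in Hodge index, so $C\equiv-K_S$. Since $\Pic$ of a rational surface is torsion-free, $C\in|-K_S|$, and $C$ is rational by assumption; this is case (3). In both cases $a=2$.

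For degree $2$, we have $C^2\geq0$ and $d\,C^2\leq4$, and $C^2$ is even because $C^2+K_S\cdot C$ is even, so $C^2\in\{0,2,4\}$.
\begin{itemize}
\item If $C^2=0$, then $p_a=0$ and $C$ is smooth rational. Riemann--Roch gives $h^0(C)\geq2$, and since $C$ is nef with $C^2=0$, $|C|$ is a base-point-free pencil; its fibration is a conic bundle with $C$ a smooth fiber (case (4)).
\item If $C^2=2$, then $d\leq2$. For $d=2$, equality in Hodge index gives $C\in|-K_S|$ (case (5)). For $d=1$, set $E:=C+K_S$. Then $E^2=2-4+1=-1$ and $-K_S\cdot E=1$, and Riemann--Roch gives $\chi(E)=1+\tfrac12(E^2-K_S\cdot E)=1$. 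Moreover $h^2(E)=h^0(K_S-E)=h^0(2K_S-C)=0$, so $E$ is effective and $C\in|-K_S+E|$ (case (6)).
\item If $C^2=4$, then $d=1$ and equality in Hodge index gives $C\in|-2K_S|$ (case (7)).
\end{itemize}

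Conversely, each listed curve is rational of the stated anticanonical degree, so the formula gives the claimed values of $a$.

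The main obstacle I expect is case (6): checking that $E$ is genuinely effective, and confirming that the Hodge index inequality together with the parity of $C^2$ exhausts all possibilities. The rest of the argument is bookkeeping with the degree formula.
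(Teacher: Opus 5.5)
Your proposal is correct and is essentially the argument of \cite[Lemma~9.2]{LT19} that the paper cites instead of giving its own proof: the degree formula reduces to rational curves with $-K_S\cdot C\le 2$ (plus the degree-zero case), and adjunction, the Hodge index theorem, parity and Riemann--Roch finish the classification. Three small slips need fixing, none of which affects the argument:
\begin{itemize}
\item the curve formula is $a=(2-2g)/\deg L$, not $(2g-2)/\deg L$;
\item in degree one you must invoke parity to exclude $C^2=0$ (it would give $p_a=\tfrac12$), since the Hodge index inequality alone allows $C^2=0$ for every $d$;
\item $K_S-E=-C$, not $2K_S-C$; since $-C$ also has no sections, $h^2(E)=0$ still holds.
\end{itemize}
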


We now return to a number field $F$.
For a smooth del Pezzo surface, \cite[Theorem~9.1]{LT19} shows that
the geometric exceptional set is not Zariski dense, except possibly
when $d=1$ and $\rho_F(S)=1$; this case is also excluded when
$S_{\overline F}$ is general in moduli.
A degree-one del Pezzo surface of Picard rank one with Zariski dense
geometric exceptional set was constructed in \cite{gao2023zariski}.

As for dominant breaking maps, \cite[Theorem~6.2]{LTDuke} shows that a smooth
del Pezzo surface admits no adjoint rigid $a$-cover in the sense of
Definition~\ref{def-a-cover}.
A weak del Pezzo surface may admit such covers. Up to birational
equivalence over an algebraic closure, they correspond to finite
covers of its anticanonical model that are \'etale in codimension one
\cite[Corollary~6.9]{LTDuke}.
Such maps are called quasi-\'etale covers. The anticanonical model
is a del Pezzo surface with at most Du Val singularities, and the
quasi-\'etale covers of these surfaces, together with their arithmetic
contributions to the exceptional set, are studied in
\cite{gao2024quasi}.
If $T$ is a Du Val del Pezzo surface and $r:S\to T$ is its minimal
resolution, then $K_S=r^*K_T$ and Definition~\ref{def-b-invariant} gives
\[
b(F,T,-K_T)=\rho_F(S).
\]

Finally, for a smooth geometrically uniruled surface with a big
and nef polarization $L$, the adjoint Iitaka dimension is either
zero or one. In dimension one, the geometry is governed by the
rational fibers of the canonical map. In the adjoint rigid case,
the adjoint minimal model program reduces the study to the
anticanonical polarization on a weak del Pezzo surface; see
\cite[Proposition~5.9]{LTT} and \cite[Theorems~6.1 and~6.10]{LTDuke}.
%This explains the role of weak del Pezzo surfaces in the study of
%exceptional sets on geometrically rational surfaces.

\section{The \(a\)-exceptional locus}\label{sec-3}

We work over an algebraically closed field $F$
of characteristic zero.
Let $X$ be a smooth Fano threefold.
For each positive-dimensional integral subvariety $Y\subsetneq X$,
choose a projective resolution $\mu_Y:\widetilde Y\to Y$, and denote
its composition with $Y\hookrightarrow X$ by the same symbol.
Since $a(X,-K_X)=1$, we define its $a$-exceptional set by
\begin{equation}\label{eq-a-exceptional-locus}
	Z_a(X):=
	\bigcup_{\substack{Y\subsetneq X\text{ integral},\ \dim Y>0\\
			a(Y,-K_X|_Y)>1}}\mu_Y\bigl(\widetilde Y(F)\bigr),
\end{equation}
where the invariant is computed on a resolution as in
Definition~\ref{def-a-invariant}.
Since $F$ is algebraically closed, this set consists of all closed
points of the union of the images $\mu_Y(\widetilde Y)$.
By \cite[Theorem~3.2]{beheshti2022moduli}, this union is a proper closed
subset of $X$, and each of its irreducible components has
$a$-invariant greater than one.
In this section, we show that this geometric image is either
empty or pure of dimension two in Proposition~\ref{prop-puredimension2}, and
describe its components in Corollary~\ref{cor-a-exceptional-set}.

We first recall the classification of surfaces with higher
$a$-invariant. We use Mori's notation E1--E5 for divisorial
extremal contractions; see \cite[Theorem~2.2]{beheshti2022moduli}.
An integral rational curve $C\subset X$ with $-K_X\cdot C=1$
will be called an anticanonical line. Such a curve can occur only
when $X$ has index one. If $-K_X$ is very ample, these are precisely
the lines in the anticanonical embedding.

\begin{prop}[{\cite[Theorem~4.1]{beheshti2022moduli}}]
	\label{prop-a-higher}
	Let $Y\subset X$ be an integral surface with
	$a_Y:=a(Y,-K_X|_Y)>1$.
	Let $\psi:\widetilde Y\to Y$ be a resolution and set
	$D_Y=K_{\widetilde Y}+a_Y\psi^*(-K_X|_Y)$.
	Then one of the following holds:
	\begin{enumerate}[label={\rm(\arabic*)},ref=\arabic*]
		\item\label{case-iitaka0}
		$\kappa(\widetilde Y,D_Y)=0$.
		In this case, $Y$ is one of the following:
		\begin{enumerate}[label={\rm(\alph*)}]
			\item An E5 divisor, with
			$(Y,-K_X|_Y)\cong(\bP^2,\cO(1))$ and $a_Y=3$.
			\item An E1 divisor, with
			$(Y,-K_X|_Y)\cong(\bP^1\times\bP^1,\cO(1,1))$ and $a_Y=2$.
			Both rulings correspond to E1 contractions.
			\item An E3 or E4 divisor, with
			$(Y,-K_X|_Y)\cong(Q,\cO_Q(1))$ and $a_Y=2$, where
			$Q\subset\bP^3$ is a smooth quadric in the E3 case and a quadric
			cone in the E4 case.
			\item An E2 divisor, with
			$(Y,-K_X|_Y)\cong(\bP^2,\cO(2))$ and $a_Y=\frac32$.
		\end{enumerate}
		\item\label{case-iitaka1}
		$\kappa(\widetilde Y,D_Y)=1$.
		In this case, $Y$ is swept out by anticanonical lines.
	\end{enumerate}
\end{prop}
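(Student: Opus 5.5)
This proposition is quoted from \cite[Theorem~4.1]{beheshti2022moduli}, so within the paper it is simply invoked. If I were to prove it, the plan runs through the adjoint Iitaka dimension of $D_Y$. Since $a_Y>0$, $\widetilde Y$ is uniruled. For a rational surface with big and nef polarization, $\kappa(\widetilde Y,D_Y)\in\{0,1\}$, because bigness of $D_Y$ would contradict $a_Y$ being the threshold. I would treat the two values separately.

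\emph{Case $\kappa=1$.} The adjoint Iitaka fibration $\widetilde Y\dashrightarrow C$ has general fibers $G$ that are curves with $a(G,-K_X|_G)=a_Y>1$, by \cite[Lemma~4.3]{LST}. For a curve, $a(G,L)=\frac{2-2g(G)}{\deg L}$ forces $g(G)=0$ and $-K_X\cdot G<2$, so $-K_X\cdot G=1$. Hence $G$ is an anticanonical line, and $Y$ is swept out by such lines.

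\emph{Case $\kappa=0$.} I would run the adjoint MMP for $(\widetilde Y,a_Y\psi^*L)$, as in \cite[Proposition~5.9]{LTT} and \cite[Theorem~6.10]{LTDuke}. It ends with a pair $(Y',L')$ where $K_{Y'}+a_YL'\equiv 0$ and $Y'$ is a Du Val del Pezzo surface. Since $L=-K_X|_Y$ is Cartier on $X$, the relation $-K_{Y'}=a_YL'$ combined with $K_{Y'}^2\le 9$ gives $a_Y\in\{3,2,\tfrac32\}$ together with the pairs $(\bP^2,\cO(1))$, a quadric with $\cO(1)$, and $(\bP^2,\cO(2))$. This is the Kobayashi--Ochiai type bound. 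Each of these has $L^2\le 4$ and $L$-degree of the rulings at most $2$.

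The remaining step, which I expect to be the main obstacle, is to pass from the model $Y'$ back to $Y\subset X$ and identify $Y$ as an exceptional divisor. I would first show that $Y$ is normal and $Y\cong Y'$, so that no curves of $\widetilde Y$ are contracted with positive $L$-degree. Then I would compute $Y|_Y$ by adjunction, $K_Y=(K_X+Y)|_Y$. For example, on $\bP^2$ with $L=\cO(1)$ this gives $Y|_Y=\cO(-2)$, which is negative. In each case the normal bundle is negative on the covering family of curves (lines, rulings, conics). These curves are then $K_X$-negative extremal, and Mori's classification matches the cases with E5, E1, E3/E4 and E2 respectively. The hard part is exactly this normality claim together with extremality of the ray. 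I would handle it using the small $L$-degree of the covering curves and the cone theorem on $X$.
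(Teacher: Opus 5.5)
The paper gives no argument for this statement; it is imported from \cite[Theorem~4.1]{beheshti2022moduli}, as you say. Your own sketch is fine on three points: that $\kappa(\widetilde Y,D_Y)\in\{0,1\}$, the case $\kappa=1$, and the abstract list of pairs when $\kappa=0$. Part of what you call the main obstacle is in fact automatic. Every curve contracted by the adjoint MMP on the smooth surface $\widetilde Y$ is a $(-1)$-curve $C$ with $-1+a_Y\,\psi^*L\cdot C<0$. The nonnegative integer $\psi^*L\cdot C$ is then $<1/a_Y<1$, hence zero. So $\psi$ factors through $Y'$, $L'$ is the pullback of $L|_Y$, and the anticanonical (Du Val) model $\bar Y$ of $Y'$ maps finitely and birationally onto $Y$. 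That is, $\nu\colon\bar Y\to Y$ is the normalization. Everything therefore hinges on the normality of $Y$, and that is where your plan has a genuine gap.

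Your adjunction computation of $Y|_Y$ is valid only once $Y$ is known to be normal. In general $Y$ is Gorenstein and $K_{\bar Y}+\Delta=\nu^*K_Y=\nu^*(K_X+Y)|_Y$, where $\Delta$ is the conductor. Hence
\[
\nu^*\cO_X(Y)\equiv\Delta-(a_Y-1)L',
\]
and the conductor can cancel the negativity you need. For example, $\bar Y=\bP^2$, $L'=\cO(1)$ and $\Delta$ a conic give $\nu^*\cO_X(Y)\equiv0$. Gorenstein surfaces with these data and $-K_Y$ ample do exist abstractly: take $w^2=(z-q)^2(z-r)$ in $\bP(1,1,2,3)$ with $q,r$ binary quadrics, normalized by $u=w/(z-q)$, $u^2=z-r$. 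Such a $Y$ would be nef on $X$, so there is no $Y$-negative extremal ray and the cone theorem yields nothing.

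The low $L$-degree of the covering curves does not force $\nu$ to be injective either. Already for $L'=\cO(2)$ there are non-normal projections of the Veronese surface. What is missing is an argument that uses the smoothness of $X$ along the double curve $\nu(\Delta)$.

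Here is how far your tools go in the $(\bP^2,\cO(1))$ case. The Hodge index inequality $L\cdot Y^2\le(L^2Y)^2/L^3<1$ gives $\deg\Delta\le2$. For $\deg\Delta=1$ your cone-theorem argument does work: it produces a divisorial contraction with exceptional divisor $Y$, and Mori's list forces $Y$ to be normal. For $\deg\Delta=2$, however, $Y$ would be a fibre of a degree-one del Pezzo fibration. It must be excluded by a local computation along the double curve, in the spirit of the proof of Lemma~\ref{ar:conic}. The case $L'=\cO(2)$ and the quadric cases need analogous separate arguments. Without them, the identification of $Y$ as an E1--E5 divisor is not established.
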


To describe the whole locus \eqref{eq-a-exceptional-locus}, it
remains to show that every curve with higher $a$-invariant is
contained in a surface with higher $a$-invariant.
In the setting of \cite[Corollary~2.1.5(ii)]{kuznetsov2018hilbert},
the required deformation bound follows from the Hilbert scheme
of lines. For arbitrary $X$, we work with morphisms from $\bP^1$;
this also treats singular rational curves and avoids the extra
subschemes discussed in \cite[Remark~2.1.3]{kuznetsov2018hilbert}.

\begin{prop}\label{prop-puredimension2}
	Every integral curve $C\subset X$ with $a(C,-K_X|_C)>1$ moves
	in a positive-dimensional family of anticanonical lines and is
	contained in an integral surface $Y\subset X$ with
	$a(Y,-K_X|_Y)>1$.
	In particular, the union of the images $\mu_Y(\widetilde Y)$ in
	\eqref{eq-a-exceptional-locus} is either empty or pure of dimension two.
\end{prop}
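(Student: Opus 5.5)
First I would pin down which curves occur. Let $C\subset X$ be integral with normalization $\nu:\widetilde C\to C$. If $g(\widetilde C)\geq1$ then $K_{\widetilde C}$ is pseudo-effective and $\deg\nu^*(-K_X)>0$, so $a(C,-K_X|_C)\leq0$. Hence $C$ must be rational. Then $a(C,-K_X|_C)=2/(-K_X\cdot C)$, and since $-K_X\cdot C$ is a positive integer, $a>1$ forces $-K_X\cdot C=1$. So $C$ is an anticanonical line, possibly singular. From now on I work with the morphism $f:\bP^1\to X$ given by the normalization, birational onto $C$, with $\deg f^*(-K_X)=1$.

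Next comes the deformation bound. By standard deformation theory of morphisms, every component of $\Mor(\bP^1,X)$ through $[f]$ has dimension at least
\[
-K_X\cdot C+\dim X=1+3=4.
\]
The automorphism group $\mathrm{Aut}(\bP^1)$ acts with finite stabilizers on birational maps and has dimension $3$. Therefore the image family of rational curves through $[C]$ has dimension at least $1$. Every deformation $f_t$ still satisfies $\deg f_t^*(-K_X)=1$. Since $1$ is the minimal positive degree on a Fano variety, $f_t$ cannot break or become multiple-to-one. So each deformation is birational onto an integral anticanonical line, and $C$ moves in a positive-dimensional family of anticanonical lines.

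Then I would produce the surface. Take an irreducible one-dimensional subfamily $T$ through $[C]$, with universal curve $U\to T$ and evaluation map $U\to X$. Let $Y$ be the closure of its image. I need $Y$ to be a surface and not a curve. Since the lines vary in moduli and are integral curves of degree $1$, only finitely many of them lie on any given curve, so $\dim Y=2$. The harder point is to check that $a(Y,-K_X|_Y)>1$. I would argue as follows. Let $\psi:\widetilde Y\to Y$ be a resolution dominated by a resolution of $U$. The strict transforms of the lines form a covering family of rational curves $\ell$ with $\psi^*(-K_X)\cdot\ell=1$. If $K_{\widetilde Y}+t\psi^*(-K_X)$ is pseudo-effective, then it is nonnegative on the movable class $\ell$ (BDPP duality). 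This gives $K_{\widetilde Y}\cdot\ell+t\geq0$. For a covering family of rational curves on a smooth surface, $K_{\widetilde Y}\cdot\ell\leq-2$, by bend-and-break or by the fact that $-K\cdot\ell\geq2$ for a free rational curve. Hence $t\geq2$, so $a(Y,-K_X|_Y)\geq2>1$.

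The main obstacle is the second step: making the dimension count rigorous when $C$ is singular or when the component of $\Mor$ is non-reduced. I would rely only on the lower bound $\dim_{[f]}\Mor(\bP^1,X)\geq-K_X\cdot f_*[\bP^1]+\dim X$, which needs no smoothness assumption. The purity statement then follows. Each component of the locus \eqref{eq-a-exceptional-locus} is either a surface, or it is the image of a curve $C$ with $a>1$. By the above, such a $C$ lies in a surface $Y$ with $a(Y,-K_X|_Y)>1$, which already belongs to the union. So no component of the union is a curve.
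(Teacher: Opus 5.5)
Your proof is correct. It agrees with the paper on the first half: rationality, $-K_X\cdot C=1$, the bound $\dim M\geq 4$, and the quotient by $\mathrm{PGL}_2$.

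The two arguments diverge in how they produce the surface and bound its $a$-invariant.

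\emph{The paper's route.} It takes the evaluation image $W$ of the whole component $M$ and invokes \cite[Theorem~3.2]{beheshti2022moduli} twice:
\begin{enumerate}
\item closedness and properness of the union in \eqref{eq-a-exceptional-locus} rule out $\dim W=3$, so $\dim W=2$;
\item every irreducible component of that union has $a$-invariant greater than one, so $a(W,-K_X|_W)>1$.
\end{enumerate}

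\emph{Your route.} You restrict to a one-dimensional subfamily, so $\dim Y=2$ is automatic. You then compute the $a$-invariant directly. The strict transforms of general lines form a covering family of free rational curves $\ell$ on $\widetilde Y$. Hence $\ell$ is nef and $-K_{\widetilde Y}\cdot\ell\geq2$, while $\psi^*(-K_X)\cdot\ell=1$, which forces $a(Y,-K_X|_Y)\geq2$.

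\emph{What each buys.} Your version is self-contained for the first two assertions and gives the sharper bound $a\geq2$. This is consistent with Proposition~\ref{prop-a-higher}: the only listed surface with $a<2$ is the E2 divisor, which contains no anticanonical lines. The paper's version shows more, namely that the full locus swept by $M$ is a surface and is an irreducible component of the $a$-exceptional locus.

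For the purity assertion, both arguments still rely on the closedness of the union from \cite[Theorem~3.2]{beheshti2022moduli}. Your final paragraph uses this implicitly when it speaks of the components of the locus.
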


\begin{proof}
	Let $\nu:\widetilde C\to C$ be the normalization and let
	$g=g(\widetilde C)$. Definition~\ref{def-a-invariant} gives
	\[
	a(C,-K_X|_C)=\frac{2-2g}{-K_X\cdot C}.
	\]
	Since $-K_X$ is ample and Cartier, the denominator is a positive
	integer. Thus $a(C,-K_X|_C)>1$ forces $g=0$ and
	$-K_X\cdot C=1$.
	
	Let $f:\bP^1\to X$ be the normalization map followed by the
	inclusion, and let $M$ be an irreducible component of
	$\Mor(\bP^1,X)$ containing $[f]$, with its reduced structure.
	Deformation theory and Riemann--Roch give
	\[
	\dim M\geq
	h^0(\bP^1,f^*T_X)-h^1(\bP^1,f^*T_X)
	=-K_X\cdot C+3=4.
	\]
	Every map parametrized by $M$ has anticanonical degree one.
	It is therefore birational onto its image, which is an
	anticanonical line. Two such maps have the same image precisely
	when they differ by an automorphism of $\bP^1$.
	Since $\dim\operatorname{PGL}_2=3$, the images vary in a family
	of dimension at least one.
	
	Let $W$ be the closure of the image of the evaluation map
	$\operatorname{ev}:\bP^1\times M\to X$.
	Then $W$ is irreducible, contains $C$, and has dimension at least
	two. Every curve in the family has $a$-invariant two, so the
	evaluation image lies in the union of the images of the resolutions
	in \eqref{eq-a-exceptional-locus}. Since this union is closed and
	proper, it contains $W$ and $\dim W=2$.
	Thus $W$ is an irreducible component of this union, and
	\cite[Theorem~3.2]{beheshti2022moduli} gives
	$a(W,-K_X|_W)>1$.
	
	Every curve occurring in \eqref{eq-a-exceptional-locus} is
	therefore contained in such a surface. As the remaining
	subvarieties in that union are already surfaces, the nonempty
	union of the images $\mu_Y(\widetilde Y)$ is pure of dimension two.
\end{proof}

Let $\mathcal L(X)$ denote the closure of the union of all
anticanonical lines on $X$. The preceding results give the
following description of the $a$-exceptional locus.

\begin{coro}\label{cor-a-exceptional-set}
	We have
	\begin{equation}\label{eq-a-exceptional-description}
%		\bigcup_{\substack{Y\subsetneq X\text{ integral},\ \dim Y>0\\
%				a(Y,-K_X|_Y)>1}}\mu_Y(\widetilde Y)
		Z_a(X)=\mathcal L(X)\cup\bigcup_{E\text{ of type E2}}E,
	\end{equation}
	where an E2 divisor is defined by
	$(E,-K_X|_E)\cong(\bP^2,\cO(2))$.
\end{coro}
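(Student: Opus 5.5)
We prove the two inclusions in \eqref{eq-a-exceptional-description} separately, using Proposition~\ref{prop-puredimension2} to reduce everything to surfaces and Proposition~\ref{prop-a-higher} to classify those surfaces.

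For the inclusion ``$\supseteq$'', every anticanonical line $C$ satisfies $a(C,-K_X|_C)=2/1=2>1$, so $C$ is one of the subvarieties in \eqref{eq-a-exceptional-locus}. Hence the union of all anticanonical lines lies in $Z_a(X)$. Since the union of the images in \eqref{eq-a-exceptional-locus} is closed by \cite[Theorem~3.2]{beheshti2022moduli}, the closure $\mathcal L(X)$ is also contained in it. An E2 divisor $E$ with $(E,-K_X|_E)\cong(\bP^2,\cO(2))$ has $a$-invariant $\frac32>1$, because $K_{\bP^2}+t\cO(2)$ is pseudo-effective exactly when $t\geq\frac32$. So each such $E$ also lies in $Z_a(X)$.

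For the inclusion ``$\subseteq$'', Proposition~\ref{prop-puredimension2} shows that every point of $Z_a(X)$ lies on an integral surface $Y$ with $a(Y,-K_X|_Y)>1$. We therefore go through the cases of Proposition~\ref{prop-a-higher}. In case~(\ref{case-iitaka1}), $Y$ is swept out by anticanonical lines, so $Y\subset\mathcal L(X)$. In case~(d), $Y$ is an E2 divisor, which appears on the right-hand side. It remains to treat cases (a)--(c) and show that such $Y$ is covered by anticanonical lines.
\begin{itemize}
\item In case (a), $(Y,-K_X|_Y)\cong(\bP^2,\cO(1))$, so the lines of $\bP^2$ have $-K_X$-degree one. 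They are therefore anticanonical lines covering $Y$.
\item In case (b), $(Y,-K_X|_Y)\cong(\bP^1\times\bP^1,\cO(1,1))$, so the rulings have degree one.
\item In case (c), the rulings of the smooth quadric $Q$, or the lines through the vertex of the quadric cone, have $\cO_Q(1)$-degree one.
\end{itemize}
In each of these cases $Y$ is a union of anticanonical lines, so $Y\subset\mathcal L(X)$.

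The only step needing care is the reduction from curves to surfaces, and Proposition~\ref{prop-puredimension2} already supplies it. With that in hand, the argument is bookkeeping over the classification in Proposition~\ref{prop-a-higher}, together with the elementary computation of the $a$-invariant in the E2 case.
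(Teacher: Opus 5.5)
Your proof is correct and follows essentially the same route as the paper. The inclusion $\supseteq$ comes from the $a$-invariants $2$ and $\frac{3}{2}$ together with closedness of the higher-$a$ locus, and the inclusion $\subseteq$ from Proposition~\ref{prop-puredimension2} followed by the case analysis of Proposition~\ref{prop-a-higher}; you only spell out explicitly which lines sweep out the surfaces in cases (a)--(c).
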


\begin{proof}
	Anticanonical lines have $a$-invariant $2$, and E2 divisors have
	$a$-invariant $\frac32$. Since the left-hand side of
	\eqref{eq-a-exceptional-description} is closed, it contains the
	right-hand side.
	
	Conversely, Proposition~\ref{prop-puredimension2} shows that every
	irreducible component $Y$ of this union is a surface with higher
	$a$-invariant. Apply Proposition~\ref{prop-a-higher}.
	In case~\textup{(\ref{case-iitaka1})}, $Y$ is swept out by
	anticanonical lines. In case~\textup{(\ref{case-iitaka0})}, either
	$Y$ is an E2 divisor, or the lines on the polarized surface listed
	there sweep out $Y$ and have anticanonical degree one in $X$.
	This proves the assertion.
\end{proof}

For a smooth Fano threefold $X$ over a number field $F$, we use
the same definition \eqref{eq-a-exceptional-locus}, with the union
taken over geometrically integral subvarieties $Y$ defined over $F$
and resolutions $\mu_Y$ defined over $F$.
The resulting set is independent of the chosen resolutions by
Lang--Nishimura over $X$.
Every map $\mu_Y$ in the union is breaking, so $Z_a(X)\subset Z(X)$.
The geometric union obtained by applying
\eqref{eq-a-exceptional-locus} to $X_{\overline F}$ is Galois
invariant by the invariance of $a$-invariant under field extension.
It therefore descends to a proper reduced closed subset of $X$
containing $Z_a(X)$.

\section{Adjoint rigid surfaces in smooth Fano threefolds}
\label{ar:section}

Throughout this section, the ground field is algebraically closed of
characteristic zero. Let $X$ be a smooth Fano threefold and put $L=-K_X$.

By \cite[Theorem~5.3]{beheshti2022moduli} and
\cite[Theorem~3.10]{burke2022geometric}, every dominant family of
adjoint rigid surfaces with $a$-invariant one on a non-product
smooth Fano threefold defines a del Pezzo fibration.
For Manin's conjecture, however, individual adjoint rigid surfaces
must be accounted for in the exceptional set, whether or not they
belong to a dominant family.
In this section, we show that every adjoint rigid surface with
$a$-invariant one on a non-product smooth Fano threefold is a
normal fiber of a del Pezzo fibration.

\begin{theo}\label{ar:main}
	Let $S\subset X$ be an integral surface such that
	\begin{equation}\label{ar:assumption}
		a(S,L)=1,\qquad (S,L)\text{ is adjoint rigid}.
	\end{equation}
	If $X$ is not a product, then $S$ is a normal del Pezzo surface with at most
	Du Val singularities, and $|S|$ is a base-point-free pencil defining a del
	Pezzo fibration $\pi:X\to\bP^1$, with $S$ as a reduced fiber.
\end{theo}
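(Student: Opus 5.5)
Let $\mu:\widetilde S\to S$ be a resolution and put $D=K_{\widetilde S}+\mu^*L$. The plan is to show in turn that $S$ has anticanonical Du Val singularities, that it moves in a base-point-free pencil, and that $X$ not being a product rules out the remaining case.

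We first run the adjoint MMP for $(\widetilde S,\mu^*L)$, as recalled in Section~\ref{sec2} via \cite[Proposition~5.9]{LTT} and \cite[Theorems~6.1 and~6.10]{LTDuke}. Adjoint rigidity with $a=1$ gives $\kappa(D)=0$. Hence, after contracting curves, we reach a weak del Pezzo surface $S'$ (or a rigid model) on which $K_{S'}+L'\equiv 0$ modulo an effective fixed part. Taking Zariski decompositions, $D=P+N$ with $P\equiv 0$. The key numerical input is then
\[
(K_{\widetilde S}+\mu^*L)\cdot \mu^*L\;\ge\;(K_X+S+L)\cdot S\cdot L=S^2\cdot L,
\]
which follows from adjunction: $\omega_S=(K_X+S)|_S$, and $K_{\widetilde S}=\mu^*\omega_S-\Delta$ with $\Delta$ the conductor/discrepancy correction, effective when $S$ is non-normal or has non-canonical singularities. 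Pseudo-effectivity of $D$ together with $\kappa(D)=0$ forces $D\cdot\mu^*L\ge 0$. We then aim to show $S^2\cdot L\le 0$ with equality only if $\Delta=0$. To do this we use the Fano property: if $S^2\cdot L>0$, then $S|_S$ is big, and $\mu^*(-K_X-S)|_S=-\mu^*\omega_S$ would make $-K$-type inequalities contradict $\kappa(D)=0$. Concretely, $D=-\mu^*(S|_S)-\Delta+(\text{effective})$ cannot have $\kappa=0$ if $S|_S$ is big. So $S|_S$ is not big, and $S^2\cdot L\le 0$. Since $S$ is effective and $L$ ample, Hodge index considerations give $S|_S\equiv0$ on $S$ once we show $D\equiv0$. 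This forces $\Delta=0$ and $S$ normal with $\omega_S=-L|_S$ anti-ample and canonical (Du Val) singularities, i.e.\ a Du Val del Pezzo surface.

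Next, with $S|_S\equiv 0$, we consider the sequence $0\to\cO_X\to\cO_X(S)\to\cO_S(S)\to0$. Since $H^1(X,\cO_X)=0$ by Kawamata--Viehweg, we get $h^0(\cO_X(S))=1+h^0(\cO_S(S))$. We then show $\cO_S(S)\cong\cO_S$: the normal bundle is numerically trivial on a rational surface with rational singularities, and $\Pic$ of such a surface is torsion-free (no quasi-étale torsion issue, since $H^1(\cO_S)=0$ and numerical equals linear equivalence). Hence $h^0(S)=2$. Two members of the pencil meet along $S\cdot S=0$ as a 1-cycle, so the pencil is base-point-free. It gives $\pi:X\to\bP^1$ with $S$ as a reduced fiber and general fiber a del Pezzo surface.

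Finally, the non-product hypothesis enters because $S$ numerically trivial on itself means $S$ spans a nef non-big class with $S^2\equiv0$, i.e.\ an extremal ray of type D (del Pezzo fibration). The alternative we must exclude is that the MMP step produced a ruled surface with $a=1$ but rigid in another way, e.g.\ $\bP^1\times\bP^1$-type fibers of conic bundles in products $\bP^1\times S_d$, where $S$ could be a non-fiber section. We plan to invoke the Mori--Mukai classification of threefolds with a type D contraction, where products are exactly where extra such surfaces appear.

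The main obstacle is the first step: rigorously bounding the conductor and discrepancy term $\Delta$ and deriving $S|_S\equiv0$ from $\kappa(D)=0$ in the non-normal case. There we would try to pull back to the normalization and use $\omega_{S^\nu}=\nu^*\omega_S-C$ with conductor $C$, showing $C\neq0$ makes $-K_{S^\nu}$ too positive relative to $L$, which would give $a>1$.
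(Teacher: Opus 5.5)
\textbf{Your first step fails, and it is exactly where the content of the theorem lies.}

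\emph{The signs are wrong.} Write $K_{\widetilde S}=\mu^*\omega_S-\Delta$ with $\Delta\ge 0$ and $\omega_S=(K_X+S)|_S$. Then $D=\mu^*(S|_S)-\Delta$, not $-\mu^*(S|_S)-\Delta+(\text{effective})$. Hence
$D\cdot\mu^*L=LS^2-\Delta\cdot\mu^*L\le LS^2$, so your displayed inequality goes the wrong way. Pseudo-effectivity of $D$ only gives $LS^2\ge \Delta\cdot\mu^*L\ge 0$.

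\emph{No refinement of these numerics can force $\Delta=0$.} By Lemma~\ref{ar:reduction}, the normalization always satisfies $\omega_T\simeq g^*\omega_X$. So the conductor is linearly equivalent to $g^*S$ and exactly cancels the positivity of $S|_S$. Thus $\kappa(D)=0$ is fully compatible with $S$ non-normal and $LS^2>0$. The same objection applies to your proposed fix on the normalization: $-K_{S^\nu}$ is exactly the pullback of $L$, so a nonzero conductor does not force $a>1$. It does so only when $S|_S\sim 0$, i.e.\ for fibers, as in Proposition~\ref{prop-dp-bad-fibers}. Also, $LS^2>0$ does not make $S|_S$ big: for $S=\pi^{-1}(\Gamma)$ under a conic bundle, $S|_S$ is pulled back from $\Gamma$.

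\emph{Your Step~1 never uses that $X$ is not a product, and its conclusion is false for products.} Take $X=\bP^1\times S_2$ with $S_2$ a del Pezzo surface of degree $2$, and $C\in|-K_{S_2}|$ a nodal rational curve. Then $S=\bP^1\times C$ is non-normal and adjoint rigid with $a=1$ by Proposition~\ref{prop-product-adjoint-surfaces}, yet $LS^2=2C^2=4>0$. So the non-product hypothesis cannot enter only at the end, to exclude ``sections''. On products, the surfaces that must be excluded are precisely such non-normal ones, and the normality statement itself is what depends on $X$.

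\textbf{What is missing is a global argument on $X$ ruling out nef classes with $LS^2>0$.} The paper's route is as follows.
\begin{enumerate}
\item Lemmas~\ref{ar:reduction} and~\ref{ar:conductor} show three things. First, $S$ is nef, since a negative extremal ray would give an E1--E5 contraction and hence $a>1$. Second, $S$ is normal if and only if $LS^2=0$. Third, $1\le L^2S\le 9$.
\item The Mori--Mukai nef cones then reduce the problem to a short list of candidates. These are conic-bundle pullbacks $\pi^{-1}(\Gamma)$ with $\Gamma^2>0$, classes on $\Bl_C V_1$ and $\Bl_C V_2$, and classes on the double covers of Nos.~2-2 and~2-8.
\item Each candidate is excluded by a specific geometric obstruction:
\begin{itemize}
\item smoothness of $X$ forces $\pi^{-1}(\Gamma)$ to be normal (Lemma~\ref{ar:conic});
\item descent to $V_1,V_2$ (Lemma~\ref{ar:blowdown}) combines with the differential obstruction of Lemma~\ref{ar:cotangent}, which also disposes of Picard rank one;
\item the double-cover identity of Lemma~\ref{ar:double} handles Nos.~2-2 and~2-8.
\end{itemize}
\end{enumerate}
The examples on a terminal Fano threefold and on a cubic fourfold confirm that this conclusion is not a formal consequence of adjoint rigidity.

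Once normality is established, your Step~2 is fine: $\cO_S(S)\cong\cO_S$ and $H^1(X,\cO_X)=0$ give a base-point-free pencil with $S$ as a reduced fiber.
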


The smoothness and dimension assumptions in Theorem~\ref{ar:main}
are essential. A normal adjoint rigid surface with $a$-invariant one
can be non-movable on a terminal Fano threefold or on a smooth Fano
fourfold, as the following examples show.

\begin{exam}[a terminal Fano threefold]
	\label{ar:isolated-terminal-surface}
	Let $q:Y=(\bP^1)^3\to X=Y/G$ be the quotient by
	$G\cong\bZ/2\bZ$, acting in affine coordinates by
	$\sigma(x,y,z)=(-x,-y,-z)$. The singular locus of $X$ consists of
	eight terminal points of type $\frac12(1,1,1)$, so $X$ is not a
	product. Since $q$ is \'etale in codimension one,
	$q^*(-K_X)=-K_Y$ and $L=-K_X$ is ample.
	
	Put $Y_0=\bP^1\times\bP^1\times\{0\}$ and $S_0=q(Y_0)$.
	Then $S_0$ is a del Pezzo surface of degree four with four $A_1$
	singularities.
	The restriction $q_0:Y_0\to S_0$ is also \'etale in codimension one,
	so adjunction on $Y_0$ gives
	\[
	q_0^*K_{S_0}
	\sim_{\bQ} K_{Y_0}
	\sim (K_Y+Y_0)|_{Y_0}
	\sim K_Y|_{Y_0}
	\sim_{\bQ} q_0^*(K_X|_{S_0}).
	\]
	Since $q_0$ is finite and surjective, it follows that
	\[
	K_{S_0}\sim_{\bQ}K_X|_{S_0}.
	\]
	Thus $(S_0,L|_{S_0})$ has
	$a$-invariant $1$ and is adjoint rigid.
	The numerical class of $S_0$ contains only two integral surfaces,
	since the pullback of any such surface to $Y$ is linearly equivalent
	to $Y_0$ and hence is a $G$-invariant fiber of the third projection.
\end{exam}

\begin{exam}[cubic fourfolds containing a plane]
	\label{ar:isolated-fourfold-surface}
	Let $X\subset\bP^5$ be a smooth cubic fourfold containing a plane
	$S\simeq\bP^2$, and let $H$ be the hyperplane class.
	Since
	\[
	(-K_X)|_S\sim3H|_S\sim-K_S,
	\]
	the surface $S$ has $a(S,-K_X)=1$ and is adjoint rigid.
	Any connected flat family of integral surfaces through $S$ consists
	of planes, since the $H$-degree remains one.
	A smooth cubic fourfold contains only finitely many planes
	\cite[Lemma~2.4]{DegtyarevItenbergOttem2023}, so $S$ is non-movable.
\end{exam}

The proof of Theorem~\ref{ar:main} has two ingredients.
Lemmas~\ref{ar:reduction} and~\ref{ar:conductor} show that $S$ is nef
and satisfies $1\le L^2S\le9$.
We exclude the remaining types using three
geometric obstructions: an obstruction for pullbacks under conic
bundles (Lemma~\ref{ar:conic}), an obstruction arising from blow-ups
along curves of positive genus (Lemma~\ref{ar:blowdown}
and Proposition~\ref{ar:elliptic}), and an obstruction for double covers
(Lemma~\ref{ar:double} and Proposition~\ref{ar:double-exclusion}).

For completeness, we also describe the product cases as follows.
\begin{prop}[Surfaces on products]\label{prop-product-adjoint-surfaces}
	Over an algebraically closed field of characteristic zero, let
	$X=\bP^1\times S$, with $S$ a smooth del Pezzo surface.
	An integral surface $T\subset X$ with $a(T,-K_X|_T)=1$ is adjoint rigid
	if and only if it has one of the following forms:
	\begin{enumerate}
		\item $T=\{t\}\times S$, or
		\item $T=\bP^1\times C$,
		where $C\subset S$ is an integral rational curve with $(-K_S)\cdot C=2$.
	\end{enumerate}
\end{prop}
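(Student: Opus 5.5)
The plan is to sort $T$ by its images under the two projections $p_1:X\to\bP^1$ and $p_2:X\to S$. In every case we compute on an explicit resolution, using
\[
L=-K_X=p_1^*\cO_{\bP^1}(2)+p_2^*(-K_S).
\]
There are three mutually exclusive possibilities.
\begin{enumerate}
\item $p_1(T)$ is a point, so $T=\{t\}\times S$.
\item $p_2(T)$ is a curve $C$, so $T=\bP^1\times C$.
\item $T$ dominates both factors, so $p_2|_T:T\to S$ is generically finite and dominant.
\end{enumerate}
Note that $p_2(T)$ cannot be a point, since the fibres of $p_2$ are curves.

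In case (1), $L|_T\cong -K_S$, so $K_S+tL|_T\sim(1-t)K_S$. Hence $a(T,L)=1$, and $K_S+L|_T\sim 0$ has Iitaka dimension $0$, so $T$ is adjoint rigid. In case (2), let $\nu:\widetilde C\to C$ be the normalization, of genus $g$, and put $\delta=-K_S\cdot C>0$. Then $\widetilde T=\bP^1\times\widetilde C$ is a resolution, and the pulled-back polarization is $\cO(2)\boxtimes\nu^*(-K_S|_C)$. We use that $\Eff^1(\bP^1\times\widetilde C)$ is spanned by the two fibre classes. Indeed, $N^1$ is spanned by these classes (any extra Néron–Severi classes from $\mathrm{Hom}(J(\widetilde C),J(\bP^1))$ vanish), and the fibres of each projection cover the surface, so they are nef. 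From this we get
\[
a(T,L)=\max\Bigl(1,\tfrac{2-2g}{\delta}\Bigr).
\]
Thus $a(T,L)=1$ if and only if $2-2g\le\delta$. The adjoint divisor is then
\[
K_{\widetilde T}+\mu^*L=\mathrm{pr}_2^*\bigl(K_{\widetilde C}+\nu^*(-K_S|_C)\bigr),
\]
which is pulled back from a divisor of degree $2g-2+\delta\geq 0$ on $\widetilde C$. If the degree is positive, the Iitaka dimension is $1$. If the degree is zero, then $2g-2=-\delta<0$ forces $g=0$ and $\delta=2$; the divisor is then trivial on $\bP^1$, so the Iitaka dimension is $0$. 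So in case (2), $T$ is adjoint rigid with $a=1$ exactly when $C$ is rational with $-K_S\cdot C=2$.

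In case (3), let $\mu:\widetilde T\to T$ be a resolution, $h=p_2\circ\mu:\widetilde T\to S$ and $q=p_1\circ\mu:\widetilde T\to\bP^1$. Since $h$ is generically finite and dominant between smooth surfaces, the ramification formula gives
\[
K_{\widetilde T}=h^*K_S+R
\]
with $R$ effective. Therefore
\[
K_{\widetilde T}+\mu^*L=R+q^*\cO_{\bP^1}(2).
\]
Because $q$ is surjective, $q^*\cO(2)$ moves in a pencil, and adding the effective divisor $R$ keeps the Iitaka dimension at least $1$. In particular $K_{\widetilde T}+\mu^*L$ is effective, so $a(T,L)\le1$. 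So if $a(T,L)=1$, then $\kappa(\widetilde T,K_{\widetilde T}+\mu^*L)\ge1$ and $T$ is not adjoint rigid. This excludes case (3) and proves the equivalence.

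The only delicate point is the effective-cone computation on $\bP^1\times\widetilde C$ in case (2), which needs the Néron–Severi remark above when $g>0$. Case (3) is settled by the ramification formula alone and needs no further work.
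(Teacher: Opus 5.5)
Your proposal is correct and follows essentially the same route as the paper. The ramification formula gives $K_{\widetilde T}+\mu^*L\sim R+q^*\cO_{\bP^1}(2)$, which rules out surfaces dominating both factors, and for $T=\bP^1\times C$ the adjoint divisor is pulled back from $K_{\widetilde C}+\nu^*(-K_S|_C)$ on $\widetilde C$. Your computation $a(T,L)=\max\bigl(1,\tfrac{2-2g}{\delta}\bigr)$ via the effective cone of $\bP^1\times\widetilde C$ spells out what the paper leaves implicit, including the check that the listed surfaces do have $a$-invariant one.
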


\begin{proof}
	Suppose that $T$ dominates $S$, and resolve $T$ to obtain
	$h:\widetilde T\to S$ and $p:\widetilde T\to\bP^1$.
	The ramification formula gives
	\[
	K_{\widetilde T}+(-K_X)|_{\widetilde T}
	\sim R_h+2p^*\cO_{\bP^1}(1),
	\]
	where $R_h$ is an effective divisor.
	If $p$ is nonconstant, this implies that
	$\kappa(K_{\widetilde T}+(-K_X)|_{\widetilde T})\geq1$,
	contradicting the assumption that $T$ is adjoint rigid.
	Thus $p$ is constant and $T=\{t\}\times S$.
	
	Otherwise, the image of $T$ in $S$ is an integral curve $C$, and
	$T=\bP^1\times C$.
	Let $\nu:\widetilde C\to C$ be the normalization and
	$q:\bP^1\times\widetilde C\to\widetilde C$ the second projection.
	The adjoint divisor on $\bP^1\times\widetilde C$ is
	\[
	q^*\bigl(K_{\widetilde C}+\nu^*((-K_S)|_C)\bigr).
	\]
	Since $q_*\cO_{\bP^1\times\widetilde C}=\cO_{\widetilde C}$,
	its Iitaka dimension equals that of
	$K_{\widetilde C}+\nu^*((-K_S)|_C)$.
	This Iitaka dimension is zero precisely when
	$\widetilde C\cong\bP^1$ and $(-K_S)\cdot C=2$.
\end{proof}

\subsection{Numerical restrictions}

Write $\nu:T\to S$ for the normalization, $g:T\to X$ for its composition
with the inclusion, and $r:\Sigma\to T$ for the minimal resolution. Put
$f=g\circ r$.

\begin{lemm}[{\cite[Lemma~5.3]{Lehmann2019}}]
	\label{ar:reduction}
	Under \eqref{ar:assumption}, $T$ is a del Pezzo surface with at most Du Val
	singularities, $\Sigma$ is a weak del Pezzo surface, and
	\begin{equation}\label{ar:crepant}
		\omega_T\simeq g^*\omega_X,\qquad
		\omega_\Sigma\simeq f^*\omega_X.
	\end{equation}
	Consequently,
	\begin{equation}\label{ar:degree}
		-K_\Sigma\sim f^*L,\qquad
		d:=L^2S=K_T^2=K_\Sigma^2\in\{1,\ldots,9\}.
	\end{equation}
\end{lemm}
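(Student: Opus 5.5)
The plan is to reduce everything to the adjoint-rigid surface theory already recalled in Section~\ref{sec2} and the standard structure of adjoint rigid pairs with $a=1$, working on the minimal resolution $\Sigma$ of the normalization $T$, with $f=g\circ r:\Sigma\to X$.

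First, by \cite[Theorems~6.1 and~6.10]{LTDuke} and \cite[Proposition~5.9]{LTT}, an adjoint rigid pair $(\Sigma,f^*L)$ with $a(\Sigma,f^*L)=1$ has an adjoint minimal model. This is a birational contraction $\phi:\Sigma\to\Sigma'$ onto a surface with $\kappa(K_{\Sigma'}+\phi_*f^*L)=0$. Moreover, $K_\Sigma+f^*L=\phi^*(K_{\Sigma'}+\phi_*f^*L)+E$ with $E\ge0$ $\phi$-exceptional, and $K_{\Sigma'}+\phi_*f^*L\equiv0$ on a surface with canonical singularities.

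The key point is that $f^*L$ is nef and big. It is the pullback of an ample divisor under a generically finite map, since $g$ is birational onto $S$. One therefore checks that the exceptional part $E$ vanishes. Intersecting $K_\Sigma+f^*L$ with $f^*L$ and with the $\phi$-exceptional curves forces any curve contracted by $\phi$ with positive discrepancy to be $f^*L$-trivial and $K_\Sigma$-negative. That is impossible, because a $(-1)$-curve $C$ has $K_\Sigma\cdot C=-1$ and hence $(K_\Sigma+f^*L)\cdot C=-1+f^*L\cdot C\geq 0$. Running the MMP for $K_\Sigma+f^*L$ thus contracts nothing, and we obtain $K_\Sigma+f^*L\equiv0$. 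As $\Sigma$ is rational (it is uniruled with trivial adjoint class), numerical and linear equivalence agree, giving $\omega_\Sigma\simeq f^*\omega_X$.

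Then $-K_\Sigma=f^*L$ is nef and big, so $\Sigma$ is a weak del Pezzo surface. Because $f^*L$ is trivial exactly on the curves contracted by $f$, and these are $(-2)$-curves, the Stein factorization of $f$ through $T$ identifies $T$ with the anticanonical model, namely $\operatorname{Proj}\bigoplus H^0(\Sigma,-mK_\Sigma)$ modulo its image. To make this rigorous, compare: $r$ is the minimal resolution, so it contracts no $(-1)$-curves, and $K_\Sigma=r^*K_T$ holds provided $K_T=g^*K_X$ as $\bQ$-Cartier classes. We obtain this by pushing forward $K_\Sigma\sim f^*K_X$ along $r$ and using normality of $T$. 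This gives that $T$ is Du Val and that $-K_T=g^*L$ is ample, since $g$ is finite. Finally, $d=L^2S=(f^*L)^2=K_\Sigma^2=K_T^2$ because $f$ is birational onto $S$, and $1\le d\le9$ holds for weak del Pezzo surfaces.

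The main obstacle is the first step: excluding a nonzero exceptional part $E$, i.e.\ showing that the adjoint divisor is numerically trivial rather than merely of Iitaka dimension zero. I would try the intersection argument above, using the fact that the negative part of the Zariski decomposition of $K_\Sigma+f^*L$ is supported on curves $C$ with $(K_\Sigma+f^*L)\cdot C<0$. Since $f^*L$ is nef and integral, such a $C$ must have $f^*L\cdot C=0$ and $K_\Sigma\cdot C<0$, making it a contracted $(-1)$-curve. That contradicts the minimality of $r$ over $T$, since curves contracted by $f$ are exactly those contracted by $r$.
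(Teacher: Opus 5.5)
The paper does not prove this lemma; it cites it from \cite[Lemma~5.3]{Lehmann2019}. Your argument is a correct self-contained proof.

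Its core is the right point. The only way $K_\Sigma+f^*L$ can fail to be nef is along a $(-1)$-curve $C$ with $f^*L\cdot C=0$. Such a curve is $r$-exceptional because $g$ is finite, which contradicts minimality of $r$. Your deduction of the Du Val property from $K_T\sim r_*K_\Sigma\sim r_*r^*g^*K_X=g^*K_X$ is also fine.

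Two places need tightening.

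First, it is not true that every component of the negative part $N$ of the Zariski decomposition satisfies $(K_\Sigma+f^*L)\cdot C<0$. What you need is weaker. If $N\neq0$, then $N^2<0$, so some component $C$ has $N\cdot C<0$, and hence $(K_\Sigma+f^*L)\cdot C<0$ because $P\cdot C=0$. Then $C^2<0$ and $K_\Sigma\cdot C<0$ make $C$ a $(-1)$-curve. Only at this point does integrality give $f^*L\cdot C=0$. The inequality $-1+f^*L\cdot C\geq 0$ in your second paragraph silently assumes $f^*L\cdot C\geq1$, which is the conclusion of this step, so order the argument accordingly.

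Second, nefness together with $\kappa=0$ gives numerical triviality only through the adjoint-MMP/abundance statement you quote from \cite{LTDuke}. You can avoid that citation as follows.
\begin{itemize}
\item $D=K_\Sigma+f^*L$ nef with $\kappa(D)=0$ forces $D^2=0$, hence $D\cdot K_\Sigma=-D\cdot f^*L$.
\item The $h^2$ term vanishes, since $K_\Sigma-mD$ has negative $f^*L$-degree.
\item Riemann--Roch then gives $h^0(mD)\geq\chi(\mathcal O_\Sigma)+\tfrac m2\,D\cdot f^*L$.
\item So $\kappa(D)=0$ forces $D\cdot f^*L=0$, and the Hodge index theorem gives $D\equiv0$.
\end{itemize}
This replaces the external minimal-model input with a two-line computation. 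The rest of your argument, from rationality of $\Sigma$ to the degree count, goes through unchanged.
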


The following lemma shows that a surface satisfying \eqref{ar:assumption}
must be nef.
\begin{lemm}\label{ar:conductor}
	Let $X$ be a smooth Fano threefold,
	and let $S$ be an integral surface on $X$ satisfying \ref{ar:assumption}.
	Then $S$ is a nef divisor on $X$, and $S$ is normal if and only if
	$LS^2=0$.
\end{lemm}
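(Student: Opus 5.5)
The plan is to compare two formulas for $\omega_T$ on the normalization, which identifies $g^*\cO_X(S)$ with the conductor divisor. Normality then reduces to a degree computation. Nefness comes from Mori theory together with Proposition~\ref{prop-a-higher}.

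\emph{Conductor identity.} Since $S$ is a Cartier divisor on the smooth threefold $X$, it is Gorenstein, and adjunction gives $\omega_S\simeq(\omega_X\otimes\cO_X(S))|_S$. For the normalization $\nu:T\to S$ of a Gorenstein surface, let $C\subset T$ be the conductor divisor. It is an effective Weil divisor, and it is zero exactly when $\nu$ is an isomorphism. The standard formula (Reid) is
\[
\omega_T\simeq\nu^*\omega_S\otimes\cO_T(-C).
\]
Combining this with the crepancy statement $\omega_T\simeq g^*\omega_X$ of Lemma~\ref{ar:reduction} yields
\[
g^*\cO_X(S)\simeq\cO_T(C).
\]
I expect this step to be the main technical point. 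One has to make sure the conductor formula is applied correctly for a non-normal Gorenstein surface, and also handle the Weil versus Cartier issue on $T$. The latter is harmless because $T$ has only Du Val singularities and so is $\bQ$-factorial.

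\emph{Normality criterion.} By the projection formula, with $\nu$ birational,
\[
LS^2=g^*L\cdot g^*S=(-K_T)\cdot C.
\]
Since $T$ is a Du Val del Pezzo surface, $-K_T$ is ample, so $(-K_T)\cdot C=0$ if and only if $C=0$. It remains to check that $C=0$ is equivalent to normality of $S$. The surface $S$ is Cartier in a smooth variety, so it satisfies $S_2$. By Serre's criterion, $S$ is normal if and only if it is $R_1$, that is, if and only if the non-normal locus has no divisorial part. That non-normal locus is exactly the image of the support of $C$. Hence $S$ is normal if and only if $LS^2=0$.

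\emph{Nefness.} Since $X$ is Fano, the cone theorem shows that $\overline{NE}(X)$ is generated by finitely many $K_X$-negative extremal rays. It therefore suffices to show $S\cdot R\ge0$ for every extremal ray $R$.
\begin{itemize}
\item Suppose instead that $S\cdot R<0$. Then every curve of class in $R$ lies in $S$, so the exceptional locus of the contraction of $R$ is contained in $S$.
\item By Mori's classification for smooth threefolds there are no small contractions, so the contraction is divisorial. Its exceptional divisor is irreducible, so it must equal $S$, and $S$ is of type E1--E5.
\item For each such type, $S$ is covered by contracted curves $\ell$ with $S\cdot\ell<0$. On a resolution of $S$ this gives
\[
(K+L)\cdot\ell=(K_X+S+L)\cdot\ell=S\cdot\ell<0
\]
for a family of curves covering $S$, so $K+L$ is not pseudo-effective and $a(S,L)>1$. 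This matches the E1--E5 cases listed in Proposition~\ref{prop-a-higher}, and for general E1, where $S$ is a ruled surface over a curve, the displayed computation applies directly.
\end{itemize}
This contradicts $a(S,L)=1$, so $S$ is nef.
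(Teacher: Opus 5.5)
Your proposal is correct and follows essentially the same route as the paper. Nefness comes from the cone theorem, and you check $a(S,L)>1$ for E1--E5 divisors directly with the covering family of contracted curves, where the paper simply cites Proposition~\ref{prop-a-higher}. The normality criterion comes from the conductor identity $g^*\mathcal O_X(S)\simeq\mathcal O_T(\Delta)$ together with ampleness of $g^*L=-K_T$; the paper writes this out separately for the normal and non-normal cases, while you treat both at once.
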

\begin{proof}
	Let $S$ be a non-nef prime divisor on $X$.
	By the cone theorem \cite[Theorem~3.7]{Kollr1998},
	there is an extremal ray $R$ of the Mori cone with $S\cdot R<0$.
	Let $\varphi:X\to Z$ be the associated contraction.
	Since $S$ is effective, every curve
	contracted by $\varphi$ is contained in $S$. Thus $\varphi$ is
	birational, and \cite[Theorem~1.32]{Kollr1998} implies that it is a
	divisorial contraction of type E1--E5.
	Thus $a(S,L)>1$ by Proposition~\ref{prop-a-higher}, and $S$ does not
	satisfy \ref{ar:assumption}.
	
	If $S$ is normal, then Lemma~\ref{ar:reduction} implies that
	$N_{S/X}\cong \cO_S$, which implies that $LS^2=0$.
	If $S$ is non-normal, let $\nu:T\to S$ be its normalization
	and $g:T\to X$ the natural map.
	By finite duality \cite[Proposition~2.3]{Reid94} and
	Lemma~\ref{ar:reduction}, the conductor defines a nonzero
	effective Cartier divisor $\Delta$ on $T$ satisfying
	\[
	\mathcal O_T(-\Delta)
	\simeq \omega_T\otimes\nu^*\omega_S^{-1}
	\simeq g^*\mathcal O_X(-S).
	\]
	Since $g^*L$ is ample, the projection formula gives
	\(
	LS^2=g^*L\cdot\Delta>0.
	\)
	\end{proof}

We record some numerical consequences that will be used in the proof.

\begin{lemm}\label{ar:numerics}
	Suppose that $S$ satisfies \eqref{ar:assumption}.
	\begin{enumerate}[label={\rm(\arabic*)},ref=\arabic*]
		\item For every $D\in\operatorname{Pic}(X)$,
		\begin{equation}\label{ar:parity}
			D^2S+LDS\in2\bZ.
		\end{equation}
		\item If $R\subset\Sigma$ is an integral curve and
		$0<(-K_\Sigma)R<d$, then $R$ is a smooth rational curve.
	\end{enumerate}
\end{lemm}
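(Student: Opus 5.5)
Both parts are proved by pulling back to $\Sigma$ and using \eqref{ar:degree}, namely $-K_\Sigma\sim f^*L$ with $K_\Sigma^2=d$.

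For part (1), fix $D\in\operatorname{Pic}(X)$ and put $D_\Sigma=f^*D$. Since $f=g\circ r$ is birational onto $S$ and $r$ is birational, the projection formula gives
\[
D^2S=D_\Sigma^2,
\qquad
LDS=f^*L\cdot D_\Sigma=(-K_\Sigma)\cdot D_\Sigma .
\]
Hence
\[
D^2S+LDS=D_\Sigma^2-K_\Sigma\cdot D_\Sigma .
\]
By Riemann--Roch on the smooth projective surface $\Sigma$, the quantity $\tfrac12\bigl(D_\Sigma^2-K_\Sigma D_\Sigma\bigr)$ equals $\chi(\Sigma,D_\Sigma)-\chi(\cO_\Sigma)$, which is an integer. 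This is exactly the adjunction-type parity $D_\Sigma(D_\Sigma-K_\Sigma)\in2\bZ$. The only point needing care is that $D^2S$ is computed by $f^*D$ even though $S$ may be non-normal. This holds because intersection numbers on $X$ against the cycle $[S]$ equal intersection numbers of pullbacks on any birational model of $S$, and $f_*[\Sigma]=[S]$.

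For part (2), let $R\subset\Sigma$ be an integral curve with $0<(-K_\Sigma)R<d$. Adjunction gives
\[
2p_a(R)-2=R^2+K_\Sigma R .
\]
The plan is to bound $R^2$ by the Hodge index theorem. Since $-K_\Sigma$ is nef and big with $K_\Sigma^2=d>0$, we have
\[
R^2\cdot d\le (K_\Sigma R)^2 .
\]
Write $k=-K_\Sigma R$, so $0<k<d$. Then
\[
R^2\le \frac{k^2}{d}<k,
\]
and since $R^2$ is an integer, $R^2\le k-1$. Substituting,
\[
2p_a(R)-2=R^2-k\le -1,
\]
so $p_a(R)\le 0$. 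As $p_a(R)\geq0$ for an integral curve, we get $p_a(R)=0$. Therefore $R$ is a smooth rational curve.

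The main obstacle is justifying the intersection-number identity in part (1) for non-normal $S$; beyond that everything is formal. For this step I would invoke $f_*[\Sigma]=[S]$ and the projection formula for the proper map $f$.
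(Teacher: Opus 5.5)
Your proof is correct and follows the paper's argument. For part (1), both use Riemann--Roch for $f^*D$ on $\Sigma$; the paper writes $\chi(\Sigma,f^*D)=1+\frac{D^2S+LDS}{2}$ using $\chi(\cO_\Sigma)=1$, which your difference formulation does not need. For part (2), both use the Hodge index bound $R^2\le m^2/d<m$ for the nef and big class $-K_\Sigma\sim f^*L$, together with adjunction, to force $p_a(R)=0$.
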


\begin{proof}
	Riemann--Roch on $\Sigma$ gives
	\[
	\chi(\Sigma,f^*D)=1+\frac{D^2S+LDS}{2},
	\]
	which proves (1). For (2), put $m=(-K_\Sigma)R$.
	By the Hodge index theorem,
	\[
	R^2\le \frac{(f^*L\cdot R)^2}{(f^*L)^2}
	=\frac{m^2}{d}<m.
	\]
	Adjunction therefore gives
	\[
	2p_a(R)-2=R\cdot(R+K_\Sigma)=R^2-m<0.
	\]
	It follows that
	$p_a(R)=0$.
\end{proof}

\subsection{Geometric obstructions}

We will use the following result to exclude Picard rank $1$ cases.

\begin{lemm}[A differential obstruction]\label{ar:cotangent}
	Let $X\subset P:=\bP(w_0,\ldots,w_N)$ be a smooth threefold that is
	a complete intersection of positive degrees $c_1,\ldots,c_{N-3}$ and is
	contained in the smooth locus of $P$. Suppose that
	$H=\mathcal O_X(1)$ is an ample line bundle. There is no morphism
	$f:W\to X$ from a smooth projective surface, generically finite onto its image,
	such that $\omega_W\simeq f^*\omega_X$.
\end{lemm}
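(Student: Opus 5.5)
The plan is to turn the isomorphism $\omega_W\simeq f^*\omega_X$ into a nonzero global section of $f^*\Omega_X$ on $W$. Then we show that no such section exists, using the Euler and conormal sequences together with Kawamata--Viehweg vanishing for the nef and big divisor $f^*H$.

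\emph{Step 1 (producing a section).} Since we are in characteristic zero and $f$ is generically finite onto its image $f(W)$, a surface, the map $f$ is generically unramified onto $f(W)$. So the codifferential $df^\vee:f^*\Omega_X\to\Omega_W$ is surjective at a general point of $W$. Taking second exterior powers gives a map
\[
\Lambda^2 f^*\Omega_X\longrightarrow \omega_W\simeq f^*\omega_X
\]
that is generically surjective, hence nonzero. On the threefold $X$ we have $\Lambda^2\Omega_X\simeq T_X\otimes\omega_X$. Twisting by $f^*\omega_X^{-1}$ therefore gives a nonzero map $f^*T_X\to\cO_W$, which is a nonzero element of $H^0(W,f^*\Omega_X)$. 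It then suffices to show $H^0(W,f^*\Omega_X)=0$.

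\emph{Step 2 (vanishing).} Since $X$ lies in the smooth locus of $P$, we restrict to $X$ the Euler sequence of $P$ and the conormal sequence of the complete intersection:
\[
0\to\Omega_P|_X\to\bigoplus_{i}\cO_X(-w_i)\to\cO_X\to0,\qquad
0\to\bigoplus_j\cO_X(-c_j)\to\Omega_P|_X\to\Omega_X\to0.
\]
We pull both back by $f$; the pullbacks stay exact because the sheaves are locally free on $X$. The line bundle $f^*H$ is nef, and it is big because $f$ is generically finite onto a surface and $H$ is ample. Hence $H^0(W,f^*\cO_X(-w_i))=0$ for $w_i>0$, and the first sequence gives $H^0(W,f^*\Omega_P|_X)=0$. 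From the second sequence, $H^0(W,f^*\Omega_X)$ injects into $\bigoplus_j H^1(W,f^*\cO_X(-c_j))$. By Serre duality and Kawamata--Viehweg vanishing applied to the nef and big divisor $c_jf^*H$, each of these groups is zero. This contradicts Step 1.

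\emph{Expected obstacle.} The delicate point is justifying the weighted Euler sequence with locally free terms $\cO_X(-w_i)$ on $X$. On the smooth locus of $P$ there may still be points with nontrivial stabilizers, and there the sheaves $\cO_P(k)$ need not be invertible. I would first use that $H=\cO_X(1)$ is assumed to be a line bundle. On the quasi-smooth affine cone, the Euler sequence is the pushforward of the exact sequence of the $\mathbb G_m$-torsor $\bigl(C_X\setminus0\bigr)\to X$, and it remains exact after taking invariants. Under the hypothesis that $X$ avoids the singular locus, I expect each $\cO_X(w_i)$ to be a power of $H$ up to invertible twist. If this becomes awkward, the fallback is to pull the whole argument back to the affine cone and compute sections of weight zero directly.
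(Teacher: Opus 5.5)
Your proposal is correct and is essentially the paper's proof. In both, $\bigwedge^2 df$ together with $(\Omega_X^2)^\vee\simeq\Omega_X^1\otimes\omega_X^{-1}$ and $\omega_W\simeq f^*\omega_X$ gives a nonzero section of $f^*\Omega_X^1$. That section is then ruled out by the pulled-back weighted Euler and conormal sequences and the vanishing of $H^0$ and $H^1$ of $\mathcal O_W(-c\,f^*H)$ by Serre duality and Kawamata--Viehweg.

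The obstacle you flag is handled in the paper simply by citing Dolgachev's weighted Euler sequence on the smooth locus. In the application the ambient spaces are well formed, so their smooth locus is exactly where $\mathbb G_m$ acts freely, and there $\mathcal O_X(k)\simeq H^{\otimes k}$. Note that reflexive rank-one sheaves are automatically invertible on the smooth locus. So the only real issue is the identification $\mathcal O_X(w_i)\simeq H^{\otimes w_i}$ (and likewise for the $c_j$), not local freeness.
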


\begin{proof}
	Since $f$ is generically finite onto a surface, the second exterior power of $df$ gives a nonzero section
	\[
	\bigwedge\nolimits^2df\in
	H^0\!\left(W,(f^*\Omega_X^2)^\vee\otimes\omega_W\right).
	\]
	Since $\Omega_X^1$ has rank three, we have
	\[
	(\Omega_X^2)^\vee\simeq\Omega_X^1\otimes\omega_X^{-1}.
	\]
	Thus, using $\omega_W\simeq f^*\omega_X$, we obtain
	\begin{equation}\label{eq:nonzero-section}
		0\ne\bigwedge\nolimits^2df\in H^0(W,f^*\Omega_X^1).
	\end{equation}
	
	On the other hand, since $M=f^*H$ is nef and big, for every integer $c>0$,
	we have
	\[
	H^0(W,\mathcal O_W(-cM))=H^1(W,\mathcal O_W(-cM))=0,
	\]
	by Serre duality and Kawamata--Viehweg vanishing.
	Since $X$ is contained in the smooth locus of $P$,
	we have the weighted Euler sequence (see \cite[\S\S2.1.4--2.1.5]{Dolgachev82})
	\[
	0\longrightarrow\Omega_P^1|_X
	\longrightarrow\bigoplus_{i=0}^N\mathcal O_X(-w_i)
	\longrightarrow\mathcal O_X\longrightarrow0
	\]
	and the conormal sequence
	\[
	0\longrightarrow\bigoplus_{j=1}^{N-3}\mathcal O_X(-c_j)
	\longrightarrow\Omega_P^1|_X
	\longrightarrow\Omega_X^1\longrightarrow0.
	\]
	Their pullbacks to $W$ remain exact, since each sequence is locally split
	as a sequence of modules. The two vanishings imply successively
	$H^0(W,f^*(\Omega_P^1|_X))=0$ and
	$H^0(W,f^*\Omega_X^1)=0$, a contradiction to \eqref{eq:nonzero-section}.
\end{proof}

The differential obstruction allows us to rule out the case of Picard
number one.
\begin{prop}[Picard number one]\label{ar:rank1-exclusion}
	A smooth Fano threefold of Picard number one contains no surface satisfying
	\eqref{ar:assumption}.
\end{prop}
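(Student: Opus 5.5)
Suppose $S\subset X$ satisfies \eqref{ar:assumption} and $\rho(X)=1$, and aim for a contradiction. First we use numerics. By Lemma~\ref{ar:conductor}, $S$ is nef. Since $\rho(X)=1$, $S$ is ample, so $LS^2>0$ and $S$ is non-normal. This does not yet contradict anything, so the real input will be Lemma~\ref{ar:cotangent}. By Lemma~\ref{ar:reduction}, the map $f:\Sigma\to X$ from the minimal resolution of the normalization is generically finite onto $S$ (indeed birational), and $\omega_\Sigma\simeq f^*\omega_X$. So it suffices to show that $X$ satisfies the hypotheses of Lemma~\ref{ar:cotangent}, namely that it is a smooth complete intersection in a weighted projective space lying in the smooth locus, with $\mathcal O_X(1)$ ample. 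Alternatively, we find another argument for the types where no such description exists.

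Next we go through the Iskovskikh--Mori--Mukai classification of Picard-rank-one Fano threefolds. The index $\ge 2$ cases are $\bP^3$, quadrics, and del Pezzo threefolds $V_1,\dots,V_5$. The first three of these, and also $V_1,V_2,V_3,V_4$, are complete intersections in weighted projective spaces lying in the smooth locus:
\[
V_1=X_6\subset\bP(1,1,1,2,3),\qquad V_2=X_4\subset\bP(1,1,1,1,2),
\]
and $V_3$, $V_4$ are ordinary complete intersections. The same holds in index one for genus $2,3,4,5$, namely the sextic double solid $X_6\subset\bP(1,1,1,1,3)$, the quartic and the double quadric $X_{2,4}\subset\bP(1,1,1,1,1,2)$, then $X_{2,3}$ and $X_{2,2,2}$. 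Before applying the lemma we check the smooth-locus condition: a smooth quasi-smooth weighted complete intersection of this kind avoids the singular points of the ambient weighted projective space. We also check that $\mathcal O_X(1)$ is an ample line bundle, which holds because the weights dividing the degrees make it Cartier.

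The remaining types are $V_5$ and the index-one threefolds of genus $g\in\{6,7,8,9,10,12\}$. These are linear sections of (weighted) Grassmannians or homogeneous spaces in Mukai's description, so they are not complete intersections. For them I would run the same vanishing argument as in Lemma~\ref{ar:cotangent}, but replace the Euler sequence with the restricted cotangent sequence of the ambient homogeneous variety $G/P$. The aim is to show $H^0(\Sigma,f^*\Omega_X^1)=0$. Since $\Omega^1_{G/P}$ restricted to $X$ is a quotient or extension of sums of negative twists of tautological bundles, for instance $\Omega_{\mathrm{Gr}}=\mathcal U\otimes\mathcal Q^\vee$, a filtration argument combined with Kawamata--Viehweg vanishing for negative multiples of the nef and big divisor $f^*H$ should kill the relevant $H^0$. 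The conormal sequence for the linear section then handles the passage from the ambient variety to $X$.

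The Gushel--Mukai case $g=6$ may be a double cover of a linear section, so it needs the special-case treatment as well. This last step is the expected main obstacle, because $f^*\mathcal U$ is not a sum of line bundles and vanishing of $H^0(f^*(\mathcal U\otimes\mathcal Q^\vee))$ is not immediate. As a fallback I would use the numerics. We have $d=L^2S\le 9$ and $S\in|mH|$, so $S\sim mH$ with $m\,L^2H\le 9$. Together with the parity condition \eqref{ar:parity} and the conductor identity $\Delta\sim g^*S$ from Lemma~\ref{ar:conductor}, this forces $S$ to be a low-degree hyperplane section, which I expect leads to a contradiction with $K_T=g^*K_X$ via adjunction, $K_S=(K_X+S)|_S$.
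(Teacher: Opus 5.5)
\textbf{There is a gap: your main route does not cover $V_5$ or index one with $g\ge6$, and the inequality that settles those cases is written down in your fallback but not used.}

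Your treatment of the weighted complete intersections is what the paper does: $X_6\subset\bP(1^4,3)$, $X_4$, $X_{2,4}$, $X_{2,3}$, $X_{2,2,2}$, $V_1$ and $V_2$, handled by Lemma~\ref{ar:cotangent} applied to $f:\Sigma\to X$.

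The problem is the remaining types, $V_5$ and index one with $g\in\{6,7,8,9,10,12\}$. For these your main route extends the cotangent vanishing to linear sections of Grassmannians and to Gushel--Mukai double covers. You do not establish this, and it is not a formal consequence of Kawamata--Viehweg, since $f^*(\mathcal U\otimes\mathcal Q^\vee)$ is not filtered by negative line bundles. Your fallback writes down the right inequality $m\,L^2H\le 9$ but draws the wrong conclusion. In these cases the inequality does not merely force $S$ to be a low-degree hyperplane section. It has no solution with $m\ge1$ at all, so no adjunction argument is needed.

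Concretely, write $\operatorname{Pic}(X)=\bZ H$, $L=iH$, $v=H^3$ and $S\sim mH$ with $m\ge1$. Lemma~\ref{ar:reduction} gives
\[
d=L^2S=i^2mv\le 9.
\]
The value of $i^2v$ is:
\begin{itemize}
\item $16$ for $\bP^3$;
\item $18$ for the quadric;
\item $4v$ for $V_v$;
\item $2g-2$ in index one.
\end{itemize}
So this single inequality excludes $\bP^3$, the quadric, $V_3$, $V_4$, $V_5$ and every genus $g\ge 6$ outright. The only survivors are index one with $g\le5$ and $V_1$, $V_2$. These are exactly the weighted complete intersections you already handle with Lemma~\ref{ar:cotangent}.

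Several parts of your proposal then become unnecessary and can be dropped:
\begin{itemize}
\item the Grassmannian and Gushel--Mukai discussion;
\item your separate application of Lemma~\ref{ar:cotangent} to $\bP^3$, the quadric, $V_3$ and $V_4$;
\item the observation that $S$ is ample and non-normal.
\end{itemize}
With the numerical step stated explicitly, your argument coincides with the paper's proof.
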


\begin{proof}
	Write $\operatorname{Pic}(X)=\bZ H$, $L=iH$, $H^3=v$, and $S\sim mH$,
	where $m\ge1$. Lemma~\ref{ar:reduction} gives $i^2mv\le9$. The
	classification of smooth Fano threefolds of Picard number one therefore
	leaves only index one with $v=2,4,6,8$, or index two with $v=1,2$.
	The corresponding models are the sextic double solid, the quartic or
	double quadric, the complete intersections of types $(2,3)$ and
	$(2,2,2)$, and the del Pezzo threefolds $V_1,V_2$.
	They are respectively weighted complete intersections
	\[
	\begin{gathered}
		X_6\subset\bP(1^4,3),\quad
		X_4\subset\bP^4\ \text{or}\
		X_{2,4}\subset\bP(1^5,2),\quad
		X_{2,3}\subset\bP^5,\quad X_{2,2,2}\subset\bP^6,\\
		V_1=X_6\subset\bP(1^3,2,3),\qquad
		V_2=X_4\subset\bP(1^4,2).
	\end{gathered}
	\]
	The ambient weighted projective spaces are well formed and have
	at most isolated singularities. Thus these threefolds are well
	formed. Since they are smooth, they avoid the singular loci of their
	ambient spaces by \cite[Proposition~2.11]{przyjalkowski2020bounds}.
	Applying Lemma~\ref{ar:cotangent} to the morphism
	$f:\Sigma\to X$ in Lemma~\ref{ar:reduction} proves the assertion.
\end{proof}

The following lemma extends \cite[Lemma~7.2]{LTT}.
\begin{lemm}[Pullback along conic bundles]\label{ar:conic}
	Let $\pi:X\to B$ be a conic bundle over a smooth surface, and let
	$\Gamma\subset B$ be a smooth integral curve with $\Gamma^2>0$.
	If $S=\pi^{-1}(\Gamma)$ is integral, then $S$ does not satisfy
	\eqref{ar:assumption}.
\end{lemm}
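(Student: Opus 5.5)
We argue by contradiction. Suppose $S=\pi^{-1}(\Gamma)$ satisfies \eqref{ar:assumption}, and keep the notation $\nu:T\to S$, $g:T\to X$, $r:\Sigma\to T$ of Lemma~\ref{ar:reduction}. The overall plan is to show that $S$ must be non-normal, and then to rule out non-normality by a local analysis of the conic bundle over $\Gamma$.

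\textbf{Step 1: $S$ is non-normal.} We compute $LS^2$. Let $F_\pi$ be the class of a fiber of $\pi$. Then $S^2=\pi^*\Gamma\cdot\pi^*\Gamma\equiv(\Gamma^2)\,F_\pi$, and $L\cdot F_\pi=2$ because the fibers are conics. Hence
\[
LS^2=2\Gamma^2>0 ,
\]
and Lemma~\ref{ar:conductor} shows that $S$ is non-normal.

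\textbf{Step 2: locate the conductor.} As in the proof of Lemma~\ref{ar:conductor}, the conductor gives a nonzero effective Cartier divisor $\Delta$ on $T$ with $\cO_T(\Delta)\simeq g^*\cO_X(S)$. Let $h:T\to\Gamma$ be the composition $T\to S\to\Gamma$. Then
\[
g^*\cO_X(S)=g^*\pi^*\cO_B(\Gamma)=h^*\bigl(\cO_B(\Gamma)|_\Gamma\bigr),
\]
and this line bundle has degree $\Gamma^2$ on $\Gamma$. So $\Delta\equiv\Gamma^2\,F$, where $F$ is the fiber class of $h$. Since $\Delta\cdot F=0$ and $\Delta$ is effective, $\Delta$ is supported on finitely many fibers of $h$. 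Equivalently, $S$ is non-normal exactly along finitely many fibers $\pi^{-1}(p)$ with $p\in\Gamma$, and at those fibers the conductor has total "degree" $\Gamma^2$ counted with multiplicities.

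\textbf{Step 3: restrict the bad fibers.} The map $\pi$ is flat, since $X$ and $B$ are smooth and $\pi$ is equidimensional. Hence $S\to\Gamma$ is flat over the smooth curve $\Gamma$. If the fiber over $p$ is reduced, then $S$ is smooth at the smooth points of that fiber, so $S$ is regular in codimension one along it. It is also $S_2$, being Cartier in $X$. So $S$ is normal near such a fiber. Therefore the non-normal fibers are double lines $2\ell$. These occur only over the finitely many points of $B$ that are singular points of the discriminant curve; for a smooth total space these are the non-reduced fibers.

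\textbf{Step 4 (main obstacle): bound the conductor at a double line.} We do a local computation at a double-line fiber $\pi^{-1}(p)=2\ell$. The aim is to show that the conductor contributes at most $F$, or more precisely an amount bounded by the local intersection data of $\Gamma$ with the discriminant at $p$. The first thing we would try is a local normal form. Near $\ell$, write $X$ as $\{q_{11}x^2+q_{12}xy+q_{22}y^2=z^2\cdots\}$ with the $q_{ij}$ local functions on $B$. Restrict to a local parameter $t$ on $\Gamma$, so that $S$ is locally $z^2=a(t)x^2+\cdots$. Then compute the normalization and its conductor explicitly.

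Summing these local bounds over all double-line points on $\Gamma$ should give $\deg\Delta<\Gamma^2$ unless $\Gamma^2\le0$. That contradicts $\Delta\equiv\Gamma^2F$ and proves that $S$ does not satisfy \eqref{ar:assumption}.

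\textbf{Fallback.} If this local counting does not close, we would combine it with global constraints on $T$. Since $\omega_T\simeq g^*\omega_X$, we have $K_T+g^*L\sim0$, so $T$ is a Du Val del Pezzo surface with a conic fibration $h$ over $\Gamma\cong\bP^1$. Its degree is $d=K_T^2=L^2\cdot\pi^*\Gamma$. Together with Lemma~\ref{ar:numerics} applied to the components of the degenerate fibers of $h$, this should bound the number and multiplicity of the possible conductor fibers.
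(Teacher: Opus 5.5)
Your Steps 1--3 correctly reduce everything to the supports $\ell$ of double-line fibers over $\Gamma$. However, Step 4 carries the whole content of the lemma, it is not carried out, and the counting strategy you propose for it cannot close. A local bound of the type ``each double-line point contributes at most a fiber'' (or an amount controlled by the local intersection of $\Gamma$ with the discriminant) has no relation to $\Gamma^2$. Take a line $\Gamma\subset\bP^2$, so $\Gamma^2=1$, passing through one or two singular points of the discriminant. Such bounds would only give $\Delta\equiv mF$ with $m\le1$ or $m\le2$, and neither contradicts $\Delta\equiv\Gamma^2F$.

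The missing idea is that the contribution is exactly zero: $S$ is never singular along $\ell$, and this follows from the smoothness of $X$. Write $X=\{\Phi(u,v;t_0,t_1,t_2)=0\}$ locally in a $\bP^2$-bundle, with $\Gamma=(u=0)$. Since the fiber over $p$ is a double line $\lambda(t)^2=0$, all $\Phi_{t_i}$ vanish on $\ell$. So $S$ singular along $\ell$ would mean $\Phi_v|_\ell\equiv0$. But $\Phi_u|_\ell$ is a section of $\cO_\ell(2)$, so it vanishes at some point of $\ell$, and there every partial derivative of $\Phi$ vanishes, contradicting the smoothness of $X$. Hence $S$ is $R_1$; being Cartier, it is $S_2$, so it is normal. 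Your Step 1 then gives the contradiction $0=LS^2=2\Gamma^2$. This is the paper's argument, and with it your Step 2 and the fallback are unnecessary.

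A second, smaller omission is that you never treat the case $\Gamma\subset D_\pi$, and there your Step 3 is false as stated. The general fiber over $\Gamma$ is then a nodal conic (geometrically two distinct lines, since $S$ is integral). The nodes sweep out a horizontal curve along which $S$ is non-normal, so $S$ is not regular in codimension one near any fiber. Within your framework this case can be disposed of: the conductor would acquire a horizontal component, contradicting the verticality $\Delta\equiv\Gamma^2F$ from your Step 2. You do, however, have to say this. The paper handles this case differently, on $\Sigma$. After Stein factorization, $\Sigma$ is ruled by smooth rational curves $R$ with $R^2=0$ that map birationally onto line components of the singular fibers. Hence $f^*L\cdot R=1$, whereas adjunction gives $(-K_\Sigma)\cdot R=2$, contradicting $-K_\Sigma\sim f^*L$.
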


\begin{proof}
	Suppose that $S$ satisfies \eqref{ar:assumption}, and let $D_\pi$
	denote the discriminant of $\pi$.
	Suppose first that $\Gamma\subset D_\pi$.
	Since $S$ is integral, the geometric generic fiber over $\Gamma$
	consists of two distinct lines.
	After resolution and Stein factorization, we obtain a morphism
	from $\Sigma$ to a smooth curve whose general fiber $R$ is a
	smooth rational curve mapping birationally onto a component of
	a singular fiber of $\pi$.
	Thus $f^*L\cdot R=1$, and \eqref{ar:degree} gives
	$(-K_\Sigma)\cdot R=1$.
	On the other hand, since $R^2=0$, adjunction gives
	$(-K_\Sigma)\cdot R=2$, a contradiction.
	
	Now suppose that $\Gamma\not\subset D_\pi$. We claim that $S$ is normal.
	Since $S$ is Cartier in $X$, it satisfies Serre's condition $S_2$.
	Any curve contained in $\operatorname{Sing}(S)$ must
	lie in a fiber over the finite set $\Gamma\cap D_\pi$.
	At a general point of either component of a reduced
	singular conic, the inverse image is smooth. Thus such a curve would have
	to be the support $\ell$ of a double-line fiber.
	
	Here the local argument of \cite[Lemma~7.2]{LTT} applies over any smooth
	base surface. Choose local parameters $u,v$ with $\Gamma=(u=0)$ and write
	the conic bundle as
	$F(u,v;t_0,t_1,t_2)=0$ in a locally trivial $\bP^2$-bundle.
	If $S$ were singular along $\ell$, then $F_v$ and all $F_{t_i}$ would
	vanish there.
	However, $F_u|_\ell$ is a section of $\mathcal O_\ell(2)$,
	so it vanishes at some point $p\in\ell$.
	At $p$, all partial derivatives of $F$ vanish, contradicting
	the smoothness of $X$.
	Thus $\operatorname{Sing}(S)$ contains no curve, and $S$ satisfies
	Serre's condition $R_1$.
	Since $S$ also satisfies $S_2$, it is normal by Serre's criterion.
	
	Lemma~\ref{ar:conductor} now gives $LS^2=0$.
	On the other hand, since $S=\pi^*\Gamma$ and a general fiber
	$F$ of $\pi$ satisfies $L\cdot F=2$, the projection formula gives
	\[
	LS^2=(L\cdot F)\Gamma^2=2\Gamma^2>0,
	\]
	a contradiction.
\end{proof}

For families $2.1$ and $2.3$, the following lemma shows that
the blow-downs to $V_1$ and $V_2$, respectively, preserve
adjoint rigid surfaces satisfying~\eqref{ar:assumption}
and dominating the base of the del Pezzo fibration.

\begin{lemm}[Descent along a half-anticanonical blow-up]
	\label{ar:blowdown}
	Let $\pi:X\to Y$ be the blow-up of a smooth Fano threefold $Y$
	along a smooth connected complete intersection
	$C=D_1\cap D_2$, where $D_1,D_2\in|H|$ and $-K_Y\sim2H$.
	Let $\varphi:X\to\bP^1$ be the morphism induced by the
	pencil $\langle D_1,D_2\rangle$.
	Assume that $X$ is Fano, and let $E$ be the exceptional divisor.
	
	Suppose that an integral surface $S\ne E$ satisfies
	\eqref{ar:assumption} and dominates $\bP^1$ under $\varphi$.
	Then $D=\pi(S)$ satisfies $a(D,-K_Y)=1$ and is adjoint rigid
	with respect to $-K_Y$.
\end{lemm}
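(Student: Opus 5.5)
The plan is to work directly on the minimal resolution $\Sigma$ of the normalization of $S$ from Lemma~\ref{ar:reduction} and to express the adjoint divisor of $(D,-K_Y)$ there. Since $S\ne E$, the map $\pi|_S:S\to D$ is birational, so $h=\pi\circ f:\Sigma\to Y$ is a resolution of $D$. Then $a(D,-K_Y)$ and adjoint rigidity may be computed using $h^*(-K_Y)=h^*(2H)$.

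\textbf{Identifying the adjoint divisor.} We have $-K_X=\pi^*(2H)-E$, and the fiber class of $\varphi$ is $F=\pi^*H-E$. Combining this with $\omega_\Sigma\simeq f^*\omega_X$ from \eqref{ar:crepant} gives
\[
K_\Sigma+h^*(2H)\sim f^*E ,
\]
which is effective. Hence $a(D,-K_Y)\le 1$. Conversely, if $a(D,-K_Y)=t<1$, then $K_\Sigma+t\,h^*(2H)$ is pseudo-effective. Writing
\[
f^*E=\bigl(K_\Sigma+t\,h^*(2H)\bigr)+(1-t)\,h^*(2H)
\]
and using that $h^*H$ is big (because $h$ is birational onto $D$), we would conclude that $f^*E$ is big. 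Both assertions of the lemma therefore reduce to one claim: $\kappa(\Sigma,f^*E)=0$. This gives $a=1$ because $f^*E$ is then not big, and it gives adjoint rigidity directly.

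\textbf{Negativity of $f^*E$.} First compute $f^*E$ numerically through $E\cong\bP(N_{C/Y})$. Since $N_{C/Y}\cong H|_C\oplus H|_C$, we have $E\cong C\times\bP^1$, and $\varphi|_E$ is the second projection. Moreover
\[
E|_E\sim -\mathrm{pr}_2^*\cO(1)-\mathrm{pr}_1^*(H|_C).
\]
Write the curve $S|_E$ as $\alpha[C\times\mathrm{pt}]+\beta[\mathrm{pt}\times\bP^1]$ with $\alpha,\beta\ge0$. Then
\[
E^2S=-\beta-\alpha H^3<0,
\]
unless $S\cap E=\emptyset$, in which case $f^*E=0$ and we are done. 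So $(f^*E)^2<0$. The useful observations are that every component of $f^*E$ maps into $E\cap S$, and that $E|_F\equiv\pi^*H|_F$ on a fiber $F$ (because $C\in|H|_{D_\lambda}|$). The latter lets us compute the degrees of the components of $f^*E$ that are horizontal for $\varphi\circ f$.

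\textbf{Main obstacle.} The hard step is upgrading $(f^*E)^2<0$ to $\kappa(f^*E)=0$. This amounts to showing that the intersection matrix of the components $\Gamma_i$ of $f^*E$ is negative definite, or equivalently that no nonzero movable divisor $M$ satisfies $M\le n f^*E$. My first attempt is to apply the computation above to each component rather than to the total. Each $\Gamma_i$ maps either onto a curve in $E\cong C\times\bP^1$ or to a point. For every effective $\bQ$-combination $Z=\sum c_i\Gamma_i$ one should obtain $Z^2<0$ by the same formula, since $E|_E$ is anti-ample on $E$ ($H|_C$ has positive degree and $\mathrm{pr}_2^*\cO(1)$ is nef). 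Combined with the Hodge index theorem, this would give negative definiteness. Curves contracted by $f$ are handled separately: they are $(-2)$-curves of the weak del Pezzo surface $\Sigma$, and the Du Val configuration is negative definite. If the general component argument fails, I would instead use Lemma~\ref{ar:numerics}(2) and the identity $-K_\Sigma=h^*(2H)-f^*E$ to bound the anticanonical degrees of the components, forcing them to be smooth rational curves with negative self-intersection.
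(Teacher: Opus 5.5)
Your reduction is correct and matches the paper: $\pi|_S$ is birational, $K_\Sigma+h^*(-K_Y)\sim f^*E$, and both conclusions follow from $\kappa(\Sigma,f^*E)=0$. The argument you give for this last claim, however, does not work.

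\textbf{The sign of $E|_E$ is wrong.} Since $\pi^*H|_E=\mathrm{pr}_1^*(H|_C)$ and $F|_E=\mathrm{pr}_2^*\cO(1)$, one has
\[
E|_E\sim\mathrm{pr}_1^*(H|_C)-\mathrm{pr}_2^*\cO(1),
\]
so $E\cdot(C\times\{t\})=H^3>0$. Your own remark $E|_F\equiv\pi^*H|_F$ gives the same thing, because $C^2=H^3$ on a fiber. Hence $E^2S=\alpha H^3-\beta$, which can be positive.

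\textbf{The dominance hypothesis is never used, and it cannot be avoided.} A smooth fiber $S=\widetilde{D_t}$ of $\varphi$ satisfies \eqref{ar:assumption}. Yet there $f^*E=C$ with $C^2=H^3>0$, so $f^*E$ is big and $a(D_t,-K_Y)=\tfrac12$. Any correct argument must therefore use that $S$ dominates $\bP^1$.

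\textbf{Negative definiteness would not follow even with correct signs.} The numbers $E\cdot f_*\Gamma_i$ only compute $f^*E\cdot\Gamma_i$. They do not determine $Z^2$ for sub-combinations $Z$ of the components, so the step you describe does not give negative definiteness.

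\textbf{What is missing} is the proof that no component of $f^*E$ dominates $C$, i.e.\ that $\alpha=0$. This is where all the hypotheses enter.
\begin{enumerate}
\item Dominance gives $LSF>0$. Since $L-E=2F$, this yields $LSE=d-2LSF<d$.
\item A component $R$ of $f^*E$ dominating $C$ is not contracted by $f$. Hence $0<(-K_\Sigma)\cdot R\le f^*L\cdot f^*E=LSE<d$.
\item Lemma~\ref{ar:numerics}(2) then makes $R$ a smooth rational curve.
\item But $C$ is elliptic by adjunction, $K_C\sim(K_Y+2H)|_C\sim0$, so $R$ cannot dominate it.
\end{enumerate}
Thus every component of $f^*E$ is contracted by $h$. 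So $f^*E$ is an effective exceptional divisor of the birational morphism $\bar h:\Sigma\to D^\nu$ onto the normalization of $D$. Then $\kappa(\Sigma,f^*E)=0$ follows at once, for instance from $\bar h_*\cO_\Sigma(mf^*E)=\cO_{D^\nu}$; the paper cites Iitaka's covering theorem. No intersection-matrix computation is needed.

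Your fallback mentions Lemma~\ref{ar:numerics}(2), but it omits the bound $LSE<d$, the genus of $C$, and the conclusion that the components are $h$-exceptional. Moreover, smooth rationality does not force negative self-intersection, since $R^2=(-K_\Sigma)\cdot R-2$.
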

\[
\begin{tikzcd}[column sep=large, row sep=large]
	\bP^1
	& X \arrow[l, "\varphi"']
	\arrow[r, "\pi"]
	& Y \\
	\Sigma \arrow[ur, "f"]
	\arrow[r, "p"]
	\arrow[dr, "\bar h"']
	& S \arrow[u, hook]
	\arrow[r, "\pi|_S"]
	& D \arrow[u, hook] \\
	& D^\nu \arrow[ur, "\nu_D"']
\end{tikzcd}
\]
\begin{proof}
	Retain the notation of Lemma~\ref{ar:reduction}, and set
	$p=\nu\circ r:\Sigma\to S$ and $h=\pi\circ f$.
	
	By adjunction, $K_C\sim0$, so $C$ is an elliptic curve.
	Put $M=\pi^*H$, and let $F=M-E$ be the fiber class of $\varphi$.
	The blow-up formula gives
	\[
	L=-K_X=2M-E=M+F,
	\qquad L-E=2F.
	\]
	Since $S$ dominates $\bP^1$, its intersection with a general
	fiber is a nonzero effective curve. Hence $LSF>0$, and therefore
	\begin{equation}\label{ar:exceptional-degree}
		LSE=d-2LSF<d,
	\end{equation}
	where $d=L^2S=K_\Sigma^2$.
	
	Since $S\ne E$, the morphism $h$ induces a birational morphism
	$\bar h:\Sigma\to D^\nu$, where
	$\nu_D:D^\nu\to D=\pi(S)$ is the normalization.
	We claim that $f^*E$ is $\bar h$-exceptional.
	Otherwise, since $\nu_D$ is finite, some irreducible component
	$R$ of $f^*E$ would dominate $C$ under $h$.
	Then $f(R)$ is a curve, so the ampleness of $L$ and the nefness
	of $-K_\Sigma\sim f^*L$ give
	\[
	0<(-K_\Sigma)\cdot R
	\le f^*L\cdot f^*E
	=LSE<d.
	\]
	By Lemma~\ref{ar:numerics}(2), $R$ is a smooth rational curve,
	which cannot dominate the elliptic curve $C$.
	
	The blow-up formula and \eqref{ar:crepant} give
	\begin{equation}\label{ar:exceptional-adjoint}
		K_\Sigma+h^*(-K_Y)\sim f^*E.
	\end{equation}
	Since $f^*E$ is effective and $\bar h$-exceptional,
	Iitaka's covering theorem
	\cite[Theorem~10.5]{iitaka1982algebraic} yields
	\begin{equation}\label{eq-kappa}
		\kappa\bigl(\Sigma,K_\Sigma+h^*(-K_Y)\bigr)=0.
	\end{equation}
	
	The effectiveness of $f^*E$ and \eqref{ar:exceptional-adjoint}
	give $a(D,-K_Y)\le1$.
	If $a(D,-K_Y)<1$, the bigness of $h^*(-K_Y)$ would imply
	that $K_\Sigma+h^*(-K_Y)$ is big, contradicting \eqref{eq-kappa}.
	Thus $a(D,-K_Y)=1$, and \eqref{eq-kappa} proves adjoint rigidity.
\end{proof}

The following lemma will be used to exclude the double-cover cases
over $\bP^1\times\bP^2$ and the blow-up of $\bP^3$
at a point.

\begin{lemm}\label{ar:double}
	Let $Y$ be a smooth projective threefold, and let $M$ be a Cartier
	divisor on $Y$. Let $q:X\to Y$ be the double cover defined by a
	section of $\mathcal O_Y(2M)$ whose zero divisor $B$ is smooth.
	Suppose that $D\subset Y$ is a smooth integral surface and that
	$S=q^{-1}(D)$ is integral.
	Let $\nu:T\to S$ be the normalization, and let $g:T\to X$
	be its composition with the inclusion.
	If $\omega_T\simeq g^*\omega_X$, then
	\begin{equation}\label{ar:double-obstruction}
		D^2(B-D)=0.
	\end{equation}
\end{lemm}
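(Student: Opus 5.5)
The plan is to compute the conductor of the non-normal double cover $S\to D$ explicitly and compare it with what Lemma~\ref{ar:conductor}-type finite duality forces under the hypothesis $\omega_T\simeq g^*\omega_X$. The last numerical step is not settled; see the end.

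First I set up the local model. Write $X=\{w^2=s\}$ in the total space of $\mathcal O_Y(M)$, with $s\in H^0(Y,\mathcal O_Y(2M))$ and $B=(s=0)$. Then $S=q^*D$ is the double cover of the smooth surface $D$ defined by $s|_D\in H^0(D,\mathcal O_D(2M|_D))$. Since $S$ is integral, $s|_D$ is not a square. So I decompose the divisor $B|_D$ uniquely as
\[
B|_D=2E+R',
\]
where $R'$ is reduced, $E\ge0$, and $R'$ carries the odd part of the multiplicities.

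The standard normalization of cyclic covers then gives the following. $T$ is the double cover $h:T\to D$ branched along $R'$ with respect to the line bundle $M|_D-E$. Locally $w^2=e^2r$ and $T$ is obtained by adjoining $w/e$. Hence:
\begin{itemize}
\item the conductor ideal is generated by $e$, so the conductor divisor is $\Delta=h^*E$;
\item $\omega_T\simeq h^*\bigl(\omega_D\otimes\mathcal O_D(M|_D-E)\bigr)$.
\end{itemize}

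Second, I feed in the hypothesis. By finite duality, exactly as in the proof of Lemma~\ref{ar:conductor}, $\mathcal O_T(-\Delta)\simeq\omega_T\otimes\nu^*\omega_S^{-1}$. Since $S$ is Cartier, adjunction gives $\omega_S=(\omega_X\otimes\mathcal O_X(S))|_S$. Together with $\omega_T\simeq g^*\omega_X$ this yields $h^*E\sim g^*S=h^*(D|_D)$. Applying $h_*$ (degree two) gives $E\equiv D|_D$ numerically on $D$.

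The alternative route through the double-cover formula uses $\omega_X=q^*(\omega_Y\otimes\mathcal O_Y(M))$ and $K_D=(K_Y+D)|_D$. It gives the same relation, which is a consistency check. Consequently
\[
D^3=(D|_D)^2=E^2,\qquad D^2B=D|_D\cdot B|_D=E\cdot(2E+R'),
\]
so $D^2(B-D)=E\cdot(E+R')$, computed on $D$.

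The main obstacle is to show that this last number vanishes, i.e.\ $E^2=-E\cdot R'$ on $D$. For this I would use the smoothness of $B$ (equivalently of $X$), which the numerical relation above has not yet exploited.
\begin{itemize}
\item Along $\operatorname{Supp}E$ the smooth surfaces $B$ and $D$ are tangent, since their intersection cycle has multiplicity $\ge2$ there. So $T_B|_E=T_D|_E$.
\item I would compute $D^2B$ a second time on $B$ as $(D|_B)^2_B=(2E+R')^2_B$.
\item I would relate the self-intersections and the normal bundles $N_{E/B}$ and $N_{E/D}$ via the tangency. Concretely, I would compare $\deg N_{E/Y}$ computed through $B$ and through $D$, using $N_{B/Y}|_E=\mathcal O(2M)|_E$ and $N_{D/Y}|_E=\mathcal O(D)|_E\equiv E|_E$.
\end{itemize}

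If the normal-bundle comparison gives only an inequality, my fallback is the local equation $s|_D=e^2r$ near points of $E\cap R'$. There I would check directly whether smoothness of $B$ forces $E\cap R'=\emptyset$, and then use $E\equiv D|_D$ once more to conclude $E^2=(D|_D)\cdot E=0$.
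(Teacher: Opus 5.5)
Your reduction is correct and agrees with the paper. Your $E$ is the paper's $R$, and your relation $E\equiv D|_D$ is exactly what the paper derives via the double-cover formula (your ``alternative route''); the conductor computation is an equally valid way to reach it. The gap is the last step, which you leave open, and none of your proposed implementations closes it.

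The missing observation is that $E+R'=B|_D-E\equiv (B-D)|_D$. So the target $E\cdot(E+R')=0$ is the statement $E\cdot(B-D)|_D=\sum_\Gamma m_\Gamma\,(B-D)\cdot\Gamma=0$, and this holds \emph{component by component}: $\mathcal O_\Gamma(B)\cong\mathcal O_\Gamma(D)$ for every component $\Gamma$ of $E$. This comes from the tangency in your first bullet, but you need it as an equality of sub-line-bundles, not of degrees. Since $s|_D$ vanishes to order at least two along $\Gamma$, one has $\mathcal I_B\subset\mathcal I_\Gamma^2+\mathcal I_D$. Hence the images of $\mathcal O_\Gamma(-B)$ and $\mathcal O_\Gamma(-D)$ in the rank-two conormal sheaf $\mathcal I_\Gamma/\mathcal I_\Gamma^2$ coincide. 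Both are subbundles, with invertible quotients, because $B$ and $D$ are smooth. Equivalently, $T_B|_\Gamma=T_D|_\Gamma$ inside $T_Y|_\Gamma$, so $N_{B/Y}|_\Gamma\cong N_{D/Y}|_\Gamma$. This is precisely the paper's conormal-diagram argument. It then gives $D^2(B-D)=(D|_D)\cdot(B-D)|_D=E\cdot(B-D)|_D=0$.

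Your concrete plans do not reach this.
\begin{itemize}
\item Comparing $\deg N_{\Gamma/Y}$ computed through $B$ and through $D$ only yields $\Gamma_B^2+B\cdot\Gamma=\Gamma_D^2+D\cdot\Gamma$, which does not isolate $(B-D)\cdot\Gamma$.
\item Writing $D^2B=(2E+R')^2_B$ on $B$ introduces self-intersections on $B$ that you cannot control.
\item The fallback is false: smoothness of $B$ does not force $E\cap R'=\varnothing$. Locally in $\mathbb{A}^3$, take $D=\{z=0\}$ and $s=z+x^2y$. Then $B$ is smooth, $B|_D=2\{x=0\}+\{y=0\}$, and the two components meet at the origin.
\item Even granting disjointness, the step ``$E^2=(D|_D)\cdot E=0$'' is unjustified. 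The equality $E^2=(D|_D)\cdot E$ merely restates $E\equiv D|_D$ and gives no reason for vanishing.
\end{itemize}
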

\begin{proof}
	Write
	\[
	B|_D=2R+B_{\mathrm{odd}},
	\]
	where $R$ is effective and $B_{\mathrm{odd}}$ is reduced.
	Let $\Gamma$ be an irreducible component of $R$.
	Let $\mathcal I$, $\mathcal I_{B}$, and $\mathcal I_D$
	be the ideal sheaves of $\Gamma$, $B$, and $D$ in $Y$,
	respectively.
	Since $B|_D\ge 2\Gamma$, we have
	\begin{equation}\label{eq:IB0}
		\mathcal I_{B}\subset\mathcal I^2+\mathcal I_D.
	\end{equation}
	The conormal exact sequences for the chains
	\(\Gamma\subset B\subset Y\) and \(\Gamma\subset D\subset Y\) give
	\[
	\begin{tikzcd}[column sep=small]
		0 \arrow[r]
		& \mathcal O_\Gamma(-B) \arrow[r] \arrow[d, "\varphi"]
		& \mathcal I/\mathcal I^2 \arrow[r] \arrow[d, equal]
		& \mathcal I/(\mathcal I^2+\mathcal I_{B})
		\arrow[r] \arrow[d, "\psi"]
		& 0 \\
		0 \arrow[r]
		& \mathcal O_\Gamma(-D) \arrow[r]
		& \mathcal I/\mathcal I^2 \arrow[r]
		& \mathcal I/(\mathcal I^2+\mathcal I_D)
		\arrow[r]
		& 0.
	\end{tikzcd}
	\]
	The inclusions \(\Gamma\subset B\subset Y\) and
	\(\Gamma\subset D\subset Y\) are regular embeddings. Thus the rows
	are exact. Since \(\Gamma\) is Cartier in both \(B\) and \(D\),
	the right-hand terms are the conormal line bundles
	$\mathcal O_B(-\Gamma)|_\Gamma$ and
	$\mathcal O_D(-\Gamma)|_\Gamma$, respectively.
	The map $\psi$ is the natural quotient map by \eqref{eq:IB0}, so it is an isomorphism.
	It follows that $\varphi$ is also an isomorphism, and hence
	$(B-D)\cdot\Gamma=0$.
	Summing with the multiplicities in $R$, we obtain
	\[
	(B-D)|_D\cdot R=0.
	\]
	
	It remains to show that $R\equiv D|_D$.
	Put $p=q|_S\circ\nu:T\to D$.
	Removing the square factor gives a finite birational morphism
	$T'\to S$, with $T'\to D$ the double cover defined by a section of
	$\mathcal O_D(2(M|_D-R))$.
	The branch divisor $B_{\mathrm{odd}}$ is reduced, so $T^\prime$ is regular
	in codimension one; it is also a hypersurface, hence Cohen--Macaulay.
	Thus $T^\prime$ identifies with the normalization $T$.
	The double-cover formula and adjunction on $Y$ give
	\[
	\begin{aligned}
		K_T
		&\sim p^*(K_D+M|_D-R)\\
		&\sim p^*\bigl((K_Y+M)|_D\bigr)+p^*(D|_D-R).
	\end{aligned}
	\]
	On the other hand, $K_X\sim q^*(K_Y+M)$ gives
	\[
	g^*K_X\sim p^*\bigl((K_Y+M)|_D\bigr).
	\]
	Subtracting, we obtain
	\[
	p^*(D|_D-R)\sim K_T-g^*K_X\sim0.
	\]
	We thus have $R\equiv D|_D$ since $p$ is a finite surjection.
	The conclusion follows.
\end{proof}

\subsection{Numerical reduction}
\label{ar:classification-section}

We use the Mori--Mukai numbering $\rho.n$ for deformation families.
Parts of the proof below follow the argument of
\cite[Theorem~5.3]{beheshti2022moduli}.
Here we treat individual adjoint rigid surfaces with
$a(S,-K_X)=1$, but not a covering family of them,
and we also allow $-K_X$ to be non-very-ample.

\begin{prop}[Numerical reduction]\label{ar:classification}
	Suppose that $X$ is not a product, that $\rho(X)\ge2$, and that $S$
	satisfies \eqref{ar:assumption}. Then one of the following holds.
	\begin{enumerate}[label={\rm(\arabic*)},ref=\arabic*]
		\item\label{ar:case-fibre}
		The surface $S$ is a reduced normal fiber of a del Pezzo
		fibration $X\to\bP^1$.
		\item\label{ar:case-conic}
		There is a conic bundle $\pi:X\to B$ over a smooth surface and a smooth
		integral curve $\Gamma\subset B$, with $\Gamma^2>0$, such that
		$S=\pi^{-1}(\Gamma)$.
		\item\label{ar:case-elliptic}
		The threefold is $X=\operatorname{Bl}_C V_j$, where $j=1$ or $2$ and
		$C$ is the smooth complete intersection of two members of the
		ample generator $|H|$ on $V_j$. With $M=\pi^*H$ and $F=M-E$, one has
		$S\sim uM+vF$ for integers $u>0$, $v\ge0$.
		\item\label{ar:case-22}
		The threefold is a double cover $q:X\to\bP^1\times\bP^2$
		branched in bidegree $(2,4)$, and
		\[
		S\sim q^*D,\qquad
		D\in\{B,A+B,2A+B,2B\},
		\]
		where $A,B$ are the two hyperplane classes on the base.
		\item\label{ar:case-28}
		The threefold is a double cover
		$q:X\to Y=\operatorname{Bl}_p\bP^3$ branched in $|4h-2e|$, where $h$ is
		the pullback of a hyperplane and $e$ is the exceptional divisor, and
		$S\sim q^*h$.
	\end{enumerate}
\end{prop}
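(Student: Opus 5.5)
We split according to whether $S$ is normal, using Lemma~\ref{ar:conductor}: $S$ is nef, and $S$ is normal exactly when $LS^2=0$.

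\emph{Normal case.} Lemma~\ref{ar:reduction} gives $N_{S/X}\simeq\cO_S$. Then $S|_S\sim0$, so the restriction sequence
\[
0\to\cO_X\to\cO_X(S)\to\cO_S\to0
\]
together with $H^1(X,\cO_X)=0$ (Kawamata--Viehweg on the Fano $X$) shows that $h^0(\cO_X(S))=2$. The pencil $|S|$ has no base points, since a base point would lie on $S$, where the pencil restricts to the trivial bundle with a nowhere vanishing section. We therefore get a morphism $X\to\bP^1$. Its Stein factorization has base $\bP^1$ because $X$ is rationally connected. Since $S$ is integral and is a member of the pencil, $S$ is a reduced fiber. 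The fibration is a del Pezzo fibration: general fibers are smooth surfaces with $-K$ equal to $L$ restricted, hence ample. This gives case~\eqref{ar:case-fibre}.

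\emph{Non-normal case.} Here $LS^2>0$, $S$ is nef, and $1\le d=L^2S\le 9$ by Lemma~\ref{ar:reduction}. We also have the parity constraint \eqref{ar:parity} for every $D\in\Pic(X)$. The plan is to run through the Mori--Mukai classification with $\rho(X)\ge2$. For each family we write $S$ in a basis of $\Pic(X)$ as a nef class with $L^2S\le9$ and $LS^2>0$. We then test \eqref{ar:parity} against the generators, and use the known intersection tables, to cut the list down to a finite set of numerical classes per family.

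To organize this, we look at the extremal contractions of $X$. Suppose $X$ has a del Pezzo fibration $\varphi$ with fiber class $F$ and $SF\ne0$. Then $L^2S\le9$ combined with the contributions of the other nef generators forces very small coefficients. Suppose instead $X$ has a conic bundle $\pi:X\to B$. Then nef classes that are pulled back from $B$ give $S=\pi^{-1}(\Gamma)$, and $LS^2=2\Gamma^2>0$ gives case~\eqref{ar:case-conic}. Nef classes that are not pulled back have $L^2S$ too large, except in a few low-degree families. Following \cite[Theorem~5.3]{beheshti2022moduli}, we expect the survivors to be these: the blow-ups of $V_1$ and $V_2$ along elliptic curves (families 2.1 and 2.3), where $L^2$ is small enough that classes $uM+vF$ survive; family 2.2, the double cover of $\bP^1\times\bP^2$, where the listed $q^*D$ survive; and family 2.8, where $q^*h$ survives. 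Families of the form $\bP^1\times S'$ are excluded by hypothesis. Families with larger $-K_X^3$ should fail the bound $d\le9$ or the parity condition.

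The main obstacle is this case-by-case bookkeeping over roughly $88$ families with $\rho\ge2$, many of which have $\rho\ge3$. For each of these I would first try the cheapest test: check whether $L^2S\le9$ with $S$ nef and not a multiple of a fiber class already fails, using that $L$ restricted to any nef non-fiber divisor has large degree. I would invoke \eqref{ar:parity} only when this test does not settle the family. A further subtlety is the distinction between ``linear equivalence class'' and ``actual surface'' in the conic-bundle case. To show that $S$ really equals $\pi^{-1}(\Gamma)$, I would use that any curve class contracted by $\pi$ meets $S$ trivially, so $S$ is a union of fibers.
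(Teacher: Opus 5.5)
\textbf{Verdict: genuine gap.} Your normal case is correct, but the proposal never carries out the classification that the statement actually asserts.

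\textbf{The normal case.} Here your route is slightly more intrinsic than the paper's. You build the del Pezzo fibration from $|S|$, using $\mathcal O_S(S)\simeq\mathcal O_S$ and $H^1(X,\mathcal O_X)=0$. The paper instead splits according to whether $S$ is a multiple of the fibre class of an already existing del Pezzo fibration, and in that case uses integrality and Lemma~\ref{ar:conductor}. The two splittings coincide: for nef $S$, one has $LS^2=0$ exactly when $S$ is a multiple of a del Pezzo fibre class. So everything else still rests on the same enumeration.

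\textbf{The non-normal case is not proved.} You only say that you \emph{expect} the survivors to be 2.1, 2.2, 2.3, 2.8 and conic-bundle pullbacks, which is the conclusion, not a derivation. Your proposed cheap test (large $L^2$-degree on nef non-fibre divisors) fails precisely in the families that matter, so it cannot stand in for explicit data. What is needed is the following.
\begin{itemize}
\item In Picard rank two, the primitive nef generators $L_1,L_2$ form a $\mathbb Z$-basis of $\Pic(X)$ \cite[Theorem~5.1]{MM83}. So the candidates are $aL_1+bL_2$ with $a,b\ge0$ and $aL^2L_1+bL^2L_2\le9$.
\item One then needs the generator degrees family by family: for example $(3,1)$ and $(6,2)$ for 2.1 and 2.3; $(2,4)$ for 2.2; $(8,6)$ for 2.8; and $(9,7),(11,7),(12,8),(14,8),(17,9)$ for 2.9, 2.11, 2.13, 2.16, 2.20.
\item The parity condition \eqref{ar:parity} is genuinely required to discard $S\sim M$ in 2.4, 2.5 and 2.9, where $M^2S+LMS=5,9,5$.
\item For $\rho\ge3$ one must bound the integral nef classes using an integral basis and generators of the Mori cone. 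One then checks that, besides fibre classes, only conic-bundle pullbacks survive (in 3.1--3.4, 3.7--3.9, 3.11, 3.14), and that for $\rho=4,5$ only fibre classes remain.
\end{itemize}
None of this is supplied, and it is the whole content of the proposition.

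\textbf{A second, smaller gap in case~\eqref{ar:case-conic}.} The statement requires $\Gamma$ to be smooth and $S=\pi^{-1}(\Gamma)$ scheme-theoretically; Lemma~\ref{ar:conic} uses a local equation $u=0$ for $\Gamma$. Your derivation of $\Gamma^2>0$ from $LS^2=2\Gamma^2>0$ is fine. But your observation that $S$ is numerically trivial on $\pi$-fibres only shows that $S$ is set-theoretically a union of fibres. It says nothing about the smoothness of $\Gamma=\pi(S)$, and integral members of a linear system $|\Gamma_0|$ can be singular in general (nodal plane cubics, for instance).

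The paper instead uses $\pi_*\mathcal O_X=\mathcal O_B$ to get $H^0(X,\mathcal O_X(\pi^*\Gamma_0))=H^0(B,\mathcal O_B(\Gamma_0))$. Hence $S=\pi^*\Gamma$ with $\Gamma\in|\Gamma_0|$ integral. It then uses the fact that the only surviving systems are lines on $\bP^2$, $(1,1)$-curves on $\bP^1\times\bP^1$, and $|h|$ on $\Bl_p\bP^2$. Their integral members are smooth, with $\Gamma^2=1,2,1$. So smoothness of $\Gamma$ also depends on the enumeration you did not carry out.
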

\begin{proof}
	By Lemmas~\ref{ar:reduction} and \ref{ar:conductor}, $S$ is nef and
	\[
	1\le L^2S\le9.
	\]
	Suppose that $S\sim mF$, where $F$ is the full fiber class of a
	del Pezzo fibration $\pi:X\to\mathbf P^1$ and $m\ge1$.
	Then $m=1$ since $S$ is integral.
	Moreover, $LS^2=0$, so $S$ is normal by
	Lemma~\ref{ar:conductor}. Thus case~\ref{ar:case-fibre} holds.
	
	It remains to determine the possible classes of $S$ that are not
	positive multiples of del Pezzo fiber classes. For this, we use
	the Mori--Mukai models and their intersection data \cite{MM83},
	together with the descriptions of the nef cones in
	\cite[\S10]{Fujita2016} and
	\cite[\S\S8--12]{burke2022geometric}.
	
	\par\medskip
	\noindent\textbf{(Picard number two).}
	Let $L_1,L_2$ be the primitive generators of the extremal rays of
	$\operatorname{Nef}(X)$.
	By \cite[Theorem~5.1]{MM83}, they form a $\bZ$-basis of
	$\operatorname{Pic}(X)$.
	Thus
	\[
	S\sim aL_1+bL_2,\qquad
	a,b\in\bZ_{\ge0},\qquad
	aL^2L_1+bL^2L_2\le9.
	\]
	For each family, the two generator degrees therefore determine
	a finite set of candidate divisor classes.
	There are nine primitive families:
	\[
	2.2,\quad2.6,\quad2.8,\quad2.18,\quad2.24,\quad
	2.32,\quad2.34,\quad2.35,\quad2.36,
	\]
	see \cite[Theorem~1.7]{MM83}.
	We first treat imprimitive families, which admit blow-up
	presentations along smooth curves.
	
	\par\smallskip
	\noindent\textbf{(Imprimitive families).}
	Write $\pi:X\to Y$ for the blow-up of a smooth curve $C$ of
	genus $\gamma$, let $E$ be its exceptional divisor, and put
	$M=\pi^*H$, where $H$ is the ample generator of
	$\operatorname{Pic}(Y)$.
	We have
	\begin{equation}\label{ar:blowup-numbers}
		L^2M=(-K_Y)^2H-H\cdot C,\qquad
		L^2E=(-K_Y)\cdot C-2\gamma+2.
	\end{equation}
	
	See \cite[Section 4.4]{Crowley2020} for the description of each family.
	In families $2.1$ and $2.3$, the nef generators are $M$ and
	$F=M-E$, with
	\[
	(L^2M,L^2F)=(3,1)\quad\text{and}\quad(6,2),
	\]
	respectively. Here $Y=V_1$ or $V_2$, and $F$ is a del Pezzo
	fiber class. Every nonfiber candidate therefore has the form
	$S\sim uM+vF$, with $u>0$ and $v\ge0$, giving
	case~\eqref{ar:case-elliptic}.
	
	In families $2.11$, $2.13$, $2.16$, and $2.20$, the class $M$ is excluded
	by the degree bound.
	In families $2.4$, $2.5$, and $2.9$, the class $M$ satisfies
	the degree bound, but $S\sim M$ is excluded by \eqref{ar:parity},
	since $M^2S+LMS=5,9,5$, respectively.

	In families $2.4$ and $2.5$, the other nef generator is a del Pezzo
	fiber class.
	In families $2.9$, $2.11$, $2.13$, $2.16$, and $2.20$, the
	other nef generator $G$ is the pullback of a line under a
	conic bundle to $\bP^2$. The generator degrees are
	\[
	(L^2M,L^2G)
	=(9,7),\ (11,7),\ (12,8),\ (14,8),\ (17,9),
	\]
	respectively. Apart from the class $M$ in family $2.9$,
	which was excluded above, the degree bound leaves only $G$.
	This gives case~\eqref{ar:case-conic}.
	
	Among all remaining imprimitive families, the degree bound
	leaves only multiples of del Pezzo fiber classes in
	$2.7$, $2.10$, $2.14$, $2.25$, $2.29$, and $2.33$,
	giving case~\eqref{ar:case-fibre}.
	
	\par\smallskip
	\noindent\textbf{(Primitive families).}
	We now treat the nine primitive families.
	The nef generators below are also recorded in the
	calculations of \cite[\S7]{LTT}.
	
	For the double cover $q:X\to\bP^1\times\bP^2$
	in family $2.2$, let $A,B$ denote the hyperplane classes on
	the two factors.
	The nef generators $q^*A,q^*B$ have degrees $2,4$, and
	$q^*A$ is a del Pezzo fiber class.
	The inequality $2a+4b\le9$, with $a\ge0$ and $b>0$, leaves
	precisely
	\[
	q^*B,\qquad q^*(A+B),\qquad
	q^*(2A+B),\qquad q^*(2B),
	\]
	giving \eqref{ar:case-22}.
	
	In families $2.6$, $2.8$, $2.18$, and $2.24$, the generator
	degrees $(L^2L_1,L^2L_2)$ are
	\[
	(6,6),\qquad(8,6),\qquad(8,8),\qquad(9,12),
	\]
	respectively.
	The degree bound leaves both generators in the first three
	families and only $L_1$ in family $2.24$.
	
	In family $2.8$, let
	$q:X\to Y=\operatorname{Bl}_p\bP^3$ be the double cover.
	Write $h$ for the pullback of the hyperplane class under
	$Y\to\bP^3$ and $e$ for the exceptional divisor.
	The two nef generators are
	\[
	L_1=q^*h,\qquad L_2=q^*(h-e).
	\]
	The class $q^*h$ gives case~\eqref{ar:case-28}, whereas
	$q^*(h-e)$ is the pullback of a line under the conic bundle
	$X\to\bP^2$ induced by projection from $p$.
	In family $2.18$, one generator is a del Pezzo fiber class,
	giving case~\eqref{ar:case-fibre}.
	All remaining candidates are pullbacks of lines along conic bundles,
	giving case~\eqref{ar:case-conic}.
	
	In families $2.32$, $2.35$, and $2.36$, both generators have degree
	above nine, so there are no candidates.
	Finally, $2.34$ is the product $\bP^1\times\bP^2$,
	excluded by hypothesis.
	This completes the rank-two analysis.
	
	\par\medskip
	\noindent\textbf{(Picard number at least three).}
	Choose an integral basis $D_1,\ldots,D_\rho$ of
	$\operatorname{Pic}(X)$ and generators
	$\gamma_1,\ldots,\gamma_s$ of the Mori cone.
	Writing $S\sim\sum_i a_iD_i$, the numerical conditions are
	\[
	a_i\in\mathbb Z,\qquad
	\sum_i a_i(D_i\cdot\gamma_j)\geq0
	\quad(1\leq j\leq s),\qquad
	1\leq\sum_i a_iL^2D_i\leq9.
	\]
	Since $L^2$ is strictly positive on every nonzero nef class,
	these inequalities leave finitely many integral divisor classes.
	We also impose the parity condition~\eqref{ar:parity}.
	
	For $\rho(X)=3$, apart from positive multiples of del Pezzo
	fiber classes, the degree bound leaves only pullbacks of curve
	classes under conic bundles, as in case~\eqref{ar:case-conic}.
	The corresponding curve classes on the bases are $(1,1)$ on
	$\bP^1\times\bP^1$ in families $3.1$ and $3.2$;
	lines on $\bP^2$ in families $3.3$, $3.7$, $3.9$,
	$3.11$, and $3.14$;
	and the pullback $h$ of the line class on $\bP^2$ to
	$\operatorname{Bl}_p\bP^2$ in families $3.4$ and $3.8$.
	
	For nonproduct families of Picard number four or five,
	only del Pezzo fiber classes remain, giving
	case~\eqref{ar:case-fibre}.
	All families of Picard number at least six are products
	and are excluded by hypothesis.
	
	To finish, we verify that the conic-bundle classes above give the
	asserted description for every integral member.
	Let $\pi:X\to B$ be one of these conic bundles and write
	$S\sim\pi^*\Gamma_0$. Since $\pi_*\mathcal O_X=\mathcal O_B$,
	the projection formula gives
	\[
	H^0(X,\mathcal O_X(\pi^*\Gamma_0))
	=H^0(B,\mathcal O_B(\Gamma_0)).
	\]
	Thus $S=\pi^{-1}(\Gamma)$ scheme-theoretically for some
	$\Gamma\in|\Gamma_0|$, and integrality of $S$ forces $\Gamma$
	to be integral. In the listed systems, every integral member is
	smooth: it is a plane line, a $(1,1)$-curve on
	$\bP^1\times\bP^1$, or the strict transform of a line
	avoiding $p$ on $\operatorname{Bl}_p\bP^2$.
	Their self-intersections are $1,2,1$, respectively.
	This proves case~\eqref{ar:case-conic} and completes the proof of the proposition.
\end{proof}

\subsection{Proof of the main theorem}

\begin{prop}\label{ar:elliptic}
	Case~\eqref{ar:case-elliptic} of Proposition~\ref{ar:classification} does
	not occur.
\end{prop}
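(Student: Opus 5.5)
The plan is to reduce case~\eqref{ar:case-elliptic} to the Picard-number-one situation on $V_j$ by means of Lemma~\ref{ar:blowdown}, and then invoke Proposition~\ref{ar:rank1-exclusion}. Place ourselves in case~\eqref{ar:case-elliptic}. Then $X=\operatorname{Bl}_CV_j$ with $j\in\{1,2\}$, $C=D_1\cap D_2$ for $D_1,D_2\in|H|$, and $-K_{V_j}\sim2H$. Moreover $X$ is Fano, since it belongs to family $2.1$ or $2.3$, and $S\sim uM+vF$ with $u>0$ and $v\ge0$. These are exactly the standing hypotheses of Lemma~\ref{ar:blowdown}, with $\varphi:X\to\bP^1$ the morphism given by the pencil $\langle D_1,D_2\rangle$, whose fibers have class $F=M-E$.

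First I verify the two remaining hypotheses of that lemma. The condition $S\ne E$ holds because $E\sim M-F$, and $\{M,F\}$ is a $\bZ$-basis of $\operatorname{Pic}(X)$. So $E$ is not of the form $uM+vF$ with $v\ge0$.

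Next, $S$ dominates $\bP^1$. Otherwise $S$, being integral, would be contained in a fiber of $\varphi$. Every fiber of $\varphi$ is the strict transform of a member of the pencil, with class $F$. Hence $S$ would be a component of such a fiber, and $S$ would have the form $F-(\text{effective})$. This contradicts $S\sim uM+vF$ with $u>0$. Numerically, $SF\cdot L=uMFL>0$ already shows that $S$ meets a general fiber in a curve, so the image of $S$ is all of $\bP^1$.

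Lemma~\ref{ar:blowdown} then applies. It shows that $D=\pi(S)$, which is an integral surface because $S\ne E$ is not contracted by $\pi$, satisfies $a(D,-K_{V_j})=1$ and is adjoint rigid with respect to $-K_{V_j}$. In other words, $D\subset V_j$ satisfies \eqref{ar:assumption}. But $V_j$ is a smooth Fano threefold of Picard number one, so Proposition~\ref{ar:rank1-exclusion} forbids such a surface. This is the required contradiction.

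The step I expect to need the most care is the dominance of $S$ over $\bP^1$, in particular ruling out that $S$ is a component of a reducible or non-reduced fiber. The fiber classes of $\varphi$ are all $F$, and $S$ has a nonzero $M$-coefficient, so the intersection-number argument above should settle this cleanly. Everything else is a direct citation of earlier results.
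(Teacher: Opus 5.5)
Your proposal is correct and follows the paper's own proof. You use $u>0$ to get that $S$ dominates $\bP^1$, then Lemma~\ref{ar:blowdown} to push $S$ down to an adjoint rigid surface with $a$-invariant one on $V_j$, which contradicts Proposition~\ref{ar:rank1-exclusion}. Your explicit checks of $S\ne E$ (from $E\sim M-F$) and of dominance (from $LSF=u\,LMF>0$, since $F^2\equiv0$) simply spell out hypotheses that the paper leaves implicit.
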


\begin{proof}
	Retain the notation of case~\eqref{ar:case-elliptic}.
	Since $u>0$, the surface $S$ dominates the base $\bP^1$
	of the del Pezzo fibration.
	Lemma~\ref{ar:blowdown} then implies that $\pi(S)\subset V_j$
	is adjoint rigid with $a$-invariant one, contrary to
	Proposition~\ref{ar:rank1-exclusion}.
\end{proof}

\begin{prop}[The double-cover cases]\label{ar:double-exclusion}
	Cases~\eqref{ar:case-22} and \eqref{ar:case-28} of
	Proposition~\ref{ar:classification} do not occur.
\end{prop}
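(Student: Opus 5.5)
The plan is to reduce both cases to Lemma~\ref{ar:double} and then check that the obstruction \eqref{ar:double-obstruction} fails numerically. Suppose $S$ satisfies \eqref{ar:assumption}. By Lemma~\ref{ar:reduction} the normalization $g\colon T\to X$ satisfies $\omega_T\simeq g^*\omega_X$, which is exactly the hypothesis of Lemma~\ref{ar:double}. It then remains to realize $S$ as $q^{-1}(D)$ for a suitable integral surface $D$ on the base $Y$, to verify smoothness of $D$ (or enough of it for the lemma), and to compute $D^2(B-D)$, where $B$ is the branch divisor.

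\textbf{Step 1: $S$ is a pullback.} Write $q_*\cO_X=\cO_Y\oplus\cO_Y(-M)$, where $2M$ is the branch class. Then
\[
H^0(X,q^*\cO_Y(D))=H^0(Y,\cO_Y(D))\oplus H^0(Y,\cO_Y(D-M)).
\]
\begin{itemize}
\item In case~\eqref{ar:case-22} we have $M=A+2B$ on $\bP^1\times\bP^2$. For $D\in\{B,A+B,2A+B,2B\}$ the class $D-M$ has a negative coefficient, so it has no sections.
\item In case~\eqref{ar:case-28} we have $M=2h-e$, and $h-M=-h+e$ is not effective.
\end{itemize}
Hence every member of $|q^*D|$ is $q^{-1}(D')$ for some $D'\in|D|$. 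Since $S$ is integral, $D'$ is integral.

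\textbf{Step 2: the numerical contradiction.} Use $A^2=0$, $AB^2=1$, $B^3=0$ on $\bP^1\times\bP^2$, with branch divisor $\Delta\sim 2A+4B$ (renamed to avoid the clash with the class $B$). Then:
\begin{itemize}
\item $B^2(2A+3B)=2$;
\item $(A+B)^2(A+3B)=7$;
\item $(2A+B)^2(3B)=12$;
\item $(2B)^2(2A+2B)=8$.
\end{itemize}
On $\operatorname{Bl}_p\bP^3$ we get $h^2(4h-2e-h)=h^2(3h-2e)=3$. All of these are nonzero, so Lemma~\ref{ar:double} gives a contradiction once its smoothness hypothesis on $D$ is verified.

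\textbf{Step 3: smoothness of $D$ (main obstacle).}
\begin{itemize}
\item In case~\eqref{ar:case-28}, an integral member of $|h|$ is the strict transform of a plane, or that plane blown up at $p$. Either way it is smooth.
\item In $|B|$, every member $\bP^1\times\ell$ is smooth.
\item In $|A+B|$, write the equation as $x_0l_0(y)+x_1l_1(y)$. If $l_0,l_1$ are independent, a gradient computation shows the surface is smooth. If they are dependent, the surface is reducible.
\item In $|2B|$, the surface is $\bP^1\times$conic, and it is integral and smooth exactly when the conic is smooth.
\end{itemize}
The delicate class is $|2A+B|$, whose integral members can be singular. Here I would first observe that the proof of Lemma~\ref{ar:double} is local at the generic point of each component $\Gamma$ of $R$, together with intersection numbers computed on $Y$. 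So it should suffice that $D$ is normal, or even just smooth at generic points of the components of $R$. Integral members of $|2A+B|$ are hypersurfaces, hence $S_2$. I would then check that they have only isolated singularities, which I expect to do by a Jacobian computation on the equation $\sum_i x$-quadrics$\cdot\ell_i(y)$. If some integral member fails to be normal, the fallback is to run the conormal argument only along components $\Gamma$ not contained in $\operatorname{Sing}D$. Those along $\operatorname{Sing}D$ would be treated via the conductor identity of Lemma~\ref{ar:conductor}, using $LS^2=2(A+B)D^2$.
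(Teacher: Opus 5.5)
Your Steps~1 and~2 are the paper's argument. The projection formula with $q_*\cO_X=\cO_Y\oplus\cO_Y(-M)$ gives $S=q^{-1}(D)$ with $D$ integral. Lemma~\ref{ar:double} applies because of \eqref{ar:crepant}. Your values $2,7,12,8$ and $3$ agree with the paper's $5a+2$ ($a=0,1,2$), $8$ and $3$.

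The gap is in Step~3, for the class $2A+B$. You assert that integral members of $|2A+B|$ can be singular. You then leave the case open, pending a Jacobian computation you expect to show only isolated singularities, plus a fallback you do not carry out. That fallback is not automatic. The proof of Lemma~\ref{ar:double} needs $R$ and each component $\Gamma$ to be Cartier on $D$: the first to build the double cover $T'\to D$ from $\cO_D(2(M|_D-R))$, the second for the conormal sequences. Without smoothness of $D$ this is not available. So your argument is incomplete for this class, and its premise is false.

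The missing observation is short, and it handles $|B|$, $|A+B|$ and $|2A+B|$ at once. Write $D=\{f_0(x)y_0+f_1(x)y_1+f_2(x)y_2=0\}$ with $f_i$ binary forms of degree $a\in\{0,1,2\}$.
\begin{itemize}
\item The partial derivatives in the $y$-variables are the $f_i(x)$.
\item If the $f_i$ vanish simultaneously at some $x^*\in\bP^1$, then $\{x^*\}\times\bP^2\subset D$, so $D$ is reducible.
\item Hence for integral $D$ the map $(f_0,f_1,f_2)\colon\cO_{\bP^1}^{3}\to\cO_{\bP^1}(a)$ is surjective.
\item Then $D$ is the projectivization of its rank-two kernel, a $\bP^1$-bundle over $\bP^1$, and in particular smooth.
\end{itemize}
This is exactly the paper's claim. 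With it, no extension of Lemma~\ref{ar:double} to singular $D$ and no conductor argument is needed.

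A minor point in case~\eqref{ar:case-28}: the integral members of $|h|$ are exactly the pullbacks of planes not through $p$, each isomorphic to $\bP^2$. A plane $P$ through $p$ pulls back to its strict transform plus $e$, and $\Bl_pP$ lies in $|h-e|$. So your second alternative does not occur, though this does not affect the conclusion.
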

\begin{proof}
	Retain the notation of Proposition~\ref{ar:classification}.
	Let $M$ be the line bundle defining the double cover, so that
	\[
	q_*\mathcal O_X
	=\mathcal O_Y\oplus\mathcal O_Y(-M).
	\]
	Here $M=A+2B$ on $\bP^1\times\bP^2$, and
	$M=2h-e$ on $\operatorname{Bl}_p\bP^3$.
	For each candidate $S\in|q^*D|$ (where we write $D=h$ in Case \eqref{ar:case-28}), the projection formula gives
	\[
	H^0\bigl(X,\mathcal O_X(q^*D)\bigr)
	=
	H^0\bigl(Y,\mathcal O_Y(D)\bigr)
	\oplus
	H^0\bigl(Y,\mathcal O_Y(D-M)\bigr).
	\]
	The second summand vanishes: on
	$\bP^1\times\bP^2$, the classes $D-M$ are
	$-A-B$, $-B$, $A-B$, and $-A$, respectively;
	on $\operatorname{Bl}_p\bP^3$, the class is $-h+e$.
	Thus the section defining $S$ is the pullback of a section
	$s\in H^0(Y,\mathcal O_Y(D))$.
	Setting $D=(s=0)$, we obtain
	$S=q^{-1}(D)$ scheme-theoretically.
	Since $q$ is faithfully flat and $S$ is integral, $D$ is integral.
	
	We claim that $D$ is smooth in every case.
	On $\bP^1\times\bP^2$, an integral member of $|aA+B|$,
	$a=0,1,2$, is a $\bP^1$-bundle over $\bP^1$,
	while an integral member of $|2B|$ is the product of
	$\bP^1$ with a smooth plane conic.
	On $\operatorname{Bl}_p\bP^3$, an integral member of $|h|$
	is the strict transform of a plane in $\bP^3$ not containing $p$,
	and hence is isomorphic to $\bP^2$.
	
	Lemma~\ref{ar:double} therefore gives $D^2(B_0-D)=0$,
	where $B_0$ is the branch divisor.
	On $\bP^1\times\bP^2$, we have
	\[
	\begin{aligned}
		D^2(B_0-D)
		&=(aA+B)^2\bigl((2-a)A+3B\bigr)
		=5a+2
		&&\text{if }D\sim aA+B,\quad a=0,1,2,\\
		D^2(B_0-D)
		&=(2B)^2(2A+2B)=8
		&&\text{if }D\sim2B.
	\end{aligned}
	\]
	On $\operatorname{Bl}_p\bP^3$, we have
	$B_0\sim4h-2e$ and $D\sim h$, hence
	\[
	D^2(B_0-D)=h^2(3h-2e)=3.
	\]
	All these values are nonzero, giving a contradiction.
\end{proof}

We finally give a proof of the main theorem of this section.
\begin{proof}[Proof of Theorem~\ref{ar:main}]
	Assume that $X$ is not a product. Proposition~\ref{ar:rank1-exclusion}
	excludes Picard number one. For higher Picard number,
	Proposition~\ref{ar:classification} applies.
	Cases~\eqref{ar:case-conic}--\eqref{ar:case-28} are excluded by
	Lemma~\ref{ar:conic},
	Proposition~\ref{ar:elliptic}, and
	Proposition~\ref{ar:double-exclusion}.
	Hence Case~\eqref{ar:case-fibre} holds, and $S$ is a reduced normal
	fiber of a del Pezzo fibration.
	The surface $S$ has Du Val singularities by Lemma~\ref{ar:reduction}.
\end{proof}

\section{$b$-exceptional sets and del Pezzo fibrations}
\label{sec-bsuface}

Let $X$ be a smooth Fano threefold over a number field $F$, and put
$L=-K_X$. We write $Z(X)$ for the geometric exceptional set of
Definition~\ref{defi-ges}.
We write $Z_a(X)$ for the union of rational-point images of
resolutions of subvarieties with higher $a$-invariant, as defined
in Section~\ref{sec-3}.

Theorem~\ref{ar:main} identifies the adjoint rigid surfaces with
$a$-invariant one on a geometrically non-product threefold as fibers
of del Pezzo fibrations. We first show that the fibers with worse
singularities have higher $a$-invariant.

\subsection{Degenerate fibers and the $a$-exceptional locus}
\label{sec-degenerate-fibers}

\begin{prop}\label{prop-dp-bad-fibers}
	Let $\pi:X\to B$ be a del Pezzo fibration on a smooth Fano threefold
	over an algebraically closed field of characteristic zero, and put
	$L=-K_X$. Then the following hold.
	\begin{enumerate}[label={\rm(\arabic*)}]
		\item Let $S$ be an integral fiber. If either $S$
		is non-normal or has non-canonical singularities,
		then $a(S,L|_S)>1$.
		\label{prop-dp-bad-fibers-1}
		\item Every irreducible component $S$ of a reducible fiber satisfies
		$a(S,L|_S)>1$.
		\label{prop-dp-bad-fibers-2}
		\item Every fiber is reduced.
		\label{prop-dp-bad-fibers-3}
	\end{enumerate}
	Consequently, an integral fiber $S$ has $a(S,L|_S)=1$ if and only if it has
	at most Du Val singularities. In this case $(S,L|_S)$ is adjoint
	rigid. Every irreducible component of any other fiber has
	$a$-invariant greater than one.
\end{prop}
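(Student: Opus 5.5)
The plan is to compare each fiber, or fiber component, with the smooth fibers via adjunction, and to use Lemma~\ref{ar:reduction} together with Lemma~\ref{ar:conductor} to decide whether the $a$-invariant can equal one. Throughout, a general fiber $S_0$ satisfies $K_{S_0}=K_X|_{S_0}$, because $S_0|_{S_0}\sim0$.

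\textbf{Step 1: upper bound $a\le1$ on every component.} Let $S$ be an irreducible component of any fiber, and let $\mu:\widetilde S\to S$ be a resolution. I will show $a(S,L|_S)\le 1$ cannot fail in the wrong direction, i.e. that it suffices to prove $\neq1$ in (1) and (2). The argument is that if $a(S,L)<1$, then $K_{\widetilde S}+\mu^*L$ is big. Since $S$ lies in a fiber of $X\to B$, a degeneration argument (upper semicontinuity of $a$ in families, or rather the fact that $S$ is covered by limits of rational curves of $L$-degree at most $3$ coming from the smooth fibers) contradicts bigness. In practice I will not need this: the key point is that $a(S,L)<1$ is impossible for a surface swept out by rational curves of low degree. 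What we actually need is the following dichotomy: if $a(S,L)=1$, then either $(S,L)$ is adjoint rigid or $\kappa=1$.

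\textbf{Step 2: reduction to adjoint rigidity.} Suppose $a(S,L)=1$. If $\kappa(K_{\widetilde S}+\mu^*L)=1$, then the adjoint Iitaka fibration gives a covering family of curves $C$ with $a(C,L)=1$. Such curves are rational of $L$-degree $2$, that is, anticanonical conics. I will then use the structure of $S$ as a component of a degeneration of del Pezzo surfaces to rule this out, or alternatively push $K_{\widetilde S}+\mu^*L$ forward. The honest route is the following: write $K_{\widetilde S}+\mu^*L=\mu^*(K_X+S)|_S+(\text{correction})$, where the correction is minus the conductor and minus the discrepancy divisor. Now $K_X+S\equiv K_X+F-(\text{other components})$ restricts to $K_X|_S-\sum_j S_j|_S$, where the $S_j$ are the other components of the fiber. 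Since $F|_S\equiv0$, adjunction gives
\[
K_{\widetilde S}+\mu^*L\sim -\mu^*\Big(\sum_{j} m_jS_j\Big)\Big|_S-(m-1)\mu^*S|_S-\Delta-E,
\]
where $m$ is the multiplicity of $S$ in the fiber, $\Delta$ is the conductor, and $E\ge0$ is the non-canonical part. The term $\sum m_j S_j|_S$ is effective and nonzero when the fiber is reducible, by connectedness of fibers, while $-(m-1)S|_S$ is numerically $\frac{m-1}{m}\sum m_jS_j|_S$.

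\textbf{Step 3: the three cases.} In case (2), the restriction $D=\sum m_jS_j|_S$ is a nonzero effective curve class, and its negative is anti-effective. Pairing with a movable curve on $\widetilde S$ that meets $D$ (for example, the pullback of an ample curve class) shows that $K_{\widetilde S}+\mu^*L$ is not pseudo-effective. Hence $a(S,L)>1$. In case (1), the fiber is irreducible, so $S|_S\equiv0$, and the formula reduces to $K_{\widetilde S}+\mu^*L\sim-\Delta-E$. If $S$ is non-normal, then $\Delta\neq0$ by finite duality, as in Lemma~\ref{ar:conductor}. If $S$ is normal but not canonical, then $E\ne0$. In either case, a nonzero anti-effective class is not pseudo-effective, so $a>1$. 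For (3), if the fiber equals $mS$ with $m\ge2$, then $S|_S$ is $\bQ$-trivial, and the canonical bundle formula gives $K_S=(K_X+S)|_S$ with $mS\sim F$. The class $K_{\widetilde S}+\mu^*L$ is then torsion-shifted by $\mu^*S|_S$, which is torsion of order $m$ but nontrivial. I expect this to be the main obstacle. I would first try to use simple connectedness: a smooth Fano threefold has $H^1(X,\mathcal O_X)=0$ and $\pi_1(X)=1$. The line bundle $\mathcal O_X(S)$ satisfies $\mathcal O_X(mS)=\pi^*\mathcal O(1)$. Hence $\mathcal O_X(S)$ is an $m$-th root of a pullback, and by $\Pic X$ being torsion-free together with the ramification of the induced root cover, this forces a cyclic cover of $X$ unramified in codimension one, which is impossible. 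More concretely, the base change $X'\to X$ along $z\mapsto z^m$ at the point becomes étale after normalization, contradicting $\pi_1(X)=1$.

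\textbf{Step 4: the converse.} Finally, if $S$ is integral with Du Val singularities, then $S$ is Gorenstein and $K_S=K_X|_S$. The minimal resolution is crepant, so $K_{\widetilde S}+\mu^*L\sim0$. This gives $a=1$ and $\kappa=0$, i.e. adjoint rigidity. Combining Steps 3 and 4 yields the final statement.
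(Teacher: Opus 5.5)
Your treatment of (1), (2) and the Du Val converse is essentially the paper's argument. It uses adjunction $K_S+L|_S\sim S|_S$, the conductor of the normalization, the non-positive discrepancies of the minimal resolution, and pairing the adjoint class with a nef class pulled back from $X$. The paper pairs with $\mu^*L$ and uses reducedness in (2); your bookkeeping with multiplicities is fine and avoids needing (3). Step~1 and the first part of Step~2 play no role and can be deleted. The genuine gap is part~(3).

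\textbf{Why the argument for (3) fails.}
\begin{enumerate}
\item If $\cO_X(S)^{\otimes m}\cong\pi^*\cO_{\bP^1}(1)$, then $\cO_X(S)$ is not a pullback twisted by a torsion line bundle, because $\cO_{\bP^1}(1)$ has no $m$-th root. So torsion-freeness of $\Pic X$ is perfectly compatible with the fiber class being divisible by $m$, and no cyclic cover unramified in codimension one is produced.
\item The normalized base change along $z\mapsto z^m$ is \'etale only over a small disc around $b$. A global degree-$m$ cyclic cover of $\bP^1$ totally ramified at $b$ must also ramify over some $b'\neq b$, by Riemann--Hurwitz. The normalized base change is then ramified along the reduced fiber $X_{b'}$, so $\pi_1(X)=1$ is not contradicted. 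In fact simple connectivity cannot exclude multiple fibers: the orbifold $\bP^1$ with a single cone point is simply connected, and Dolgachev surfaces are simply connected elliptic surfaces with multiple fibers.
\item You only treat $X_b=mS$, whereas (3) also asserts reducedness of reducible fibers with a non-reduced component.
\end{enumerate}
The gap is not cosmetic. For an irreducible fiber $mS$ with $S$ normal and canonical, your own formula gives $K_{\widetilde S}+\mu^*L\sim\mu^*(S|_S)$, which is torsion. That yields $a=1$ and adjoint rigidity, so the final assertion of the proposition fails unless such fibers are excluded.

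\textbf{A fix via a local invariant.} What is needed is a local numerical invariant, not a global topological one. For $X_b=mS$ we have $mS\sim X_b$, hence $S^2\equiv0$, and Riemann--Roch on $X$ gives
\[
\chi(\cO_S)=\chi(\cO_X)-\chi(\cO_X(-S))=\tfrac1{12}\,S\cdot\bigl(K_X^2+c_2(X)\bigr)=\tfrac1m\,\chi(\cO_{X_b})=\tfrac1m .
\]
Since $\chi(\cO_S)$ is an integer, $m=1$. The same computation, applied to $X_b=gF'$, excludes fibers whose multiplicities have a common factor $g\geq2$. Fibers such as $2S_1+S_2$ still require a further argument. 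The paper does not prove (3) itself; it cites \cite[Proposition~1.4 and Lemma~2.8]{Fujita1990} for the reducedness of all fibers.
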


\begin{proof}
	Every fiber of $\pi$ is reduced by
	\cite[Proposition~1.4 and Lemma~2.8]{Fujita1990}.
	Let $S$ be an irreducible component of a fiber.
	Let $\nu:T\to S$ be its normalization, let $r:\widetilde S\to T$
	be a resolution, and set $\mu=\nu\circ r$ and $H=\nu^*(L|_S)$.
	Write $\Delta$ for the effective conductor divisor on $T$.
	Adjunction and the normalization formula
	\cite[Proposition~2.3]{Reid94}
	\[
	K_T+\Delta\sim\nu^*K_S,
	\qquad K_S+L|_S\sim S|_S
	\]
	implies
	\[
	\begin{aligned}
		K_T+H
		\sim \nu^*(K_S+L|_S)-\Delta
		\sim \nu^*(S|_S)-\Delta.
	\end{aligned}
	\]
	By the projection formula,
	\begin{align}\label{eq-fiber-conductor-intersection}
			\bigl(K_{\widetilde S}+\mu^*(L|_S)\bigr)
			\cdot\mu^*(L|_S)
			&=(K_{\widetilde S}+r^*H)\cdot r^*H\notag\\
			&=(K_T+H)\cdot H.\notag\\
			&=LS^2-H\cdot\Delta.	
	\end{align}
	Here $H$ is ample and $\mu^*(L|_S)$ is nef.
	% Since $S$ is
	% Cohen--Macaulay, nonnormality implies failure of normality in
	% codimension one, so $\Delta\ne0$ whenever $S$ is nonnormal.
	
	Suppose first that $S$ is an integral fiber. Then
	$\mathcal O_S(S)\simeq\mathcal O_S$ and $LS^2=0$.
	If $S$ is non-normal, the right-hand side of
	\eqref{eq-fiber-conductor-intersection} is negative. Thus
	$K_{\widetilde S}+\mu^*(L|_S)$ is not pseudo-effective, and
	therefore $a(S,L|_S)>1$, proving~\ref{prop-dp-bad-fibers-1}.
	
	If $S$ is normal, let $\mu$ be its minimal resolution and write
	\[
	K_{\widetilde S}+\mu^*(L|_S)
	\sim K_{\widetilde S}-\mu^*K_S
	=\sum_i a_iE_i,
	\]
	where the $a_i$ are the discrepancies.
	If $S$ is not canonical, then some $a_i<0$, while $a_i\leq0$
	for every $i$. Hence
	\(
	K_{\widetilde S}+\mu^*(L|_S)\sim -E
	\)
	for some nonzero effective divisor $E$.
	Its intersection with an ample divisor is negative, so it is not
	pseudo-effective. Thus $a(S,L|_S)>1$.
	
	If $S$ has at most canonical singularities, its minimal resolution
	is crepant, so
	\[
	K_{\widetilde S}+\mu^*(L|_S)\sim0.
	\]
	Hence $a(S,L|_S)=1$ and $(S,L|_S)$ is adjoint rigid.

	For a reducible fiber, write $X_b=\sum_{i=1}^k S_i$,
	with $k\geq2$.
	Since $\mathcal O_X(X_b)|_{S_i}\simeq\mathcal O_{S_i}$,
	\[
	LS_i^2=-\sum_{j\ne i}LS_iS_j<0,
	\]
	where the strict inequality follows from connectedness of
	the fiber and ampleness of $L$.
	Equation~\eqref{eq-fiber-conductor-intersection} therefore gives
	$a(S_i,L|_{S_i})>1$ for every $i$, proving~\ref{prop-dp-bad-fibers-2}.
\end{proof}

\subsection{The conic and del Pezzo contributions}
\label{sec-b-reduction}
\label{sec-moving-adjoint-rigid}

An anticanonical conic is a geometrically integral curve
$C\subset X$ whose normalization is geometrically rational
and for which $L\cdot C=2$.
The following proposition describes the geometric exceptional
set of geometrically non-product smooth Fano threefolds.

\begin{prop}\label{prop-breaking-map-reduction}
	Let $X$ be a smooth Fano threefold over a number field $F$
	such that $X_{\overline F}$ is not a product.
	Put $L=-K_X$ and $r=\rho_F(X)$.
	Then $Z(X)$ is the union of $Z_a(X)$ and the following
	rational-point images, with all source varieties taken
	to be smooth, projective, and geometrically integral:
	\begin{enumerate}[label={\rm(\arabic*)}]
		\item
		If $r=1$, the images $f(Y(F))$ of thin maps with
		$a(Y,f^*L)=1$ whose general geometric adjoint Iitaka
		fibers map birationally onto anticanonical conics.
		
		\item
		For each geometrically integral del Pezzo fiber $S$
		over an $F$-rational point with at worst Du Val singularities,
		let $\mu:\widetilde S\to S$ be its minimal resolution.
		Include $\mu(\widetilde S(F))$ if $\rho_F(\widetilde S)\geq r$.
		Otherwise, include $g(Y(F))$ for every adjoint rigid
		$a$-cover $g:Y\to S$ satisfying
		\[
		b(F,Y,g^*(L|_S))\geq r.
		\]
		
		\item
		The full images $f(Y(F))$ of smooth projective models
		of dominant base changes $X\times_B T\to X$, where
		$\pi:X\to B$ is a del Pezzo fibration and
		$T\to B$ is a finite cover of degree at least two
		by a smooth projective geometrically integral curve,
		provided that
		\[
		\rho_{F(T)}(X_{F(T)})\geq r.
		\]
	\end{enumerate}
\end{prop}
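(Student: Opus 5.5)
Throughout we write $r=\rho_F(X)$. We compare the breaking thin maps $f:Y\to X$ of Definition~\ref{defi-ges} with the listed contributions, proving both inclusions, and we organize the proof by the pair $(a(Y,f^*L),\dim f(Y))$.

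\emph{Maps with higher $a$-invariant.} If $a(Y,f^*L)>1$ and $f$ is not dominant, then $\overline{f(Y)}$ has $a$-invariant $>1$ because the image factors birationally through $Y$. By Lang--Nishimura, $f(Y(F))\subset Z_a(X)$. A dominant $f$ satisfies $a(Y,f^*L)\le a(X,L)=1$, since $K_Y+f^*L\geq f^*(K_X+L)$ by ramification. So all breaking maps with $a>1$ are absorbed in $Z_a(X)$. Conversely, every map in $Z_a(X)$ is breaking. We may therefore assume from now on that $a(Y,f^*L)=1$ and $b(F,Y,f^*L)\ge r$, or that equality holds together with one of the conditions (a)--(c).

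\emph{Curve images.} Suppose $f(Y)$ is a curve $C$ with $a=1$. As in the proof of Proposition~\ref{prop-puredimension2}, $a(C,L)=(2-2g)/(L\cdot C)=1$ forces $C$ to be rational with $L\cdot C=2$, i.e.\ an anticanonical conic. Its $b$-invariant is $1$, so it is breaking precisely when $r\le1$. The same count applies when $f$ maps onto a surface or onto $X$ but its adjoint Iitaka fibers are conics: the $b$-invariant is then computed on the Iitaka base, and one must check that it is again at most $1$ relative to the face. This yields item (1) and shows that conic-type contributions occur only when $r=1$.

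\emph{Surfaces and dominant maps with two-dimensional Iitaka fibers.} If the general geometric adjoint Iitaka fiber of $f$ is a surface, it is an adjoint rigid surface with $a$-invariant one. Its image in $X_{\overline F}$ is then, by Theorem~\ref{ar:main} (using that $X_{\overline F}$ is not a product), a Du Val fiber of a del Pezzo fibration $\pi$, or is dominated by an adjoint rigid $a$-cover of such a fiber. By Galois invariance of the set of fibrations, together with uniqueness of the fibration through a given fiber (since $|S|$ is a pencil), $\pi$ descends to $F$.

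\begin{itemize}
\item If $\dim f(Y)=2$, then $f(Y)=S$ is a fiber over an $F$-point. Using the surface theory of Section~\ref{sec2}, $b(F,\widetilde S,-K_{\widetilde S})=\rho_F(\widetilde S)$, and for a cover $g$ the $b$-invariant is the one defining the condition. This yields item (2), while Proposition~\ref{prop-dp-bad-fibers} places non-Du Val fibers into $Z_a$.
\item If $f$ is dominant, the Iitaka fibration factors through $\pi$ generically. Stein factorization of $Y\to B$ gives a curve $T\to B$, and $f$ factors rationally through $X\times_BT$. One needs $b(F,Y)\le b(F(T)\text{-generic fiber})=\rho_{F(T)}(X_{F(T)})$, and face contraction or $\kappa>0$ must reduce to the same Picard-rank condition. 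The degree-one case $T=B$ gives $\kappa=0$, $b=r$ and no face contraction, hence is not breaking. This yields item (3), where Lang--Nishimura lets us replace $Y$ by a model of the base change.
\end{itemize}

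\emph{Main obstacle.} The hardest step is the $b$-invariant comparison for dominant maps. It requires computing $\mathcal F(Y,f^*L)$ via the generic fiber, in the sense of \cite[Lemma~4.14]{LST}, and showing that conditions (a)--(c) all collapse to the inequality $\rho_{F(T)}\ge r$ on the base-changed threefold. I would try to identify the minimal face with the vertical classes and apply the formula $b=\rho$ on the generic fiber over $F(T)$.
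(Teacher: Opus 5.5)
Your overall strategy (split by the adjoint Iitaka fibration, use Theorem~\ref{ar:main} for surface fibers, and identify dominant contributions with base changes) is the paper's. However, your case division omits one case entirely. It considers one- and two-dimensional Iitaka fibers, but not the case where the fiber is all of $Y$. That is a dominant thin map $f:Y\to X$ with $a(Y,f^*L)=1$ and $\kappa(Y,K_Y+f^*L)=0$, i.e.\ an adjoint rigid $a$-cover of $X$.

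Such a map is breaking as soon as $b(F,Y,f^*L)>r$, or as soon as $b(F,Y,f^*L)=r$ and $f$ is face contracting. Indeed, condition (c) of Definition~\ref{defi-ges} can only be realized in this situation. The rational points of such a cover lie neither in $Z_a(X)$ nor, a priori, in (1)--(3). So until these covers are excluded, the inclusion of $Z(X)$ in the stated union is not proved.

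This does not follow from the ramification formula, which only gives $\kappa\ge0$. The paper handles it with \cite[Theorem~5.5]{beheshti2022moduli}, which excludes $\kappa=0$ for $a$-covers of a smooth Fano threefold. The same theorem gives, in the last part of Lemma~\ref{lemm-dp-base-change-b}, that a base change $X\times_BT$ with $\deg(T/B)\ge2$ has $\kappa=1$ with Iitaka fibration $Y\to T$. You need this for the reverse inclusion in (3), i.e.\ to know the listed base changes are actually breaking.

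Your discussion of face contraction for the dominant surface-fiber maps is misplaced. There $\kappa=1$, so condition (b) holds automatically and only $b\ge r$ matters. The ``degree-one case'' does not occur, since it would make $f$ birational and hence not thin.

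A second input is missing in the dominant surface case, where you leave open that the Iitaka fiber $E$ is an adjoint rigid $a$-cover of $S=f(E)$. Since $E$ is general, $S$ is a smooth fiber of $\pi$, and \cite[Theorem~6.2]{LTDuke} forces $E\to S$ to be birational. Only then is $Y$ birational to the main component of $X\times_BT$ (Proposition~\ref{prop-breaking-surfaces-move}). Without this, the generic fiber of $Y\to T$ could have larger Picard rank than $X_{F(T)}$, and your inequality $b(F,Y,f^*L)\le\rho_{F(T)}(X_{F(T)})$ would be unjustified.

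With birationality in hand, your ``main obstacle'' is resolved as in Lemma~\ref{lemm-dp-base-change-b}. The divisor $K_Y+f^*L$ restricts trivially to the generic fiber, so subtracting any positive multiple of a horizontal prime divisor destroys pseudo-effectivity. Hence no horizontal class enters the minimal face, and \cite[Lemma~4.15(1)]{LST} gives $b=\rho_{F(T)}(X_{F(T)})$. The same formula gives $b=1$ in the conic case, which you left unchecked.

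Finally, in your first step $Y\to\overline{f(Y)}$ is only generically finite, not birational. The inequality $a(\overline{f(Y)},L)\ge a(Y,f^*L)$ comes from the ramification formula instead.
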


\begin{proof}
	Outside the higher-$a$ contribution,
	\cite[Lemmas~4.3 and~4.9]{LST} and
	\cite[Theorem~5.5]{beheshti2022moduli} reduce the classification
	to adjoint Iitaka fibers of dimension one or two.
	The first case gives {\rm(1)}, with $b=1$ by
	\cite[Lemma~4.15(1)]{LST}.
	The second gives {\rm(2)} and {\rm(3)} by
	Theorem~\ref{ar:main} and
	Proposition~\ref{prop-breaking-surfaces-move}.
	For smooth del Pezzo fibers there are no adjoint rigid
	$a$-covers by \cite[Theorem~6.2]{LTDuke}.
	In {\rm(2)}, Lang--Nishimura places every cover image
	inside $\mu(\widetilde S(F))$ whenever the resolution
	is already breaking.
	The criterion in {\rm(3)} follows from
	Lemma~\ref{lemm-dp-base-change-b}.
	Finally, resolutions of geometrically integral non-normal
	or non-canonical fibers, and of geometrically
	integral components of reducible fibers, already contribute
	to $Z_a(X)$ by Proposition~\ref{prop-dp-bad-fibers}.
\end{proof}

\begin{rema}\label{rema-ges-density-reduction}
	In the setting of Proposition~\ref{prop-breaking-map-reduction},
	Zariski density of $Z(X)$ can only arise from the conic
	contribution in {\rm(1)}, or from smooth breaking del Pezzo
	fibers and the dominant breaking base changes in {\rm(3)}.
	
	Indeed, $Z_a(X)$ is contained in a proper closed subset by
	Section~\ref{sec-3}. There are only finitely many del Pezzo
	fibrations on $X$, up to isomorphism of the base, since their
	contractions correspond to faces of the rational polyhedral
	Mori cone. The union of their singular fibers is therefore
	also a proper closed subset. Smooth del Pezzo surfaces admit
	no adjoint rigid $a$-covers, so their contribution in {\rm(2)}
	consists precisely of $S(F)$ for smooth fibers satisfying
	$\rho_F(S)\geq\rho_F(X)$.
	
	The conic contribution occurs only when $\rho_F(X)=1$.
	In this case no del Pezzo fibration is defined over $F$.
	When $\rho_F(X)>1$, only the del Pezzo contribution can
	account for Zariski density.
\end{rema}

The following proposition records the base-change argument used in
\cite[proof of Theorem~5.3]{beheshti2022moduli} and
\cite[proof of Theorem~3.15, Steps~1--2]{gao2024geometric}.
\begin{prop}[Surface fibers and base change]
	\label{prop-breaking-surfaces-move}
	Let $\pi:X\to B$ be a del Pezzo fibration from a smooth
	Fano variety over $F$ to a smooth projective variety.
	Let $f:Y\to X$ be an $a$-cover, and let
	$p:Y\dashrightarrow T$ be its adjoint Iitaka fibration.
	Suppose that $f$ maps a general geometric fiber of $p$ onto
	a general geometric fiber of $\pi$.
	Then, after replacing $T$ by a smooth birational model,
	there is a generically finite morphism $T\to B$ such that
	$(f,p)$ induces a birational map from $Y$ to the main component
	of $X\times_B T$.
\end{prop}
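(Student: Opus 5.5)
The argument combines the Stein factorization of the adjoint Iitaka fibration with the rigidity of fibrations by connected varieties. Resolve $p$ so that $p:Y\to T$ is a morphism with connected fibers onto a smooth projective variety $T$; we may assume $Y$ is smooth. Let $V\subset T$ be a dense open subset over which the geometric fibers $Y_t$ are integral and $f(Y_t)$ is a geometric fiber $X_{b}$ of $\pi$. Such a $V$ exists by the hypothesis and generic flatness. Consider the composition $\pi\circ f:Y\to B$. For $t\in V$ the fiber $Y_t$ is contracted by $\pi\circ f$ to a point. Since $p$ has connected fibers and $p_*\mathcal O_Y=\mathcal O_T$, the rigidity lemma (in the form: a proper morphism contracting the general fibers of a contraction factors rationally through it) gives a rational map $\beta:T\dashrightarrow B$ with $\pi\circ f=\beta\circ p$ on $p^{-1}(V)$. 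After replacing $T$ by a resolution of the indeterminacy of $\beta$ (and $Y$ by a corresponding blow-up, which does not affect birational statements), $\beta$ becomes a morphism. It is defined over $F$, because all the data are.

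Next, $\beta$ is generically finite. Since $f$ is dominant and generically finite, $\dim Y=\dim X$. The adjoint Iitaka fibers map onto fibers of $\pi$, so they have dimension $\dim X-\dim B$, and therefore $\dim T=\dim B$. The map $\beta$ is dominant because $\pi\circ f$ is dominant. Hence $\beta$ is generically finite.

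The pair $(f,p)$ defines a morphism $\Phi:Y\to X\times_B T$, and its image lies in the main component $W$, namely the closure of the preimage of the locus where $\beta$ is étale. Both $Y$ and $W$ have dimension $\dim X$ and $\Phi$ is dominant onto $W$, so it remains to show $\deg\Phi=1$. Over a general $t\in T$ with $b=\beta(t)$, the fiber of $W$ over $t$ is $X_b$, a smooth del Pezzo surface (general fibers of $\pi$ are smooth). The restriction $\Phi_t=f|_{Y_t}:Y_t\to X_b$ is surjective and generically finite, and $\deg\Phi=\deg\Phi_t$.

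The main obstacle is proving $\deg\Phi_t=1$, and this is the step where the $a$-cover hypothesis enters. By \cite[Lemma~4.3]{LST}, the general fiber $(Y_t,f^*L|_{Y_t})$ is adjoint rigid with $a$-invariant $a(Y,f^*L)=a(X,L)=1$. Since $L|_{X_b}=-K_{X_b}$ by adjunction ($X_b|_{X_b}$ is trivial), the map $\Phi_t$ is either birational or an adjoint rigid $a$-cover of the smooth del Pezzo surface $(X_b,-K_{X_b})$. The latter is impossible by \cite[Theorem~6.2]{LTDuke}. For an alternative argument, the ramification formula gives $K_{\widetilde Y_t}+\Phi_t^*(-K_{X_b})=R$ with $R$ effective, and $R$ must have Iitaka dimension $0$; by purity of the branch locus and simple connectedness of smooth del Pezzo surfaces, a degree $\ge2$ cover would be branched along a curve, forcing $\kappa>0$. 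Hence $\Phi_t$ is birational, so $\Phi$ is birational onto $W$. This yields the required birational map from $Y$ to the main component of $X\times_B T$.
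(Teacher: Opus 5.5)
Your proposal is correct and follows essentially the same route as the paper. \cite[Lemma~4.3]{LST} and \cite[Theorem~6.2]{LTDuke} make a general adjoint Iitaka fiber birational onto a smooth del Pezzo fiber, the rigidity lemma produces the generically finite map $T\to B$, and fiberwise birationality gives birationality onto the main component of $X\times_B T$.

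Your parenthetical alternative is incomplete. Branching along a curve does not by itself force $\kappa>0$: weak del Pezzo surfaces admit adjoint rigid $a$-covers branched along $(-2)$-curves, namely quasi-\'etale covers of their anticanonical models. So that step genuinely rests on the cited theorem.
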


\begin{proof}
	Resolve $p$ and let $E$ be a general geometric fiber.
	By \cite[Lemma~4.3]{LST}, the pair
	$(E,f^*(-K_X)|_E)$ is adjoint rigid with $a$-invariant one.
	Its image $S=f(E)$ is a smooth del Pezzo fiber of $\pi$, with
	$(-K_X)|_S\sim-K_S$. Since a smooth del Pezzo surface admits
	no adjoint rigid $a$-cover, the map $E\to S$ is birational
	by \cite[Theorem~6.2]{LTDuke}.
	
	Since $\pi\circ f$ is constant on general fibers of $p$,
	the rigidity lemma \cite[Lemma~1.15]{Deb} yields a dominant
	rational map $g:T\dashrightarrow B$ such that
	$\pi\circ f=g\circ p$.
	Since $f$ is generically finite and $\dim E=\dim S=2$,
	the map $g$ is generically finite.
	After resolving $g$, the induced map
	$Y\dashrightarrow X\times_B T$ is birational onto the main
	component, because it is birational on general geometric fibers
	over $T$.
\end{proof}

To determine which base changes are breaking, we express
their $b$-invariants in terms of the Picard ranks of their
generic fibers.
\begin{lemm}[The generic-fiber formula]
	\label{lemm-dp-base-change-b}
	Let $X$ be a smooth Fano variety over $F$, and let
	$\pi:X\to B$ be a del Pezzo fibration over a smooth projective
	variety. Let $g:T\to B$ be a dominant generically finite morphism,
	where $T$ is smooth, projective, and geometrically integral.
	Choose a smooth projective model $Y$ of the main component of
	$X\times_B T$, with induced morphisms $f:Y\to X$ and $p:Y\to T$.
	Then
	\[
	a(Y,f^*(-K_X))=1.
	\]
	If $p$ is birationally equivalent to the Iitaka fibration
	associated with $K_Y+f^*(-K_X)$, then
	\begin{equation}\label{eq-lemm-dp-base-change-b}
		b(F,Y,f^*(-K_X))
		=
		\rho_{F(T)}(X_{F(T)}).
	\end{equation}
	If $\dim X=3$ and $\deg g\geq2$, then the assumption is satisfied so
	equation \eqref{eq-lemm-dp-base-change-b} holds.
\end{lemm}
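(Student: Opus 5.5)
Three steps: compute the $a$-invariant from the generic fiber, compute the $b$-invariant from the Iitaka fibration, and verify the Iitaka hypothesis when $\dim X=3$ and $\deg g\ge2$.

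\emph{The $a$-invariant.} Since $f$ is generically finite and $a(X,-K_X)=1$, the inequality $a(Y,f^*(-K_X))\le 1$ should follow from ramification: $K_Y+f^*(-K_X)\sim R_f$, which is effective. For the reverse inequality, I restrict to a general fiber $E$ of $p$. This fiber maps isomorphically to a smooth del Pezzo fiber $S$ of $\pi$, and $f^*(-K_X)|_E\sim -K_E$. Since $E$ is covered by curves on which $K_Y+tf^*(-K_X)$ has negative degree for $t<1$, and these curves form a covering family, $K_Y+tf^*(-K_X)$ is not pseudo-effective for $t<1$. Hence $a=1$.

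\emph{The $b$-invariant.} Assume $p$ is the adjoint Iitaka fibration of $D=K_Y+f^*(-K_X)$. By \cite[Lemma~4.14]{LST}, $b$ is the codimension of the minimal face of $\Eff^1(Y)$ containing $D$. I plan to identify this with the generic fiber as follows.
\begin{itemize}
\item $D$ is $p$-vertical up to $\bQ$-linear equivalence: on the generic fiber $Y_\eta\cong X_{F(T)}$ it is trivial, since the ramification of $f$ is vertical over $T$.
\item $D$ has Iitaka dimension $\dim T$, so it lies in the interior of the cone spanned by pullbacks of divisors from $T$ and $p$-exceptional vertical divisors.
\item Hence the minimal face containing $D$ is spanned by all vertical divisors, namely $p^*N^1(T)$ together with the components of reducible fibers. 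By Zariski's lemma, its span is exactly the kernel of the restriction $N^1(Y)\to N^1(Y_\eta)$.
\item Restriction is surjective onto $N^1(Y_\eta)$, because divisors on the generic fiber extend by closure.
\end{itemize}
The codimension of the face is therefore $\rho_{F(T)}(Y_\eta)=\rho_{F(T)}(X_{F(T)})$. Over non-closed $F$ I would use Galois-invariant classes throughout, as in \cite[Lemma~4.14]{LST}.

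\emph{The Iitaka hypothesis for threefolds.} When $\dim X=3$, $B$ and $T$ are curves, and Riemann--Hurwitz gives
\[
K_Y+f^*(-K_X)\sim_{\bQ} p^*(K_T-g^*K_B)+(\text{vertical effective})=p^*R_g+(\text{vertical effective}),
\]
where $R_g$ is the ramification divisor of $g:T\to B\cong\bP^1$. For $\deg g\ge 2$ we have $\deg R_g=2g(T)-2+2\deg g>0$, so $\kappa=1$. The Iitaka fibration is then given by $p$, since $D$ restricts trivially to the fibers of $p$ and is the pullback of an ample class up to fixed vertical parts.

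\emph{Main obstacle.} The hardest step is the face computation. The issue is whether the vertical fixed part of $D$ can make the minimal face larger or smaller than the span of vertical divisors. This must be controlled via the structure of the Iitaka fibration (\cite[Lemma~4.9]{LST}) and the negativity of vertical divisors not pulled back from $T$.
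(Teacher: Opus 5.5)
Your Step~1 is the paper's argument, but Step~2 has a gap at exactly the point where the paper's proof does its work.

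Your observation that $D$ has Iitaka dimension $\dim T$ gives one inclusion. Write $D\sim_{\bQ}p^*A+E_0$ with $A$ big on $T$ and $E_0\geq0$. Then $D-\epsilon G$ is $\bQ$-effective for every vertical prime divisor $G$ and small $\epsilon>0$, so all vertical classes lie in the minimal face $F_D$ of $\Eff^1(Y)$ containing $D$.

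The opposite inclusion is the problem. That $F_D$ contains no class with nonzero restriction to $Y_\eta$ is only asserted with ``hence''. The tools you name for your ``main obstacle'' (negativity of vertical divisors, \cite[Lemma~4.9]{LST}) do not address it; the vertical side is not where the difficulty lies. The missing step is short. If $\beta\in F_D$, then $D-\epsilon\beta\in\Eff^1(Y)$ for some $\epsilon>0$. Restricting to $Y_\eta$, where $D|_{Y_\eta}\sim0$, shows that both $\beta|_{Y_\eta}$ and $-\beta|_{Y_\eta}$ are pseudo-effective. Since $\Eff^1(Y_\eta)$ contains no line, $\beta|_{Y_\eta}=0$.

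The paper states this for a horizontal prime divisor $E$: the class $(D-cE)|_S\equiv-cE|_S$ is negative against an ample class, so $D-cE$ is not pseudo-effective. It then lets \cite[Lemma~4.15(1)]{LST} supply the vertical part. With this step added, your explicit face computation is a legitimate substitute for that citation. Separately, identifying the span of vertical classes with the kernel of restriction uses that a numerically trivial class on the rationally connected generic fiber is $\bQ$-linearly trivial there; Zariski's lemma is not what is needed.

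Step~3 takes a different route from the paper, which rules out $\kappa(Y,D)=0$ by \cite[Theorem~5.5]{beheshti2022moduli}. Your Riemann--Hurwitz approach is more elementary, but the claim $D\sim_{\bQ}p^*R_g+(\text{vertical effective})$ is not justified. The correction term is the discrepancy divisor of $Y$ over the normalization of $X\times_BT$. Over a singular fiber, that variety is locally a cyclic cover of $X$ branched along the fiber, and you give no reason it has canonical singularities.

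You do not need this claim:
\begin{enumerate}
\item The divisor $D=R_f$ is effective and vertical.
\item Since $B$ is geometrically $\bP^1$ and $\deg g\geq2$, the map $g$ has a ramification point $t$ of index $e_t\geq2$.
\item Every component $G$ of $p^{-1}(t)$ has positive coefficient in $R_f$. If $f(G)$ is a component of $X_{g(t)}$, the ramification index of $f$ along $G$ is $e_t\operatorname{mult}_G(p^*t)\geq2$, because the fibers of $\pi$ are reduced (Proposition~\ref{prop-dp-bad-fibers}). If $G$ is $f$-exceptional, its coefficient is at least $1$ because $X$ is smooth.
\item Hence $D\geq c\,p^*t$ for some $c>0$, so $\kappa(Y,D)=1$.
\end{enumerate}
This gives a self-contained replacement for the cited theorem.
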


\begin{proof}
	By birational invariance, we may choose $Y$ with generic fiber
	$S=X_{F(T)}$ over $T$. Put $D=K_Y+f^*(-K_X)$.
	Ramification gives $D\geq0$, hence $a(Y,f^*(-K_X))\leq1$.
	Since $f^*(-K_X)|_S\sim-K_S$, restriction to $S$ gives the
	reverse inequality. Thus $a(Y,f^*(-K_X))=1$.
	
	Suppose that $p$ is birationally the Iitaka fibration of $D$.
	For any prime divisor $E\subset Y$ dominating $T$ and any $c>0$,
	the equality $D|_S\sim0$ gives
	\[
	(D-cE)|_S\equiv-c(E|_S).
	\]
	Since $E|_S$ is nonzero and effective, this class has negative
	intersection with an ample divisor on $S$.
	Thus $(D-cE)|_S$ is not pseudoeffective, and neither is $D-cE$.
	Consequently, the subspace generated by horizontal divisor classes
	in \cite[Lemma~4.15(1)]{LST} is zero, and the formula there yields
	\[
	b(F,Y,f^*(-K_X))=\rho_{F(T)}(S).
	\]
	
	Finally, suppose that $\dim X=3$ and $\deg g\geq2$.
	The divisor $D$ is effective and vertical over the curve $T$,
	so $0\leq\kappa(Y,D)\leq1$.
	Since $f$ is an $a$-cover, \cite[Theorem~5.5]{beheshti2022moduli}
	excludes $\kappa(Y,D)=0$.
	Hence $\kappa(Y,D)=1$, and its Iitaka fibration is birationally $p$.
\end{proof}

Thus, for a Fano threefold, a nontrivial del Pezzo base change
is breaking precisely when its generic Picard rank is at
least $\rho_F(X)$.

\subsection{Breaking thin maps arising from del Pezzo fibrations}
\label{sec-breaking-dP}

Fix a del Pezzo fibration $\pi:X\to B$ of degree $d$, where
$X$ is a smooth Fano variety over $F$ and $B$ is a
positive-dimensional smooth projective variety.
We construct a finite collection $P_\pi$ of candidate
breaking thin maps.
Theorem~\ref{theo-breaking} shows, in any dimension, that every
dominant breaking map whose general geometric adjoint Iitaka
fiber maps onto a general fiber of $\pi$ factors rationally
through a member of $P_\pi$.
For Fano threefolds, Remark~\ref{rema-dp-candidates}
characterizes the breaking members of $P_\pi$ in terms of
their generic Picard ranks.

\begin{cons}[Candidate thin maps from del Pezzo fibrations]
	\label{cons-relative para-spaces}\label{cons-dP-fib}
	Restrict $\pi$ over an open subset $B^\circ$ on which it is smooth,
	write $X^\circ=\pi^{-1}(B^\circ)$, and fix a
	geometric point $\bar b\to B^\circ$. Put $\Lambda=\Pic(X_{\bar b})$.
	The arithmetic \'etale fundamental group
	$\pi_1^{\mathrm{\acute et}}(B^\circ,\bar b)$ acts on $\Lambda$,
	preserving the intersection pairing, the canonical class, and the nef cone.
	Define the finite invariant sets
	\[
	\mathscr E_{\bar b}=
	\{e\in\Lambda:e^2=-1,\ (-K_{X_{\bar b}})\cdot e=1\},
	\]
	\[
	\mathscr R_{\bar b}=
	\{u\in\Lambda:u^2=0,\ (-K_{X_{\bar b}})\cdot u=2,\ u\text{ nef}\}.
	\]
	They parametrize $(-1)$-curves and conic-bundle classes, respectively.
	For $1\leq r\leq9-d$, also take the finite set
	\[
	\mathscr E_{\bar b}^{(r)}=
	\{\{e_1,\ldots,e_r\}\subset\mathscr E_{\bar b}:
	e_i\cdot e_j=0\text{ for }i\ne j\}.
	\]
	The associated finite \'etale schemes over $B^\circ$ are denoted by
	$\mathscr E^{(r)}$ and $\mathscr R$. Normalize $B$ in the function fields
	of their connected components to obtain finite covers
	$p_r:\mathscr E^{(r)}\to B$ and $p_{\mathrm{cb}}:\mathscr R\to B$.

	For each geometrically integral connected component $N$ of
	$\mathscr E^{(r)}$ or $\mathscr R$ with
	$\deg(p_N:N\to B)\geq2$, choose a smooth projective model
	$Y_N$ of $X\times_B N$ such that the induced rational map to $X$
	extends to a morphism $\tau_N:Y_N\to X$.
	Let $P_\pi$ be the finite collection of these morphisms.
\end{cons}

\begin{theo}\label{theo-breaking}
	Let $X$ be a smooth Fano variety over $F$ with a del Pezzo
	fibration $\pi:X\to B$ as above. Let $f:Y\to X$ be a dominant thin map
	such that
	\[
	a(Y,f^*(-K_X))=1,
	\qquad b(F,Y,f^*(-K_X))\geq\rho_F(X).
	\]
	Suppose that a general geometric fiber of its adjoint Iitaka fibration
	maps onto a general fiber of $\pi$. Then $f$ factors rationally over
	$X$ through a member of $P_\pi$.
\end{theo}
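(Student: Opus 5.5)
Our plan is to reduce to a base change and then locate the extra contraction on the generic fiber. By Proposition~\ref{prop-breaking-surfaces-move}, after replacing $T$ by a smooth model there is a generically finite $g:T\to B$ such that $Y$ is birational over $X$ to the main component of $X\times_B T$. Lemma~\ref{lemm-dp-base-change-b} then gives $b(F,Y,f^*(-K_X))=\rho_{F(T)}(X_{F(T)})$. This uses that $p$ is the adjoint Iitaka fibration, which is part of the hypothesis here. Thus the assumption becomes
\[
\rho_{F(T)}(S_T)\geq\rho_F(X),\qquad S_T:=X_{F(T)}.
\]
Since $f$ is thin and dominant, $\deg f\geq2$, so $\deg g\geq2$.

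Next we compare Picard ranks. Let $S_\eta=X_{F(B)}$ be the generic fiber of $\pi$. For a del Pezzo fibration, $\rho_F(X)=\rho_{F(B)}(S_\eta)+\rho(B)$, and the fiber part is $\Lambda^{G}$, where $G$ is the image of the arithmetic fundamental group of $B^\circ$ in $O(\Lambda)$. When $B$ is a curve, $\rho(B)=1$; in general we only need $\rho_F(X)\geq\rho_{F(B)}(S_\eta)+1$. Hence
\[
\rho_{F(T)}(S_T)=\operatorname{rk}\Lambda^{H}>\operatorname{rk}\Lambda^{G},
\]
where $H\subset G$ is the image of the fundamental group of $T^\circ=g^{-1}(B^\circ)$, possibly after shrinking. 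So $S_T$ has strictly larger arithmetic Picard rank than $S_\eta$.

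The key step is to convert this rank jump into a $G$-orbit in $\mathscr E^{(r)}_{\bar b}$ or $\mathscr R_{\bar b}$ that is not a single point but has an $H$-fixed element. We run the $F(T)$-minimal model program on the smooth del Pezzo surface $S_T$, using the Mori cone structure of $\overline{NE}(S_{\bar b})$, which is generated by $(-1)$-curves, or by $K$-negative rays in degrees $8$ and $9$. Since $\Lambda^H\supsetneq\Lambda^G$, the $H$-invariant Mori cone has an extremal ray that is not $G$-invariant. Otherwise the two invariant cones would coincide, and so would their spans, because the invariant nef cones are full-dimensional in the invariant lattices.

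Such a ray is one of two kinds. It may be a birational contraction of an $H$-orbit of pairwise disjoint $(-1)$-curves $\{e_1,\dots,e_r\}$, which gives an $H$-fixed element of $\mathscr E^{(r)}_{\bar b}$. Or it may be a conic bundle, which gives an $H$-fixed class in $\mathscr R_{\bar b}$. The remaining possibility is a contraction to a point, which only occurs when $\rho=1$ and so cannot give the larger rank. We then choose the ray so that the corresponding element is not $G$-fixed. The cleanest way is to argue with the finite group $G/H$ acting on the finitely many extremal contractions of $S_T$: if every $H$-extremal element were $G$-fixed, its class would lie in $\Lambda^G$, and these classes span $\Lambda^H\otimes\bR$.

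An $H$-fixed point of a $G$-set corresponds to a factorization of $F(T)/F(B)$ through the function field of the connected component $N$ containing its orbit. Its degree $[G:\operatorname{Stab}]\geq2$ because the element is not $G$-fixed. The component $N$ is geometrically integral because it receives a map from the geometrically integral $T$. So $g$ factors rationally through $p_N:N\to B$, and hence $Y\dashrightarrow X\times_B T\to X\times_B N\dashrightarrow Y_N\to X$ is a rational factorization of $f$ over $X$ through $\tau_N\in P_\pi$.

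The main obstacle is this extremal-ray step. The rank jump must produce a non-$G$-fixed $(-1)$-collection or conic class rather than, say, a collection that is $G$-fixed although its individual curves are permuted differently. We must also control the rank bookkeeping $\rho_F(X)$ versus $\rho_{F(B)}(S_\eta)$ when $\dim B>1$.
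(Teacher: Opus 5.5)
Your proposal is correct and follows essentially the same route as the paper. You reduce to $X\times_B T$ via Proposition~\ref{prop-breaking-surfaces-move}, then combine Lemma~\ref{lemm-dp-base-change-b} with the nonzero kernel of restriction to the generic fiber to get $\operatorname{rk}\Lambda^H>\operatorname{rk}\Lambda^G$. Because the extremal classes of $X_{F(T)}$ span $\Lambda^H\otimes\bR$, some extremal contraction (a disjoint $(-1)$-collection or a conic bundle, since the rank is at least two) is not $G$-fixed, and this yields the factorization through a component $N$ of degree at least two. The ``obstacle'' you flag is already resolved by that spanning argument, since a $G$-fixed collection has $G$-invariant class. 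Also, only the inequality $\rho_F(X)\geq\rho_{F(B)}(S_\eta)+1$ is needed; the equality $\rho_F(X)=\rho_{F(B)}(S_\eta)+\rho(B)$ you state can fail when there are reducible fibers.
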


\begin{proof}
	Let $Y\dasharrow T$ be the adjoint Iitaka fibration.
	By Proposition~\ref{prop-breaking-surfaces-move}, replace $Y$ by a
	smooth model of $X\times_B T$. Write $\eta$ for the generic point of
	$B$ and $\eta_T$ for that of $T$. The restriction on divisor classes
	is surjective and has nonzero kernel, since it kills the pullback of
	an ample class on the positive-dimensional base.
	By Lemma~\ref{lemm-dp-base-change-b},
	\[
	\rho_{F(T)}(X_{\eta_T})
	=b(F,Y,f^*(-K_X))
	\geq \rho_F(X)
	>\rho_{F(B)}(X_\eta).
	\]
	
	Let $G$ and $H\subset G$ be the images of the absolute Galois groups
	of $F(B)$ and $F(T)$ in the automorphism group of the geometric
	Picard lattice. Their invariant subspaces have different dimensions.
	The cone of curves of the del Pezzo surface over $F(T)$ spans the
	larger space, so some extremal contraction is not defined over $F(B)$.
	Indeed, if all its extremal contraction classes were $G$-invariant,
	the whole $H$-invariant space would be contained in the $G$-invariant
	space, a contradiction. Its Picard rank is at least two, so this is
	not a contraction to a point.
	
	An extremal contraction of a smooth del Pezzo surface of Picard rank
	at least two either contracts an invariant collection of disjoint
	$(-1)$-curves or is a conic-bundle contraction. The new contraction
	therefore gives an $F(T)$-point of $\mathscr E^{(r)}$ or $\mathscr R$
	whose image belongs to a component $N$ of degree at least two over
	$B$. Equivalently, $F(N)$ embeds in $F(T)$ over $F(B)$.
	This is the required rational factorization $T\dashrightarrow N$,
	and hence the factorization of $f$ through $\tau_N$.
\end{proof}

\begin{lemm}\label{lemm-dp-complexity}
	Let $\pi:X\to\bP^1_F$ be a del Pezzo fibration of degree $d$
	on a smooth Fano threefold over a number field $F$.
	Suppose that either a smooth fiber over an $F$-rational point
	is breaking or $\pi$ admits a dominant breaking base change.
	Then
	\[
	\rho_F(X)\leq10-d.
	\]
	In particular, $d\leq8$.
\end{lemm}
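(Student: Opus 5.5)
Both hypotheses are converted into a bound on the Picard rank of a smooth del Pezzo surface of degree $d$; such a rank is at most $10-d$ because $\rho(S_{\overline F})=10-d$.

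\emph{Case of a breaking smooth fiber.} Suppose the fiber $S=X_b$ over $b\in\bP^1(F)$ is smooth and breaking. By Remark~\ref{rema-ges-density-reduction} and Proposition~\ref{prop-breaking-map-reduction}(2), breaking means $\rho_F(S)\geq\rho_F(X)$. Since $S_{\overline F}$ is a smooth del Pezzo surface of degree $d$, we have $\rho_F(S)\leq\rho(S_{\overline F})=10-d$. Hence $\rho_F(X)\leq10-d$.

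\emph{Case of a dominant breaking base change.} Let $T\to\bP^1$ have degree at least two. By Lemma~\ref{lemm-dp-base-change-b} (we are in dimension three), breaking means $\rho_{F(T)}(X_{F(T)})\geq\rho_F(X)$. The generic fiber $X_{F(T)}$ is a smooth del Pezzo surface of degree $d$ over $F(T)$, because the generic fiber of $\pi$ is smooth in characteristic zero. Its Picard rank is therefore bounded by the geometric rank $10-d$, and the same inequality follows. Theorem~\ref{theo-breaking} actually gives the strict bound $\rho_{F(T)}(X_{F(T)})>\rho_{F(\bP^1)}(X_\eta)$, but the weaker one is all we need.

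\emph{Consequence $d\leq8$.} For $d=9$ the inequality reads $\rho_F(X)\leq1$. This is impossible, because $X$ carries the del Pezzo fibration over $\bP^1_F$ together with an ample class, so $\rho_F(X)\geq2$. Hence $d\leq8$.

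The only point needing care is that the relevant surfaces, namely the fiber $S$ and the generic fiber over $F(T)$, are genuinely smooth del Pezzo surfaces of degree $d$. For $S$ this is the hypothesis that it is a smooth fiber. For the generic fiber it is smoothness of the generic fiber of $\pi$, combined with the observation that $-K$ restricts to the anticanonical class with $K^2=d$. With that in place, the bound $\rho\leq\rho_{\mathrm{geom}}=10-d$ is standard.
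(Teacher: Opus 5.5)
Your proof is correct and follows the paper's argument. In both cases you bound $\rho_F(X)$ by the $b$-invariant of the breaking map and identify that invariant with the Picard rank of a smooth degree-$d$ del Pezzo surface: the fiber $S$ via $-K_X|_S=-K_S$, or the generic fiber over $F(T)$ via Lemma~\ref{lemm-dp-base-change-b}. You bound that rank by $10-d$ and then use $\rho_F(X)\geq2$ to get $d\leq8$; the only cosmetic difference is that you take the smooth-fiber criterion from Proposition~\ref{prop-breaking-map-reduction}(2), whereas the paper reads it off directly from adjunction and Definition~\ref{defi-ges}.
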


\begin{proof}
	For a breaking smooth fiber $S$, adjunction gives
	\[
	\rho_F(X)\leq b(F,S,-K_X|_S)=\rho_F(S)\leq10-d.
	\]
	For a breaking base change $f:Y\to X$ associated with a finite
	cover $T\to\bP^1_F$, Lemma~\ref{lemm-dp-base-change-b} gives
	\[
	\rho_F(X)\leq b(F,Y,f^*(-K_X))
	=\rho_{F(T)}(X_{F(T)})\leq10-d.
	\]
	Finally, $\rho_F(X)\geq2$ implies $d\leq8$.
\end{proof}

\begin{rema}\label{rema-dp-candidates}
	Suppose that $X$ is a smooth Fano threefold, and let $\eta$
	be the generic point of $B$.
	For each $\tau_N\in P_\pi$, Lemma~\ref{lemm-dp-base-change-b}
	gives the following criterion:
	\[
	\tau_N\text{ is breaking}
	\quad\Longleftrightarrow\quad
	\rho_{F(N)}(X_{F(N)})\geq\rho_F(X).
	\]
	By construction, the contraction parametrized by $N$ becomes
	defined after base change but is not defined over $F(B)$.
	It therefore supplies a new Galois-invariant geometric divisor
	class. In the conic-bundle case, this class may have a Brauer
	obstruction to descent, but a positive multiple always descends.
	Thus, after tensoring with $\bQ$, it gives a new divisor class over
	$F(N)$, and hence
	\[
	\rho_{F(N)}(X_{F(N)})>\rho_{F(B)}(X_\eta).
	\]
	Consequently, every member of $P_\pi$ is breaking whenever
	\[
	\rho_F(X)=\rho_{F(B)}(X_\eta)+1.
	\]
\end{rema}
\begin{coro}\label{cor-dp-rank-two}
	Let $X$ be a smooth Fano threefold over $F$ with $\rho_F(X)=2$
	and a del Pezzo fibration $\pi:X\to B$.
	Let $f:Y\to X$ be a dominant $a$-cover whose general geometric
	adjoint Iitaka fiber maps onto a general fiber of $\pi$.
	Then $f$ is breaking if and only if it factors rationally
	over $X$ through a member of $P_\pi$.
\end{coro}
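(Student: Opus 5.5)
Both directions reduce to the generic-fiber formula (Lemma~\ref{lemm-dp-base-change-b}) together with Theorem~\ref{theo-breaking} and Remark~\ref{rema-dp-candidates}. Since $f$ is an $a$-cover, $a(Y,f^*(-K_X))=1$. By Proposition~\ref{prop-breaking-surfaces-move} we may replace $Y$ by a smooth model of the main component of $X\times_B T$ for a generically finite $g:T\to B$. The map $f$ is dominant of degree at least two, and the fiberwise map is birational (again by Proposition~\ref{prop-breaking-surfaces-move}), so $\deg g\geq2$. Since $\rho_F(X)=2$ and $\pi$ is a nontrivial contraction, $B$ is a curve, necessarily $\bP^1$ up to a smooth genus-zero model, as $\rho_F(X)=2$ forces $\rho(B)=1$. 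Thus $\dim X=3$, $\deg g\geq2$, and Lemma~\ref{lemm-dp-base-change-b} gives
\[
b(F,Y,f^*(-K_X))=\rho_{F(T)}(X_{F(T)}),
\]
with $p:Y\to T$ birationally the adjoint Iitaka fibration. In particular $\kappa(Y,K_Y+f^*(-K_X))=1>0$, so in Definition~\ref{defi-ges} the map $f$ is breaking exactly when $b(F,Y,f^*(-K_X))\geq\rho_F(X)=2$, via case~(1) or case~(2)(b).

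For the forward direction, if $f$ is breaking then $b\geq2=\rho_F(X)$, and Theorem~\ref{theo-breaking} applies directly: $f$ factors rationally over $X$ through some $\tau_N\in P_\pi$.

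For the converse, suppose $f$ factors rationally over $X$ through $\tau_N$, so we obtain a dominant rational map $Y\dashrightarrow Y_N$ over $X$. Compatibility with the fibrations should give $F(N)\subset F(T)$ over $F(B)$. The adjoint Iitaka fibers of $Y$ map birationally onto fibers of $\pi$, and the same holds for $Y_N\to N$, so the rational map $Y\dashrightarrow Y_N$ carries fibers of $p$ into fibers over $N$, and the rigidity lemma yields $T\dashrightarrow N$ over $B$. The one step that needs care is this inclusion of function fields. Once it is established, the Picard rank of the generic fiber can only increase under the extension $F(T)/F(N)$, so Remark~\ref{rema-dp-candidates} gives
\[
\rho_{F(T)}(X_{F(T)})\geq\rho_{F(N)}(X_{F(N)})>\rho_{F(B)}(X_\eta).
\]
Since the restriction $\Pic(X)\to\Pic(X_\eta)$ is surjective with kernel containing the fiber class, $\rho_{F(B)}(X_\eta)\geq1$. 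Hence $\rho_{F(T)}(X_{F(T)})\geq2=\rho_F(X)$, and $f$ is breaking by the formula above.

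The main obstacle I expect is confirming that $\rho_{F(B)}(X_\eta)=\rho_F(X)-1=1$ rather than merely at most $1$. This follows from the standard surjectivity of restriction to the generic fiber with kernel generated by vertical divisors. The vertical divisors consist of the fiber class together with components of reducible fibers, and the latter have $a>1$. Only the lower bound $\rho_{F(B)}(X_\eta)\geq1$ is actually needed, and it is automatic because $-K_X$ restricts nontrivially to $X_\eta$.
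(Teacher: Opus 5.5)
Your proposal is correct and is essentially the paper's proof. The forward direction is Theorem~\ref{theo-breaking}, and the converse uses Proposition~\ref{prop-breaking-surfaces-move}, the induced map $T\dashrightarrow N$, Lemma~\ref{lemm-dp-base-change-b} and Remark~\ref{rema-dp-candidates} to get $b=\rho_{F(T)}(X_{F(T)})\geq\rho_{F(N)}(X_{F(N)})\geq2$, with adjoint Iitaka dimension one.

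Two minor slips. First, surjectivity of $\Pic(X)\to\Pic(X_\eta)$ with the fiber class in the kernel gives the upper bound $\rho_{F(B)}(X_\eta)\leq1$, not the lower bound. The lower bound you need comes, as you note at the end, from $-K_X|_{X_\eta}$ being nonzero. Second, $B$ is a curve because the fibers are surfaces, not because $\rho_F(X)=2$.
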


\begin{proof}
	The forward implication is Theorem~\ref{theo-breaking}.
	Conversely, Proposition~\ref{prop-breaking-surfaces-move}
	identifies $Y$ birationally over $X$ with $X\times_B T$.
	A factorization through $\tau_N\in P_\pi$ induces
	$T\dashrightarrow N$, so Lemma~\ref{lemm-dp-base-change-b}
	and Remark~\ref{rema-dp-candidates} give
	\[
	b(F,Y,f^*(-K_X))
	=\rho_{F(T)}(X_{F(T)})
	\geq \rho_{F(N)}(X_{F(N)})
	\geq 2.
	\]
	Since the adjoint Iitaka dimension is one, $f$ is breaking.
\end{proof}

\section{Del Pezzo fibrations with dense geometric exceptional sets}
\label{sec-dp-families}

\subsection{Generic fibers and blow-up types}
\label{sec:bt-models}

Throughout this section, let $X$ be a smooth complex Fano threefold,
general in its deformation family,
and let $\pi:X\to B=\mathbb P^1$ be a del Pezzo fibration.
Write $K=\mathbb C(B)$, $S=X_K$, and $d=K_S^2$.
We assume that $S$ is nonsplit.

We define several notations which we call blow-up types.
Write $M_d$ ($1\le d\le5$) for
a del Pezzo surface of degree $d$ with $\rho_K=1$ and
maximal Galois action on its Picard group.
Write
$M_d(a_1,\ldots,a_s)$ for a blow-up of $M_d$ at closed points of degrees
$a_1,\ldots,a_s$ in general position.
We assume that the $i$-th closed point has the maximal Galois action of
$S_{a_i}$. We write $Q_+$ for $\mathbb P^1\times\mathbb P^1$ and write
$Q_-$ for a nonsplit quadric.
We write $C_4$ for a minimal quartic del Pezzo surface with $\rho_K=2$
and Galois action $W(D_4)$.

For a type $\mathbb P^2(1^k,m)$ or $Q_+(m)$ with $m>1$, let
$c$ be the genus of the normalization of $B$ in the residue field
of its degree-$m$ point. For $Q_-$ and $Q_-(2)$, let $q$ be the
genus of the ruling double cover.

\begin{lemm}\label{lem:bt-nodal-fibers}
	Let $X$ be a smooth complex Fano threefold, general in its
	deformation family, and let $\pi:X\to\mathbb P^1$ be a del Pezzo
	fibration listed in Table~\ref{tab:bt-models}.
	Then every singular fiber of $\pi$ is integral and has exactly
	one singular point, which is of type $A_1$.
\end{lemm}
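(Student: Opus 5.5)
The plan is a genericity argument: exhibit, for each family in Table~\ref{tab:bt-models}, a single member whose fibration $\pi$ has only integral singular fibers, each with exactly one $A_1$ point, and then show that this property is open in the deformation space, so that it holds for a general member. First, I would compare Euler characteristics. For a del Pezzo fibration $\pi:X\to\mathbb P^1$ of degree $d$ with smooth fiber $S_0$,
\[
\chi_{\mathrm{top}}(X)=2\chi_{\mathrm{top}}(S_0)+\sum_b\bigl(\chi_{\mathrm{top}}(X_b)-\chi_{\mathrm{top}}(S_0)\bigr),
\]
and a fiber with a single ordinary node contributes exactly $+1$. The quantity $\delta:=\chi_{\mathrm{top}}(X)-2(12-d)$ is computed from the Hodge numbers $h^{1,2}$ and $\rho(X)$ in \cite{MM83}. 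So it suffices to show that every singular fiber of a general $X$ is integral with isolated singularities whose local contributions sum to $\delta$, with each contribution at least one and equal to one only for a single $A_1$ point.

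Next I would rule out the bad fibers. Reducible fibers are excluded on a general member. The reason is that a component $S_i$ of a reducible fiber has $a(S_i,L)>1$ by Proposition~\ref{prop-dp-bad-fibers}, so it is a surface of type E1--E5 or is swept out by lines, by Proposition~\ref{prop-a-higher}. A divisorial contraction inside a fiber would contribute an extra Picard class in the fiber direction, but for the families in the table $\rho(X)$ equals the generic fiber rank plus one, and the extremal rays are known. Non-normal and non-canonical integral fibers are handled by the same $a$-invariant dichotomy combined with a dimension count. Such fibers would have to be swept out by lines, and by \cite[Theorem~3.2]{beheshti2022moduli} and the description of $\mathcal L(X)$ for general $X$ (finitely many line surfaces, none vertical), no fiber consists of lines. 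Finally, Fujita's results give reducedness (Proposition~\ref{prop-dp-bad-fibers}\ref{prop-dp-bad-fibers-3}).

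The main step is to show that the Du Val singular points are all ordinary nodes, one per fiber. For this I would use the explicit models: each $X$ in the table is a divisor or complete intersection in a projective bundle over $\mathbb P^1$, such as $\mathbb P^1\times\mathbb P^3$ for $d=4$ or quadric bundles for $d=8$, cut out by sections of a base-point-free linear system. The discriminant locus of the pencil of fibers is the image of the incidence variety
\[
\{(b,x): x\in\operatorname{Sing}(X_b)\}.
\]
A Bertini/Kleiman-type parameter count in the space of defining equations shows that the following conditions each impose at least one more condition than the generic nodal one: a fiber with two singular points, a singular point worse than $A_1$ (a degenerate Hessian), or a singular point at which $X$ itself is singular. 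Consequently, for a general choice these do not occur.

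I expect the main obstacle to be the uniformity of this count across all models in Table~\ref{tab:bt-models}, particularly blow-up descriptions where the fibration is not given directly as a hypersurface pencil. In those cases I would first try to pass to the model in which the fibers are hyperplane or quadric sections of a fixed smooth variety. There, the statement reduces to the classical fact that a general pencil of hyperplane sections is a Lefschetz pencil. The Euler characteristic identity then serves as a consistency check that the number of nodal fibers equals $\delta$.
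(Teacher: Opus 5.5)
Your integrality step is essentially the paper's. Every row of Table~\ref{tab:bt-models} has $\rho_K(S)=\rho(X)-1$, so $\rho(X)=1+\rho_K(S)+\sum_b(n_b-1)$ forces all fibers to be irreducible, and Fujita gives reducedness; the detour through $a$-invariants and E1--E5 is not needed.

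The gap is the main step. You assert it rather than prove it, and the mechanism you propose does not fit most rows.
\begin{itemize}
\item The premise fails for several degrees. A del Pezzo surface of degree $5$, $6$ or $7$ is not a complete intersection in any projective space, since adjunction allows only degrees $3,4,8,9$. So rows with these $d$ are not relative complete intersections in a projective bundle over $\mathbb P^1$.
\item Where such a model does exist, a count in the full space of equations tells you about a general member of the family only when the family is an open part of that linear system. That is essentially the relative Picard rank one case, which the paper settles by citing \cite[proof of Corollary~5.14]{burke2022geometric}. The rows with $\rho(X)\geq3$ cannot be general members of such a system when the relative divisors are ample, because Grothendieck--Lefschetz would then force $\rho(X)=2$.
\item Your fallback to Lefschetz pencils of hyperplane sections of a fixed smooth variety fails for the same rows. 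After blowing up a horizontal curve, the fibration is a \emph{fixed} pencil in a system that is not very ample; for No.~3-6 it is the pullback to $\operatorname{Bl}_C\mathbb P^3$ of the planes through $\ell$.
\item Your exclusion of non-normal fibers rests on an unproved claim that no fiber of a general $X$ is swept out by anticanonical lines.
\end{itemize}

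The missing idea is to locate the singular fibers inside the explicit blow-up models. Write $X$ as the blow-up of a smooth surface bundle over $B$ (e.g.\ $\mathbb P^1\times\mathbb P^2\to\mathbb P^1$, or $\operatorname{Bl}_\ell\mathbb P^3\to\mathbb P^1$) along a horizontal curve $C$. Each fiber is then the blow-up of a smooth surface at the scheme $C\cap X_b$, and it is singular exactly over branch points of $C\to B$. At a simple branch point one blows up a curvilinear length-two point, which gives an integral fiber with exactly one $A_1$ (locally $uv=w^2$). So it suffices that $C\to B$ be simply branched. The paper obtains this from \cite[Exp.~XVII, Th.~2.5.2]{SGA7II} applied to the \emph{curve} $C$ in a very ample embedding: for example $(\mathbb P^1,\mathcal O(5))$ for No.~3-5, or the elliptic quartic for No.~3-6, where a general line $\ell$ is a general pencil in $|\mathcal O_C(1)|$.

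The other rows need their own inputs:
\begin{itemize}
\item for quadric fibrations, smoothness of $X$ forces singular fibers to have rank three;
\item for conic-bundle presentations, one needs a smooth discriminant whose projection to $B$ is simply branched;
\item for degree-two centers on a non-smooth $\pi$, a general center avoids the singular points of the fibers, and its branch values avoid the singular values.
\end{itemize}

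Finally, your Euler characteristic step is not a reduction. With the correct sign a nodal fiber contributes $-1$, so $\delta=-N$. The local Milnor numbers automatically sum to $-\delta$, so without an independent count of singular fibers this cannot distinguish one $A_2$ fiber from two nodal ones. In the paper this identity is used only afterwards, in Proposition~\ref{prop:bt-models}, to compute $N$ once the lemma is known.
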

\begin{proof}
	By scanning Table~\ref{tab:bt-models}, one sees that
	all rows satisfy \(\rho_K(S)=\rho(X)-1.\)
	Thus every fibration listed in Table~\ref{tab:bt-models}
	has integral fibers.

	We now prove that every singular fiber has exactly one $A_1$ singularity.
	When $\pi$ has relative Picard rank $1$, the assertion follows from
		\cite[proof of Corollary~5.14]{burke2022geometric}.
		
		{\bf($Q_-,Q_-(2),Q_+(2)$)}
		For the quadric fibrations on Nos.~2--18, 2--25 and 2--29,
		smoothness of $X$ forces every singular quadric
		to have rank three.
		The fibrations $(3\text{--}11,7)$, $(4\text{--}4,6)$ and
		$(5\text{--}1,5)$ are obtained from these by blowing up
		disjoint sections.
		A section lies in the smooth locus of $\pi$.

		{\bf(Conic-bundles)}
		For $(3\text{--}1,4)$ and $(3\text{--}2,3),(3\text{--}2,6)$,
		the discriminants $\Delta$ of the conic bundles on them
		are smooth, and the projections of $\Delta$
		to $B$ are simply branched, by
		\cite[proof of Theorem~5.13 and proof of Lemma~11.7]
		{burke2022geometric}, respectively.
		For $(3\text{--}4,4)$, the discriminant is a smooth plane
		quartic, and its projection is the pencil of lines through
		a general point. Hence it is simply branched by
		\cite[Exp.~XVII, Th.~2.5.2]{SGA7II}.
		In each case, singular fibers of
		$\pi$ have a unique $A_1$ singularity.
		
		{\bf(Blow-ups at a degree $2$ multi-section)}
		Let $Y$ be a blow-up
		of $\pi:X\to B$ at a degree $2$ center.
		When $\pi$ is smooth, the types of $Y$ are
		\[
		(4\text{--}1,6),\ (4\text{--}2,6),\
		(4\text{--}3,6),\ (4\text{--}5,6),\
		(3\text{--}15,7),\ (3\text{--}18,7),\
		(3\text{--}21,7).
		\]
		In each case, singular fibers of
		$Y\to B$ have a unique $A_1$ singularity.
		When $\pi$ is not smooth, these are
		$(3\text{--}6,6)$, $(3\text{--}10,6)$,
		and $(3\text{--}4,6)$.
		In these cases, a general blow-up center $C$
		avoids the non-smooth points of $\pi$,
		and its branch locus avoids the singular values
		of $\pi$. The assertion follows.
		
		{\bf(Blow-ups at a degree$\geq3$ multi-section)}
		It remains to treat the seven presentations with horizontal
		centers of degree at least three:
		\[
		\begin{gathered}
			(3\text{--}3,5),\ (3\text{--}5,4),\
			(3\text{--}6,5),\ (3\text{--}7,6),\\
			(3\text{--}8,5),\ (3\text{--}12,6),\
			(4\text{--}13,5).
		\end{gathered}
		\]
		Their underlying surface bundles are smooth, so it suffices
		to prove that the induced map \(C\to B\) is simply branched.
		For $(3\text{--}6,5)$, write
		$X=\operatorname{Bl}_{C\sqcup\ell}\mathbb P^3$,
		where $C$ is a smooth elliptic quartic and $\ell$ is a
		general disjoint line.
		The fibration is induced by the planes through $\ell$.
		The restriction map
		\[
		H^0(\mathbb P^3,\mathcal O_{\mathbb P^3}(1))
		\xrightarrow{\sim} H^0(C,\mathcal O_C(1))
		\]
		is an isomorphism, and $\mathcal O_C(1)$ is very ample.
		Projective duality therefore identifies a general line $\ell$
		with a general pencil in the complete very ample linear system
		$|\mathcal O_C(1)|$.
		By \cite[Exp.~XVII, Th.~2.5.2]{SGA7II}, a general
		hyperplane pencil for this embedding is a Lefschetz pencil,
		hence the induced map  $C\to  B$ is simply branched.
		The same theorem applies to five of the remaining cases,
		using the following very ample line bundles:
		\[
		\begin{aligned}
			(3\text{--}5,4)&:\quad
			(C,L)=(\mathbb P^1,\mathcal O_{\mathbb P^1}(5)),\\
			(3\text{--}8,5)&:\quad
			(C,L)=(\mathbb P^1,\mathcal O_{\mathbb P^1}(4)),\\
			(3\text{--}12,6)&:\quad
			(C,L)=(\mathbb P^1,\mathcal O_{\mathbb P^1}(3)),\\
			(4\text{--}13,5)&:\quad
			(C,L)=(\mathbb P^1,\mathcal O_{\mathbb P^1}(3)),\\
			(3\text{--}7,6)&:\quad
			g(C)=1,\ \deg L=3.
		\end{aligned}
		\]
		
		Finally, No.~3--3 is a general hypersurface
		\[
		X=\{s_0Q_0(t,x)+s_1Q_1(t,x)=0\}
		\subset\mathbb P^1_s\times\mathbb P^1_t\times\mathbb P^2,
		\qquad
		Q_i\in H^0(\mathbb P^1_t\times\mathbb P^2,\mathcal O(1,2)).
		\]
		Projection to $\mathbb P^1_s$ resolves a general pencil
		in the very ample system $|\mathcal O(1,2)|$.
		The Lefschetz pencil theorem therefore gives the assertion.
\end{proof}

\begin{prop}\label{prop:bt-models}
	Table~\ref{tab:bt-models} lists all del Pezzo fibrations with nonsplit
	generic fiber on smooth Fano threefolds $X$.
	The multiplicity is the number of distinct extremal rays of
	$\operatorname{Nef}(X)$ defining del Pezzo fibrations of the
	indicated type.
	
	Assume that $X$ is general in its deformation family.
	Writing $N$ for the number of singular fibers, we have
	\begin{equation}\label{eq:bt-N}
		N=22-2d-2\rho(X)+2h^{1,2}(X).
	\end{equation}
	For the following blow-up types, we have
	\begingroup\normalfont
	\begin{equation}\label{eq:N}
		N=\begin{cases}
			2c+2m-2, & S\text{ of type }\mathbb P^2(1^k,m)\text{ or }Q_+(m),\\
			2q+2, & S\text{ of type }Q_-,\\
			2q+4, & S\text{ of type }Q_-(2).
		\end{cases}
	\end{equation}
	\endgroup
\end{prop}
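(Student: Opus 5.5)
The proposition has three parts: the classification behind Table~\ref{tab:bt-models}, the topological count \eqref{eq:bt-N}, and the branched-cover count \eqref{eq:N}. I would prove them in that order.

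\textbf{Classification.} I would go through the Mori--Mukai list family by family. Using the nef cone descriptions of \cite[\S10]{Fujita2016} and \cite[\S\S8--12]{burke2022geometric}, I would list the extremal rays of $\operatorname{Nef}(X)$ whose contraction is a morphism onto $\bP^1$; these are exactly the del Pezzo fibrations. For each such ray I compute the degree $d=L^2F$ of the fiber class. I then decide whether the generic fiber is split by comparing $\rho_K(S)=\rho(X)-1$ with $\rho(S_{\overline K})=10-d$. The fibration is nonsplit precisely when $\rho(X)-1<10-d$, with the quadric case handled separately according to whether the two rulings are exchanged by monodromy. The multiplicity column is read off by counting the rays of each type. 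To assign a blow-up type, I would use the Mori--Mukai blow-up presentation of $X$: the exceptional divisors restrict to the generic fiber as blow-ups of closed points whose degrees equal the degrees of the horizontal centers over $B$. The minimal model of $S$ is then $M_d$, $Q_\pm$, $\bP^2$ or $C_4$, as indicated.

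\textbf{Formula \eqref{eq:bt-N}.} This is an Euler characteristic count. Since $X$ is Fano, $h^{1,0}=h^{2,0}=h^{3,0}=0$, so $b_2=b_4=\rho(X)$ and $b_3=2h^{1,2}$. This gives
\[
\chi(X)=2+2\rho(X)-2h^{1,2}(X).
\]
On the other hand, $\chi(X)=\chi(\bP^1)\chi(S_{\mathrm{gen}})+\sum_{\text{sing}}(\chi(X_b)-\chi(S_{\mathrm{gen}}))$, with $\chi(S_{\mathrm{gen}})=12-d$. By Lemma~\ref{lem:bt-nodal-fibers}, each singular fiber is a nodal degeneration with a single $A_1$ point, so its Euler characteristic drops by exactly one (one vanishing cycle). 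Hence $\chi(X)=2(12-d)-N$, and solving gives \eqref{eq:bt-N}.

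\textbf{Formula \eqref{eq:N}.} Here I would count singular fibers through the Galois-theoretic structure. For type $\bP^2(1^k,m)$ or $Q_+(m)$, the generic fiber comes from a smooth $\bP^2$- or $Q_+$-bundle, birationally, blown up along a horizontal curve $C$ of degree $m$ over $B$ with genus $c$. A singular fiber of the blow-up arises exactly where two points of $C$ collide, i.e.\ at a branch point of $C\to B$. Since branching is simple (Lemma~\ref{lem:bt-nodal-fibers}), Riemann--Hurwitz gives $2c-2=-2m+N$. For $Q_-$, the singular fibers are rank-three quadrics, which are exactly the branch points of the ruling double cover of genus $q$; Riemann--Hurwitz gives $N=2q+2$. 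For $Q_-(2)$ there are two additional singular fibers coming from branching of the degree-two center. Alternatively, I would check this case directly against \eqref{eq:bt-N} using the known value of $h^{1,2}$.

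\textbf{Main obstacle.} The bulk of the work is the classification step: an exhaustive case check over all $105$ families, including identifying the blow-up type and the Galois action. The risky point is justifying that the Galois action is maximal for general $X$. I would first try a monodromy argument: by Lemma~\ref{lem:bt-nodal-fibers} all degenerations are Lefschetz, so the monodromy is generated by reflections in vanishing $(-2)$-classes. I would then show that the full Weyl group orthogonal to the invariant part occurs, using irreducibility of the vanishing cycles.
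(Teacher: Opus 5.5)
Your three steps are the paper's own. You read the del Pezzo fibrations off the rays of $\operatorname{Nef}(X)$ together with a chain of blow-downs over $B$; the paper takes the rays from Matsuki's tables \cite{Mat95,Mat23} rather than from \cite{Fujita2016,burke2022geometric}. You then do the Euler characteristic count with one $A_1$ point per singular fiber, and finally Riemann--Hurwitz for the covers recording the degree-$m$ point or the rulings. The real problem is the step you flag yourself: the maximal Galois action built into the symbols $M_d$, $M_4(1)$, $C_4$. As sketched, your argument for it does not go through.

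\textbf{The gap.} Picard--Lefschetz gives that $G$ is generated by reflections in vanishing roots. All of these roots are orthogonal to the image $R$ of $\operatorname{Pic}(X)$. So $G=W(\Phi')$ for a root subsystem $\Phi'\subseteq\Phi\cap R^\perp$, and $\rho_K(S)=\operatorname{rk}R$ forces $\Phi'$ to span $R^\perp$. Knowing that the vanishing cycles are all conjugate only adds that $\Phi'$ is irreducible. This does not force $\Phi'=\Phi\cap R^\perp$. Indeed, $W(E_8)$ contains the irreducible full-rank reflection subgroups $W(A_8)$ and $W(D_8)$, and $W(E_7)$ contains $W(A_7)$. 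So for the types $M_1$ and $M_2$ (Nos.~2-1, 2-2, 2-3), your argument cannot conclude.

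For $E_6$, $D_5$, $D_4$ the proper full-rank subsystems are all reducible, so irreducibility would suffice there. Even then, you would still need to prove that the vanishing cycles are conjugate for the fibration on a fixed $X$. The paper does not argue this way at all. It quotes \cite[Theorem~5.13]{burke2022geometric}, namely $G=\operatorname{Fix}_{W(S)}(R)$, which is exactly the missing input. For the symmetric-group types your setup does suffice: a transitive permutation group generated by transpositions is the full symmetric group.

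\textbf{Two smaller points.}

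First, $\rho_K(S)=\rho(X)-1$ holds only when every fiber is irreducible. In general $\rho(X)=1+\rho_K(S)+\sum_b(n_b-1)$, where $n_b$ is the number of components of $X_b$. So your splitness test should use $\rho_K(S)$ itself rather than $\rho(X)-1$. In the paper this equality is a property observed on the rows of Table~\ref{tab:bt-models}, and Lemma~\ref{lem:bt-nodal-fibers} uses it to deduce that the fibers are integral.

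Second, for $Q_-(2)$ your count of ``two additional singular fibers'' needs the two facts the paper states:
\begin{enumerate}
\item the center giving the degree-two point is a smooth rational curve, so its double cover of $B$ has exactly two branch points;
\item these two points are disjoint from the $2q+2$ branch points of the ruling cover.
\end{enumerate}
Comparing with \eqref{eq:bt-N} is only a consistency check once $q$ has been computed from the geometry.
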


\begin{longtable}{@{}l c c c c@{}}
	\caption{Nonsplit del Pezzo fibrations on Fano threefolds. The multiplicity counts
		the number of fibration structures on the threefold with the same blow-up type}
	\label{tab:bt-models}\\
	\toprule
	No. & $d$ & blow-up type & parameter & multiplicity\\
	\midrule\endfirsthead
	\toprule No. & $d$ & blow-up type & parameter & multiplicity\\
	\midrule\endhead
	\bottomrule\endfoot
	2--1 & $1$ & $M_1$ & $N=60$ & $1$\\
	2--2 & $2$ & $M_2$ & $N=54$ & $1$\\
	2--3 & $2$ & $M_2$ & $N=36$ & $1$\\
	2--4 & $3$ & $M_3$ & $N=32$ & $1$\\
	2--5 & $3$ & $M_3$ & $N=24$ & $1$\\
	3--2 & $3$ & $M_4(1)$ & $N=16$ & $1$\\
	2--7 & $4$ & $M_4$ & $N=20$ & $1$\\
	2--10 & $4$ & $M_4$ & $N=16$ & $1$\\
	3--1 & $4$ & $C_4$ & $N=24$ & $3$\\
	3--4 & $4$ & $C_4$ & $N=12$ & $1$\\
	3--5 & $4$ & $\mathbb P^2(5)$ & $c=0$ & $1$\\
	2--14 & $5$ & $M_5$ & $N=10$ & $1$\\
	3--3 & $5$ & $\mathbb P^2(4)$ & $c=3$ & $2$\\
	3--6 & $5$ & $\mathbb P^2(4)$ & $c=1$ & $1$\\
	3--8 & $5$ & $\mathbb P^2(4)$ & $c=0$ & $1$\\
	4--13 & $5$ & $\mathbb P^2(1,3)$ & $c=0$ & $1$\\
	5--1 & $5$ & $\mathbb P^2(1,1,2)$ & $c=0$ & $1$\\
	3--2 & $6$ & $Q_-(2)$ & $q=3$ & $1$\\
	3--4 & $6$ & $Q_-(2)$ & $q=2$ & $1$\\
	3--6 & $6$ & $Q_-(2)$ & $q=1$ & $1$\\
	3--10 & $6$ & $Q_-(2)$ & $q=0$ & $2$\\
	3--7 & $6$ & $\mathbb P^2(3)$ & $c=1$ & $1$\\
	3--12 & $6$ & $\mathbb P^2(3)$ & $c=0$ & $1$\\
	4--1 & $6$ & $Q_+(2)$ & $c=1$ & $4$\\
	4--2 & $6$ & $\mathbb P^2(1,2)$ & $c=1$ & $2$\\
	4--3 & $6$ & $Q_+(2)$ & $c=0$ & $1$\\
	4--4, 4--5 & $6$ & $\mathbb P^2(1,2)$ & $c=0$ & $1$\\
	3--11 & $7$ & $\mathbb P^2(2)$ & $c=1$ & $1$\\
	3--15, 3--18, 3--21 & $7$ & $\mathbb P^2(2)$ & $c=0$ & $1$\\
	2--18 & $8$ & $Q_-$ & $q=2$ & $1$\\
	2--25 & $8$ & $Q_-$ & $q=1$ & $1$\\
	2--29 & $8$ & $Q_-$ & $q=0$ & $1$\\
\end{longtable}

\begin{proof}
	Put $A=-K_X$.
	A del Pezzo fibration on $X$ is given by a nonzero nef class $F$
	with $AF^2=0$. Such nef class must generate an extremal ray of
	$\operatorname{Nef}^1(X)$.
	Such classes $F$ can be read from Matsuki's tables \cite[\S III-3]{Mat95},
	with the corrections in \cite[\S\S3,4,6]{Mat23}.
	For each $F$, we choose a sequence of blow-downs over $B$.
	Restricting this sequence to the generic fiber gives
	the blow-up types in Table~\ref{tab:bt-models}.

	Let $\Lambda=\operatorname{Pic}(S_{\overline K})$ and let
	$R$ be the image of $\operatorname{Pic}(X)_{\mathbb Q}$ in
	$\Lambda_{\mathbb Q}$. By \cite[Theorem~5.13]{burke2022geometric},
	\begin{equation}\label{eq:bt-G}
		G=\operatorname{Fix}_{W(S)}(R).
	\end{equation}
	Assume that $X$ is general in its deformation family.
	By Lemma~\ref{lem:bt-nodal-fibers}, a smooth fiber has Euler
	characteristic $12-d$, whereas each singular fiber has Euler
	characteristic $11-d$. Hence
	\[
	2+2\rho(X)-2h^{1,2}(X)
	=e(X)
	=(2-N)(12-d)+N(11-d)
	=24-2d-N,
	\]
	which proves \eqref{eq:bt-N}.
	
	For the types $\mathbb P^2(1^k,m)$ and $Q_+(m)$,
	let $C\to B$ be the normalization of $B$
	in the residue field of the degree-$m$ closed point, and put
	$c=g(C)$. Since blowing up a degree-$m$ point decreases the
	anticanonical degree by $m$,
	\[
	m=
	\begin{cases}
		9-d-k, & S\text{ of type }\mathbb P^2(1^k,m),\\
		8-d,   & S\text{ of type }Q_+(m).
	\end{cases}
	\]
	By Lemma~\ref{lem:bt-nodal-fibers}, $C\to B$
	is simply branched at exactly
	the $N$ singular values. Riemann--Hurwitz gives
	\[
	2c-2=-2m+N,
	\]
	proving $N=2c+2m-2$.

	For $Q_-(2)$, the monodromy group is generated by the involutions
	exchange the rulings and the blown-up points.
	The two corresponding double covers have genera
	$q$ and $0$, respectively. Their branch loci are disjoint
	and are exactly the values of singular fibers.
	In each row with blow-up type $Q_-(2)$, the
	blow-up center corresponding to the degree $2$ point
	is a smooth rational curve. Hence the corresponding
	double cover has two branch points by Riemann--Hurwitz.
	Thus we have
	\[
	N=(2q+2)+2=2q+4.
	\]
	A simplified argument shows that $N=2q+2$ for $Q_-$.

	Substituting the values of $\rho(X)$ and $h^{1,2}(X)$ in
	\eqref{eq:bt-N} gives $N$ for every row of Table~\ref{tab:bt-models}.
	The remaining parameters and the multiplicities
	are given by the geometric descriptions for row.
\end{proof}

\subsection{Parameter covers of the blow-up types}
\label{sec:bt-covers}

Let $\Omega$ be either the set $\mathscr R$ of fiber classes of
conic bundles on $S_{\overline K}$, or, for a fixed $r$, the set
$\mathscr E^{(r)}$ of unordered collections of $r$ pairwise
disjoint $(-1)$-curves on $S_{\overline K}$.
Let $T\to B$ be the connected Galois cover defined by the full
monodromy group $G$.

The group $G$ acts on $\Omega$, and each $G$-orbit
$G/H\subset\Omega$ gives a connected component
$D=T/H$ of $P_\pi$.
Write $n=[G:H]$ and let
\[
f_j=\#\{gH\in G/H:s_jgH=gH\}
\]
be the number of cosets fixed by the local reflection $s_j$.
Let $N_j$ be the number of
branch values whose local monodromy is conjugate to $s_j$.
Then Riemann--Hurwitz gives
\begin{equation}\label{eq:bt-genus}
	\deg(D/B)=n,\qquad
	g(D)=1-n+\frac14\sum_jN_j(n-f_j).
\end{equation}
Indeed, above such a value there are $f_j$ unramified points
and $(n-f_j)/2$ points of ramification index two.
Suppose that $G=\prod_j S_{m_j}$ and that
$H=\prod_j\bigl(\prod_a S_{\lambda_{ja}}\bigr)$ is a product
of Young subgroups, with $\sum_a\lambda_{ja}=m_j$.
For $s_j$ a transposition in the $j$th factor, we have
\begin{equation}\label{eq:bt-young}
	n=\prod_j\frac{m_j!}{\prod_a\lambda_{ja}!},\qquad
	\frac{f_j}{n}=
	\frac{\sum_a\binom{\lambda_{ja}}2}{\binom{m_j}2}.
\end{equation}

For a cover $D\to B$, write $\mu(D)=(c_{\rm cb};c_1,\ldots,c_{9-d})$,
where the entries count its occurrences in $\mathscr R$ and
$\mathscr E^{(r)}$. The symbols in the column $D$
of Table~\ref{tab:bt-covers} have the following meanings.
\begin{itemize}
	\item $C$: for $\mathbb P^2(1^k,m)$ or $Q_+(m)$, the
	normalization of $B$ in the residue field of the degree-$m$
	closed point. It marks one of its $m$ geometric points.
	We thus have $\deg(C/B)=m$ and $g(C)=c$.
	\item $C_{[2]}$: for $\mathbb P^2(4)$, the cover marking an
	unordered pair of distinct points in a geometric fiber of
	$C\to B$.
	We have $\deg(C_{[2]}/B)=\binom42=6$.
	\item $Q$: for $Q_-$ or $Q_-(2)$, the double cover marking
	one of the two ruling classes of the underlying quadric
	$Q_-$. Thus $g(Q)=q$.
	\item $E$: for $Q_-(2)$, the double cover marks one
	of the two exceptional curves. We have $\deg(E/B)=2$ and $g(E)=0$.
	\item $T$: The original Galois cover.
	\item $C_5$: for $M_5$, the degree-five cover marking one
	of the five conic-bundle classes on $S_{\overline K}$.
	Its stabilizer is $S_4\subset S_5$.
	Since a transposition fixes three of the five conic bundles,
	and $N=10$, we have
	\[
	g(C_5)=1-5+\frac{10}{4}(5-3)=1.
	\]
\end{itemize}
For covers with too high genus, we left a $-$ in the column of
cover.

\begin{prop}\label{prop:bt-covers}
	For types $M_1,M_2,M_3,M_4,M_4(1),C_4$, every
	non-trivial component of $P_{\pi}$ has genus at least two.
	Table~\ref{tab:bt-covers} lists every
	non-trivial component of $P_{\pi}$
	for all other types.
\end{prop}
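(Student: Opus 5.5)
We argue orbit by orbit with the Riemann--Hurwitz formula \eqref{eq:bt-genus}. For each blow-up type in Table~\ref{tab:bt-models}, we first determine the monodromy group $G$ from \eqref{eq:bt-G}, i.e.\ $G=\operatorname{Fix}_{W(S)}(R)$. By Lemma~\ref{lem:bt-nodal-fibers}, every local monodromy at a singular value is a reflection $s_\alpha$ in a root $\alpha$ of $\Lambda$ orthogonal to $R$, and $N$ is given by \eqref{eq:bt-N}. We then list the $G$-orbits on $\mathscr R$ and on each $\mathscr E^{(r)}$; these give the components $D$ of $P_\pi$. For each orbit we count the points fixed by a reflection and compute $g(D)$. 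Since all reflections in $G$ are conjugate when the relevant root system is irreducible, we use a single $f$ with $N_j=N$. In the product cases ($Q_-(2)$, $\mathbb P^2(1,2)$, $\mathbb P^2(1,1,2)$, $Q_+(2)$) the classes $s_j$ are the transpositions in the separate factors, and $N_j$ comes from the branch counts in the proof of Proposition~\ref{prop:bt-models}.

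For the minimal types $M_d$ with $d\le4$ and for $M_4(1)$ and $C_4$, we have $G=W(E_{9-d})$, $W(D_5)$, $W(D_4)$ or a close variant, and $N$ is large ($\ge12$). We use the crude bound $g(D)\ge 1-n+\frac N4(n-f)$, where $n-f$ is the number of elements of the orbit moved by a reflection. A reflection $s_\alpha$ moves exactly those $e$ with $e\cdot\alpha\neq0$, so $n-f$ equals twice the number of orbit elements with $e\cdot\alpha=1$; this is a standard count in root lattices. For example, for $W(E_6)$ acting on the $27$ lines, a reflection moves $12$ of them, giving $g=1-27+\frac{32}{4}\cdot 12=70$. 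It then suffices to show that $\frac N4(n-f)\ge n+1$ for every orbit. This reduces to checking that the moved fraction $(n-f)/n$ exceeds $4/N$ plus a small margin. Because the fixed-point proportion of a reflection on an orbit of exceptional configurations is well below $1-4/N$ for $N\ge12$, this holds throughout. The only delicate cases are the small-$N$ ones, namely $C_4$ on No.~3--4 with $N=12$ and $M_4(1)$ with $N=16$. There we compute the orbits of $W(D_4)$ and $W(D_5)$ on conic classes and disjoint $(-1)$-tuples explicitly, using \eqref{eq:bt-young} where $G$ acts through a symmetric group.

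For the remaining types, namely $\mathbb P^2(m)$, $\mathbb P^2(1^k,m)$, $Q_\pm$, $Q_\pm(2)$ and $M_5$, the group $G$ is a product of symmetric groups (for $M_5$, $G=S_5$), and each orbit corresponds to a Young subgroup. We enumerate the $(-1)$-curves and conic classes explicitly: exceptional curves $e_i$, lines through pairs $\ell-e_i-e_j$, conics, the rulings of the quadric, and so on. For each orbit we record its stabilizer and apply \eqref{eq:bt-young} together with \eqref{eq:N} to express $g(D)$ in terms of $c$ or $q$. We identify orbits isomorphic to the same cover. For instance, the curves $e_i$ and the conics through the other three points both give $C$ for $\mathbb P^2(4)$, while the lines $\ell-e_i-e_j$ give $C_{[2]}$. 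Recording the multiplicities $\mu(D)$ then produces Table~\ref{tab:bt-covers}.

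The main obstacle is the exhaustive orbit enumeration for the large Weyl groups ($M_1$: $W(E_8)$ acting on $240$ curves and on many disjoint tuples). Rather than listing orbits, we intend to use a uniform lower bound. Every nontrivial orbit $\Omega'$ spans a nonzero vector not fixed by $G$, so for each element a positive proportion of reflections move it. Averaging over the $N$ reflections, weighted by the conjugacy class, gives $\frac{N}{4}\cdot\frac{n-f}{n}\ge\frac{N}{4}\cdot\frac{2h}{|\Phi|}$-type bounds, where $h/|\Phi|$ is the proportion of roots $\alpha$ with $e\cdot\alpha\ne0$. We then compare this with $1$ using the values of $N$ from Table~\ref{tab:bt-models}.
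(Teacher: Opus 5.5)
Your handling of the types whose monodromy is a product of symmetric groups (including $M_5$ with $G=S_5$) matches the paper: stabilizers are Young subgroups, and \eqref{eq:bt-young}, \eqref{eq:bt-genus} and \eqref{eq:N} produce Table~\ref{tab:bt-covers}. The gap is in the first assertion, for $M_1,\dots,M_4,M_4(1),C_4$. There everything rests on a uniform bound for the fraction $f/n$ of points of an arbitrary nontrivial orbit in $\mathscr R$ or $\mathscr E^{(r)}$ that are fixed by a reflection, and you do not supply one. The claim that this fraction is ``well below $1-4/N$'' is only asserted. The remark that ``a positive proportion of reflections move it'' gives merely $f<n$. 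But $g(D)\ge2$ needs $\frac N4(n-f)\ge n+1$, for instance $f/n\le\frac23-\frac1{3n}$ when $N=12$. Your explicit fallback covers only a few orbits of $W(D_4)$ and $W(D_5)$, and \eqref{eq:bt-young} is not available there, since these groups are not products of symmetric groups. It covers none of the many orbits of $W(E_6)$, $W(E_7)$, $W(E_8)$ on the $\mathscr E^{(r)}$. There is also a smaller error: in degree one there are roots with $e\cdot\alpha=\pm2$, such as $\alpha=e+K$. So the moved points number twice those with $e\cdot\alpha>0$, not twice those with $e\cdot\alpha=1$.

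The missing idea, which is how the paper argues, is a reduction to maximal parabolic subgroups. Let $v$ be a non-fixed parameter and $H$ its stabilizer. Then $H$ fixes a vector $w$: $w=u$ for a conic class, $w=e_1+\cdots+e_r$ for a collection. This $w$ is not $G$-invariant; for collections, a short intersection-number check rules out $w\in R_{\bQ}$. By Steinberg's theorem, $\operatorname{Stab}_G(w)$ is a proper parabolic subgroup, so $H\subset P$ for some maximal proper parabolic $P$, and the cover $T/H\to T/P$ gives $g(D)\ge g(T/P)$. All reflections in these irreducible simply laced groups are conjugate. Hence a reflection $s$ fixes the fraction $|s^G\cap P|/|s^G|=|\Phi_P|/|\Phi|$ of the cosets in $G/P$. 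Inspecting the Dynkin diagrams shows this fraction is at most $3/5$ for $E_8,E_7,E_6,D_5,D_4$, attained by $D_4\subset D_5$. With $N\ge12$, Riemann--Hurwitz then gives $g(T/P)\ge 1-n_P+\frac{12n_P}{4}\cdot\frac25=1+\frac{n_P}{5}>1$. Your averaging remark becomes exactly this argument once you write $f/n=|s^G\cap H|/|s^G|$ and bound the reflections in $H$ by those of a maximal parabolic sub-root system. If $h$ counts all roots not orthogonal to $w$, the correct comparison is with $h/|\Phi|$, so the factor $2$ in your $\frac{2h}{|\Phi|}$ is spurious. It is this root count, not mere positivity, that closes the argument.
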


\begin{longtable}{@{}l c r c c@{}}
	\caption{Parameter components of degree greater than one.
		The parameters $c,q$ are supplied by Table~\ref{tab:bt-models}.}
	\label{tab:bt-covers}\\
	\toprule
	blow-up type & $D$ & $n$ & $g(D)$ & $\mu$\\
	\midrule\endfirsthead
	\toprule blow-up type & cover & degree & genus & $\mu$\\
	\midrule\endhead
	\bottomrule\endfoot
	$M_5$ & $C_5$ & 5 & $1$ & $(1;0,0,0,1)$\\
	$$ & $-$ & 10 & $6$ & $(0;1,0,1,0)$\\
	$$ & $-$ & 20 & $16$ & $(0;0,0,1,0)$\\
	$$ & $-$ & 30 & $31$ & $(0;0,1,0,0)$\\
	\midrule
	$\mathbb P^2(5)$ & $C$ & 5 & $0$ & $(2;1,0,0,2,1)$\\
	$$ & $-$ & 10 & $3$ & $(0;1,2,2,2,1)$\\
	$$ & $-$ & 20 & $9$ & $(0;0,0,1,3,0)$\\
	$$ & $-$ & 30 & $19$ & $(0;0,2,2,1,0)$\\
	$$ & $-$ & 60 & $49$ & $(0;0,0,1,0,0)$\\
	\midrule
	$\mathbb P^2(4)$ & $C$ & 4 & $c$ & $(1;1,0,3,1)$\\
	$$ & $C_{[2]}$ & 6 & $2c+1$ & $(0;1,1,1,0)$\\
	$$ & $-$ & 12 & $5c+4$ & $(0;0,2,1,0)$\\
	\midrule
	$\mathbb P^2(1,3)$ & $C$ & 3 & $0$ & $(1;3,6,7,1)$\\
	$$ & $T$ & 6 & $1$ & $(0;0,2,1,0)$\\
	\midrule
	$\mathbb P^2(1,1,2)$ & $C$ & 2 & $0$ & $(1;3,12,10,1)$\\
	\midrule
	$\mathbb P^2(3)$ & $C$ & 3 & $c$ & $(1;2,3,0)$\\
	\midrule
	$\mathbb P^2(1,2)$ & $C$ & 2 & $c$ & $(1;2,3,0)$\\
	\midrule
	$Q_+(2)$ & $C$ & 2 & $c$ & $(0;3,3,1)$\\
	\midrule
	$Q_-(2)$ & $E$ & 2 & $0$ & $(0;1,1,1)$\\
	$$ & $Q$ & 2 & $q$ & $(1;0,1,0)$\\
	$$ & $T$ & 4 & $2q+1$ & $(0;1,1,0)$\\
	\midrule
	$\mathbb P^2(2)$ & $C$ & 2 & $c$ & $(1;1,0)$\\
	\midrule
	$Q_-$ & $Q$ & 2 & $q$ & $(1;0)$\\
\end{longtable}

\begin{proof}
	For each of the six types of low degrees, the monodromy representation
	is the reflection representation of
	$E_8,E_7,E_6,D_5,D_5,D_4$, respectively, and we have $N\ge12$.
	Thus for any parameter $v\in\Omega$, the stabilizer
	$H=\operatorname{Stab}_G(v)$ is contained in a maximal proper parabolic
	subgroup $P$ of $G$. For these root systems, the proportion
	of fixed cosets in $G/P$ is $|\Phi_P|/|\Phi|\le3/5$.
	Consequently, writing $n_P=[G:P]$,
	the Riemann-Hurwitz formula
	\[
	2g(T/P)-2=-2n_P+N\frac{n_P-f_P}{2}.
	\]
	gives
	\[
	g(D)\ge g(T/P)\ge1-n_P+\frac{12n_P}{4}(1-3/5)
	=1+\frac{n_P}{5}>1,
	\]
	which excludes the low degree types.

	For the blow-up types listed in Table~\ref{tab:bt-covers}, the monodromy
	group $G$ is a product of symmetric groups, and
	each factor permutes a set given by the defining data.
	For example, for $Q_-(2)$, we have
	$G=S_2\times S_2$, and the two factors exchange the two ruling classes and
	the two exceptional curves, respectively.
	Each $v\in\Omega$ gives a partition of the set,
	and $H$ consists of all permutations
	preserving each part of the partition.
	Hence $H=\operatorname{Stab}_G(v)$ is a product of Young subgroups,
	and \eqref{eq:bt-young} gives their genera.
	For example, for $\mathbb P^2(4)$, the connected parameter
	covers of degree greater than one, up to isomorphism
	over $B$, mark the following data:
	\[
	\{e_1\},\qquad
	\{h-e_1-e_2\},\qquad
	\{h-e_1-e_2,\ h-e_1-e_3\},
	\]
	and the corresponding covers have degrees $4,6,12$, respectively.
	Since $N=2c+6$ by \eqref{eq:N}, formula~\eqref{eq:bt-genus} gives their genera
	as $c$, $2c+1$, and $5c+4$, respectively.
	
	For $M_5$, the connected parameter covers
	of degree greater than one mark the following data:
	\[
	\{2h-e_1-e_2-e_3-e_4\},\qquad
	\{e_1\},\qquad
	\{e_1,e_2,e_3\},\qquad
	\{e_1,e_2\},
	\]
	where the first one is a conic bundle and
	the remaining markings are collections of disjoint lines.
	The corresponding covers have degrees $5,10,20,30$,
	respectively, and a transposition in the monodromy group
	$S_5$ fixes $3,4,6,6$ markings in the respective orbits.
	Since $N=10$, formula~\eqref{eq:bt-genus} gives their genera
	as $1,6,16,31$, respectively.
	
	For $Q_-(2)$, let $q_1,q_2$ be the ruling classes and
	$e_1,e_2$ the exceptional classes.
	The covers $E,Q,T$ are represented, respectively, by
	\[
	\{e_1\},\qquad \{q_1\},\qquad \{q_1-e_1\}.
	\]
	They mark an exceptional curve, a ruling class, and the
	strict transform of a ruling line through a blown-up point,
	respectively, and have degrees $2,2,4$.
	There are $2q+2$ branch values with local monodromy $s_q$
	and two with local monodromy $s_e$, where $s_q$ exchanges
	the ruling classes and $s_e$ exchanges the exceptional classes.
	These branch loci are disjoint.
	Thus \eqref{eq:bt-genus} reads
	\[
	g(D)=1-n+\frac14\bigl((2q+2)(n-f_q)+2(n-f_e)\bigr).
	\]
	Substituting $(n,f_q,f_e)=(2,2,0),(2,0,2),(4,0,0)$
	gives $g(E)=0$, $g(Q)=q$, and $g(T)=2q+1$, respectively.
	
	Write $H=\operatorname{Stab}_G(v)$.
	The occurrences of the cover $T/H$ in the parameter scheme
	$P_\pi$ correspond precisely to the
	$G$-orbits in $\Omega$ whose point stabilizers are conjugate
	to $H$.
	Each of these orbits has $[G:H]$ elements.
	Hence the corresponding entry of $\mu(T/H)$ is
	\[
	c_\Omega(v)=
	\frac{\#\{w\in\Omega\mid
		\operatorname{Stab}_G(w)\text{ is conjugate to }H\}}
	{[G:H]},
	\qquad
	\Omega=\mathscr R\ \text{or}\ \mathscr E^{(r)}.
	\]
	These calculations give every row and complete the proof.
\end{proof}

The following theorem gives the precise family lists and
the arithmetic conclusion summarized in
Theorem~\ref{intro-dp-families}.

\begin{theo}[Family lists and potential density]\label{thm:bt-families}
	Let $X$ be a smooth complex Fano threefold, general in its
	deformation family. For $i\in\{0,1\}$, there exist a del Pezzo fibration $\pi$ and
	a connected parameter cover $D\to\mathbb P^1$ of degree greater
	than one with $g(D)=i$ if and only if the family number belongs
	to $\mathcal I_i$, where
	\begingroup\normalfont
	\[
	\begin{aligned}
		\mathcal I_0=\{&2\text{--}29,\ 3\text{--}2,\ 3\text{--}4,\ 3\text{--}5,
		\ 3\text{--}6,\ 3\text{--}8,\ 3\text{--}10,\ 3\text{--}12,\\
		&3\text{--}15,\ 3\text{--}18,\ 3\text{--}21,\ 4\text{--}3,
		\ 4\text{--}4,\ 4\text{--}5,\ 4\text{--}13,\ 5\text{--}1\},\\[3pt]
		\mathcal I_1=\{&2\text{--}14,\ 2\text{--}25,\ 3\text{--}6,\ 3\text{--}7,
		\ 3\text{--}8,\ 3\text{--}10,\\
		&3\text{--}11,\ 4\text{--}1,\ 4\text{--}2,\ 4\text{--}13\}.
	\end{aligned}
	\]
	\endgroup
	
	For each of the $22$ families in $\mathcal I_0\cup\mathcal I_1$,
	the points represented by models over number fields with potentially
	Zariski-dense geometric exceptional sets are Zariski dense
	in the corresponding geometric deformation space.
\end{theo}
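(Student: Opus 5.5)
The family lists come from combining Table~\ref{tab:bt-models} with Proposition~\ref{prop:bt-covers} and Table~\ref{tab:bt-covers}. By Proposition~\ref{prop:bt-covers}, the types $M_1,\dots,M_4,M_4(1),C_4$ contribute no cover of genus $\le1$. These types cover Nos.~2--1 through 2--10 in the degree $\le4$ rows, and 3--1, 3--2 (degree $3$), and 3--4 (degree $4$). For every other row I read off the genera of the listed covers as functions of $c$ or $q$, and substitute the value from Table~\ref{tab:bt-models}:
\begin{itemize}
\item \textbf{Types with cover $C$.} For $\mathbb P^2(5)$, $\mathbb P^2(1,3)$, $\mathbb P^2(1,1,2)$ the cover $C$ has genus $0$. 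For $\mathbb P^2(4)$, $\mathbb P^2(3)$, $\mathbb P^2(1,2)$, $Q_+(2)$, $\mathbb P^2(2)$ it has genus $c$.
\item \textbf{Types with $T$ or $C_{[2]}$.} For $\mathbb P^2(1,3)$, $T$ has genus $1$. For $\mathbb P^2(4)$, $C_{[2]}$ has genus $2c+1$, which is $1$ when $c=0$.
\item \textbf{Type $C_5$.} For $M_5$, $C_5$ has genus $1$.
\item \textbf{Quadric types.} For $Q_-(2)$, $E$ has genus $0$, $Q$ has genus $q$, and $T$ has genus $2q+1$. For $Q_-$, $Q$ has genus $q$.
\end{itemize}
Running through the rows then yields $\mathcal I_0$ and $\mathcal I_1$. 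Some families appear twice, e.g.\ 3--6 via $Q_-(2)$ with $q=1$ and via $\mathbb P^2(4)$ with $c=1$. Others appear only in $\mathcal I_0$, such as 3--2 and 3--4 through the genus-$0$ cover $E$ of $Q_-(2)$. The union consists of $22$ distinct families. The converse direction (``only if'') is immediate from the same tables: all remaining covers have genus $\ge2$ for the given parameters. For example, 3--3 has $c=3$, so its covers have genus $3,7,19$, and 2--18 has $q=2$.

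For potential density, fix a family in $\mathcal I_0\cup\mathcal I_1$ with its fibration $\pi:X\to\mathbb P^1$ and a low-genus cover $D=N$. The plan has three steps.
\begin{enumerate}
\item Show that models over number fields $F$ are Zariski dense in the geometric deformation space. This holds because the space is an irreducible variety over $\overline{\mathbb Q}$ (or its $\overline{\mathbb Q}$-points are dense), and generality is an open condition.
\item Choose $F$ large enough that $\pi$, $N$, and all geometric Picard classes of $X$ are defined over $F$, so that $\rho_F(X)=\rho(X)$. After enlarging $F$, the curve $N$ has infinitely many, hence Zariski dense, $F$-points: a genus-$0$ curve acquires a point, and an elliptic curve acquires positive rank over a suitable extension.
\item Apply Remark~\ref{rema-dp-candidates}. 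Every row has $\rho_{F(B)}(X_\eta)=\rho(X)-1$ over the geometric generic point by Lemma~\ref{lem:bt-nodal-fibers}; the same holds over $F(B)$ once $\rho_F(X)=\rho(X)$. Hence $\tau_N$ is breaking by Theorem~\ref{intro-dp-breaking}(1).
\end{enumerate}
Since $Y_N\to N$ has del Pezzo generic fiber, which is geometrically rational, and $N(F)$ is infinite, the fibers over points of $N(F)$ are del Pezzo surfaces over $F$. After a further finite extension they have dense $F$-points; for instance, they become split or acquire a point, and del Pezzo surfaces of degree $\ge5$ with a point are rational. Therefore $\tau_N(Y_N(F))$ is Zariski dense in $X$, and $Z(X_K)$ is dense for a suitable finite $K/F$.

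The main obstacle is making the last step uniform. Density of $Y_N(K)$ requires rational points on the fibers over infinitely many points of $N(K)$, all over a single field $K$. I would first try to exhibit a rational section, or use that the relevant degrees ($d\ge4$ here) together with the marked contraction give $F(N)$-rationality of the generic fiber $X_{F(N)}$. This works for the degree $\ge5$ types; degree $4$ ($\mathbb P^2(5)$ on 3--5) can be handled by contracting the marked conic bundle, which has a section after base change. The elliptic case also needs positive rank over some finite extension, which is standard.
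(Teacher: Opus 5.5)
Your proposal follows the paper's proof. The family lists are obtained by substituting the parameters of Table~\ref{tab:bt-models} into Table~\ref{tab:bt-covers}, with Proposition~\ref{prop:bt-covers} discarding the types $M_1,\ldots,M_4,M_4(1),C_4$. Potential density then comes from the breaking base change along a cover $N$ of genus at most one, after enlarging $F$ so that $X$ is split, Remark~\ref{rema-dp-candidates} applies, and $N(F)$ is infinite. The uniformity issue you flag is handled in the paper by the same device you propose: after a finite extension of $F$, the generic fiber of $Y_N\to N$ becomes rational over the function field of $N$, so $Y_N$ is birational to $N\times\mathbb P^2$ and $Y_N(K)$ is dense. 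This rationality holds because every relevant type is $M_5$ or a blow-up of $\mathbb P^2$ or $Q_\pm$, and a point on $Q_-$ comes from Tsen's theorem over $\overline F(N)$; in particular, your separate conic-bundle argument for $\mathbb P^2(5)$ is unnecessary.
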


\begin{proof}
	The two lists follow by substituting the parameters of
	Table~\ref{tab:bt-models} into Table~\ref{tab:bt-covers} and applying
	Proposition~\ref{prop:bt-covers}. Their intersection is
	$\{3\text{--}6,3\text{--}8,3\text{--}10,4\text{--}13\}$,
	so their union has $16+10-4=22$ elements. Choose $U$ so that
	Proposition~\ref{prop:bt-models} and Proposition~\ref{prop:bt-covers}
	hold throughout $U$.
	
	Let $X/F$ represent a point of $U$. After a finite extension of
	$F$, choose $\pi:X\to B=\mathbb P^1_F$ and a geometrically
	integral parameter cover $D\to B$ with $g(D)\le1$, both defined
	over $F$. Let $f:Y\to X$ be a smooth
	projective model of $X\times_B D$.
	Since the involved del Pezzo fibrations has degree at least $t$, the generic fiber of $Y\to D$ is geometrically rational.
	So after a finite extension of $F$, the threefold $Y$ is rational.
	After a further finite extension of $F$, we may assume that
	$X$ is split. Since the fibers of $\pi_k$ are
	integral,
	Proposition~\ref{prop:bt-covers} implies that
	\[
	\rho_{F(D)}(X_{F(D)})\ge \rho_{F(B)}(S)+1=\rho_F(X).
	\]
	By Lemma~\ref{lemm-dp-base-change-b},
	\[
	a(Y,f^*(-K_X))=1,
	\qquad
	b(F,Y,f^*(-K_X))
	=\rho_{F(D)}(X_{F(D)})\geq\rho_F(X).
	\]
	Since $\deg(D/B)>1$, the same lemma identifies $Y\to D$
	with the adjoint Iitaka fibration, up to birational equivalence.
	Thus $f$ is a breaking thin map and $f(Y(F))\subset Z(X)$.
	
	Finally, after a finite extension of $F$ such that $D(F)$
	is infinite, the set
	$Y(F)$ is Zariski dense. Its image is dense in $X_F$ and is
	contained in $Z(X_F)$. The last assertion follows because
	$U(\overline{\mathbb Q})$ is Zariski dense and each of its points has a model over a number field.
\end{proof}

\section{Zariski dense geometric exceptional sets arising from del Pezzo fibrations}
\label{sec-example}
\subsection{Quadric surface fibrations}

For a quadric-surface fibration on a smooth Fano threefold,
a rank-three fiber is a Du Val del Pezzo surface of type $A_1$.
Each geometric irreducible component of a fiber of rank at most
two has higher $a$-invariant by Proposition~\ref{prop-dp-bad-fibers}.
If such a component is defined over $F$, the rational-point image
of its resolution is contained in $Z_a(X)$.

For a smooth quadric surface, the only parameter cover in
Construction~\ref{cons-dP-fib} is the degree-two scheme of rulings.
We use this scheme to define the discriminant cover, rather than
specifying only its branch divisor, which would leave a quadratic
twist undetermined.

\begin{coro}\label{coro:p1xp1}
	Let $X$ be a smooth Fano variety over $F$ and let $\pi:X\to B$
	be a flat quadric-surface fibration over a positive-dimensional smooth
	projective variety.
	Let $q:D\to B$ be the normal discriminant cover
	parametrizing rulings, and suppose that $D$ is geometrically integral.
	Let $\tau:Y_D\to X$ be a smooth projective model of the projection
	$X\times_B D\to X$, and write $p:Y_D\to D$ for its resolved map.
	Assume that $p$ is birationally the adjoint Iitaka fibration of
	$(Y_D,\tau^*(-K_X))$. Then
	\[
	a(Y_D,\tau^*(-K_X))=1,
	\qquad b(F,Y_D,\tau^*(-K_X))=2.
	\]
	Thus $\tau$ is breaking if and only if $\rho_F(X)=2$.
	Every dominant breaking map whose canonical fibers map onto quadric
	fibers factors rationally through $\tau$.
	
	If $X$ is a smooth Fano threefold, the assumption is
	automatic. In this case, if $\rho_F(X)=2$, a smooth fiber $X_t$ is
	breaking exactly when $t\in B(F)$ lifts to $D(F)$.
\end{coro}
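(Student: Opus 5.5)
The plan is to specialize the general del Pezzo machinery of Section~\ref{sec-bsuface} to the quadric case $d=8$ with $\mathbb P^1\times\mathbb P^1$ geometric generic fiber. First I will compute the invariants. Lemma~\ref{lemm-dp-base-change-b} applies to $g=q:D\to B$ and gives $a(Y_D,\tau^*(-K_X))=1$. Under the assumption that $p$ is birationally the adjoint Iitaka fibration, it also gives
\[
b(F,Y_D,\tau^*(-K_X))=\rho_{F(D)}(X_{F(D)}).
\]
To evaluate this, let $S=X_\eta$ be the generic fiber. Its geometric Picard lattice is $\bZ q_1\oplus\bZ q_2$, spanned by the two ruling classes, and the Galois action factors through the swap. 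The swap is nontrivial because $D$ is geometrically integral of degree $2$, so $\rho_{F(B)}(S)=1$. Over $F(D)=F(B)(\text{rulings})$ each ruling class is Galois invariant. A ruling class is a conic bundle class. It may fail to descend to a line bundle because of a Brauer obstruction, but as in Remark~\ref{rema-dp-candidates} a multiple descends. Hence $\rho_{F(D)}(X_{F(D)})=2$, which gives $b=2$.

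Since the pair $(1,2)$ must be compared with $(1,\rho_F(X))$ and the Iitaka dimension is one, $\tau$ is breaking if and only if $2\ge\rho_F(X)$. We have $\rho_F(X)\ge\rho_{F(B)}(S)+1=2$, because the pullback of an ample class on $B$ lies in the kernel of restriction. So the condition is $\rho_F(X)=2$.

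For the factorization statement I will apply Theorem~\ref{theo-breaking}, observing that in Construction~\ref{cons-dP-fib} with $d=8$ the set $\mathscr E_{\bar b}$ is empty on $\mathbb P^1\times\mathbb P^1$. Indeed, for $e=aq_1+bq_2$ the condition $e^2=2ab=-1$ has no solutions. The set $\mathscr R_{\bar b}$ equals $\{q_1,q_2\}$, since $u^2=0$ and $-K\cdot u=2$ force $u=q_i$. So $P_\pi$ consists of the single cover $\mathscr R=D$, and every breaking map factors through $\tau$. The theorem's hypothesis $b\ge\rho_F(X)$ is exactly what breaking gives here; the case of a larger $a$-invariant does not arise for dominant maps whose fibers map onto quadric fibers.

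For threefolds, the last paragraph of Lemma~\ref{lemm-dp-base-change-b} shows that the Iitaka assumption holds automatically, since $\deg q=2$. For a smooth fiber $X_t$ with $t\in B(F)$, the invariants are $a=1$ and $b=\rho_F(X_t)$, and this is breaking if and only if $\rho_F(X_t)\ge2$. For a smooth quadric surface, $\rho_F(X_t)=2$ exactly when the two rulings are individually defined over $F$. This holds exactly when the fiber $q^{-1}(t)$ splits into two $F$-points, that is, when $t$ lifts to $D(F)$. Over the étale locus this follows from the definition of $D$ as the scheme of rulings. Smooth fibers lie over the étale locus of $q$ because branching occurs only at singular quadrics.

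The main obstacle is making the identification $\rho_{F(D)}=2$ rigorous when the ruling class only descends up to a multiple. I will handle it by working with $N^1\otimes\bQ$ and Galois-invariant classes, as in Remark~\ref{rema-dp-candidates}.
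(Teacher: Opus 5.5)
Your proposal is correct and takes essentially the same route as the paper. The steps match: $P_\pi=\{\tau\}$ because $\bP^1\times\bP^1$ has no $(-1)$-classes and exactly two conic-bundle classes, Lemma~\ref{lemm-dp-base-change-b} with $\rho_{F(D)}=2$ gives $(a,b)=(1,2)$, $\rho_F(X)\geq2$ gives the criterion, Theorem~\ref{theo-breaking} gives the factorization, and the ruling scheme over the smooth locus gives the fiber criterion. Two minor points: for general $B$ the adjoint Iitaka dimension is $\dim B$ rather than one, though only its positivity is used, and strictly you should replace $D$ by a smooth projective model before applying the lemma.
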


\begin{proof}
	A geometric quadric surface has no $(-1)$-curves and has exactly two
	conic-bundle classes, namely its rulings. Consequently $P_\pi$ consists
	of $\tau$. Over $F(D)$ both ruling classes are invariant, so the
	generic Picard rank is two. Lemma~\ref{lemm-dp-base-change-b} gives the
	claimed invariants. The pair has positive adjoint Iitaka dimension,
	and $\rho_F(X)\geq2$, proving the breaking criterion. Factorization
	follows from Theorem~\ref{theo-breaking}.
	For a Fano threefold apply the last assertion of
	Lemma~\ref{lemm-dp-base-change-b}. A smooth quadric fiber has arithmetic
	Picard rank two precisely when its two ruling classes are individually
	invariant, or equivalently when its ruling scheme has an $F$-point.
	This proves the last assertion.
\end{proof}

We recall two examples from the literature with Zariski dense
exceptional sets arising from quadric-surface fibrations.
\begin{exam}\label{exam-BT-quadric}
	The quadric analogue of the Batyrev--Tschinkel example is
	the Fano fivefold
	\[
	\sum_{i=0}^3x_iy_i^2=0
	\quad\subset\bP^3\times\bP^3,
	\]
	whose Manin asymptotic was established in \cite{BTB20}.
	Restricting the first projection to general lines gives
	Fano threefolds of type No.~2-25, and this construction
	dominates their geometric deformation family.
	Their Manin--Peyre asymptotic was proved under additional
	arithmetic hypotheses in \cite[Theorem~1.1]{BBH24}.
\end{exam}
\begin{exam}
	Let $X=\Bl_C(Q)$, where
	\[
	Q=V(z_{11}z_{12}-z_{21}^2-z_{31}z_{32})
	\subset\bP^4_{\bQ},
	\qquad
	C=Q\cap V(z_{31},z_{32}).
	\]
	This is a Fano threefold of type No.~2-29, whose Manin
	asymptotic was established in
	\cite[Theorem~1.1]{blomer2024manin}.
	The pencil $[z_{31}:z_{32}]=[s:t]$ has quadric fibers
	\[
	z_{11}z_{12}-z_{21}^2-stz^2=0.
	\]
	Its ruling cover $w^2=-st$ is rational, and
	$[z_{11}:z_{12}:z_{21}:z]=[1:0:0:0]$ defines a section.
	Hence infinitely many rational parameters give smooth
	split fibers.
	Since $\rho_{\bQ}(X)=2$, these fibers are breaking by
	Corollary~\ref{coro:p1xp1}, and their rational points
	are Zariski dense in $X$.
\end{exam}

\subsection{A quartic del Pezzo fibration}\label{sec-dp4}

In this section, we give a general construction of Fano fourfolds with Zariski dense
exceptional sets, arising from quartic del Pezzo fibrations.

Let $V$ be a four-dimensional vector space over a number field $F$.
Let
\[
\mathcal N=\langle q_0,q_1,q_2\rangle
\]
be a net of quadrics in $\mathbb P(V)\cong\mathbb P^3$ whose base locus contains a point $z_0\in\bP(V)(F)$. Put
\[
B=\mathbb P(\mathcal N)\cong\mathbb P^2.
\]
For $\boldsymbol{t}=[t_0:t_1:t_2]\in B$, write
\[
q_{\boldsymbol t}=t_0q_0+t_1q_1+t_2q_2.
\]
After choosing a basis of $V$, let
\[
A(\boldsymbol t)=t_0A_0+t_1A_1+t_2A_2
\]
be the symmetric matrix representing $q_t$. 
We assume from now on that
\[ 
\det A(\boldsymbol t)\in F(B)^\times\setminus F(B)^{\times2}.
\]
Then the discriminant curve of
the net is defined as
\[
\Delta_{\mathcal N}
=
\{\det A(\boldsymbol{t})=0\}
\subset B.
\]
Let
\[
\delta\colon D_{\mathcal N}\longrightarrow B
\]
be the normalization of the discriminant double cover
\[
w^2=\det A(\boldsymbol{t}).
\]

Set $V^+=V\oplus F$ and regard $q_t$ as a quadratic form on $V^+$
independent of the last coordinate. Thus
\[
\widehat Q_{\boldsymbol{t}}=\{q_{\boldsymbol{t}}=0\}
\subset\bP(V^+)\cong\bP^4
\]
is a quadric cone with vertex $v=[0:0:0:0:1]$.

Let $P\in\bP(V^+)$ be a lifting of $z_0$.
Let
\[
Q^3=V(Q)\subset\mathbb P(V^+)
\]
be a smooth quadric threefold containing $P$.
We define a variety
\[
X=
\left\{
(\boldsymbol{t},x)\in B\times\bP(V^+)
\ \middle|\
x\in Q,\quad q_{\boldsymbol{t}}(x)=0
\right\}.
\]

\begin{defi}\label{def-admissible}
	The datum $(\mathcal N,Q)$ is admissible if it satisfies the following conditions.
	\begin{enumerate}
		\item The base locus of $\mathcal{N}$ is a reduced scheme $Z$ of dimension $0$.\label{def-admissible2}
		\item $Q(v)\neq0$, and for each $z\in Z$, the quadric $Q$ intersects $L_z$ transversely, where $L_z$ is the line connecting $z$ and $v$. \label{def-admissible3}
		\item
		For a general line
		\(\ell\subset\mathbb P(V)\) passing through \(z_0\), the restriction map
		\begin{equation}\label{eq-restriction}
			\mathcal N
			\longrightarrow
			H^0(\ell,\mathcal O_\ell(2)(-z_0))
		\end{equation}
		is surjective.
		\label{def-admissible4}
	\end{enumerate}
\end{defi}

\begin{lemm}
	After fixing $z_0$ and $P$,
	the admissible locus is a nonempty Zariski open subset of the parameter space of $(\mathcal{N},Q)$.
\end{lemm}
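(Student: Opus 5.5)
The plan is to show that each condition in Definition~\ref{def-admissible} is Zariski open in the parameter space, and then to exhibit a single explicit admissible datum. Irreducibility then gives a nonempty open set.

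The parameter space is a product. The first factor consists of nets $\mathcal N$ of quadrics on $\bP(V)$ through $z_0$, which is an open subset of a Grassmannian of $3$-planes in the linear space $W$ of quadrics vanishing at $z_0$. The second factor consists of smooth quadrics $Q\subset\bP(V^+)$ through $P$, an open subset of a projective space. Both factors are irreducible, so it suffices to prove openness and nonemptiness separately. The nonsquare hypothesis on $\det A(\boldsymbol t)$ also fails only on a closed set, since being a square is a closed condition on the coefficients of a quartic. I will add it to the list, although it is not one of the stated conditions.

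\textbf{Openness.}
\begin{enumerate}
\item \emph{Condition~(1).} The base locus of a net in $\bP^3$ has expected dimension $0$ and expected length $8$. Finite fibers of the universal base scheme over the Grassmannian form an open set by semicontinuity of fiber dimension. On that set the family is a complete intersection, hence flat of degree $8$. Reducedness is then an open condition on the base: it amounts to the discriminant, i.e.\ the locus where the Jacobian of $q_0,q_1,q_2$ has rank below $3$ at a base point, being nonzero.
\item \emph{Condition~(2).} The condition $Q(v)\neq0$ is clearly open. Over the locus where (1) holds, the points $z\in Z$ vary in a finite \'etale family. Transversality of $Q\cap L_z$ means that the restriction $Q|_{L_z}$ has nonzero discriminant. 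This is a nonvanishing condition on a symmetric function over the finite \'etale family, hence open.
\item \emph{Condition~(3).} Lines through $z_0$ form a $\bP^2$. Surjectivity of~\eqref{eq-restriction} is a maximal-rank condition on a map to the $2$-dimensional space $H^0(\ell,\cO_\ell(2)(-z_0))$. For a given $\mathcal N$, "surjective for a general $\ell$" is equivalent to "surjective for some $\ell$". The set of pairs $(\mathcal N,\ell)$ where surjectivity holds is open, and its image in the first factor is open because the projection is flat and hence open.
\end{enumerate}

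\textbf{Nonemptiness.} I will choose coordinates with $z_0=[0:0:0:1]$ and take a diagonal-type example modeled on Example~\ref{exam-diagonal-dp4}. Take
\[
q_i=x_i^2-a_ix_3^2+(\text{terms linear in }x_3)\quad\text{or simply}\quad q_i=\sum_j\lambda_{ij}x_j^2-x_3\ell_i(x),
\]
arranged so that $z_0$ lies in the base locus. Alternatively, and more cleanly, I will take three general quadrics through $z_0$. By Bertini, together with the fact that the general complete intersection of three quadrics through a point is eight reduced points, condition~(1) holds.

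For condition~(3), restricting to $\ell$ gives the map $W\to H^0(\ell,\cO_\ell(2)(-z_0))$, which is surjective. A general $3$-dimensional subspace $\mathcal N\subset W$ still surjects onto this $2$-dimensional target, because the kernel has codimension $2$ and a general $3$-plane is not contained in it.

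For condition~(2), given $\mathcal N$, choose $Q$ through $P$ with $Q(v)\neq0$. Transversality to each of the finitely many lines $L_z$ fails only on a proper closed subset of the quadrics through $P$. The exception is the line $L_{z_0}$, which contains $P$. For that line I must check that the quadrics through $P$ can still meet $L_{z_0}$ in two distinct points. This holds as long as $Q$ is not tangent to $L_{z_0}$ at $P$, which is an open condition satisfied, for instance, by $Q=\{x_4^2-x_4x_3+\cdots\}$ after a suitable normalization of $P$. A general smooth $Q$ satisfies everything.

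\textbf{Expected obstacle.} I expect the main obstacle to be the bookkeeping in the openness of condition~(3): the word "general $\ell$" has to be formulated uniformly over the parameter space. I will handle it with the incidence-correspondence and flat-projection argument described above.
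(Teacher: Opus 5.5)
Your proposal is correct and follows the same route as the paper. Both arguments check that each condition of Definition~\ref{def-admissible} is Zariski open and nonempty on the irreducible parameter space; the paper treats (3) through the explicit $2\times 3$ matrix in the normal form $q_i=x_0x_i+r_i$, you treat it through the restriction map $W\to H^0(\ell,\mathcal O_\ell(2)(-z_0))$, and your openness arguments are more careful than the paper's.

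One small fix is needed in the nonemptiness of (3): ``a general $3$-plane is not contained in the kernel'' only gives an image of dimension at least one. What you need is $\mathcal N+K=W$, i.e.\ $\dim(\mathcal N\cap K)=1$ for the $7$-dimensional kernel $K$ inside the $9$-dimensional $W$, and this holds for a general $3$-plane by a dimension count.
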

\begin{proof}
	After choosing bases, the net $\cN$ can be put into the form
	\[
	q_i=x_0x_i+r_i(x_1,x_2,x_3).
	\]A general triple of quadrics through \(P\) has reduced zero-dimensional base locus of length \(2^3=8\). This proves that condition (\ref{def-admissible2}) holds on a nonempty open subset.
	
	Take a line through \(P\) as
	\[
	l_z:[u:v]\mapsto [u:vz_1:vz_2:vz_3].
	\]Then
	\[
	q_i|_{l_z}
	=
	z_iuv+r_i(z)v^2.
	\] Thus, with respect to the bases \(q_1,q_2,q_3\) and \(uv,v^2\), the matrix of the restriction map (\ref{eq-restriction}) is
	\[
	\begin{pmatrix}
		z_1&z_2&z_3\\
		r_1(z)&r_2(z)&r_3(z)
	\end{pmatrix}.
	\] The map (\ref{eq-restriction}) is surjective if and only if the matrix has rank $2$, which is an open condition. This is condition (\ref{def-admissible4}).

	Finally, it is clear that condition (\ref{def-admissible3}) defines a nonempty open subset of the parameter space.
\end{proof}

\begin{lemm}\label{lemm-X is smooth}
	Assume that the datum $(\cN,Q)$ satisfies conditions (\ref{def-admissible2}) and (\ref{def-admissible3}) of Definition \ref{def-admissible}. Then \(X\) is smooth.
\end{lemm}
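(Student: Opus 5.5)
The plan is to verify the Jacobian criterion on $X$, viewed as the intersection of the two hypersurfaces
\[
G(\boldsymbol t,x)=Q(x)=0,\qquad H(\boldsymbol t,x)=\sum_i t_iq_i(x)=0
\]
in the smooth sixfold $B\times\bP(V^+)$. If $dG$ and $dH$ are linearly independent at every point of $X$, then $X$ is smooth of dimension $4$. I will organize the argument through the second projection $\mathrm{pr}_2\colon X\to Q^3$; its image lies in $Q^3$, which is smooth by hypothesis.

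First consider $x\in Q^3$ with $(q_0(x),q_1(x),q_2(x))\neq0$. Then $H(\cdot,x)$ is a nonzero linear form in $\boldsymbol t$, so $\partial H/\partial\boldsymbol t\neq0$. This gives a nonzero differential in the $B$-direction, whereas $dG$ has no $B$-component. Since $Q^3$ is smooth, $dG\neq0$ at $x$, so $dG$ and $dH$ are independent. Equivalently, over this open set $X$ is a $\bP^1$-bundle over an open subset of $Q^3$.

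It remains to treat the points $x\in Q^3$ at which $q_0,q_1,q_2$ all vanish. Since the $q_i$ do not involve the last coordinate, the common zero locus of the $q_i$ in $\bP(V^+)$ is the cone $\bigcup_{z\in Z}L_z$ over the base locus $Z$. The vertex $v$ is not on $Q^3$ because $Q(v)\neq0$. Hence such an $x$ is a point $w\in L_z\cap Q^3$ with $w\neq v$, and the whole plane $B\times\{w\}$ lies in $X$.

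At a point $(\boldsymbol t,w)$ of this plane we have $\partial H/\partial\boldsymbol t=(q_i(w))=0$. Working in an affine chart of $\bP(V^+)$ around $w$ (which avoids Euler-relation issues), the condition becomes that $\sum_i t_i\,dq_i(w)$ and $dQ(w)$ are linearly independent for every $\boldsymbol t\neq0$. I split this into two facts.

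(a) The differentials $dq_0(w),dq_1(w),dq_2(w)$ are linearly independent. Since $Z\subset\bP(V)$ is reduced and zero-dimensional, and is cut out by the three equations $q_0,q_1,q_2$, each $z\in Z$ is a smooth point of this complete intersection. Therefore $dq_0,dq_1,dq_2$ are independent at $z$ on $\bP(V)$. The linear projection from $v$, $\bP(V^+)\setminus\{v\}\to\bP(V)$, is smooth, and the $q_i$ on $\bP(V^+)$ are pulled back along it. Because $w\neq v$ maps to $z$, independence persists at $w$.

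(b) $dQ(w)\notin\operatorname{span}\{dq_i(w)\}$. Each $q_i$ vanishes identically on $L_z$, so every $dq_i(w)$ kills $T_wL_z$. By the transversality in condition~(\ref{def-admissible3}), $dQ(w)$ does not kill $T_wL_z$.

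From (a), $\sum_i t_i\,dq_i(w)\neq0$ whenever $\boldsymbol t\neq0$. From (b), this differential is not proportional to $dQ(w)$. So the Jacobian has rank $2$ at these points as well.

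The main point to check carefully is (a): the passage from reducedness of $Z$, defined by exactly three equations in $\bP^3$, to independence of the differentials, and the transfer of this independence to the cone through the smooth projection away from $v$. The rest is the routine case split above.
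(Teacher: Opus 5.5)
Your proposal is correct and follows essentially the same route as the paper. Both arguments split according to whether $(q_0(x),q_1(x),q_2(x))$ vanishes, identify the common zero locus with the cone $\bigcup_{z\in Z}L_z$ (which avoids $v$), and use reducedness of $Z$ together with transversality of $Q$ to $L_z$ to show that $dq_0,dq_1,dq_2,dQ$ are independent at $w$. The paper works in $Q^3\times B$ and phrases the last step as independence of the restrictions $dq_i(x)|_{T_xQ^3}$, while your step (a) spells out in more detail the paper's bare assertion that the common kernel of the $dq_i(x)$ is $T_xL_z$.
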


\begin{proof}
	Consider the smooth fivefold
	\[
	M=Q^3\times B.
	\]The variety \(X\subset M\) is the zero locus of the section
	\[
	s(x,\boldsymbol t)
	=
	t_1q_1(x)+t_2q_2(x)+t_3q_3(x)
	\]of \(\mathcal O_M(2,1)\).
	Suppose first that, at a point \((x,\boldsymbol t)\in X\), not all of the \(q_i(x)\) vanish. Then the derivative of \(s\) in the \(B\)-directions is
	\[
	d_Bs=(q_1(x),q_2(x),q_3(x)),
	\]which is nonzero. Hence \(X\) is smooth at \((x,\boldsymbol t)\).
	It remains to consider points for which
	\[
	q_1(x)=q_2(x)=q_3(x)=0.
	\]Since the \(q_i\) are independent of \(x_4\), their common zero locus in \(\bP^4\) is the cone over \(Z\) with vertex \(v\):
	\[
	V(q_1,q_2,q_3)
	=
	\bigcup_{z\in Z}L_z.
	\]Because \(Q(v)\neq0\), the point \(x\) is not equal to \(v\). Since \(Z\) is a reduced scheme of dimension $0$, the common kernel of
	\[
	dq_1(x),\ dq_2(x),\ dq_3(x)
	\]is precisely the tangent direction \(T_xL_z\).
	By assumption, \(Q^3\) meets \(L_z\) transversely at \(x\). Hence
	\[
	T_xL_z\not\subset T_xQ^3.
	\]It follows that the restrictions
	\[
	dq_1(x)|_{T_xQ^3},\quad
	dq_2(x)|_{T_xQ^3},\quad
	dq_3(x)|_{T_xQ^3}
	\]are linearly independent. Therefore, for every nonzero \(\boldsymbol t\),
	\[
	d_xs
	=
	t_1dq_1+t_2dq_2+t_3dq_3
	\]does not vanish on \(T_xQ^3\). Thus \(X\) is smooth also at such points.
\end{proof}

Under the assumption of the above lemma, the variety $X$ is smooth and the first projection defines a quartic del Pezzo fibration
\[
\pi\colon X\longrightarrow B.
\]
Let $H$ and $L$ denote the restrictions of the hyperplane classes from
$B$ and $\mathbb P(V^+)$, respectively. By adjunction,
\[
-K_X=2H+L.
\]
Thus $X$ is a Fano fourfold. Moreover, viewing $X$ as a divisor of class
$H+2L$ on $B\times Q$, the Lefschetz hyperplane theorem gives
\[
\operatorname{Pic}(X)=\mathbb ZH\oplus\mathbb ZL.
\]

Let $\Delta_\pi\subset B$ denote the discriminant locus of
$\pi:X\to B$, and put
\[
B^\circ=B\setminus(\Delta_{\mathcal N}\cup\Delta_\pi).
\]
For $\boldsymbol t\in B^\circ$, the quadric
$\widehat Q_{\boldsymbol t}$ has rank four and
\[
X_{\boldsymbol t}=Q\cap\widehat Q_{\boldsymbol t}
\]
is a smooth quartic del Pezzo surface.
The two geometric rulings of $\widehat Q_{\boldsymbol t}$ by planes
induce two conic-bundle classes on $X_{\boldsymbol t}$.
Thus the restriction of $D_{\mathcal N}\to B$ over $B^\circ$
parametrizes these two classes.
Since \(z_0\) belongs to the base locus of \(\mathcal N\), every cone
\(\widehat Q_{\boldsymbol t}\) contains the line \(L_{z_0}\). Hence the
point
\[
P\in Q^3(F)\cap L_{z_0}(F)
\]
defines a constant section of
\[
\pi\colon X\longrightarrow B.
\]

Recall that the discriminant cover
\[ \delta:D_{\cN}\to B \]
is defined by the normalization of
\[ w^2=\det A(x). \]
If the discriminant curve \(\Delta_{\mathcal N}\) is smooth, then
\(D_{\mathcal N}\) is a smooth del Pezzo surface of degree \(2\).
In general \(D_{\mathcal N}\) may be singular, as in Example~\ref{prop-split-diagonal-dp4} below.

\begin{prop}\label{prop-dp4-rational}
	Assume that condition~(\ref{def-admissible4}) of
	Definition~\ref{def-admissible} holds. Then \(D_{\mathcal N}\) is
	\(F\)-rational.
\end{prop}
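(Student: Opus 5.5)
The plan is to realize $D_{\mathcal N}$ birationally as the space of lines in $\mathbb P(V)$ through the $F$-point $z_0$. That space is the plane $\Pi:=\mathbb P(V/\langle z_0\rangle)\cong\mathbb P^2_F$, which is $F$-rational.

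\emph{Step 1: a rational map $\Pi\dashrightarrow B$.} For a general line $\ell\ni z_0$, every $q_{\boldsymbol t}$ vanishes at $z_0$, so restriction gives the linear map \eqref{eq-restriction}. This map goes from the $3$-dimensional space $\mathcal N$ to the $2$-dimensional space $H^0(\ell,\mathcal O_\ell(2)(-z_0))$. By condition~(\ref{def-admissible4}) it is surjective, so its kernel is a single line in $\mathcal N$. Hence there is a unique $\boldsymbol t(\ell)\in B$ with $\ell\subset\{q_{\boldsymbol t(\ell)}=0\}$. In the coordinates of the preceding lemma, $\boldsymbol t(\ell)$ is given by the $2\times2$ minors of the matrix
\[
\begin{pmatrix} z_1&z_2&z_3\\ r_1(z)&r_2(z)&r_3(z)\end{pmatrix}.
\]
This is a rational map $\Pi\dashrightarrow B$ defined over $F$.

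\emph{Step 2: lift to $D_{\mathcal N}$.} Over the open set where $\det A(\boldsymbol t)\neq0$, I would identify $D_{\mathcal N}$ with the scheme of rulings of $q_{\boldsymbol t}$. Concretely, this is the Stein factorization of the relative Fano scheme of lines of the family of quadrics over $B$. Its Galois twist is exactly that of $w^2=\det A(\boldsymbol t)$, by the classical fact that the two rulings of a nondegenerate quadric in $\mathbb P^3$ are defined over the field adjoined a square root of its discriminant. A line lying in a smooth quadric surface belongs to exactly one ruling. So the $F(\Pi)$-point $\ell_{\mathrm{gen}}$ of the relative Fano scheme over $\boldsymbol t(\ell_{\mathrm{gen}})$ determines an $F(\Pi)$-point of the ruling scheme. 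This gives an $F$-rational map $\Phi:\Pi\dashrightarrow D_{\mathcal N}$.

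\emph{Step 3: a birational inverse.} Given a general point $(\boldsymbol t,R)$ of $D_{\mathcal N}$, where $R$ is one of the two rulings of the smooth quadric $q_{\boldsymbol t}$, the point $z_0$ lies on $q_{\boldsymbol t}$. There is therefore exactly one line of the ruling $R$ through $z_0$; call it $\ell(\boldsymbol t,R)$. This construction is Galois-equivariant, so it gives an $F$-rational map $\Psi:D_{\mathcal N}\dashrightarrow\Pi$. By Step~1, $q_{\boldsymbol t}$ is the only quadric of the net containing $\ell(\boldsymbol t,R)$. Hence $\Phi\circ\Psi=\mathrm{id}$: it recovers $\boldsymbol t$, and $R$ is the unique ruling containing $\ell$. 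Likewise $\Psi\circ\Phi=\mathrm{id}$ on the open set where $\boldsymbol t(\ell)$ is a smooth quadric. Thus $D_{\mathcal N}$ is $F$-birational to $\mathbb P^2_F$.

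\emph{Main obstacle.} The main difficulty is the domains of definition. For $\Psi\circ\Phi$, one must check that for general $\ell$ the quadric $q_{\boldsymbol t(\ell)}$ is nondegenerate. For $\Phi\circ\Psi$, one must check that for general $(\boldsymbol t,R)$ the line $\ell(\boldsymbol t,R)$ avoids the degenerate locus of \eqref{eq-restriction}. I would settle both by a dimension count. Since $\det A$ is not a square, and in particular not zero, a general member of the net is smooth. The image of $\Psi$ is dense in $\Pi$: each smooth member of the net contributes a $1$-dimensional family of lines through $z_0$, namely the two lines of the tangent cone, and a $2$-dimensional family of smooth members sweeps out a dense set of lines. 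So general points on either side lie in the good open sets. A secondary point is to justify the ruling-scheme identification of the twist in Step~2 by a direct check, for instance in the diagonal form of a quadric over $F(B)$.
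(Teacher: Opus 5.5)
Your Steps~1--3 are the paper's proof written out as explicit inverse maps. The paper forms the incidence variety $I=\{(\ell,\boldsymbol t): z_0\in\ell\subset q_{\boldsymbol t}\}$. It identifies $I$ birationally with $D_{\mathcal N}$ using the two lines through $z_0$ on a smooth member, and with $\bP^2_{\boldsymbol z}$ using the rational section supplied by condition~(\ref{def-admissible4}). The gap is in your last paragraph, which is exactly the point both identifications rest on. A smooth member contributes only two lines through $z_0$, not a one-dimensional family. A two-parameter family of smooth members therefore need not sweep out a dense set of lines; for that you need $\Psi$ to be generically finite, i.e.\ a line through $z_0$ on a general smooth member should lie on no other member. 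This is equivalent to what you are trying to prove, namely that $\boldsymbol t(\ell)$ avoids the discriminant for general $\ell$. Condition~(\ref{def-admissible4}) only says that a general line lies on a unique member. It says nothing about whether that member is smooth. Nor does it control the lines that actually lie on smooth members, and these may all sit where \eqref{eq-restriction} drops rank.

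In fact the statement fails under condition~(\ref{def-admissible4}) alone. Over $F=\bQ$ take $z_0=[1:0:0:0]$ and
\[
q_1=x_0x_1,\qquad q_2=x_0x_2,\qquad q_3=x_0x_3+x_1^2+x_2^2+x_3^2,
\]
which even has the paper's normal form $q_i=x_0x_i+r_i$. The matrix of \eqref{eq-restriction} is
\[
\begin{pmatrix} z_1&z_2&z_3\\ 0&0&z_1^2+z_2^2+z_3^2\end{pmatrix},
\]
which has rank two for general $\boldsymbol z$, and $\det A(\boldsymbol t)=-\tfrac14t_3^2(t_1^2+t_2^2+t_3^2)$ is not a square. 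Nevertheless:
\begin{itemize}
\item $\boldsymbol t(\ell_{\boldsymbol z})=[z_2:-z_1:0]$ is the reducible member $x_0(z_2x_1-z_1x_2)$;
\item the lines through $z_0$ on smooth members all lie over the conic $z_1^2+z_2^2+z_3^2=0$ in $\Pi$, where \eqref{eq-restriction} has rank one.
\end{itemize}
So $\Phi$ is undefined, $\Psi$ is not dominant, and $I$ has two components. Moreover, $D_{\mathcal N}$ is birational to the quadric $w^2+t_1^2+t_2^2+t_3^2=0$, which has no real points, so it is not $\bQ$-rational. The paper's proof treats $I$ as irreducible and makes the same tacit assumption. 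What is missing is a hypothesis, not just an argument.

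Condition~(\ref{def-admissible2}) suffices, together with geometric integrality of $D_{\mathcal N}$, which the paper's conventions presuppose. It fails in the example above, since the base locus contains the conic $\{x_0=x_1^2+x_2^2+x_3^2=0\}$. It holds for the general nets of Theorem~\ref{theo-dp4}. Here is how it closes the gap.
\begin{itemize}
\item It makes the tangent forms at $z_0$ independent. So $\boldsymbol t\mapsto T_{z_0}q_{\boldsymbol t}$ identifies $B$ with the dual plane of $\Pi$.
\item Wherever \eqref{eq-restriction} is not surjective at $\ell$, its kernel is the pencil of members whose tangent plane at $z_0$ contains $\ell$.
\item Hence, for a general smooth member $q_{\boldsymbol t}$, the general line $\{\ell:\ell\subset T_{z_0}q_{\boldsymbol t}\}$ of $\Pi$ meets the non-surjective locus only in lines on $q_{\boldsymbol t}$, so in at most two points.
\item If your density claim failed, the lines through $z_0$ on smooth members would trace a curve in that locus meeting this general line in two points. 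The locus would then contain a conic.
\item The $2\times2$ minors, which form $\boldsymbol z\times r(\boldsymbol z)$ in normal-form coordinates, would be divisible by the conic's equation. The quotient is a skew linear map, so $\boldsymbol t(\ell_{\boldsymbol z})=a\times\boldsymbol z$.
\item Every member of the pencil $a^\perp$ would then contain its own tangent plane at $z_0$. One checks that these members have the form $h_{\boldsymbol t}\cdot m$ with a common second plane $m$.
\item The base locus would then contain the curve $m\cap\{q_{\boldsymbol t'}=0\}$ for any $\boldsymbol t'\notin a^\perp$, contradicting condition~(\ref{def-admissible2}).
\end{itemize}
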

\begin{proof}
	Consider the incidence variety
	\[I=
	\left\{
	(\ell,\boldsymbol t)
	\ \middle|\
	z_0\in\ell\subset q_{\boldsymbol t}
	\right\}.
	\]
	Over the open subset where \(q_{\boldsymbol t}\) is smooth, there are
	exactly two lines on \(q_{\boldsymbol t}\) passing through \(z_0\), one
	from each ruling.
	Hence the variety $I$ is birational to $D_{\cN}$.
	
	It remains to show that \(I\) is rational.
	Lines in \(\mathbb P(V)\) passing through \(z_0\) are parametrized by
	a projective plane.
	Write such a line as
	\[
	\ell_{\boldsymbol{z}}:
	[u:v]\longmapsto
	[u:vz_1:vz_2:vz_3],
	\qquad
	{\boldsymbol{z}}=[z_1:z_2:z_3]\in\bP^2_{\boldsymbol{z}}.
	\]Restricting \(q_i\) to \(\ell_{\boldsymbol{z}}\) gives
	\[
	q_i|_{\ell_{\boldsymbol{z}}}
	=
	v(uz_i+v r_i({\boldsymbol{z}})).
	\]Consequently,
	\[
	q_{\boldsymbol t}|_{\ell_{\boldsymbol{z}}}\equiv0
	\]if and only if
	\[
	\begin{cases}
		t_1z_1+t_2z_2+t_3z_3=0,\\
		t_1r_1({\boldsymbol{z}})+t_2r_2({\boldsymbol{z}})+t_3r_3({\boldsymbol{z}})=0.
	\end{cases}
	\]The system of linear equations has a unique solution for a general $\ell_{\boldsymbol{z}}$ by assumption (\ref{def-admissible4}) of Definition \ref{def-admissible}.
	Thus the projection
	\[
	I\to\bP^2_{\boldsymbol{z}}
	\] admits a rational section, and thus is birational. Hence, \(D_{\cN}\) is rational over $F$.
\end{proof}

\begin{prop}\label{prop-net-dp4-density}
	Assume that $X$ is smooth.
	Let $Y$ be a smooth projective model of the main component of
	$X\times_BD_{\cN},$
	and let
	\[
	\tau\colon Y\longrightarrow X
	\]
	be the induced map.
	Suppose the following assertions hold.
	\begin{enumerate}
		\item The fibration $\rho:Y\ra D_{\cN}$ admits an $F$-rational section.
		\item $D_{\mathcal N}$ has Zariski dense rational points.
	\end{enumerate}
	Then $\tau$ is a breaking thin map and
	$\tau(Y(F))$ is Zariski dense in $X$. In particular, the geometric
	exceptional set of $X$ is Zariski dense.
\end{prop}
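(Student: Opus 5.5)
We prove two things: that $\tau$ is breaking, by computing $a$, $b$ and the adjoint Iitaka dimension of $(Y,\tau^*(-K_X))$; and that $Y(F)$ is Zariski dense, so its image under the dominant map $\tau$ is too.

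The first step identifies the invariants. By Lemma~\ref{lemm-dp-base-change-b}, $a(Y,\tau^*(-K_X))=1$. We need $p=\rho:Y\to D_{\mathcal N}$ to be birationally the Iitaka fibration of $K_Y+\tau^*(-K_X)$. The last clause of that lemma covers only threefolds, so here we argue directly.

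Write $K_Y+\tau^*(-K_X)\sim R_\tau$, where $R_\tau$ is the ramification divisor. It is effective and vertical over $D_{\mathcal N}$, and away from exceptional components it is the pullback of the ramification divisor $R_\delta$ of $\delta$. Since $K_{D}\sim\delta^*K_B+R_\delta$ and $D_{\mathcal N}$ is a degree-two cover branched over the sextic $\Delta_{\mathcal N}$, we get $R_\delta\sim\delta^*(3H)$, which is big on $D_{\mathcal N}$. Then $\kappa(Y,K_Y+\tau^*(-K_X))\ge\kappa(D_{\mathcal N},R_\delta)=2=\dim D_{\mathcal N}$. Restricting to the generic fiber $Y_\eta\cong X_{F(D)}$ gives the trivial class, so the Iitaka dimension is exactly $2$ and $p$ is the Iitaka fibration.

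If $D_{\mathcal N}$ is singular, normalization replaces $\Delta_{\mathcal N}$ by a smaller branch curve, and we must check that the remaining branch divisor is still big. I expect this is the main obstacle. I would handle it through the explicit geometry: either the branch curve still contains an ample component, or one uses $\kappa\ge1$ and the $b$-formula from \cite[Lemma~4.15]{LST} suitably adapted.

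With $p$ identified, \eqref{eq-lemm-dp-base-change-b} gives $b(F,Y,\tau^*(-K_X))=\rho_{F(D)}(X_{F(D)})$. We show this is at least $2=\rho_F(X)$. Over $F(D)$ the ruling class of the cone $\widehat Q_{\boldsymbol t}$ becomes defined, which yields a conic-bundle class $u$ on the generic fiber. By Remark~\ref{rema-dp-candidates}, a multiple of $u$ descends. It is independent of $-K$, since $u^2=0\neq K^2$. The section in hypothesis (1) removes any Brauer obstruction, so $\rho_{F(D)}\ge2$.

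Since $\deg\delta=2$, the map $\tau$ is dominant of degree at least $2$. The pair $(a,b)$ satisfies $(1,\ge2)\ge(1,2)$, and $\kappa>0$. So either condition~(1) or condition~(2b) of Definition~\ref{defi-ges} applies, and $\tau$ is breaking.

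For density, over each point $d\in D_{\mathcal N}(F)$ in the smooth locus, the fiber $Y_d\cong X_{\delta(d)}$ is a smooth quartic del Pezzo surface. Hypothesis (1) gives it an $F$-point, so it is unirational over $F$ and its rational points are Zariski dense (Manin/Skorobogatov for $d=4$ with a point). By hypothesis (2), such $d$ are dense in $D_{\mathcal N}$. Hence $Y(F)$ is dense, and so $\tau(Y(F))\subset Z(X)$ is dense in $X$.
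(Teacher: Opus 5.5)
There is a genuine gap in your first step, the claim $\kappa\bigl(Y,K_Y+\tau^*(-K_X)\bigr)=2$. Everything after it depends on this, because \eqref{eq-lemm-dp-base-change-b} is only available once $p$ is known to be the adjoint Iitaka fibration.

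You pass from ``away from exceptional components $R_\tau$ is the pullback of $R_\delta$'' to $\kappa(Y,R_\tau)\ge\kappa(D_{\mathcal N},R_\delta)$. What you actually need is $R_\tau\ge p^*R_\delta$, and $p^*R_\delta$ itself has exceptional components. Every branch point of $\delta$ lies on $\Delta_{\mathcal N}$, and $\Delta_{\mathcal N}\subset\Delta_\pi$: for $\boldsymbol t\in\Delta_{\mathcal N}$ the cone $\widehat Q_{\boldsymbol t}$ has a vertex line, and $X_{\boldsymbol t}$ is singular where $Q$ meets it. Hence $X':=X\times_BD_{\mathcal N}$ is singular along the preimage of the curve of fibre singularities; generically it is locally $w^2=x^2+y^2+z^2$ times a disc. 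The exceptional divisors $E_i$ of $Y\to X'$ over this curve map into $\delta^{-1}(\Delta_{\mathcal N})$, so they occur in $p^*R_\delta$.

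Since $\pi$ is flat with Gorenstein fibres, $X'$ is Gorenstein with $K_{X'}=\tau'^*K_X+p'^*R_\delta$, where $\tau'\colon X'\to X$ and $p'\colon X'\to D_{\mathcal N}$ are the projections. So $R_\tau=p^*R_\delta+\sum_i a_iE_i$, with $a_i$ the discrepancies of $X'$. Your inequality is therefore equivalent to $X'$ having canonical singularities. This holds at general points over $\Delta_{\mathcal N}$. At the finitely many special points, for example over the six nodes of $\Delta_{\mathcal N}$ in Example~\ref{prop-split-diagonal-dp4} where the fibres become reducible, it needs a singularity analysis that you do not supply. Negative discrepancies over points of $D_{\mathcal N}$ would not obviously preserve bigness either.

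The obstacle you flagged is harmless. After removing square factors the branch curve is a conic or a quartic, so $R_\delta$ is ample in any case. Note also that $\det A(\boldsymbol t)$ is a quartic, not a sextic, so $R_\delta\sim\delta^*(2H)$ and $D_{\mathcal N}$ is a degree-two del Pezzo surface.

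The paper's proof never computes $\kappa$, and you need not either. Spread out the two ruling conic classes on $X_{F(D_{\mathcal N})}$ to covering families of curves in the fibres of $p$. Their classes $\alpha_1,\alpha_2\in N_1(Y)$ are nef, are orthogonal to the vertical effective divisor $K_Y+\tau^*(-K_X)$, and are linearly independent. This gives $b(F,Y,\tau^*(-K_X))\ge2$ directly. Moreover $\tau_*\alpha_1=\tau_*\alpha_2$, since both have $H$-degree $0$ and $L$-degree $2$ and $\rho(X)=2$, so $\tau$ is face contracting. Definition~\ref{defi-ges} then gives breaking in every case: $b>2$ is condition (1), $b=2$ with $\kappa>0$ is (2b), and $b=2$ with $\kappa=0$ is (2c).

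Your density argument is fine and is equivalent to the paper's spreading out of the generic unirational parametrization. You should, however, restrict it to $d\in D_{\mathcal N}(F)$ with $\delta(d)\in B^\circ$, rather than to the smooth locus of $D_{\mathcal N}$.
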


\begin{proof}
	The map $\tau$ is the base change associated with the degree-$2$
	component $D_{\mathcal N}\to B$ of $\mathscr R\to B$. Hence it is a
	member of $P_\pi$.

	By birational invariance, we may choose $Y$ with generic fiber
	$S=X_{F(D_{\mathcal N})}$ over $D_{\mathcal N}$.
	Lemma~\ref{lemm-dp-base-change-b} gives
	$a(Y,\tau^*(-K_X))=1$.
	
	Over $F(D_{\mathcal N})$, the two rulings of the quadric cone
	induce two conic-bundle structures on $S$.
	Spreading out these families gives nef curve classes
	$\alpha_1,\alpha_2\in N_1(Y)$.
	Since $\alpha_1$ and $\alpha_2$ are linearly independent,
	\[
	b(F,Y,\tau^*(-K_X))\geq2=b(F,X,-K_X).
	\]
	Moreover, $\tau_*\alpha_1=\tau_*\alpha_2$, so $\tau$ is
	face contracting.
	Definition~\ref{defi-ges} shows that $\tau$ is breaking.
	
	By assumption, the fibration
	\[
	\rho\colon Y\longrightarrow D_{\mathcal N}
	\]
	admits an \(F\)-rational section. Hence its generic fiber has an
	\(F(D_{\mathcal N})\)-rational point.
	A quartic del Pezzo surface over a field
	of characteristic zero with a rational point is unirational
	\cite[Theorem~30.1]{manin1986cubicforms}. After spreading out the
	resulting parametrization, we obtain a dominant rational map
	\[
	\mathbb P^2_{D_{\mathcal N}}\dashrightarrow Y.
	\]
	Since $D_{\mathcal N}(F)$ is Zariski dense, it follows that $Y(F)$,
	and hence $\tau(Y(F))$, is Zariski dense.
\end{proof}

We summarize the preceding discussion in the following theorem.
\begin{theo}\label{theo-dp4}
	Let
	\[
	\pi\colon X\longrightarrow\mathbb P^2_F
	\]
	be a quartic del Pezzo fibration obtained by intersecting a general
	net of quadric cones with common vertex \(v\) and a general smooth
	quadric threefold
	\[
	Q^3\subset\mathbb P^4_F.
	\]
	Assume that the base locus of the corresponding net in
	\(\mathbb P^3\) contains a point
	\[
	z_0\in\mathbb P^3(F),
	\]
	and that
	\[
	Q^3(F)\cap L_{z_0}(F)\neq\varnothing.
	\]
	Then
	\[
	\left\{
	t\in\mathbb P^2(F)
	\ \middle|\
	X_t\text{ is smooth and }\rho(X_t)\geq2
	\right\}
	\]
	is Zariski dense in \(\mathbb P^2\).
\end{theo}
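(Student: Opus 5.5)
The plan is to combine Proposition~\ref{prop-dp4-rational} with the constant section coming from $P$, and then read off the Picard rank of fibres over points of $B(F)$ that lift to $D_{\mathcal N}(F)$. First, ``general'' lets us assume that $(\mathcal N,Q)$ is admissible in the sense of Definition~\ref{def-admissible}. We also assume $\det A(\boldsymbol t)$ is not a square in $F(B)$. This holds because the discriminant curve of a general net is a reduced quartic, in fact smooth, so $D_{\mathcal N}$ is geometrically integral. By Lemma~\ref{lemm-X is smooth}, $X$ is smooth. By Proposition~\ref{prop-dp4-rational}, $D_{\mathcal N}$ is $F$-rational, so $D_{\mathcal N}(F)$ is Zariski dense in $D_{\mathcal N}$. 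Since $\delta$ is finite and surjective, $\delta(D_{\mathcal N}(F))$ is Zariski dense in $B=\mathbb P^2$.

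Next, remove the proper closed set $\Delta_{\mathcal N}\cup\Delta_\pi$, together with the images of the non-normal and singular loci of $D_{\mathcal N}$. Take $\boldsymbol t\in B^\circ(F)$ that is the image of some $w\in D_{\mathcal N}(F)$. Then $\widehat Q_{\boldsymbol t}$ is a rank-four cone, and the point $w$ means $\det A(\boldsymbol t)$ is a square in $F$. Equivalently, the two families of planes of $\widehat Q_{\boldsymbol t}$ are each defined over $F$. Indeed, the ruling scheme of the rank-four quadric $q_{\boldsymbol t}$ on $\mathbb P(V)$ is $\operatorname{Spec}F(\sqrt{\det A(\boldsymbol t)})$, and $\delta^{-1}(\boldsymbol t)$ is that scheme over $B^\circ$. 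Each family of planes cuts $X_{\boldsymbol t}=Q\cap\widehat Q_{\boldsymbol t}$ in conics, giving a conic-bundle class $u_1$ that is Galois invariant. The class $u_1$ is not a multiple of $-K_{X_{\boldsymbol t}}$, since $u_1^2=0$.

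To turn the invariant class into an actual $F$-divisor class, note that $u_1$ is invariant in $\operatorname{Pic}(X_{\boldsymbol t,\overline F})$. The obstruction to descent lies in $\mathrm{Br}(F)$, and it vanishes because $X_{\boldsymbol t}(F)\ni P\neq\varnothing$; here $P$ lies on every fibre because $L_{z_0}\subset\widehat Q_{\boldsymbol t}$ for all $\boldsymbol t$. More concretely, the plane of the first family through the line $L_{z_0}$, which is unique and defined over $F$, cuts out an $F$-rational conic of class $u_1$. Thus $\rho_F(X_{\boldsymbol t})\ge 2$, generated by $-K$ and $u_1$. Even without the descent argument, $\rho$ is defined via $N^1$ with real coefficients, so a Galois-invariant class already suffices up to torsion, and the point $P$ removes any doubt.

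The main obstacle is the bookkeeping in the first step. We must check that a general admissible net has non-square discriminant, so that $D_{\mathcal N}$ is geometrically integral. We must also check that the Zariski density of $D_{\mathcal N}(F)$ survives removing $\delta^{-1}(\Delta_\pi\cup\Delta_{\mathcal N})$ and the singular points. The latter follows since a dense set minus a proper closed subset stays dense, and finite surjective maps preserve density. For the former, I would first exhibit one admissible net, such as the diagonal example with a point $z_0$, whose discriminant is not a square, and then conclude by openness.
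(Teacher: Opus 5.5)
Your proof is correct and follows the paper's argument: the paper likewise combines the $F$-rationality of $D_{\mathcal N}$ from Proposition~\ref{prop-dp4-rational} with the section given by $P$, and then concludes via Proposition~\ref{prop-net-dp4-density}. Your version is slightly more direct, since you check by hand that $\rho_F(X_{\boldsymbol t})\geq2$ over the image of $D_{\mathcal N}(F)$, using $P$ only for an optional descent step, whereas the paper uses the section to obtain the stronger statement that rational points on these fibers are dense in $X$.
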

\begin{proof}
	The point of \(Q^3(F)\cap L_{z_0}(F)\) defines a section of \(\pi\).
	Moreover, the discriminant double cover is \(F\)-rational by
	Proposition~\ref{prop-dp4-rational} and hence has Zariski dense rational points.
	The assertion follows from
	Proposition~\ref{prop-net-dp4-density}.
\end{proof}

We now give explicit examples satisfying the hypotheses of
Proposition~\ref{prop-net-dp4-density} and hence having
Zariski dense exceptional sets.
The density hypothesis on $D_{\mathcal N}$ is particularly easy to
verify for split diagonal nets.

\begin{exam}[Split diagonal nets]\label{prop-split-diagonal-dp4}
	Let
	\[
	L_i=\{\ell_i=0\}\subset \mathbb P^2_F,
	\qquad 0\leq i\leq 3,
	\]
	be four lines defined over $F$ and in general position.
	The generality assumption implies that their defining forms satisfy a
	unique linear relation in which every coefficient is nonzero.
	Thus, after rescaling the $\ell_i$, we may assume that
	\[
	\ell_0+\ell_1+\ell_2+\ell_3=0.
	\]
	Consider the diagonal net
	\[
	q_x(y)
	=
	\ell_0(x)y_0^2+\ell_1(x)y_1^2
	+\ell_2(x)y_2^2+\ell_3(x)y_3^2.
	\]
	Then
	\[
	q_x(\pm1,\pm1,\pm1,\pm1)=0
	\]
	for every $x\in\mathbb P^2$. Thus condition
	(\ref{def-admissible2}) of Definition \ref{def-admissible} is satisfied.
	
	Write
	\[
	L_Z=
	\left\{
	[a_0:a_1:a_2:a_3:a_4]\in\mathbb P^4
	\;\middle|\;
	a_0^2=a_1^2=a_2^2=a_3^2
	\right\}.
	\]
	Let $Q\subset\mathbb P^4$ be a smooth quadric threefold such that
	$Q$ intersects $L_Z$ transversely and $Q\cap L_Z$ contains an
	$F$-rational point. Then condition (\ref{def-admissible3}) of
	Definition \ref{def-admissible} is satisfied.
	
	Define
	\[
	X=
	\left\{
	(x,y)\in\mathbb P^2\times\mathbb P^4
	\;\middle|\;
	Q(y)=q_x(y)=0
	\right\}.
	\]
	Then $X$ is a smooth Fano fourfold of Picard rank $2$, and the first
	projection defines a quartic del Pezzo fibration
	\[
	\pi\colon X\longrightarrow\mathbb P^2.
	\]
	
	The discriminant cover of the net is
	\[
	D=
	\left\{
	w^2=\ell_0\ell_1\ell_2\ell_3
	\right\}
	\longrightarrow\mathbb P^2,
	\]
	whose branch curve is the union of four lines in general position.
	Thus $D$ is a del Pezzo surface of degree $2$ with six singularities
	of type $A_1$.
	
	On the open subset where $\ell_1\ell_3\neq0$, put
	\[
	u=\frac{\ell_0}{\ell_1},
	\qquad
	v=\frac{\ell_2}{\ell_3},
	\qquad
	z=\frac{w}{\ell_1\ell_3}.
	\]
%	The relation
%	\[
%	\ell_0+\ell_1+\ell_2+\ell_3=0
%	\]
%	shows that \(u\) and \(v\) are birational coordinates on
%	\(\mathbb P^2\).
	Then
	\[
	F(D)=F(u,v,z)/(z^2-uv)=F(u,z),
	\]
	and consequently \(D\) is \(F\)-rational and \(D(F)\) is Zariski
	dense.
\end{exam}

\begin{prop}\label{prop-split-diagonal-dp4-exceptional}
	Let $\pi\colon X\to\mathbb P^2$ be the quartic del Pezzo fibration
	constructed in Example \ref{prop-split-diagonal-dp4}, and let
	$B^\circ\subset\mathbb P^2$ denote its smooth locus.
	Then
	\[
	\bigcup_{\substack{x\in B^\circ(F)\\ \rho(X_x)\geq2}}
	X_x(F)
	\]
	is Zariski dense in \(X\). In particular, the geometric exceptional
	set of \(X\) is Zariski dense.
\end{prop}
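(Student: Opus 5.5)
The plan is to reduce to Proposition~\ref{prop-net-dp4-density}, and then pass from density of $Y(F)$ to density of split smooth fibers. The diagonal datum already satisfies conditions (\ref{def-admissible2}) and (\ref{def-admissible3}) of Definition~\ref{def-admissible}, as noted in Example~\ref{prop-split-diagonal-dp4}. So $X$ is smooth by Lemma~\ref{lemm-X is smooth}, and $\pi$ is a quartic del Pezzo fibration. Example~\ref{prop-split-diagonal-dp4} also shows that $D=D_{\mathcal N}$ is $F$-rational, so $D(F)$ is Zariski dense. This is hypothesis (2) of Proposition~\ref{prop-net-dp4-density}.

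For hypothesis (1) we need an $F$-rational section of $Y\to D$. Pick an $F$-point $P\in Q\cap L_Z$. By construction $P$ lies on a line $L_z$ with $z=[\pm1:\pm1:\pm1:\pm1]$ in the base locus of the net, and $L_z$ is contained in every cone $\widehat Q_x$. Hence $x\mapsto(x,P)$ is a constant $F$-section of $\pi$, and its pullback along the base change is a section of $Y\to D$. Proposition~\ref{prop-net-dp4-density} then shows that $\tau$ is breaking and that $\tau(Y(F))$ is Zariski dense.

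It remains to show that the union over smooth split fibers is itself dense. Let $D^\circ=\delta^{-1}(B^\circ\setminus\Delta_{\mathcal N})$. Take a point $w\in D^\circ(F)$ over $x=\delta(w)\in B^\circ(F)$. The fiber $X_x$ is then smooth, and $w$ is an $F$-rational choice of ruling of $\widehat Q_x$. That ruling cuts out a conic-bundle class on $X_x$ defined over $F$. This class is not a multiple of $-K_{X_x}$, so $\rho_F(X_x)\ge2$.

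Since $D(F)$ is dense, the points of $D^\circ(F)$ are dense in $D$. Over each such point $w$, the fiber $Y_w\cong X_x$ has an $F$-point coming from the section. It is therefore unirational by \cite[Theorem~30.1]{manin1986cubicforms}, so $X_x(F)$ is dense in $X_x$. A union of Zariski-dense subsets of fibers over a dense set of base points is dense in $X$. This gives the first claim. The final assertion follows because each such $X_x$ with $\rho_F(X_x)\ge2=\rho_F(X)$ is a breaking fiber; alternatively, the union is contained in $\tau(Y(F))\subset Z(X)$.

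The main obstacle I expect is certifying that the fiberwise density really sits over $B^\circ$. Concretely, one has to check that $D^\circ(F)$ is nonempty and dense, which holds because $D^\circ$ is a nonempty open subset of a rational surface. One also has to check that the $F$-rational ruling yields an $F$-rational divisor class, not merely a Galois-invariant one with a Brauer obstruction. For the second point I would argue as follows. The planes of that ruling through $L_z$ are cut out over $F$, because $P$ and the section give an $F$-point on $X_x$, and a conic through that point is defined over $F$. Even if this direct argument needs adjusting, Remark~\ref{rema-dp-candidates} guarantees that a positive multiple of the class descends, which suffices for $\rho_F(X_x)\ge2$.
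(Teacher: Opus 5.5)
Your proposal is correct and follows the paper's route. The $F$-rationality of $D$ and the constant section through an $F$-point of $Q\cap L_Z$ are exactly the hypotheses of Proposition~\ref{prop-net-dp4-density}, to which the paper reduces; your extra fiberwise step (Manin's unirationality on each smooth fiber over $\delta(D^\circ(F))$, with $\rho_F(X_x)\geq2$ coming from the $F$-rational ruling, where descending a positive multiple of the conic class already suffices) makes explicit the passage from density of $\tau(Y(F))$ to density of the union over split fibers, which the paper leaves implicit.
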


\begin{proof}
	The discriminant cover $D$ is $F$-rational by
	Example \ref{prop-split-diagonal-dp4}, and hence $D(F)$ is Zariski
	dense. The assertions therefore follow immediately from
	Proposition \ref{prop-net-dp4-density}.
\end{proof}

\begin{exam}\label{exam-diagonal-dp4}
	Take homogeneous coordinates $[x_0:x_1:x_2]$ on $\mathbb P^2$ and
	$[y_0:\cdots:y_4]$ on $\mathbb P^4$. Put
	\[
	q_x(y)
	=
	x_0(y_0^2-y_3^2)
	+x_1(y_1^2-y_3^2)
	+x_2(y_2^2-y_3^2)
	\]
	and
	\[
	Q(y)=
	y_0^2+y_1^2+y_2^2+y_3^2-y_4^2.
	\]
	Define
	\[
	X=
	\{Q(y)=q_x(y)=0\}
	\subset\mathbb P^2\times\mathbb P^4.
	\]
	
	We first check that this is a member of the family in
	Example \ref{prop-split-diagonal-dp4}.  In this case
	\[
	\ell_0=x_0,\qquad
	\ell_1=x_1,\qquad
	\ell_2=x_2,\qquad
	\ell_3=-x_0-x_1-x_2.
	\]
	The base locus $L_Z$ is the union of the lines
	\[l_{\boldsymbol{\epsilon}}\colon
	\bP^1\to\bP^4,\qquad
	[a:b]\mapsto [a:\epsilon_1a:\epsilon_2a:\epsilon_3a:b],
	\qquad
	\epsilon_i\in\{\pm1\}.
	\]
	On each such line, the restriction of $Q$ is
	\[
	4a^2-b^2.
	\]
	Hence $Q\cap L_Z$ consists of $F$-rational transverse intersection
	points. Therefore the hypotheses of
	Example \ref{prop-split-diagonal-dp4} are satisfied. In particular,
	$X$ is a smooth Fano fourfold of Picard rank $2$.
	
	It follows from
	Proposition~\ref{prop-split-diagonal-dp4-exceptional} that the
	geometric exceptional set of \(X\) is Zariski dense.
	
	More explicitly, let $B^\circ\subset\bP^2$ be the locus
	of smooth fibers. The following set is a Zariski dense
	thin subset of $Z(X)$:
	\[
	T=
	\left\{
	(x,y)\in X(F)\mid
	x\in B^\circ(F),\
	-x_0x_1x_2(x_0+x_1+x_2)\in F^{\times2}
	\right\}.
	\]
\end{exam}

\begin{coro}[Potential density for threefold sections]
	\label{rema-dp-threefold-sections}
	Let $\pi:X\to\bP^2_F$ be as above,
	with general defining data, and let $\ell\subset\bP^2_F$
	be a general line.
	Then $X_\ell:=\pi^{-1}(\ell)$ is a smooth Fano threefold
	of type No.~2-7, and its geometric exceptional set is
	potentially Zariski dense.
\end{coro}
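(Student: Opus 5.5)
Set $X_\ell=\pi^{-1}(\ell)\subset\ell\times\bP^4\cong\bP^1\times\bP^4$; it is cut out by $Q(y)=0$ and $q_x(y)=0$ with $x\in\ell$. The plan has three steps: identify the family of $X_\ell$, show that the ruling cover over $\ell$ has genus one, and then run the density argument of Theorem~\ref{thm:bt-families} directly on $X_\ell$.

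\textbf{Smoothness and type.} Since $X_\ell$ is a general member of the base-point-free linear system $|H|$ on the smooth fourfold $X$, Bertini gives that $X_\ell$ is smooth. Adjunction with $-K_X=2H+L$ gives $-K_{X_\ell}=(H+L)|_{X_\ell}$. Concretely, $X_\ell$ is the intersection of $\bP^1\times Q^3$ with a divisor of bidegree $(1,2)$, and this is ample, so $X_\ell$ is Fano. Lefschetz gives $\rho(X_\ell)=2$. The second projection $X_\ell\to Q^3$ is birational, and its exceptional locus lies over the curve $\{q_{x_0}=q_{x_1}=0\}\cap Q^3$, where $x_0,x_1$ span $\ell$. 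This curve is an intersection of three quadrics in $\bP^4$, so it is a canonical curve of degree $8$ and genus $5$. This is exactly the Mori--Mukai description of No.~2-7: the blow-up of $Q$ along the intersection of two members of $|\cO(2)|$. Equivalently, the invariants $(-K)^3=14$ and $\rho=2$, together with the quartic del Pezzo fibration, identify the family.

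\textbf{The ruling cover over $\ell$.} The parameter cover of interest is $D_\ell=\delta^{-1}(\ell)\to\ell$, the restriction of the discriminant double cover. For general $\ell$ it is a double cover of $\ell$ branched over the four points $\ell\cap\Delta_{\mathcal N}$, where $\Delta_{\mathcal N}$ is a quartic. Hence $D_\ell$ is a smooth genus-one curve. Choosing $\ell$ through an $F$-point of $D_{\mathcal N}$ lying over $B$ (these are dense, since $D_{\mathcal N}$ is $F$-rational by Proposition~\ref{prop-dp4-rational}), or simply after a finite extension, $D_\ell$ becomes an elliptic curve. After a further extension it has positive rank, so $D_\ell(F)$ is infinite and Zariski dense.

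\textbf{Breaking map and density.} Let $Y_\ell$ be a smooth model of $X_\ell\times_\ell D_\ell$ and $\tau_\ell:Y_\ell\to X_\ell$ the induced map. By Lemma~\ref{lemm-dp-base-change-b}, in dimension three with degree two, $a=1$ and $b=\rho_{F(D_\ell)}$ of the generic fiber. Over $F(D_\ell)$ the two conic-bundle classes are individually defined, so this rank is at least $2=\rho_F(X_\ell)$, and $\tau_\ell$ is breaking; alternatively, apply Corollary~\ref{coro:p1xp1}-style reasoning or Remark~\ref{rema-dp-candidates} with $\rho_F(X_\ell)=\rho_{F(\ell)}+1$. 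The constant section $P$ of $\pi$ restricts to a section of $Y_\ell\to D_\ell$. By Manin's unirationality theorem, the generic fiber is then unirational over $F(D_\ell)$, so $Y_\ell(F)$ is dense once $D_\ell(F)$ is dense. As in Proposition~\ref{prop-net-dp4-density}, $\tau_\ell(Y_\ell(F))$ is then a Zariski dense subset of $Z(X_\ell)$.

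The main obstacle is the generality bookkeeping. One must make sure that a general $\ell$ is simultaneously transverse to $\Delta_{\mathcal N}$ (so that $g(D_\ell)=1$ and $D_\ell$ is geometrically integral) and yields a smooth $X_\ell$. One must also secure positive rank of $D_\ell$ after a finite extension, which holds because every elliptic curve acquires infinitely many points over some number field.
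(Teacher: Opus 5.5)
Your proof is correct. It shares the paper's ingredients: the blow-up description of No.~2-7, the genus-one restriction of the ruling cover to $\ell$, a finite extension giving infinitely many points on that curve, and density via the section $P$ and Manin's unirationality theorem. The difference is which breaking thin map you use.

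The paper never forms $Y_\ell=X_\ell\times_\ell D_\ell$. After enlarging $K$, it takes the smooth fibres $S=X_t$ over the images in $\ell(K)$ of the $K$-points of the genus-one cover. On each such fibre the two ruling classes are individually defined, so $\rho_K(S)\geq2=\rho_K(X_\ell)$. Adjunction $-K_{X_\ell}|_S=-K_S$ then gives $(a,b)=(1,\rho_K(S))$, so each inclusion $S\hookrightarrow X_\ell$ is a breaking type~I map. Density follows because there are infinitely many such fibres, each with dense $K$-points.

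You instead exhibit a single dominant type~II breaking map $\tau_\ell$, exactly as in Proposition~\ref{prop-net-dp4-density} and Theorem~\ref{thm:bt-families}. The comparison is as follows.
\begin{itemize}
\item The fibrewise route is lighter. It needs only $b(K,S,-K_S)=\rho_K(S)$ for a smooth del Pezzo surface. It avoids Lemma~\ref{lemm-dp-base-change-b}, whose identification of the adjoint Iitaka fibration rests on \cite[Theorem~5.5]{beheshti2022moduli}.
\item Your route gives a more structured conclusion. The dense set lies in $\tau_\ell(Y_\ell(K))$ for one thin map, and $D_\ell$ appears as a genus-one member of $P_{\pi_\ell}$ in the sense of Construction~\ref{cons-dP-fib}.
\item The two dense sets agree away from finitely many fibres.
\end{itemize}

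One point you use but do not state: $\operatorname{Pic}(X_{\ell,\overline F})$ is generated by the restrictions of the $F$-classes $H$ and $L$. This is why $\rho_K(X_\ell)=2$ still holds after the finite extensions you make, and the paper says so explicitly.
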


\begin{proof}
	The threefold $X_\ell$ is the blow-up of $Q^3$ along the
	smooth base curve of a pencil in $|\mathcal O_{Q^3}(2)|$,
	hence is of type No.~2-7 and has geometric Picard rank $2$.
	Its Picard group is generated by restrictions of divisor
	classes defined over $F$, so this rank does not increase
	after a field extension.
	
	The discriminant double cover restricts to a smooth
	genus-one curve
	\[
	C_\ell\longrightarrow\ell.
	\]
	After a finite extension $K/F$, the curve $C_\ell$ has a
	$K$-point and its Jacobian has a $K$-rational point of
	infinite order. Thus $C_\ell(K)$ is infinite.
	By the preceding construction, its image contains infinitely
	many parameters of smooth fibers $S$ satisfying
	\[
	\rho_K(S)\geq2=\rho_K(X_\ell),
	\]
	and each such fiber has Zariski dense $K$-rational points.
	Since $-K_{X_\ell}|_S=-K_S$, we have
	\[
	a(S,-K_{X_\ell}|_S)=1,
	\qquad
	b(K,S,-K_{X_\ell}|_S)=\rho_K(S).
	\]
	Hence these fibers are breaking.
	Their rational points form a Zariski dense subset of
	$X_{\ell,K}$, proving the assertion.
\end{proof}

\begin{rema}[Comparison with the Batyrev--Tschinkel example]
	The present construction is similar in spirit to the example of
	Batyrev and Tschinkel \cite{BT96}. In their setting, the diagonal
	presentation of the cubic-surface fibers provides an effective way to
	describe the Galois action on the $27$ lines; see
	\cite{peyre2001tamagawa}. 
	However, for non-diagonal cubic surfaces, controlling the monodromy action may be difficult even in special families; see, for example, \cite{medrano2022monodromy}.
\end{rema}

\subsection{A degree-six del Pezzo fibration}
In this subsection, we construct an explicit Fano threefold
of type No.~3-10 over any number field $F$.
It admits two degree-six del Pezzo fibrations, each with
infinitely many split fibers of Picard rank four.
These fibers give a Zariski dense geometric exceptional set over $F$
and a lower bound of order $B(\log B)^3$ on every non-empty
open subset.

\begin{exam}[Fano threefold No~3-10]
	\label{exam-symmetric-3-10}
	Let $F$ be a number field, and take homogeneous coordinates
	$[x_0:x_1:y_0:y_1:z]$ on $\bP^4_F$. Put
	\[
	Q=\{x_0x_1+y_0y_1=z^2\},\qquad
	C_x=Q\cap\{x_0=x_1=0\},\qquad
	C_y=Q\cap\{y_0=y_1=0\}.
	\]
	The quadric $Q$ is smooth, and $C_x$ and $C_y$ are disjoint smooth
	conics. Let
	\[
	\mu:X=\operatorname{Bl}_{C_x\sqcup C_y}Q\longrightarrow Q,
	\]
	and denote the exceptional divisors by $E_x$ and $E_y$.
	This is a member of the Mori--Mukai family No.~3-10.
	
	The two projections $[x_0:x_1]$ and $[y_0:y_1]$ extend to morphisms
	$\pi_x,\pi_y:X\to\bP^1$.
	Write $M=\mu^*\mathcal O_Q(1)$,
	$A=\pi_x^*\mathcal O_{\bP^1}(1)=M-E_x$, and
	$B=\pi_y^*\mathcal O_{\bP^1}(1)=M-E_y$. Then
	\[
	L:=-K_X=3M-E_x-E_y=M+A+B,
	\qquad \rho_F(X)=\rho_{\overline F}(X)=3.
	\]
	Thus $L$ is very ample and $X$
	is a smooth Fano threefold with
	$(a(X,L),b(F,X,L))=(1,3)$.
	
	For $t\in F^\times$, the fiber $S_t=\pi_y^{-1}([t:1])$ is the
	blow-up of
	\[
	Q_t=\{x_0x_1+t y^2=z^2\}
	\subset\bP^3_{[x_0:x_1:y:z]}
	\]
	at the degree-two subscheme $\{x_0=x_1=0\}$. Its two geometric
	points do not lie on a common ruling, so $S_t$ is a smooth del Pezzo
	surface of degree six. If $t=s^2$ with $s\in F^\times$, then
	\[
	x_0x_1=(z-sy)(z+sy),\qquad
	p_\pm=[0:0:1:\pm s].
	\]
	Hence $Q_t$ is split and both blow-up points are $F$-rational.
	If $t\notin F^{\times2}$, the nontrivial element of
	$\operatorname{Gal}(F(\sqrt t)/F)$ exchanges both the two rulings
	and the two exceptional curves. Consequently,
	\begin{equation}\label{eq-symmetric-3-10-ranks}
		\rho_F(S_t)=
		\begin{cases}
			4,&t\in F^{\times2},\\
			2,&t\notin F^{\times2}.
		\end{cases}
	\end{equation}
	Since $L|_{S_t}=-K_{S_t}$, the square parameters give
	\[
	\bigl(a(S_t,L|_{S_t}),b(F,S_t,L|_{S_t})\bigr)
	=(1,4)>(1,3).
	\]
	The parameter cover is the rational double cover
	\[
	\delta:\bP^1\longrightarrow\bP^1,
	\qquad [a:b]\longmapsto[a^2:b^2].
	\]
	It is also the map $C_x\to\bP^1$ induced by $\pi_y$, under the
	parametrization $[a:b]\mapsto[0:0:a^2:b^2:ab]$.
	The same statements hold for $\pi_x$ by symmetry.
\end{exam}

\begin{prop}\label{prop-symmetric-3-10-exceptional}
	Let $X$ be as in Example~\ref{exam-symmetric-3-10}, and put
	$F^2=\{s^2 \mid s\in F\}$, including zero. Then
	\begin{equation}\label{eq-symmetric-3-10-exceptional}
		Z(X)=
		\left\{
		P\in X(F)\;\middle|\;
		\begin{array}{l}
			\mu(P)=[x_0:x_1:y_0:y_1:z],\\
			x_0x_1\in F^2\ \text{or}\ y_0y_1\in F^2
		\end{array}
		\right\}.
	\end{equation}
	In particular, $Z(X)$ is Zariski dense in $X$ over $F$.
\end{prop}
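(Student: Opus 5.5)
Since $X$ is split of Picard rank $3$ over $F$ and $X_{\overline F}$ is not a product, I will compute $Z(X)$ through Proposition~\ref{prop-breaking-map-reduction}. Write $Z(X)=Z_a(X)\cup Z_{\rm dP}$. Because $r=3>1$, the conic contribution (1) does not occur.

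The first step is to list the del Pezzo fibrations. From Table~\ref{tab:bt-models}, family No.~3-10 has multiplicity $2$ for type $Q_-(2)$, and these two fibrations are $\pi_x$ and $\pi_y$. I will confirm this directly: the nef classes $F'$ with $LF'^2=0$ are exactly $A$ and $B$.

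For contribution (2), take a fiber $S_t$ of $\pi_y$ over an $F$-point with $t\neq0,\infty$. It is a smooth degree-six del Pezzo surface, and \eqref{eq-symmetric-3-10-ranks} shows it is breaking exactly when $t\in F^{\times2}$, that is, when $y_0y_1\in F^{\times 2}$. Since $\mu(S_t)$ is the locus $y_0=ty_1$, I need to express this condition in the coordinates of $\mu(P)$. On $Q$ we have $x_0x_1+y_0y_1=z^2$. The fiber over $t$ consists of the points with $[y_0:y_1]=[t:1]$, and there $y_0y_1=ty_1^2$, so $y_0y_1$ is a nonzero square exactly when $t$ is. I must also handle points with $y_0=y_1=0$, which lie on $E_y$ and hence have no well-defined $\mu$-coordinate condition. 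Here I would check that the statement's convention (``$y_0y_1\in F^2$'' includes the value $0$) is compatible with what the exceptional set actually contains.

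The special fibers over $t=0,\infty$ are $\{y_0=0\}$ and $\{y_1=0\}$. Their images in $Q$ are $x_0x_1=z^2$, a quadric cone, blown up at two points, one of which is the vertex. I will check whether these fibers are smooth or singular. If the fiber is singular of type $A_1$ with $\rho_F(\widetilde S)\ge3$, it is breaking. If it is non-normal or non-Du Val, it lies in $Z_a(X)$ by Proposition~\ref{prop-dp-bad-fibers}. In either case its points are included, which matches $y_0y_1=0\in F^2$. The same analysis applies to the fibers of $\pi_x$ by symmetry.

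For contribution (3), I will use Theorem~\ref{theo-breaking} and Remark~\ref{rema-dp-candidates}: any dominant breaking base change factors through a member of $P_{\pi_y}$. By Table~\ref{tab:bt-covers} with $q=0$, these are the covers $E$, $Q$, and $T$. Here $E\cong Q\cong\bP^1$ via $[a:b]\mapsto[a^2:b^2]$, and $T$ is the corresponding degree-four cover. The image of $Y(F)$ for each of these lands, through the fiber over $\delta([a:b])$, in fibers with square parameter. The rational points of the covers therefore add nothing beyond the square fibers already counted, since the image of $F$-points of $X\times_B N$ lies over $\delta(N(F))$. I will also check that the points of $X$ over non-square parameters are not hit.

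It remains to show $Z_a(X)$ is contained in the right-hand side. By Corollary~\ref{cor-a-exceptional-set}, X has index one but no anticanonical lines, since $L=M+A+B$ and each summand has positive degree on every curve, except that $A$ or $B$ can be zero on some curves. I will compute directly that $L\cdot C\ge2$ for every curve $C$, and note that X has no E2 divisor. This gives $Z_a(X)=\emptyset$, or at most points already contained in the boundary fibers.

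Zariski density then follows because the square parameters $t$ are dense in $\bP^1$ and each split fiber $S_t$ is rational with $S_t(F)$ dense.

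The main obstacle is the bookkeeping at the boundary: the points of $E_x$ and $E_y$, and the fibers over $0$ and $\infty$. These are where the coordinate description ``$\in F^2$ including $0$'' must match the $a$-exceptional and singular-fiber contributions exactly.
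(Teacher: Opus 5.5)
Your decomposition through Proposition~\ref{prop-breaking-map-reduction} (higher-$a$ locus, split smooth fibers, base changes through $t=s^2$) matches the paper's, but the step computing $Z_a(X)$ fails. It is false that $L\cdot C\geq2$ for every curve. A fiber $f$ of $E_x\to C_x$ has $M\cdot f=B\cdot f=0$ and $A\cdot f=1$, so $L\cdot f=1$. The strict transform of a line through the vertex of the cone $Q\cap\{x_0=0\}$ (vertex on $C_y$, base $C_x$) meets each center once transversally, so it has $L$-degree $3-1-1=1$. Your heuristic breaks because $M$ vanishes on $\mu$-exceptional curves, and $A$ and $B$ both vanish on fibers of $(\pi_x,\pi_y)$.

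Thus $Z_a(X)\neq\varnothing$; in fact $Z_a(X)=\Delta(F)$ with $\Delta=\mu^{-1}(Q\cap\{x_0x_1y_0y_1=0\})$. Your fallback ``at most boundary points'' is the right statement, but you give no argument for it. The missing argument is this: $\Nef(X)=\bR_{\geq0}M+\bR_{\geq0}A+\bR_{\geq0}B$ and $L=M+A+B$, so a curve with $L\cdot C=1$ is contracted either by $\mu$ or by the conic bundle $(\pi_x,\pi_y)\colon X\to\bP^1\times\bP^1$. The reducible fibers of that conic bundle lie over $x_0x_1y_0y_1=0$. Together with the absence of E2 divisors and Corollary~\ref{cor-a-exceptional-set}, this gives $Z_a(X)\subseteq\Delta(F)$, which lies in the right-hand side.

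The boundary part of the reverse inclusion also needs repair. The fibers over $t=0,\infty$ are not cones blown up at two points. Instead, $C_x$ meets $\{y_0=0\}$ in a length-two scheme at the vertex $p=[0:0:0:1:0]$. In the chart $y_1=1$, $x_0=ux_1$ of the blow-up, the fiber $\pi_y^{-1}(0)$ is $z^2=ux_1^2$, so it is non-normal along $\mu^{-1}(p)$. Proposition~\ref{prop-dp-bad-fibers} gives $a>1$ for its normalization, but $Z_a(X)$ only records images of $F$-points of a resolution. The normalization double covers $\mu^{-1}(p)$ via $w^2=u$, so the $F$-points of $\mu^{-1}(p)$ with $u\in F^\times\setminus F^{\times2}$ are missed. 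These points lie in the right-hand side, and they are covered by the $F$-rational anticanonical line $\mu^{-1}(p)$, which your argument says does not exist.

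Likewise, points of $E_y$ do satisfy the coordinate condition, since $\mu(P)\in C_y$ gives $y_0y_1=0$, so you must show they lie in $Z(X)$. The paper handles all of $\Delta(F)$ at once. Every $F$-point of $E_x\cup E_y$ lies on an $F$-rational exceptional fiber, and every other $F$-point of $\Delta$ lies on the strict transform of an $F$-rational ruling line through the vertex of one of the four cones.

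Finally, Table~\ref{tab:bt-covers} concerns general members of No.~3-10. In this symmetric example $\mathrm{Gal}(F(\sqrt t)/F)$ swaps the rulings and the exceptional curves simultaneously, so every member of $P_{\pi_y}$ is the single double cover $\delta$, and there is no degree-four $T$. Your conclusion survives, but the direct argument is simpler. Over any finite $K/F(t)$ the generic fiber has rank $4$ if $t\in K^{\times2}$ and rank $2$ otherwise, so Lemma~\ref{lemm-dp-base-change-b} forces factorization through $t=s^2$.
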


\begin{proof}
	Put
	\[
	\Delta=\mu^{-1}\bigl(Q\cap\{x_0x_1y_0y_1=0\}\bigr),
	\qquad X^\circ=X\setminus\Delta.
	\]
	We first describe the contributions supported on $\Delta$.
	Every $F$-point of $E_x\cup E_y$ lies on an $F$-rational exceptional
	fiber, which has anticanonical degree one.
	Let $q\in Q(F)\setminus(C_x\cup C_y)$ satisfy
	$x_0x_1y_0y_1=0$. We may first assume that $x_0=0$.
	The section $Q\cap\{x_0=0\}$ is a rank-three quadric cone
	with vertex on $C_y$ and base $C_x$.
	The ruling line through $q$ is defined over $F$ and meets
	each blow-up center transversely in one point.
	Its strict transform therefore has anticanonical degree
	one. These show that $\Delta(F)\subset Z_a(X)$.
	
	Next, we show that $\Delta(F)=Z_a(X)$.
	The exceptional fibers of $\mu$ and the ruling
	lines above give curve classes dual to $A,B,M$, respectively.
	Thus
	\[
	\operatorname{Nef}(X)
	=\mathbb R_{\geq0}M+\mathbb R_{\geq0}A+\mathbb R_{\geq0}B.
	\]
	The corresponding extremal contractions are the two blow-downs
	and the conic bundle
	$(\pi_x,\pi_y):X\to\bP^1\times\bP^1$.
	In particular, $X$ has no E2 divisor.

	Since $L=M+A+B$, every anticanonical line is contracted
	either by $\mu$ or by $(\pi_x,\pi_y):X\to \bP^1\times\bP^1$.
	In the latter case, it is a component of a degenerated fiber.
	Hence every anticanonical line lies in $\Delta$.
	Corollary~\ref{cor-a-exceptional-set}, together with the
	inclusion proved above, gives $Z_a(X)=\Delta(F)$.

	Since $\rho_F(X)=3$, the conic contribution in
	Proposition~\ref{prop-breaking-map-reduction}(1) is absent.
	The nef cone computed above shows that $\pi_x$ and $\pi_y$
	are the only del Pezzo fibrations, up to isomorphism of
	the base.
	All their singular fibers are contained in $\Delta$.
	
	Smooth del Pezzo surfaces admit no adjoint rigid $a$-covers.
	Thus a smooth fiber contributes precisely when its Picard
	rank is at least three, which occurs exactly over a square
	parameter by \eqref{eq-symmetric-3-10-ranks}.
	
	It remains to consider dominant breaking base changes.
	For either fibration, write $t$ for the affine parameter.
	The same calculation of the Galois action shows that, over
	any finite extension $K/F(t)$, the generic fiber has Picard
	rank four if $t\in K^{\times2}$ and rank two otherwise.
	By Lemma~\ref{lemm-dp-base-change-b}, a breaking base change
	must therefore factor through the double cover
	given by $t=s^2$.
	Its rational-point image outside $\Delta$ consequently
	lies over square parameters.
	
	Proposition~\ref{prop-breaking-map-reduction}, together
	with $Z_a(X)=\Delta(F)$, now proves
	\eqref{eq-symmetric-3-10-exceptional}.
	The smooth fibers over nonzero square parameters are split
	rational surfaces and occur over infinitely many parameters,
	so their rational points are Zariski dense in $X$.
\end{proof}

\begin{coro}\label{coro-symmetric-3-10-manin}
	Let $X$ be as in Example~\ref{exam-symmetric-3-10}. For every
	non-empty open subset $U\subset X$ and every anticanonical height
	function $H$ on $X$, we have
	\[
	N(U(F),H,B)\gg_{U,H}B(\log B)^{3}.
	\]
	In particular,
	\[
	\lim_{B\to\infty}
	\frac{N(U(F),H,B)}{B(\log B)^{\rho_F(X)-1}}
	=+\infty.
	\]
	Thus the original form of Manin's conjecture, in which only a
	proper closed subset is removed, fails on every non-empty open
	subset of $X$, for every anticanonical height.
\end{coro}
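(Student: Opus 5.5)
The plan is to obtain the lower bound by counting rational points of bounded height on the split fibers $S_t$ with $t=s^2$, restricted to $U$. Fix $U$ and $H$. Since $U$ is a non-empty open subset, we may shrink it to $U\subset X^\circ=X\setminus\Delta$, where $\Delta$ is as in the proof of Proposition~\ref{prop-symmetric-3-10-exceptional}. For all but finitely many nonzero $s\in F$, the fiber $S_{s^2}=\pi_y^{-1}([s^2:1])$ meets $U$ in a non-empty open subset $U_s$ of the split degree-six del Pezzo surface $S_{s^2}$. The reason is that only finitely many fibers can be contained in the proper closed complement $X\setminus U$.

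First, fix one such $s$ and restrict $H$ to $S:=S_{s^2}$. Since $L|_S=-K_S$, the restriction $H|_S$ is an anticanonical height on $S$; its adelic metric is arbitrary, but it differs from any chosen one by a bounded factor. The surface $S$ is a split smooth toric del Pezzo surface of degree six with $\rho_F(S)=4$, by \eqref{eq-symmetric-3-10-ranks}. It is $\bP^2$ blown up in three $F$-rational non-collinear points, since it is the blow-up of a split quadric at two rational points not on a common ruling. Manin's conjecture for split toric varieties (Batyrev--Tschinkel, and for this surface also de la Bretèche and Salberger) gives
\[
N(U'(F),H|_S,B)\sim c_S\,B(\log B)^{3},\qquad c_S>0,
\]
where $U'$ is the open torus. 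The change of metric affects only the leading constant, up to bounded factors.

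Second, I need to pass from the torus to the open set $U_s$. The torus $U'$ is exactly the complement in $S$ of the six lines, and these lie in $\Delta\cup E_x\cup E_y$ restricted to $S$. The set $U_s\cap U'$ is a non-empty open subset of the torus, and removing a proper closed subset from the torus does not change the leading term. To see this, note that a proper closed subset of $S$ consists of curves, each of which contributes only $O(B^{1+\epsilon})$, or $O(B\log B)$ on conics. More precisely, points on a curve $C\not\subset$ lines satisfy $\#\{P\in C(F):H(P)\le B\}=O(B^{2/(-K_S\cdot C)}\cdots)$. Curves with $-K_S\cdot C=1$ are lines and are already removed, and curves of anticanonical degree two give $O(B)$. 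Hence
\[
N(U(F),H,B)\ \ge\ N(U_s(F),H|_S,B)\ \gg\ B(\log B)^{3}.
\]
Combining this with $\rho_F(X)-1=2$ gives the limit statement.

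The main obstacle is the curve estimate in the second step. I would handle it through the explicit torus parametrization of the torsor, or simply by citing the known statement that the asymptotic holds on every dense open subset of the torus. The argument needs only one fiber, so no uniformity in $s$ is required. The final sentence follows because any proper closed subset removed from $X$ leaves such a $U$.
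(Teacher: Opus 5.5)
Your proposal is correct and follows essentially the same route as the paper's proof. You take one split fiber $S_{s^2}$ meeting $U$, apply the toric Manin asymptotic on its open torus with bounded comparability of anticanonical heights, and bound the contribution of the removed proper closed subset of the torus by $O(B)$, since every curve meeting the torus has anticanonical degree at least two (the six lines lie in the toric boundary).
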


\begin{proof}
	Choose $s\in F^\times$ such that $S=S_{s^2}$ meets $U$.
	The surface $S$ is a split toric del Pezzo surface with
	$\rho_F(S)=4$, and $L|_S=-K_S$. Let $T\subset S$ be its open
	torus. Manin's conjecture for toric varieties
	\cite{batyrev1998manin}, together with bounded comparability of
	anticanonical heights, gives
	\[
	N((U\cap T)(F),H|_S,B)\gg_{U,H}B(\log B)^3.
	\]
	All anticanonical lines on $S$ lie in the toric boundary.
	Thus every curve meeting $T$ has anticanonical degree
	at least two, and the proper closed subset $T\setminus U$
	contributes $O_{U,S,H}(B)$ points.
	Consequently,
	\[
	N(U(F),H,B)
	\geq N((U\cap T)(F),H|_S,B)
	\gg_{U,H}B(\log B)^3.
	\]
	Since $\rho_F(X)=3$, the second assertion follows.
\end{proof}

\section{Anticanonical conics on Fano threefolds of geometric Picard rank $1$}\label{sec-curves}

In this section, we study adjoint rigid curves with the same
$a$-invariant as the ambient Fano threefold, with particular attention
to the Zariski density of the geometric exceptional set
(Definition~\ref{defi-ges}).

\subsection{Conics on Fano threefolds of index $1$ and Picard rank $1$}

In this section we study anticanonical conics on Fano threefolds of Picard rank $1$,
which are conics on threefolds of index $1$ or
lines on threefolds of index $2$.

Let $X$ be a Fano threefold of index $1$ and Picard rank $1$. There are
ten deformation families of such threefolds, indexed by the genus
\[
g(X)\in\{2,3,4,5,6,7,8,9,10,12\}.
\]

We first record a general consequence of the Mordell--Lang theorem.

\begin{lemm}\label{lemm-irregular-surface}
	Let $S$ be a smooth projective geometrically integral surface over a
	number field $F$. If $q(S)=h^1(S,\mathcal O_S)>2$, then $S(F)$ is not
	Zariski dense.
\end{lemm}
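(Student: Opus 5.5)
We argue by contradiction and reduce to Faltings' theorem on subvarieties of abelian varieties. Assume $S(F)$ is Zariski dense, so in particular $S(F)\neq\varnothing$. Fix a point $s_0\in S(F)$, and let $\mathrm{alb}\colon S\to A=\mathrm{Alb}(S)$ be the Albanese morphism sending $s_0$ to $0$. It is defined over $F$ because $s_0$ is rational, and $\dim A=q(S)\geq3$. Let $W=\overline{\mathrm{alb}(S)}$ be the image, a closed geometrically integral subvariety of $A$ of dimension at most $2$. It generates $A$, since the Albanese image always generates the Albanese variety.

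The main step uses Faltings' theorem (Mordell--Lang for subvarieties of abelian varieties over number fields). It says that $W(F)$ is contained in a finite union $\bigcup_i(x_i+B_i)$, where each $x_i+B_i\subset W$ is a translate of an abelian subvariety. Since $S(F)$ is dense in $S$ and $\mathrm{alb}|_S\colon S\to W$ is surjective, $W(F)$ is dense in $W$. Irreducibility of $W$ then forces $W=x+B$ for a single abelian subvariety $B$. Because $0=\mathrm{alb}(s_0)\in W$, we get $W=B$, which is an abelian subvariety. But $W$ generates $A$, so $W=A$, and hence $\dim A\leq\dim W\leq2$. This contradicts $q(S)>2$.

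The only point needing care is the geometric input that the Albanese image generates $A$, and that everything is compatible with working over $F$ rather than $\overline F$. Both are standard. The Albanese of a smooth projective geometrically integral variety with a rational point is defined over $F$ and commutes with base change. Generation can be checked over $\overline F$, where it follows from the universal property: if the image lay in a proper abelian subvariety $B'$, then $\mathrm{alb}$ would factor through $B'$, and the universal property would force $B'=A$. I expect no serious obstacle beyond citing Faltings' theorem in the form just stated. The hypothesis $q>2$ is exactly what prevents a surface image from being an abelian surface equal to the whole Albanese variety.
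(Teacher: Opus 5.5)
Your argument is correct and is essentially the paper's proof: you base the Albanese morphism at a rational point, use Faltings' theorem (Mordell--Lang) together with density to force the image to be a single abelian subvariety through the origin, and then use generation to conclude that the image is all of $\operatorname{Alb}(S)$, contradicting $\dim\operatorname{Alb}(S)=q(S)>2$. Your universal-property argument that the Albanese image generates $\operatorname{Alb}(S)$ supplies a step that the paper uses without proof.
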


\begin{proof}
	Suppose that $S(F)$ is Zariski dense, and choose a point of $S(F)$ as
	the base point of the Albanese morphism
	$\alpha:S\to\operatorname{Alb}(S)$. By the Mordell--Weil and
	Mordell--Lang theorems, $\alpha(S)$ is a translate of an abelian
	subvariety. Since it contains the origin and generates
	$\operatorname{Alb}(S)$, the morphism $\alpha$ is surjective. This
	contradicts $q(S)=\dim\operatorname{Alb}(S)>\dim S$.
\end{proof}

Since $\rho_{\mathrm{geom}}(X)=1$, there is no del Pezzo
fibration on $X_{\overline F}$.
By Remark~\ref{rema-ges-density-reduction}, it suffices to
study the conic contribution.

\subsubsection{Geometry of the parameter spaces of conics}

We first recall the description of the Fano surface of conics in genus
at least $7$.

\begin{prop}[{\cite[Theorem~1.1.1]{kuznetsov2018hilbert}}]
	\label{prop-S(X)}
	Let $X$ be a Fano threefold of index $1$, Picard rank $1$, and genus
	$g\geq7$ over $\bC$. Then the Fano surface $S(X)$ of conics is smooth
	and irreducible. Moreover,
	\begin{enumerate}
		\item if $g=12$, then
		\[
		S(X)\cong\bP^2;
		\]
		
		\item if $g=10$, then $S(X)$ is an abelian surface;
		
		\item if $g=9$, then $S(X)$ is a ruled surface over a smooth curve
		of genus $3$;
		
		\item if $g=8$, then $S(X)$ is a minimal surface of general type
		with
		\[
		q(S(X))=5,\qquad p_g(S(X))=10,\qquad K_{S(X)}^2=45;
		\]
		
		\item if $g=7$, then
		\[
		S(X)\cong\operatorname{Sym}^2(C),
		\]
		where $C$ is a smooth curve of genus $7$. In particular,
		\[
		q(S(X))=7.
		\]
	\end{enumerate}
\end{prop}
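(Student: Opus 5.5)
This statement is cited from \cite[Theorem~1.1.1]{kuznetsov2018hilbert}, so in the paper it is quoted rather than reproved. If I had to reconstruct the argument, I would organize it around the Mukai-type presentations of prime Fano threefolds of genus $g\geq7$ and the standard deformation theory of conics.

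The first step is smoothness and the dimension count. For a conic $C\subset X$ one has $\chi(N_{C/X})=-K_X\cdot C+\dim X-3=2$. So it suffices to show $h^1(N_{C/X})=0$ for every conic, including reducible conics and non-reduced ones (double lines). The plan for this:
\begin{enumerate}
\item Use that $-K_X$ is very ample for $g\geq7$. Since $X$ is an intersection of quadrics, there are no planes on $X$.
\item For smooth conics, bound $N_{C/X}\cong\cO(a)\oplus\cO(b)$ with $a+b=2$, and exclude $a\le -2$ by comparing with the normal bundle in the ambient Grassmannian or homogeneous space.
\item Handle reducible and non-reduced conics by the same comparison, using the Hilbert-scheme description of lines, whose Fano scheme is a smooth curve for general $X$ and a possibly non-reduced curve in general.
\end{enumerate}
I expect the double-line case, which involves the extra subschemes mentioned in \cite[Remark~2.1.3]{kuznetsov2018hilbert}, to be the main obstacle. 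Irreducibility would then follow from connectedness, obtained by degenerating to conics through lines.

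The second step identifies $S(X)$ genus by genus using the Mukai models.
\begin{itemize}
\item \textbf{$g=12$:} $X$ is a variety of sums of powers of a plane quartic, and conics correspond to points of the dual $\bP^2$.
\item \textbf{$g=10$:} $X$ is a linear section of $G_2/P$. Via the Hodge-theoretic identification with the intermediate Jacobian, the Abel--Jacobi map $S(X)\to J(X)$, with $\dim J(X)=h^{1,2}=2$, is an isomorphism.
\item \textbf{$g=9$:} $X$ is a section of the Lagrangian Grassmannian $LG(3,6)$. Conics are governed by the associated genus-$3$ curve through its homological projective dual, which gives a $\bP^1$-bundle structure.
\item \textbf{$g=8$:} $X=\mathrm{Gr}(2,6)\cap\bP^9$. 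Conics relate to the Pfaffian cubic threefold, and $S(X)$ is the Fano surface of lines on that cubic, with the stated invariants.
\item \textbf{$g=7$:} $X$ is a section of the spinor tenfold. The dual section gives a genus-$7$ curve $C$, and conics correspond to pairs of points, so $S(X)\cong\mathrm{Sym}^2(C)$ and $q=7$.
\end{itemize}

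Within the paper, I would simply invoke the cited theorem at this point.
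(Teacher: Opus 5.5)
Your proposal agrees with the paper, which gives no argument of its own and simply quotes \cite[Theorem~1.1.1]{kuznetsov2018hilbert}. Your genus-by-genus identifications, via the Mukai models and the associated curve, cubic threefold, or intermediate Jacobian, are consistent with the cited statement. One slip in the optional reconstruction: for a conic on an index-one threefold, $\deg N_{C/X}=-K_X\cdot C-2=0$, so $N_{C/X}\cong\cO(a)\oplus\cO(-a)$. Your $a+b=2$ would give $\chi(N_{C/X})=4$, contradicting your own count, and $h^1(N_{C/X})=0$ means that $N_{C/X}$ is $\cO\oplus\cO$ or $\cO(1)\oplus\cO(-1)$. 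Likewise, $X$ contains no planes because $\rho(X)=1$ forces every surface to have degree divisible by $2g-2$, not because $X$ is cut out by quadrics.
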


For the remaining genera, we use the following results on the dominant
components of the space of conics.

\begin{prop}\label{prop-low-genus-conics}
	Let $X$ be a Fano threefold of index $1$ and Picard rank $1$ over
	$\bC$.
	
	\begin{enumerate}
		\item Suppose that $g=3,4$, or $5$, that $-K_X$ is very ample, and
		that $X$ is general in moduli. Then the space of conics has a unique
		dominant component, which is birational to a smooth projective
		surface $S(X)$. Its irregularity is
		\[
		\begin{array}{c|ccc}
			g(X) & 3 & 4 & 5\\ \hline
			q(S(X)) & 30 & 20 & 14.
		\end{array}
		\]
		
		\item Suppose that $g=6$ and that $X$ is general in moduli. Then
		the Fano surface of conics is a smooth irreducible surface of
		general type satisfying
		\[
		q(S(X))=10.
		\]
		
		\item Suppose that $g=2$ and that $X$ is general in moduli. Then
		the space of conics has two dominant components
		\[
		M_{\mathrm{tc}}
		\qquad\text{and}\qquad
		M_{\mathrm{bt}},
		\]
		corresponding respectively to totally tangent conics and bitangent
		lines of the branch sextic.
		
		A smooth projective model $\widetilde M_{\mathrm{tc}}$ satisfies
		\[
		\operatorname{Alb}(\widetilde M_{\mathrm{tc}})
		\cong J(X),
		\]
		and hence
		\[
		q(\widetilde M_{\mathrm{tc}})=\dim J(X)=52.
		\]
		The normalization $\widetilde M_{\mathrm{bt}}$ is a smooth
		irreducible surface. Its Abel--Jacobi morphism is constant.
		
	\end{enumerate}
\end{prop}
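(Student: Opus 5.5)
The proposition collects several known descriptions, and I would prove it by treating each genus separately. The common thread is the Abel--Jacobi map of the dominant family of conics, $\mathrm{AJ}\colon S(X)\to J(X)$. Recall that $\dim J(X)=h^{1,2}(X)$, which equals $30,20,14,10,52$ for $g=3,4,5,6,2$ respectively. The irregularity statements would all follow from an isomorphism $\operatorname{Alb}(S(X))\cong J(X)$ induced by $\mathrm{AJ}$, together with $q(S)=\dim\operatorname{Alb}(S)$. So for each family the plan has two parts: identify the dominant component(s) of the space of conics, and prove (or cite) that $\mathrm{AJ}$ induces an isomorphism on Albanese varieties.

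\textbf{Step 1: $g=3,4,5$.} Here $X$ is a quartic threefold, a $(2,3)$ complete intersection, or a $(2,2,2)$ complete intersection; the very ampleness hypothesis rules out the double quadric. For general $X$ I would use the classical description of conics via the Fano surface of lines and the Hilbert scheme of conics, following Collino, Markushevich, Letizia and Tyurin. The main input is a Clemens--Griffiths type argument. First, the incidence correspondence between conics and the cohomology of $X$ shows that $\mathrm{AJ}^*\colon H^{3}(X)\to H^1(S(X))$-level data (via the cylinder map) is injective. Second, a degeneration or monodromy argument for general $X$ shows the surjectivity needed for $\operatorname{Alb}(S(X))\cong J(X)$. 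This yields $q=h^{1,2}(X)$. Uniqueness of the dominant component follows because, for general $X$, the family of conics through a general point is irreducible: the monodromy of the finitely many conics through a general point is transitive.

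\textbf{Step 2: $g=6$.} For general Gushel--Mukai threefolds, smoothness and irreducibility of the surface of conics, and the fact that it is of general type, are due to Logachev and to Debarre--Iliev--Manivel. I would cite their theorem that $\operatorname{Alb}(S(X))\cong J(X)$ with $\dim J(X)=10$, which gives $q=10$.

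\textbf{Step 3: $g=2$.} Here $X$ is a double cover $\pi\colon X\to\bP^3$ branched along a sextic surface $W$. An anticanonical conic maps under $\pi$ either birationally onto a conic in $\bP^3$ totally tangent to $W$, or $2:1$ onto a line. In the second case the line must be bitangent to $W$ so that its preimage splits into two curves. For general $X$ each of these loci is irreducible by a monodromy argument, which gives the two components $M_{\mathrm{tc}}$ and $M_{\mathrm{bt}}$.

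For $M_{\mathrm{tc}}$, I would cite the result (Iliev and collaborators) that the Abel--Jacobi map induces $\operatorname{Alb}(\widetilde M_{\mathrm{tc}})\cong J(X)$, giving $q=52$.

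For $M_{\mathrm{bt}}$, the covering involution $\iota$ acts as $-1$ on $H^3(X)$, because $H^3(\bP^3)=0$. If $C$ is one component of the preimage of a bitangent line and $C'=\iota(C)$, then $\mathrm{AJ}(C')=-\mathrm{AJ}(C)$. On the other hand $C+C'$ is the pullback of a line, and lines in $\bP^3$ are parametrized by the Grassmannian, which is rational; hence $\mathrm{AJ}(C)+\mathrm{AJ}(C')$ is constant. These two relations together show only that $2\,\mathrm{AJ}$ is constant up to sign, which is not enough.

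\textbf{Expected obstacle.} The hardest point is showing that the Abel--Jacobi map on $M_{\mathrm{bt}}$ is genuinely constant rather than merely $2$-torsion-constrained. I would first try to show that $\widetilde M_{\mathrm{bt}}$ has $q=0$, or that its map to $J(X)$ factors through a variety with no nonconstant maps to abelian varieties; one candidate is a double cover of the surface of bitangent lines compatible with $\iota$. Failing that, I would cite the corresponding result in \cite{iliev2017hyper}.
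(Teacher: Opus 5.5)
Your treatment of $g=3,4,5,6$ and of $M_{\mathrm{tc}}$ matches the paper, which also rests on the literature: \cite{Lehmann2019}, \cite{corvaja2023integral}, \cite{logachev2012fano}, and \cite{ceresa1986abel} (not Iliev) for $\operatorname{Alb}(\widetilde M_{\mathrm{tc}})\cong J(X)$.

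The genuine gap is your description of $M_{\mathrm{bt}}$. Suppose an integral curve $C$ with $-K_X\cdot C=2$ maps $2:1$ onto a line $\ell$. Since $-K_X=\pi^*\cO_{\bP^3}(1)$, the preimage $\pi^{-1}(\ell)$ also has anticanonical degree $2$, so $C$ is the whole preimage $\pi^{-1}(\ell)$; nothing splits. Bitangency is exactly what makes this preimage rational. If $W|_\ell=2p+2q+r+s$, then $\pi^{-1}(\ell)$ is irreducible with nodes over $p$ and $q$. Its normalization is the double cover of $\ell$ branched at $r,s$, i.e.\ $\bP^1$. A splitting $\pi^{-1}(\ell)=C\cup\iota(C)$ would instead require $W|_\ell$ to be twice a cubic divisor, i.e.\ $\ell$ tritangent. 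Each component would then have anticanonical degree $1$: these are anticanonical lines, not conics. So your $\iota$-computation concerns the wrong family. Moreover, your two relations are the same relation (both say $\mathrm{AJ}(\iota C)=-\mathrm{AJ}(C)$ after normalizing at a base point), so they carry no information at all, not even about $2\,\mathrm{AJ}$.

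With the correct description, your ``expected obstacle'' disappears and the argument is the paper's. Each curve $\pi^{-1}(\ell)$ in the family has cycle class $\pi^*[\ell]$. All lines in $\bP^3$ are rationally equivalent, so these cycles are rationally equivalent on $X$, and $\mathrm{AJ}$ is constant on $\widetilde M_{\mathrm{bt}}$. Your fallback of citing \cite{iliev2017hyper} would not help, since that work concerns $(2,2)$-divisors in $\bP^2\times\bP^2$ (No.~2-6), not sextic double solids. You also leave the smoothness of $\widetilde M_{\mathrm{bt}}$ unaddressed: normality of a surface does not give smoothness. The paper obtains it by identifying $\widetilde M_{\mathrm{bt}}$ with the normalization of the Fano congruence of bitangents of the general sextic, citing \cite[Proposition~2.4]{kim2021normalization}.
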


\begin{proof}
	For $g=3,4,5$, the existence and uniqueness of the dominant component
	and its relation with the Fano surface of conics follow from
	\cite[Theorem~7.1]{Lehmann2019}. The Abel--Jacobi morphism is
	generically finite onto its image by
	\cite[Lemma~7.2]{Lehmann2019}. The stated irregularities are recalled
	in \cite[Section~2.9]{corvaja2023integral}.
	
	The assertion for $g=6$ follows from
	\cite[Propositions~0.1 and~0.5]{logachev2012fano}; the excluded locus
	has codimension one in the moduli space.
	
	Suppose finally that $g=2$. Then $X$ is a double cover
	\[
	w\colon X\longrightarrow\bP^3
	\]
	branched over a smooth sextic surface $B$. The two dominant components
	are described in \cite[Theorem~6.6]{burke2022geometric}.
	
	The component $M_{\mathrm{tc}}$ generically parametrizes curves mapping
	isomorphically to conics in $\bP^3$ having even contact with $B$.
	By \cite{ceresa1986abel}, the induced homomorphism
	\[
	\operatorname{Alb}(\widetilde M_{\mathrm{tc}})
	\longrightarrow J(X)
	\]
	is an isomorphism. Since $\dim J(X)=h^{1,2}(X)=52$, we obtain
	\[
	q(\widetilde M_{\mathrm{tc}})=52.
	\]
	
	The second component is the closure in the Chow scheme of the
	curves $w^{-1}(\ell)$, where $\ell$ is a bitangent line of $B$.
	For a general $\ell$, the inverse image has two nodes and rational
	normalization, and its anticanonical degree is $2$.
	The normalization of this parameter surface is the Fano congruence
	of bitangents, which is smooth and irreducible by
	\cite[Proposition~2.4]{kim2021normalization}. Its Abel--Jacobi map
	is constant because the cycles $w^*[\ell]$ are rationally
	equivalent; see also \cite[Remark~5.1(ii)]{kim2021normalization}.
\end{proof}

Thus Lemma \ref{lemm-irregular-surface} applies to all the dominant
components occurring for general Fano threefolds of genera
$3\leq g\leq9$, as well as to the component $M_{\mathrm{tc}}$ in
genus $2$. The arithmetic of the bitangent component
$M_{\mathrm{bt}}$ is not determined by the Abel--Jacobi morphism and
remains the unresolved part of the genus-$2$ case.

\begin{ques}\label{ques-g2}
	Let $X$ be a general Fano threefold of genus $2$ over a number field
	$F$, and let $M_{\mathrm{bt}}$ be the component of the space of conics
	induced by bitangent lines of the branch sextic. Are the rational
	points on a smooth projective model of $M_{\mathrm{bt}}$ Zariski
	dense?
\end{ques}

\subsubsection{Non-density results}

We now combine the geometric descriptions and the
Mordell--Lang theorem to show the non-density results.

\begin{theo}\label{theo-index-one-conics}
	Let $X$ be a Fano threefold of index $1$ and geometric Picard rank $1$
	over a number field $F$. Assume that one of the following conditions holds:
	\begin{enumerate}
		\item $g(X)\in\{7,8,9\}$;
		\item $g(X)\in\{3,4,5,6\}$ and $X_{\overline F}$ is general in its
		deformation family.
	\end{enumerate}
	Then the geometric exceptional set of $(X,-K_X)$ is not Zariski dense.
\end{theo}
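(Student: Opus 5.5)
Since $\rho_{\mathrm{geom}}(X)=1$, the threefold $X_{\overline F}$ is not a product and admits no del Pezzo fibration. Also $\rho_F(X)=1$. So Proposition~\ref{prop-breaking-map-reduction} and Remark~\ref{rema-ges-density-reduction} reduce $Z(X)$ to two pieces. The first is $Z_a(X)$, which lies in a proper closed subset by Section~\ref{sec-3}. The second is the conic contribution: the images $f(Y(F))$ of thin maps with $a(Y,f^*L)=1$ whose general geometric adjoint Iitaka fibers map birationally onto anticanonical conics. It therefore suffices to show that the conic contribution is not Zariski dense. The images of non-dominant maps lie in proper closed subsets. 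Only finitely many dominant families of conics need to be controlled, since the relevant Hom-scheme components are of finite type.

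For a dominant such $f$, take the adjoint Iitaka fibration $p:Y\dashrightarrow T$, with $T$ a surface. The map $t\mapsto f(p^{-1}(t))$ defines a dominant rational map from $T$ to a dominant component $M$ of the space of anticanonical conics on $X$. This map is defined over $F$ after taking Galois orbits of components. Every point of $f(Y(F))$ outside a proper closed subset lies on $f(p^{-1}(t))$ for some $t\in T(F)$. If $M(F)$ is not Zariski dense, then the union of conics over $M(F)$ lies in a proper closed subset, so $f(Y(F))$ is not dense.

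Consider the Galois-stable union of dominant components, and pass to a finite extension if needed; non-density over a larger field implies non-density over $F$. By Proposition~\ref{prop-S(X)} for $g=7,8$ and Proposition~\ref{prop-low-genus-conics} for general $X$ with $g=3,4,5,6$, each dominant component is birational to a smooth surface with $q>2$. The values are $q=7,5$ for $g=7,8$, and $q=30,20,14,10$ for $g=3,4,5,6$. Lemma~\ref{lemm-irregular-surface} then gives non-density of rational points. This uses that $q$ and $F$-rationality behave well under birational models, and that Proposition~\ref{prop-S(X)} applies over $\overline F$ and descends.

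The genus $9$ case is the main obstacle. There $S(X)$ is a ruled surface over a genus-$3$ curve $\Gamma$. The irregularity is $3>2$, so Lemma~\ref{lemm-irregular-surface} still applies directly. Alternatively, the ruling $S(X)\to\Gamma$ descends to $F$, and Faltings' theorem gives finitely many rational points of $\Gamma$, so $S(X)(F)$ lies on finitely many fibers. Each fiber is a curve of conics that sweeps out only a surface in $X$, so the union is a proper closed subset.

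A remaining technical point is the case where $-K_X$ is not very ample or the conics are non-reduced. Such conics form non-dominant loci and are absorbed into proper closed subsets. This needs a check against the hypotheses of Proposition~\ref{prop-low-genus-conics}, and for special $X$ in genus $3$ this is why the generality assumption is imposed.
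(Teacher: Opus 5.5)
Your proposal is correct and follows essentially the same route as the paper: you reduce via Proposition~\ref{prop-breaking-map-reduction} and Remark~\ref{rema-ges-density-reduction} to the dominant components of the conic parameter space, use Propositions~\ref{prop-S(X)} and~\ref{prop-low-genus-conics} to see that each has a smooth projective model with $q>2$ (genus $9$ is no real obstacle, since $q=3$), and conclude with Lemma~\ref{lemm-irregular-surface}. The one point that both you and the paper leave implicit is that the discarded closed subset must not depend on $f$: this follows because each such $f$ factors rationally through the evaluation map from a smooth model of the universal conic over a model of $M$, so Lang--Nishimura puts all of $f(Y(F))$ inside the image of the rational points of that single family.
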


	\begin{proof}
		By the discussion preceding Proposition~\ref{prop-S(X)}, it
		suffices to consider the dominant components of the parameter
		space of conics on $X$.
		Under the stated hypotheses, Propositions~\ref{prop-S(X)}
		and~\ref{prop-low-genus-conics} show that each such component
		has a smooth projective model $S$ with $q(S)>2$.
		Lemma~\ref{lemm-irregular-surface} therefore implies that
		$S(F)$ is not Zariski dense.
		The corresponding $F$-defined conics sweep out a subset of
		dimension at most two in $X$, proving the assertion.
	\end{proof}

If $X_{\overline F}$ is general in its deformation family, Theorem
\ref{theo-index-one-conics} excludes all genera $3\leq g(X)\leq9$.
The genera not excluded are therefore $2$, $10$, and $12$; the
bitangent contribution in genus $2$ is the subject of
Question~\ref{ques-g2}.

\subsubsection{The remaining cases and dense examples}

For genera $10$ and $12$, Proposition \ref{prop-S(X)} gives, over $\overline F$,
\[
S(X)\cong
\begin{cases}
	\text{an abelian surface} & \text{if }g(X)=10,\\
	\bP^2 & \text{if }g(X)=12.
\end{cases}
\]
In particular, the Mordell--Lang argument used in Theorem
\ref{theo-index-one-conics} does not exclude Zariski density in these
two cases. Notice, however, that potential density of the rational
points on $S(X)$ alone does not imply potential density of the
geometric exceptional set, since an $F$-rational point of $S(X)$ may
parametrize a conic without an $F$-rational point.

The Mukai--Umemura threefold provides an explicit example in which
this obstruction does not occur.

\begin{exam}\label{exam-mukai-umemura}
	Let $X_{\mathrm{MU}}$ be the model of the Mukai--Umemura threefold
	over $\bQ$ constructed below. Then the geometric exceptional set of
	$(X_{\mathrm{MU}},-K_{X_{\mathrm{MU}}})$ is Zariski dense.
\end{exam}

\begin{proof}
	Let $M_{d}$ denote the space of binary forms of degree $d$. The
	Mukai--Umemura threefold admits the presentation
	\[
	X_{\mathrm{MU}}
	=
	\overline{
		\operatorname{PGL}_2\cdot(\phi_{12},1)
	}
	\subset\bP(M_{12}\oplus M_0),
	\]
	where
	\[
	\phi_{12}
	=
	xy(x^{10}+11x^5y^5-y^{10});
	\]
	see \cite[Section~8.2]{faenzi2007bundles} and
	\cite[Section~4]{kuznetsov2018prime}. In particular,
	$X_{\mathrm{MU}}$ and the action of $\operatorname{PGL}_2$ are defined
	over $\bQ$.
	
	Set
	\[
	p=(\phi_{12},1)\in X_{\mathrm{MU}}(\bQ),
	\]
	and let
	\[
	C=\overline{\mathbb G_m\cdot p}\subset X_{\mathrm{MU}},
	\]
	where $\mathbb G_m\subset\operatorname{PGL}_2$ is the standard maximal
	torus. Then $C$ is a smooth conic defined over $\bQ$ and
	$p\in C(\bQ)$; see \cite[proof of Proposition~4.4]{kuznetsov2018prime}.
	
	For every
	\[
	g\in\operatorname{PGL}_2(\bQ),
	\]
	the translate $g(C)$ is a smooth conic defined over $\bQ$, and
	\[
	g(p)\in g(C)(\bQ).
	\]
	Moreover,
	\[
	-K_{X_{\mathrm{MU}}}\cdot g(C)=2,
	\]
	so each $g(C)$ is adjoint rigid and satisfies
	\[
	a(g(C),-K_{X_{\mathrm{MU}}}|_{g(C)})=1,
	\qquad
	b(\bQ,g(C),-K_{X_{\mathrm{MU}}}|_{g(C)})=1.
	\]
	Since $b(\bQ,X_{\mathrm{MU}},-K_{X_{\mathrm{MU}}})=1$, every
	point $g(p)$ belongs to the geometric exceptional set by
	Definition~\ref{defi-ges}.
	
	Finally, $\operatorname{PGL}_2(\bQ)$ is Zariski dense in
	$\operatorname{PGL}_2$, and
	\[
	X_{\mathrm{MU}}
	=
	\overline{\operatorname{PGL}_2\cdot p}.
	\]
	Consequently,
	\[
	\{g(p)\mid g\in\operatorname{PGL}_2(\bQ)\}
	\]
	is Zariski dense in $X_{\mathrm{MU}}$. Hence the geometric exceptional
	set is Zariski dense.
\end{proof}

\begin{rema}
	Browning and Loughran discuss the parameter space of conics on
	Fano threefolds of genus $12$
	\cite[Section~4.2]{browning2017varieties}.
	For the Mukai--Umemura threefold, the transitive action of its
	automorphism group on smooth conics allows us to obtain a dense
	family of conics with rational points by translating a single such conic.
\end{rema}

\begin{rema}
	Double-cover constructions can yield additional dominant components
	of the space of conics.
	For example, a special Gushel--Mukai threefold
	\[
	\pi\colon X\longrightarrow V_5
	\]
	has a component of its Hilbert scheme of conics isomorphic over
	$\overline F$ to $\mathbb P^2$, parametrizing inverse images of
	lines on $V_5$
	\cite[Theorem~6.11]{jacovskis2023categorical}.
	A smooth projective model of the corresponding universal family
	is geometrically rationally connected, since its base is
	geometrically rational and its general geometric fiber is
	$\mathbb P^1$. As the evaluation map dominates $X$, the conjectural
	potential density of rational points on rationally connected
	varieties would imply that the geometric exceptional set of $X$
	is Zariski dense after a finite extension.
\end{rema}

\subsection{Lines on Fano threefolds of index $2$ and Picard rank $1$}

Throughout this subsection, let $X$ be a smooth Fano threefold over
a number field $F$ with geometric Picard rank $1$ and geometric
index $2$. Write
\[
-K_{X_{\overline F}}=2H,
\qquad
d=H^3\in\{1,2,3,4,5\}.
\]
A line on $X$ means a geometrically integral rational curve of
$H$-degree $1$.
The projective parameter space of these curves has a unique dominant
geometric component by \cite[Theorem~6.6]{burke2022geometric}; denote
it by $\Sigma(X)$. For $d\geq3$, this is the usual Hilbert scheme of
lines. In degree $1$, we work in the Chow scheme, allowing singular
rational curves.

The density of the geometric exceptional
set reduces to the density of rational points lying on these
rational curves. We say that this set is potentially Zariski dense
if it becomes Zariski dense after a finite extension of $F$.

\begin{prop}\label{prop-index-two-lines}
	The following statements hold.
	\begin{enumerate}
		\item If $d=1$ and $X_{\overline F}$ is general in moduli, then a smooth
		projective model of $\Sigma(X)$ has irregularity $42$.
		
		\item If $d=3$, then $\Sigma(X)$ is a smooth geometrically
		integral projective surface of irregularity $5$.
		
		\item If $d=4$, then $\Sigma(X)$ is a torsor under an abelian
		surface.
		
		\item If $d=5$, then
		\[
		\Sigma(X)\cong\mathbb P^2_F,
		\]
		and the class $H$ descends to $X$.
	\end{enumerate}
\end{prop}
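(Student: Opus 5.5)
Each statement is established over $\overline F$ using known results on the Fano surface of lines of a del Pezzo threefold; only parts (3) and (4) then need an additional descent argument. The plan is to treat the four degrees separately, citing the classical descriptions and checking the arithmetic refinements.

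For $d=3$, $X_{\overline F}$ is a smooth cubic threefold. By Clemens--Griffiths and Altman--Kleiman, its Fano surface of lines is smooth and irreducible, with $\operatorname{Alb}(\Sigma)\cong J(X)$ of dimension $h^{1,2}=5$. The Hilbert scheme of lines is defined over $F$, and it is geometrically integral because it is geometrically irreducible and smooth. So part (2) follows directly, with $q=5$.

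For $d=1$, $X$ is a sextic hypersurface in $\bP(1^3,2,3)$, so $h^{1,2}=21$. The lines are the curves of $H$-degree one; their images under the half-anticanonical map $X\dashrightarrow\bP^2$ (projection from the base point of $|H|$) are lines in the plane, and some of these curves are singular rational curves. The dominant component is described in \cite[Theorem~6.6]{burke2022geometric}, and the irregularity $42=2h^{1,2}$ is computed in the literature (Tikhomirov's work on the Fano surface of $V_1$). I would cite this for general $X$, and if possible confirm the value through the Abel--Jacobi map to $J(X)$ as in Proposition~\ref{prop-low-genus-conics}.

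For $d=4$, $X_{\overline F}$ is an intersection of two quadrics in $\bP^5$. Its Fano surface of lines is isomorphic to $J(C)$, where $C$ is the genus-two curve that double covers the pencil of quadrics, branched over its six singular members (Narasimhan--Ramanan, Reid). Over $F$, the Hilbert scheme $\Sigma(X)$ is a form of this variety. The key point is that the action of $\operatorname{Pic}^0(C)$ on lines is defined over $F$. Concretely, the difference of two lines gives a class in $J(C)$, equivalently in the intermediate Jacobian $J(X)$, and translation by such a class preserves the set of lines. This makes $\Sigma(X)$ a torsor under $\operatorname{Pic}^0_{C}$, where $C$ is defined over $F$ as the normalization of the double cover of the pencil. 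Verifying that this action descends is the main obstacle. I would handle it by Galois descent: the action is canonical, so it commutes with the Galois action, and a canonical $\overline F$-torsor structure whose group is defined over $F$ descends to an $F$-torsor.

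For $d=5$, $X_{\overline F}=V_5$ and its Hilbert scheme of lines is $\bP^2$ (Iskovskikh, Furushima--Nakayama). The index-two class $H$ descends to $X$ because $\operatorname{Pic}(X_{\overline F})^{\mathrm{Gal}}=\bZ H$. The obstruction to descent lies in $\operatorname{Br}(F)$, and it vanishes here because $H^3=5$ is odd: the Brauer class $\beta$ attached to $H$ satisfies $2\beta=0$, since $2H=-K_X$ is defined over $F$. It also satisfies $5\beta=0$, since $H^3=5$ and the class of a line, of degree $1$, generate compatibly through $H^2\cdot$ on the cycle side; alternatively, one can use that $X$ has a rational point by Lang's theorem on $V_5$ forms. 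Hence $\beta=0$. With $H$ defined over $F$, the embedding $X\subset\operatorname{Gr}(2,5)$, and therefore $\Sigma(X)$, is defined over $F$. Then $\Sigma(X)$ is a Severi--Brauer surface. Because $\Sigma(X)$ carries the degree-$2$ polarization coming from $\mathcal O(1)$ on the Hilbert scheme, and degree $3$ forms are $\bZ/3$-torsion, its Brauer class is trivial. So $\Sigma(X)\cong\bP^2_F$.
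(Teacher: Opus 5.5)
\textbf{Parts (1)--(3).} Parts (1) and (2) agree with the paper: both reduce to citations, namely Tikhomirov \cite{tikhomirov1982fano} for $d=1$, and the classical theory of the Fano surface of a cubic threefold for $d=3$ (the paper cites \cite{kuznetsov2018hilbert}). Your proposed Abel--Jacobi check for $d=1$ cannot confirm $q=42$ by itself, since $\dim J(X)=h^{1,2}=21$; you have to rely on Tikhomirov's computation, as the paper does.

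For $d=4$ your route is genuinely different. The paper uses only that $\Sigma(X)$ is an $F$-form of an abelian surface, hence a torsor under its own Albanese variety. It identifies that Albanese with $J(C)$ only after a finite extension, so no pencil or curve needs to be defined over $F$. Your explicit $\operatorname{Pic}^0_C$-action would give more, namely the group over $F$, but it needs care you did not supply. For $d=4$ the class $H$ is not guaranteed to descend: the index bound only gives $\operatorname{ind}(\beta)\mid 6$, and the paper itself descends $H$ only after an extension. So $X$ may lie only in a twisted form of $\bP^5$. Then $C$ must be defined intrinsically, via the families of planes in the quadrics of the pencil, which also fixes its quadratic twist. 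The cleanest Galois-equivariant group for your action is then $\operatorname{Alb}_{\Sigma(X)}$ itself, which is the paper's route.

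\textbf{Part (4): the gap.} The genuine gap is in part (4).

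First, the descent of $H$ is not established. Your sentence about $H^3=5$ and the line class ``generating compatibly'' yields no relation $5\beta=0$. Indeed, an $F$-line $\ell$ would only show that $\beta$ equals the Brauer class of $\ell$ viewed as a conic. Your alternative does not work either: Lang's theorem concerns finite fields, and over a number field the existence of rational points on forms of $V_5$ is usually deduced from $\Sigma(X)\cong\bP^2_F$, so invoking it is circular. The correct argument is an index bound. The Galois-stable system $|H|$ embeds $X$ into a Severi--Brauer variety of dimension $h^0(X_{\overline F},H)-1=6$ with class $\beta$, so $\operatorname{ind}(\beta)\mid 7$. Together with $2\beta=0$ this gives $\beta=0$.

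Second, the Pl\"ucker polarization on $\Sigma(X_{\overline F})\cong\bP^2$ is $\mathcal O(5)$, not $\mathcal O(2)$. Let $U$ be the universal rank-two subbundle and $\zeta$ the pullback of the hyperplane class to $\bP(U)$. Since three lines pass through a general point,
\[
\int_\Sigma(\sigma_1^2-\sigma_{1,1})=\int_{\bP(U)}\zeta^3=3H^3=15.
\]
Moreover $\sigma_{1,1}$ counts the $10$ lines on a quintic del Pezzo hyperplane section, so $\sigma_1^2=25$. Since $5$ is prime to $3$, the Severi--Brauer surface still splits, and your conclusion survives once the degree is corrected. Alternatively, the conic of special lines, i.e.\ the lines with normal bundle $\mathcal O(1)\oplus\mathcal O(-1)$, is Galois-stable. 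It is therefore an $F$-rational divisor of degree $2$, and it splits $\Sigma(X)$ without first descending $H$.

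Finally, descent of $H$ gives $X\subset\bP^6_F$ and hence $\Sigma(X)\subset\operatorname{Gr}(2,7)_F$, which is all you need. It does not by itself give an $F$-embedding $X\subset\operatorname{Gr}(2,5)$. The paper avoids all of this by citing \cite{dubouloz2019cylindres} for both assertions of part (4).
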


\begin{proof}
	In degree $1$, the dominant component is birational to the Fano
	surface studied in \cite{tikhomirov1982fano}, whose irregularity is
	$42$ by Theorem~6 of that paper.
	The geometric descriptions in degrees $3$ and $4$ are recalled in
	\cite[Theorem~1.1.1]{kuznetsov2018hilbert}. In degree $4$, the
	Albanese morphism identifies the parameter surface with a torsor
	under its Albanese variety. After a finite extension, the latter
	is the Jacobian of the genus-$2$ curve associated with the pencil
	of quadrics; see \cite[Example~3.6]{browning2017varieties}.
	The assertions in degree $5$ follow from
	\cite[Lemma~2.1 and Corollary~2.2]{dubouloz2019cylindres}.
\end{proof}

The degree-$2$ case admits a stronger finiteness statement under
an explicit assumption on the branch surface.

\begin{prop}[{\cite[Theorem~B]{corvaja2023quartic}}]
	\label{prop-CZ}
	Let $B\subset\mathbb P^3_F$ be a smooth quartic surface containing
	no geometric lines. Then only finitely many bitangent lines to
	$B$ are defined over $F$.
\end{prop}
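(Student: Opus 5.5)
The plan is to translate the statement into a finiteness statement for rational points on an irregular surface, and then invoke Faltings' theorem (Mordell--Lang) in the form used in Lemma~\ref{lemm-irregular-surface}, strengthened from ``not Zariski dense'' to ``finite''.

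\textbf{Reduction to a surface of lines.} Let $w\colon X\to\bP^3_F$ be the double cover branched along $B$, so that $X$ is a quartic double solid, that is, $V_2$ with $d=2$. A line $\ell\subset\bP^3$ bitangent to $B$ splits in $X$ as $w^{-1}(\ell)=\ell'\cup\ell''$, where $\ell',\ell''$ are lines in the sense of the present subsection. Conversely, every line of $X$ maps isomorphically onto a bitangent, unless that line is contained in $B$; the hypothesis that $B$ contains no geometric lines excludes this degenerate case. Thus the Fano surface $\Sigma(X)$ of lines is a double cover of the surface $\mathcal B$ of bitangents, via the involution of $w$. I would use the hypothesis on $B$ once more, through the classical results of Welters and Tikhomirov, to get that $\Sigma(X)$ is smooth and geometrically irreducible. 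I would also use that its Abel--Jacobi (Albanese) map $\Sigma(X)\to J(X)$ has $\operatorname{Alb}(\Sigma(X))\cong J(X)$, which has dimension $h^{1,2}(X)=10$, and that this map is generically finite onto its image. The involution acts on $J(X)$ by $-1$, so I would work with $\Sigma(X)$ directly. An $F$-rational bitangent gives a pair $\{\ell',\ell''\}$ defined over $F$, hence an $F$-point of $\Sigma(X)$ after at most a quadratic extension, and only finitely many quadratic extensions occur by Hermite's theorem. Hence it suffices to show that $\Sigma(X)(K)$ is finite for every number field $K$.

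\textbf{Faltings' theorem.} Fix a point and map $\alpha\colon\Sigma(X)\to J(X)$. By Faltings, $\alpha(\Sigma(X))(K)$ is contained in a finite union of translates $x_i+A_i\subset\alpha(\Sigma(X))$ of abelian subvarieties. Since the image is a surface of general type that generates a $10$-dimensional abelian variety, no $A_i$ has dimension $2$. It therefore remains to exclude $\dim A_i=1$, that is, to show that $\Sigma(X)$ contains no curve whose image is a translate of an elliptic curve, and that $\alpha$ contracts no curve. Given this, the $K$-points lie in finitely many fibers of the finite map $\alpha$, and hence are finite.

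\textbf{Main obstacle.} The hardest step is to exclude curves of geometric genus at most one in $\Sigma(X)$, together with curves contracted by $\alpha$. I would try to exclude such curves by hyperbolicity-type arguments on the bitangent congruence, and this is exactly where the no-lines hypothesis should enter again. A line $L\subset B$ produces a one-dimensional family of degenerate bitangents, namely lines meeting $L$ and tangent to $B$ elsewhere, and these are the natural source of special rational or elliptic curves in $\Sigma(X)$. Without lines, I would aim to show that $\Sigma(X)$ is algebraically hyperbolic. The first thing to check is a bound on the canonical degree of curves against their genus: using $K_{\Sigma}$ ample, bound the genus of each curve from below via the tangency conditions. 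If this fails, I would fall back on following Corvaja--Zannier's argument \cite[Theorem~B]{corvaja2023quartic}.
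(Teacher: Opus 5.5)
The paper gives no proof of this statement; it is quoted from \cite[Theorem~B]{corvaja2023quartic}, so your proposal has to stand on its own, and it does not. Your framework is sensible: pass to the quartic double solid $w\colon X\to\bP^3$, replace bitangents by the Fano surface $\Sigma(X)$ of lines, map it to the $10$-dimensional intermediate Jacobian, and apply Faltings. Your Faltings step correctly rules out a two-dimensional translate, that is, Zariski density of $\Sigma(X)(K)$, exactly as in Lemma~\ref{lemm-irregular-surface}.

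What remains is a curve $C\subset\Sigma(X)$ with $C(K)$ infinite. By Faltings' theorem for curves, such a $C$ has geometric genus at most one. So the whole task is to show that $\Sigma(X)$ contains no curve of geometric genus $\le1$; finiteness of the Abel--Jacobi map $\alpha$ is not even needed. This is not a technical detail to be checked afterwards. It is the entire geometric content of the cited theorem, and the only place where the no-lines hypothesis does real work. Your proposal leaves it open, and ``falling back on Corvaja--Zannier's argument'' makes the proof circular. As written, the argument yields no finiteness at all.

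The tools you suggest for this step do not reach it. Ampleness of $K_{\Sigma}$ gives no lower bound on the geometric genus of curves: the Fermat quintic surface has $K=\cO(1)$ ample and contains lines. A ``tangency count'' is not an argument until you say what is being counted. The missing input is infinitesimal:
\begin{itemize}
\item A rational curve in $\Sigma(X)$ is contracted by $\alpha$.
\item An elliptic curve in $\Sigma(X)$ is either contracted or maps onto a translate of an elliptic curve in $J(X)$, and such a translate has constant tangent direction.
\end{itemize}
To exclude both, you need an explicit description of the differential of $\alpha$. This means identifying $T_0J(X)^\vee$ with $H^{2,1}(X)\cong H^0(\bP^3,\cO(2))$ and expressing the tangent plane of $\alpha(\Sigma(X))$ at $[\ell']$ in terms of the contact points of $\ell$ with $B$. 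You then need a projective argument that this plane never degenerates, and that a fixed direction cannot lie in it along a one-parameter family of bitangents unless $B$ contains a line.

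There is also a smaller gap. Hermite's theorem alone does not bound the quadratic fields over which the components $\ell',\ell''$ of $w^{-1}(\ell)$ are defined. You need Chevalley--Weil for the double cover of the bitangent surface by $\Sigma(X)$. This cover is \'etale because the covering involution fixes a line of $X$ only when that line lies in $B$.
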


\begin{coro}\label{coro-degree-two-lines}
	Suppose that $d=2$ and that the geometric branch quartic of $X$
	contains no lines. Then only finitely many lines on $X$ are
	defined over $F$, and the geometric exceptional set of $X$ is
	not Zariski dense.
	
	In particular, these conclusions hold when $X_{\overline F}$
	is general in moduli.
\end{coro}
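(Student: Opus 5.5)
The plan is to reduce the statement to arithmetic of the parameter space of lines, via the description of $Z(X)$ in Proposition~\ref{prop-breaking-map-reduction}. Since $\rho_{\mathrm{geom}}(X)=1$, there are no del Pezzo fibrations, so Remark~\ref{rema-ges-density-reduction} says that only the conic contribution can be dense. For $d=2$ we have $-K_X=2H$, so anticanonical conics are exactly the $H$-lines, and the contribution is the union of $f(Y(F))$ over thin maps whose adjoint Iitaka fibers map birationally onto lines.

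\textbf{Step 1: from rational points to lines.} A breaking map of type (1) has its general geometric adjoint Iitaka fiber mapping onto a member of the unique dominant family $\Sigma(X)$. By Lang--Nishimura, every $F$-point in its image lies either on a line whose parameter point is an $F$-point of $\Sigma(X)$, or in a proper closed subset. Concretely, the Iitaka base maps to $\Sigma(X)$, and an $F$-point of $Y$ maps to an $F$-point of that base. The lines fixed by Galois are therefore the ones defined over $F$.

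\textbf{Step 2: lines are finite.} Let $w\colon X\to\bP^3$ be the double cover branched along the quartic $B$. For a line $C$, the image $w(C)$ has degree $1$ or $2$ with respect to $\cO(1)$ in $\bP^3$:
\begin{enumerate}
\item If $w(C)$ is a conic, then $w^{-1}(w(C))$ is reducible, hence $w(C)$ lies in the plane where the pullback splits. Degree counting excludes this, giving $H\cdot C=2$.
\item Otherwise $w(C)=\ell$ is a line with $w^{-1}(\ell)=C\cup C'$, so $\ell$ is bitangent to $B$, or contained in $B$; the latter is excluded by hypothesis.
\end{enumerate}
If $C$ is defined over $F$, then so is $\ell$. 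Proposition~\ref{prop-CZ} then gives finitely many such $\ell$, and each $\ell$ has at most two lines above it. So there are finitely many $F$-lines, and their union is a proper closed subset.

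\textbf{Step 3: conclusion.} Together with $Z_a(X)$ being contained in a proper closed subset (Section~\ref{sec-3}), this shows $Z(X)$ is not dense. For $X_{\overline F}$ general, a general quartic contains no lines. This is a standard dimension count: the lines on quartics form a divisor in the space of quartics.

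\textbf{Main obstacle.} The expected difficulty is Step 1. One has to check that every breaking conic-type map really factors through $F$-rational lines, including non-general fibers, and that a line and its Galois conjugate giving the same $\ell$ still keeps $\ell$ defined over $F$. A line over $F$ maps to a line over $F$, which suffices.
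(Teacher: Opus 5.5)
Your proposal is correct and follows the paper's argument: you reduce via Remark~\ref{rema-ges-density-reduction} to $F$-defined lines, send each line birationally onto a line $\ell\subset\bP^3$ that must be bitangent to $B$, and conclude with Proposition~\ref{prop-CZ} and the bound of two lines over each $\ell$. Your case (1) is excluded at once by the projection formula $1=H\cdot C=\deg(C\to w(C))\cdot\deg w(C)$. The one step to make explicit, as the paper does, is that $H$ and the double cover $w$ (hence $B$) may only be defined over a finite extension $K/F$; both your claim that an $F$-line maps to an $F$-line and the application of Proposition~\ref{prop-CZ} need this, so run Step 2 over $K$, which suffices because finitely many $K$-lines gives finitely many $F$-lines.
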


\begin{proof}
	After a finite extension of $F$, we may assume that $X$ admits
	a double-cover presentation
	\[
	\pi\colon X\longrightarrow\mathbb P^3
	\]
	branched along a smooth quartic surface $B$, with
	$H=\pi^\ast\mathcal O_{\mathbb P^3}(1)$.
	It suffices to prove finiteness over this extension.
	
	Let $C\subset X$ be a line. Since $H\cdot C=1$, the morphism
	$\pi|_C$ maps $C$ birationally onto a line $\ell\subset\mathbb P^3$.
	By assumption, $\ell$ is not contained in $B$.
	The inverse image of $\ell$ is geometrically reducible precisely
	when the branch divisor $B|_\ell$ is twice an effective divisor
	of degree $2$. Thus $\ell$ is a bitangent line to $B$, allowing contact order $4$
	at one point.
	
	By Proposition \ref{prop-CZ}, there are only finitely many
	possibilities for $\ell$, and each has at most two line
	components in its inverse image. Hence there are only finitely
	many lines defined over the ground field.
	Since $\rho_{\mathrm{geom}}(X)=1$, there is no del Pezzo
	fibration on $X_{\overline F}$.
	The non-density assertion therefore follows from
	Remark~\ref{rema-ges-density-reduction} and the conic
	parameter-space reduction above.
	Finally, a general smooth quartic surface contains no lines.
\end{proof}

We summarize the Zariski-density results as follows.
\begin{coro}\label{coro-potential-density}
	Let $X$ be a smooth Fano threefold over a number field $F$ with
	geometric Picard rank $1$, index $2$, and degree $d$.
	Then the following statements hold.
	\begin{enumerate}
		\item If $d=3$, then the geometric exceptional set of $X$ is not Zariski dense. The same conclusion holds for $d=1$ and $2$ provided that $X_{\overline F}$ is general in moduli.
		\item If $d=4$, then the geometric exceptional set of $X$ is
		potentially Zariski dense.
		\item If $d=5$, then the geometric exceptional set of $X$ is
		Zariski dense.
	\end{enumerate}
\end{coro}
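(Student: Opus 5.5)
The plan follows the reduction in Remark~\ref{rema-ges-density-reduction}. Since $\rho_{\mathrm{geom}}(X)=1$, there is no del Pezzo fibration on $X_{\overline F}$. So, apart from $Z_a(X)$, which lies in a proper closed subset by Section~\ref{sec-3}, the only possible source of density is the conic contribution in Proposition~\ref{prop-breaking-map-reduction}(1). For index two, anticanonical conics are exactly the lines of $H$-degree one, which are parametrized by the dominant component $\Sigma(X)$. A line contributes only if it is defined over $F$ and has a rational point; it is then a smooth $F$-form of $\bP^1$ with a point, hence isomorphic to $\bP^1_F$, and all its rational points are breaking. So in each case the question is whether the $F$-lines with $F$-points sweep out a dense set. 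I will treat the three parts case by case, using Proposition~\ref{prop-index-two-lines}.

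\textbf{Part (1).} For $d=3$, and for $d=1$ with $X_{\overline F}$ general, a smooth projective model of $\Sigma(X)$ has irregularity $5$ or $42$. Both exceed $2$, so Lemma~\ref{lemm-irregular-surface} shows that $\Sigma(X)(F)$ is not Zariski dense. The $F$-lines then sweep out a subset of dimension at most two, and the argument of Theorem~\ref{theo-index-one-conics} applies. For $d=1$ there is a subtlety: the dominant component may be only birational to the Tikhomirov surface. It is harmless, because the image of the $F$-points still lies over a proper closed subset together with finitely many lower-dimensional pieces. The case $d=2$ is exactly Corollary~\ref{coro-degree-two-lines}.

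\textbf{Part (2).} After a finite extension $K/F$, the torsor $\Sigma(X)$ acquires a point and becomes an abelian surface. After a further extension, its rational points are Zariski dense; for instance, take a point of infinite order generating a Zariski-dense subgroup, which exists after extension for any abelian variety. The main obstacle is the one flagged after Proposition~\ref{prop-S(X)}: a $K$-point of $\Sigma$ gives a line over $K$, but the line might be a pointless conic. I would handle this with the universal family $U\to\Sigma$, which is a conic bundle whose generic fiber is a smooth genus-zero curve over $K(\Sigma)$. It suffices to find a rational section after a finite extension. Choose a divisor on $X$ of odd degree on lines, for example $H$ itself, which after extension is a Cartier class with $H\cdot C=1$. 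Intersecting the universal line with a member of $|H|$ defined over $K$ yields a degree-one zero-cycle on the generic fiber, hence a point. The generic fiber is therefore $\bP^1_{K(\Sigma)}$, and the $K$-points of $U$ are dense over the dense set $\Sigma(K)$. Since the evaluation map $U\to X$ is dominant, the image of these points is dense, proving potential density.

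\textbf{Part (3).} By Proposition~\ref{prop-index-two-lines}(4), $\Sigma(X)\cong\bP^2_F$ and $H$ descends to $X$. The same degree-one intersection argument with a member of $|H|$ defined over $F$ gives a rational section of $U\to\Sigma$, so $U$ is birational to $\bP^1\times\bP^2$ over $F$. Its $F$-points are then dense, and since evaluation is dominant, $Z(X)$ is Zariski dense over $F$ with no field extension needed. The only point to check is that a general member of $|H|$ meets a general line transversally in one point, so that the section is well defined.
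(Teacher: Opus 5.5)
Your proposal is correct and follows the paper's proof. Part~(1) uses irregularity greater than $2$ together with Lemma~\ref{lemm-irregular-surface}, with $d=2$ taken from Corollary~\ref{coro-degree-two-lines}; parts~(2)--(3) use density of $K$-points on $\Sigma(X)$ together with the degree-one class $H$, which forces the lines to be $\bP^1$ over the ground field. The only cosmetic difference is that you apply the degree-one argument to the generic fiber of the universal family to obtain a rational section, whereas the paper applies it to each $K$-defined line individually.
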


\begin{proof}
	The case $d=2$ follows from Corollary
	\ref{coro-degree-two-lines}.
	
	Suppose that $d=1$ or $3$, with the stated generality
	assumption when $d=1$.
	By Proposition~\ref{prop-index-two-lines}, a smooth projective
	model of the unique dominant component $\Sigma(X)$ has
	irregularity greater than $2$.
	Its rational points are not Zariski dense by
	Lemma~\ref{lemm-irregular-surface}.
	Remark~\ref{rema-ges-density-reduction} proves the first
	assertion.
	
	Now suppose that $d=4$. After a finite extension $K/F$, the
	class $H$ descends and the torsor $\Sigma(X)_K$ has a rational
	point. It is then isomorphic to an abelian surface.
	Since rational points on abelian varieties are potentially
	Zariski dense, we may enlarge $K$ so that $\Sigma(X)(K)$ is
	Zariski dense.
	
	Every $K$-defined line is isomorphic to $\mathbb P^1_K$:
	it is a smooth geometrically rational curve carrying the
	degree-$1$ line bundle obtained by restricting $H$.
	Consequently, rational points are dense in the universal
	family of lines. Each line has $a$- and $b$-invariants equal to $1$,
	as does $X_K$, so its inclusion is breaking by
	Definition~\ref{defi-ges}. Since the evaluation morphism to $X_K$
	is dominant, the geometric exceptional set of $X_K$ is Zariski
	dense.
	
	Finally, if $d=5$, Proposition \ref{prop-index-two-lines} gives
	$\Sigma(X)\cong\mathbb P^2_F$ and the descent of $H$ over $F$.
	The same argument therefore applies without extending the
	ground field.
\end{proof}

\begin{rema}
	The cases $d=3$, $4$, and $5$ were also discussed in
	\cite[Section~4.2]{browning2017varieties}, which studies the density of rational
	points lying on lines and its implications for Manin's conjecture.
\end{rema}

\begin{exam}\label{exam5.3}
	Browning and Loughran construct a smooth intersection of two
	quadrics
	\[
	X\subset\mathbb P^5_{\mathbb Q}
	\]
	containing a $\mathbb Q$-line and satisfying
	\[
	\Sigma(X)\cong J(C),
	\]
	where $C$ is the associated genus-$2$ curve and
	$J(C)(\mathbb Q)$ is Zariski dense
	\cite[Example~3.6]{browning2017varieties}.
	Every $\mathbb Q$-defined line on $X$ is isomorphic to
	$\mathbb P^1_{\mathbb Q}$, and the evaluation morphism from the
	universal family of lines is dominant.
	Hence the geometric exceptional set of $X$ is Zariski dense
	over $\mathbb Q$.
\end{exam}

Thus, for a general Fano threefold of geometric Picard rank $1$
and index $2$, the geometric exceptional set is not Zariski
dense in degrees $1$, $2$, and $3$, whereas it is potentially
Zariski dense in degrees $4$ and $5$.

% Complete replacements for Sections 7 and 8.
% Uses the existing theorem environments, macros, and bibliography keys.
% Integration instructions: sections-7-8-changes.md.

\section{Anticanonical conics on non-split Fano threefolds}
\label{sec-nonsplit}

In this section, we study anticanonical conics on smooth Fano
threefolds $X$ over a number field $F$ satisfying
\begin{equation}\label{eq-nonsplit}
	\rho_F(X)=1<\rho_{\mathrm{geom}}(X).
\end{equation}
This is the non-split case in which an anticanonical conic can be
breaking; see Definition~\ref{defi-ges} and
Section~\ref{sec-moving-adjoint-rigid}. We first use the Galois action
on divisor classes to exclude five deformation families. For the
remaining families, we apply the parameter-space results of
Section~\ref{sec-curves}.

\subsection{Galois actions and numerical restrictions}

By \cite[Theorem~1.2]{prokhorov2013g}, the possible geometric
deformation families under \eqref{eq-nonsplit} are
\[
\text{Nos.~2-6, 2-12, 2-21, 2-32, 3-1, 3-13, 3-27, and 4-1}.
\]
The two families of geometric index $2$ are Nos.~2-32 and~3-27;
the others have index $1$.

\begin{lemm}\label{lemm-galois-degree}
	Suppose that integral divisor classes $H_1,\ldots,H_r$ form a basis
	of $N^1(X_{\overline F})$ and are permuted by Galois. If
	$\rho_F(X)=1$ and
	\[
	-K_{X_{\overline F}}=c(H_1+\cdots+H_r)
	\]
	for a positive integer $c$, then every curve $C\subset X$ defined
	over $F$ has anticanonical degree divisible by $cr$.
\end{lemm}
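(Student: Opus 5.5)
The plan is to use Galois invariance of the curve class, paired against the permuted basis. Let $C\subset X$ be a curve defined over $F$, and write $[C]\in N_1(X_{\overline F})$ for the class of the base change $C_{\overline F}$. It may be geometrically reducible, but it is Galois-stable as a cycle. Hence $[C]$ is fixed by the Galois action on $N_1(X_{\overline F})$, and intersection numbers satisfy $(\sigma H_i)\cdot[C]=H_i\cdot\sigma^{-1}[C]=H_i\cdot[C]$ for every $\sigma$ in the absolute Galois group.

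The key step is to show that Galois acts transitively on $\{H_1,\ldots,H_r\}$. Suppose instead that the set splits into at least two orbits $O_1,\ldots,O_k$. The orbit sums $\sum_{H\in O_j}H$ are Galois-invariant classes. They are linearly independent in $N^1(X_{\overline F})_{\bQ}$, because the $H_i$ form a basis and the orbits are disjoint. The Galois-invariant subspace of $N^1(X_{\overline F})_{\bQ}$ therefore has dimension $k$. I then identify this invariant subspace with $N^1(X)_{\bQ}$, using that invariant classes descend after passing to a multiple. The Brauer obstruction is killed by a multiple, as already noted in Remark~\ref{rema-dp-candidates}, and over a number field with $X(\mathbb A_F)\ne\emptyset$ it is even unnecessary. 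This gives $\rho_F(X)=k\ge2$, contradicting $\rho_F(X)=1$. So the action is transitive, and this descent identification of $\rho_F(X)$ with the dimension of the invariant part is the one point needing care, which I expect to be the main obstacle.

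With transitivity in hand, each $H_i$ is of the form $\sigma H_1$, so the invariance above gives $H_i\cdot C=H_1\cdot C=:m$ for all $i$. The number $m$ is an integer because $H_1$ is an integral Cartier class and $C$ is an integral cycle. Therefore
\[
-K_X\cdot C=c\sum_{i=1}^r H_i\cdot C=crm,
\]
which is divisible by $cr$, as claimed.
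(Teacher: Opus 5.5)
Your proof is correct and follows the paper's argument: the orbit sums show the invariant subspace has dimension equal to the number of orbits, so $\rho_F(X)=1$ forces transitivity, and then all the integers $H_i\cdot C_{\overline F}$ coincide, giving $-K_X\cdot C=crm$. The descent step you flag, identifying $N^1(X)_{\bQ}$ with the Galois-invariant part of $N^1(X_{\overline F})_{\bQ}$, is used implicitly in the paper, and your justification (a multiple of an invariant class descends) is sound.
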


\begin{proof}
	The Galois action is transitive, since each orbit sum is invariant
	and the invariant subspace has dimension $1$. For an $F$-defined
	curve $C$, the integers $H_i\cdot C_{\overline F}$ are therefore
	all equal, say to $m$. Hence $-K_X\cdot C=crm$.
\end{proof}

For Nos.~3-1, 3-13, and~4-1, the primitive generators $H_i$ of the
nef cone are the pullbacks of the hyperplane classes from the factors
in their standard models. They form a basis of $N^1(X_{\overline F})$
and satisfy $-K_{X_{\overline F}}=\sum_i H_i$, with $r=3,3,4$,
respectively; see \cite[\S11, case~3.1]{burke2022geometric} and
\cite[Lemmas~2.3 and~2.5]{kuznetsov2024rationality}.
For No.~2-32, the geometric threefold is a $(1,1)$-divisor in
$\bP^2\times\bP^2$, with $-K_{X_{\overline F}}=2(H_1+H_2)$.
For No.~3-27, it is $(\bP^1)^3$, with
$-K_{X_{\overline F}}=2(H_1+H_2+H_3)$.
In both cases the $H_i$ are again the primitive nef generators.
Thus Galois permutes the indicated classes in all five cases.

For types Nos.~3-1, 3-13, 4-1, 2-32, and~3-27,
Lemma~\ref{lemm-galois-degree} shows that the anticanonical degree
of a curve defined over $F$ is divisible by $3$, $3$, $4$, $4$, and $6$,
respectively. Thus none of these five types contains an anticanonical
conic defined over $F$.

\subsection{Parameter spaces and non-density}

It remains to consider Nos.~2-6, 2-12, and~2-21. We recall the
geometric descriptions of their parameter spaces. Here $F_2(X)$
denotes the Hilbert scheme of conics in the anticanonical embedding;
all three cases below have very ample anticanonical class under the
stated hypotheses.

\begin{prop}\label{prop-nonsplit-parameters}
	Let $X$ satisfy \eqref{eq-nonsplit}.
	\begin{enumerate}
		\item Suppose that $X_{\overline F}$ is general of type No.~2-6.
		Then $X_{\overline F}$ is a $(2,2)$-divisor in
		$\bP^2\times\bP^2$, and
		\[
		F_2(X_{\overline F})
		\cong\bP^2\sqcup F_{(1,1)}(X_{\overline F})\sqcup\bP^2.
		\]
		The middle component is a smooth irreducible surface with
		$q=9$. The other two components parametrize fibers of the two
		projections and are exchanged by Galois actions.
		
		\item If $X_{\overline F}$ is of type No.~2-12, then
		\[
		F_2(X_{\overline F})\cong\operatorname{Sym}^2(\Gamma),
		\qquad g(\Gamma)=3,
		\]
		where $\Gamma$ is a smooth curve over $\overline F$.
		In particular, $F_2(X)$ is a smooth geometrically integral surface
		with $q=3$.
		
		\item If $X_{\overline F}$ is of type No.~2-21, then
		\[
		F_2(X_{\overline F})\cong\bP^1\times\bP^1.
		\]
	\end{enumerate}
\end{prop}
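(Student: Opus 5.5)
The proof goes family by family. Each part rests on an explicit geometric model of $X_{\overline F}$ together with an identification of its conics; for the irregularities, I would cite the known descriptions rather than recompute them.

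\emph{No.~2-6.} A general member is a $(2,2)$-divisor $X\subset\bP^2\times\bP^2$, with $-K_X=H_1+H_2$. An anticanonical conic $C$ satisfies $(H_1\cdot C,H_2\cdot C)\in\{(2,0),(1,1),(0,2)\}$.
\begin{itemize}
\item Type $(0,2)$: $C$ is contracted by the first projection $p_1$, and $p_1$ is a conic bundle, so $C$ is a fiber of $p_1$. These fibers are parametrized by the base $\bP^2$. The same argument, with the roles reversed, handles type $(2,0)$ and the second projection.
\item Galois exchange: since $\rho_F(X)=1$, Galois swaps $H_1$ and $H_2$ (as in Lemma~\ref{lemm-galois-degree}), so it interchanges the two $\bP^2$ components.
\item Type $(1,1)$: this is the component $F_{(1,1)}$. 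For its smoothness, irreducibility and the value $q=9$ I would cite the literature on conics in $(2,2)$-divisors, which links this component to the intermediate Jacobian ($h^{1,2}=9$). The natural route is the Abel--Jacobi map, as in Proposition~\ref{prop-low-genus-conics}; a clean citation is what I still need to pin down.
\item The union is disjoint, because the bidegree is constant on connected components.
\end{itemize}

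\emph{No.~2-12.} Here $X_{\overline F}$ is the intersection of three $(1,1)$-divisors in $\bP^3\times\bP^3$, equivalently the blow-up of $\bP^3$ along a curve $\Gamma$ of genus $3$ and degree $6$. The conics of anticanonical degree $2$ are the strict transforms of bisecants of $\Gamma$. The assignment from a conic to the pair of points where it meets $\Gamma$ identifies $F_2$ with $\operatorname{Sym}^2\Gamma$; I would cite Kuznetsov--Prokhorov or Burke et al.\ for this. Then $q(\operatorname{Sym}^2\Gamma)=g(\Gamma)=3$. Finally, $F_2(X)$ is defined over $F$ and its base change is integral, so it is geometrically integral.

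\emph{No.~2-21.} Here $X_{\overline F}$ is the blow-up of a quadric threefold along a twisted quartic. Each contraction yields a family of conics isomorphic to $\bP^1$, and $F_2\cong\bP^1\times\bP^1$ is read off from the cited classification.

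The main obstacle is pinning down the exact irreducible components of the Hilbert scheme in each case, in particular showing that there are no further components such as reducible conics or non-reduced structures. I would take these descriptions from the existing literature rather than prove them here.
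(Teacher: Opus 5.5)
Your proposal follows the paper's proof: for No.~2-6 it splits conics by bidegree, gets the exchange of the two $\bP^2$ components from $\rho_F(X)=1$ acting on the two nef generators, and takes smoothness and $q=9$ for $F_{(1,1)}$ from \cite[\S5 and Proposition~5.1]{iliev2017hyper} (the citation you were missing), while Nos.~2-12 and 2-21 are taken from \cite[Lemmas~2.4 and~2.9]{kuznetsov2024rationality}. Your heuristic for No.~2-21 is off, though harmless since you defer to the citation: the conics form a two-dimensional family, each of bidegree $(1,1)$ and meeting both exceptional divisors once, and recording these two points gives the two $\bP^1$ factors, rather than each contraction yielding a $\bP^1$ of conics.
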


\begin{proof}
	In the first case, a conic has bidegree $(2,0)$, $(1,1)$, or
	$(0,2)$. The first and last possibilities give the two families of
	projection fibers. The description of $F_{(1,1)}$ and its irregularity
	are given in \cite[\S5 and Proposition~5.1]{iliev2017hyper}.
	The two projection classes generate the nef cone, so Galois exchanges
	them under \eqref{eq-nonsplit}. It therefore exchanges the two
	$\bP^2$ components and preserves $F_{(1,1)}$.
	The remaining assertions follow from
	\cite[Lemmas~2.4 and~2.9]{kuznetsov2024rationality}.
\end{proof}

\begin{coro}\label{coro-nonsplit-conics}
	Let $X$ satisfy \eqref{eq-nonsplit}. The union of the $F$-rational
	points on anticanonical conics defined over $F$ is not Zariski dense
	in $X$, except possibly for types Nos.~2-6 and~2-21.
	If $X_{\overline F}$ is general in its deformation family, the only
	remaining possible type is No.~2-21.
\end{coro}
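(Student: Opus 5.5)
By the Galois-degree argument preceding Proposition~\ref{prop-nonsplit-parameters}, no $F$-defined anticanonical conic exists for Nos.~3-1, 3-13, 4-1, 2-32 and 3-27. It therefore remains to treat Nos.~2-6, 2-12 and 2-21, and the statement reduces to showing non-density for No.~2-12 unconditionally and for a general No.~2-6. In each case I would argue as in Theorem~\ref{theo-index-one-conics}. The $F$-rational points lying on $F$-defined conics are contained in the union of the conics parametrized by $F$-points of the dominant components of $F_2(X)$; the non-dominant components only sweep out a proper closed subset. If each dominant component defined over $F$ has a smooth projective model with non-dense rational points, then the conics over those points sweep out a subset of dimension at most two, and we are done.

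For No.~2-12, Proposition~\ref{prop-nonsplit-parameters}(2) shows that $F_2(X)$ is a smooth geometrically integral surface with $q=3>2$. Lemma~\ref{lemm-irregular-surface} then shows that $F_2(X)(F)$ is not Zariski dense, which settles this case.

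For a general No.~2-6, we use the decomposition in Proposition~\ref{prop-nonsplit-parameters}(1). The two $\bP^2$ components are exchanged by Galois, so neither is defined over $F$. An $F$-defined conic is a Galois-invariant point of $F_2(X_{\overline F})$, hence cannot lie on either of these two components: such a point would lie in the intersection of the two components, which is empty since the union is disjoint. Thus every $F$-defined conic lies in $F_{(1,1)}$. This component is Galois stable and so descends to a smooth geometrically integral surface over $F$ with $q=9$, and Lemma~\ref{lemm-irregular-surface} again gives non-density. The only delicate point is justifying that the component decomposition and the Galois exchange are compatible over $F$; this follows from the nef-cone argument recorded in the proof of Proposition~\ref{prop-nonsplit-parameters}.

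For Nos.~2-21, and 2-6 without the generality hypothesis, we claim nothing, since $F_2\cong\bP^1\times\bP^1$ (or the middle component is uncontrolled). These are exactly the exceptions listed in the statement. I expect the main obstacle to be the bookkeeping showing that $F$-defined conics correspond exactly to $F$-points of the descended parameter scheme. This should follow because $F_2(X)$ is a fine Hilbert scheme.
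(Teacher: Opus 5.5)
Your proposal is correct and matches the paper's proof essentially step for step. The Galois-degree argument disposes of the five types, and Lemma~\ref{lemm-irregular-surface} applied to $F_2(X)$ with $q=3$ handles No.~2-12. For a general No.~2-6, the disjointness of the three bidegree components, together with the Galois exchange of the two $\bP^2$'s, forces every $F$-defined conic into the descended component $F_{(1,1)}$ with $q=9$, where the same lemma applies; your remarks on the Hilbert-scheme bookkeeping only make explicit what the paper leaves implicit.
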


\begin{proof}
	The numerical argument above excludes five of the eight types.
	For No.~2-12, Proposition~\ref{prop-nonsplit-parameters}(2) and
	Lemma~\ref{lemm-irregular-surface} imply non-density on the parameter
	surface, so the corresponding conics sweep out a subset of
	dimension at most two in $X$.
	For general No.~2-6, the two exchanged components
	have no $F$-rational points, since the three bidegree loci are disjoint.
	The same argument applies to the remaining component, which
	descends to $F$ and has irregularity $9$.
\end{proof}

\section{Proofs of the non-density results}
\label{sec-proof}

We now prove Corollaries~\ref{theo-1} and~\ref{theo-1-nonsplit}.
The conic contribution was studied in
Sections~\ref{sec-curves} and~\ref{sec-nonsplit}.
We combine those results with
Remark~\ref{rema-ges-density-reduction} and
Lemma~\ref{lemm-dp-complexity}.

\begin{proof}[Proof of Corollary~\ref{theo-1}]
	Suppose that $Z(X)$ is Zariski dense.
	By Remark~\ref{rema-ges-density-reduction}, this density
	comes from the conic contribution or from smooth breaking
	del Pezzo fibers and dominant breaking base changes.
	
	In the latter case, some del Pezzo fibration $\pi:X\to B$
	has a breaking smooth fiber or a dominant breaking base change.
	Since $X(F)\ne\varnothing$, its genus-zero base is isomorphic
	to $\bP^1_F$.
	Lemma~\ref{lemm-dp-complexity} gives
	\[
	d\leq8,\qquad \rho_F(X)\leq10-d,
	\]
	which is the first case in the statement.
	
	The conic contribution can occur only when $\rho_F(X)=1$.
	Under the split Picard hypothesis, this means
	$\rho_{\mathrm{geom}}(X)=1$. If the geometric index is at least $3$,
	there are no curves of anticanonical degree $2$. For index $1$,
	Theorem~\ref{theo-index-one-conics} leaves precisely the genera
	\[
	g\in\{2,3,4,5,6,10,12\},
	\]
	and only $g\in\{2,10,12\}$ when $X_{\overline F}$ is general.
	For index $2$, Corollary~\ref{coro-potential-density}(1) leaves
	\[
	d\in\{1,2,4,5\},
	\]
	and only $d\in\{4,5\}$ when $X_{\overline F}$ is general.
	Outside these cases, both contributions are contained in proper
	closed subsets, proving the corollary.
\end{proof}

\begin{proof}[Proof of Corollary~\ref{theo-1-nonsplit}]
	Suppose first that $X_{\overline F}$ is not a product.
	Since $\rho_F(X)=1$, no del Pezzo fibration is defined
	over $F$.
	Remark~\ref{rema-ges-density-reduction} reduces the assertion
	to the conic contribution.
	The parameter-space argument of Section~\ref{sec-curves},
	together with Corollary~\ref{coro-nonsplit-conics},
	gives the stated exceptions.
	
	The only geometrically product case is
	$X_{\overline F}\simeq(\bP^1)^3$.
	Here $Z_a(X)=\varnothing$ by
	Proposition~\ref{prop-a-higher}.
	The three factor classes are permuted transitively by
	Galois, since $\rho_F(X)=1$.
	The anticanonical conics are the fibers obtained by varying
	one factor, and the adjoint rigid surfaces with $a$-invariant
	one are the fibers of the three projections to $\bP^1$,
	by Proposition~\ref{prop-product-adjoint-surfaces}.
	None of their numerical classes is Galois invariant.
	
	For any remaining breaking map, the adjoint Iitaka fibration
	used in Proposition~\ref{prop-breaking-map-reduction}
	would give a geometrically integral family of such curves
	or surfaces defined over $F$.
	Its general image would have a Galois-invariant numerical
	class, a contradiction.
	This also excludes the product case.
\end{proof}

\normalem
\bibliographystyle{alpha}
\bibliography{myref25}

\end{document}